\documentclass[12pt,centertags,oneside]{amsart}
\usepackage{amstext,amsthm,amscd}
\usepackage{amsmath,amssymb}

\usepackage[a4paper,width=16.2cm,top=3cm,bottom=3cm]{geometry}

\usepackage[mathscr]{eucal}
\usepackage{mathrsfs}
\usepackage{charter}
\usepackage{colonequals}

\usepackage{xcolor}
\usepackage[colorlinks=true,
            linkcolor=blue,
            citecolor=blue,
            urlcolor=blue]{hyperref}

\numberwithin{equation}{section}

\newtheorem{theorem}{Theorem}[section]
\newtheorem{definition}[theorem]{Definition}
\newtheorem{proposition}[theorem]{Proposition}
\newtheorem{corollary}[theorem]{Corollary}
\newtheorem{lemma}[theorem]{Lemma}
\newtheorem{remark}[theorem]{Remark}

\newtheorem{question}[theorem]{Question}
\newtheorem{conjecture}[theorem]{Conjecture}
\newtheorem{problem}[theorem]{Problem}

\newcommand{\Aut}{{\rm Aut}}

\newcommand{\supp}{{\rm supp}}

\newcommand{\alg}{\textnormal{alg}}

\renewcommand{\Re}{\mathop{\mathrm{Re}}}

\newcommand{\hyp}{\mathop{\mathrm{hyp}}\nolimits}
\newcommand{\chyp}{\mathop{\underline{\mathrm{hyp}}}\nolimits}

\newcommand{\sing}{{\rm sing}}

\newcommand{\FS}{{\rm FS}}
\newcommand{\B}{\mathbb{B}}
\newcommand{\D}{\mathbb{D}}
\newcommand{\C}{\mathbb{C}}

\newcommand{\N}{\mathbb{N}}
\newcommand{\Z}{\mathbb{Z}}
\newcommand{\R}{\mathbb{R}}

\renewcommand\P{\mathbb{P}}

\title[]{\quad Quantitative hyperbolicity for complex manifolds \break via numerical invariants}

\author{Tien-Cuong Dinh, Duc-Bao Nguyen, Duc-Viet Vu}

\newcommand{\Addresses}{
{\bigskip
\footnotesize

		\textsc{Tien-Cuong Dinh, National University of Singapore, Department of Mathematics, 10 Lower Kent Ridge Road, 119076, Singapore.}
		\noindent
		\par\nopagebreak
		\noindent
		\textit{E-mail address}: \texttt{matdtc@nus.edu.sg}
}
{
		\bigskip
  \footnotesize
		
  \textsc{Duc-Bao Nguyen, National University of Singapore, Department of Mathematics, 10 Lower Kent Ridge Road, 119076, Singapore.}
		\noindent
		\par\nopagebreak
		\noindent
		\textit{E-mail address}: \texttt{ducbao.nguyen@u.nus.edu}}
        
{
		\bigskip
		\footnotesize
		\textsc{Duc-Viet Vu, University of Cologne, Division of Mathematics, Department of Mathematics and Computer Science, Weyertal 86-90, 50931, K\"oln.}
		\noindent
		\par\nopagebreak
		\noindent
		\textit{E-mail address}: \texttt{dvu@uni-koeln.de}
}}

\begin{document}

\hyphenpenalty=10000

\sloppy

\date{\today}

\begin{abstract}
    We introduce numerical invariants called hyperbolic indices, which measure the hyperbolicity of compact K\"ahler manifolds using directed positive closed currents. We prove that if a manifold $X$ has positive hyperbolic indices, then $X$ is Kobayashi hyperbolic; and if $X$ satisfies Demailly's condition of negative jet curvature, then it has positive hyperbolic indices. In particular, by combining the method of jet differentials and density currents, we can prove that for a general hypersurface $X_d$ of degree $d$ in $\P^{n+1}$, the hyperbolic indices of $X_d$ grows to $\infty$ with at least linear growth in $d$. Finally, we discuss an analytic approach to the Kobayashi conjecture. 
\end{abstract}

\medskip

\maketitle

\noindent {\bf Classification AMS 2020:} 32Q45, 32U15, 32Q15.

\smallskip

\noindent {\bf Keywords:} quantitative hyperbolicity, directed currents, density currents, jet curvature, jet differentials, Kobayashi conjecture.

\tableofcontents

\section{Introduction}

A complex manifold $X$ is called \textit{Kobayashi hyperbolic} if its Kobayashi pseudo-metric is a metric, and \textit{Brody hyperbolic} if there exist no non-constant entire curves $f:\C\to X$. It is known that Kobayashi hyperbolic implies Brody hyperbolic, and when $X$ is compact, these two notions coincide by Brody's lemma.

Such manifolds are called hyperbolic manifolds and have been studied intensively (see \cite{diverio-survey} for a recent survey). A central problem in complex hyperbolicity is the Kobayashi conjecture, which predicts that a general projective hypersurface $X$ of degree $d$ in $\P^n$ with $d \geq 2n-1$ is hyperbolic. It has been proved in \cite{siu-hyperbolic-idea,siu-hyperbolic-full,brotbek-ihes} that general projective hypersurfaces of degree $d \gg n$ in $\P^n$ are hyperbolic. See also \cite{diverio-merker-rousseau-effective,riedl-yang-grassmannian-technique}. While a number of papers have refined the aforementioned works by providing an effective number $d$ (see \cite{deng-effectivity,berczi-kirwan-non-reductive-hyperbolicity,cadorel-hyperbolicity-viaGG} for results in all dimensions, see \cite{DemaillyElGoul00,paun-dim2-hyperbolicity,diverio-trapani-GGlocus-codim2,HouHuynhMerkerXie26,CuiHouLiuXie26} for results in low dimensions),  the optimal degree in the Kobayashi conjecture has still remained elusive to obtain. A key method employed in these studies has been the use of jet differentials, which reduces the problem to algebraic computations.

The aim of this article is to revisit complex hyperbolicity and the Kobayashi conjecture from a different and more analytic perspective through the theory of positive currents. More precisely, we introduce and study a family of numerical invariants which measure, in a quantitative way, the hyperbolicity of a complex manifold.

One of the main motivations for our theory is Demailly's notion of \textit{algebraic hyperbolicity} for projective manifolds, introduced in \cite{demailly-97-algebraic-hyperbolicity}. Recall that a projective manifold $X$ is algebraically hyperbolic if we have
\begin{equation}\label{eq demailly alg condition}
\hyp_{\alg}(X,\omega)\colonequals \inf_\mathscr{C} \Big\{\frac{-\chi(\widehat{\mathscr{C}})}{\int_\mathscr{C} \omega}\Big\} >0,\end{equation}
where the infimum runs over all irreducible closed curves.
Here, $\widehat{\mathscr{C}}$ is a normalization of $\mathscr{C}$, $\chi(\widehat{\mathscr{C}})$ is the Euler characteristic of $\widehat{\mathscr{C}}$, and $\omega$ is a K\"ahler form on $X$. As shown in \cite{demailly-97-algebraic-hyperbolicity}, Kobayashi hyperbolicity implies this notion.  A version of Kobayashi's conjecture using algebraic hyperbolicity instead of Kobayashi's was proved in \cite{clemens-86-curves-hypersurfaces,ein88-subvarieties-complete-intersect,ein91-subvarieties-complete-intersect-II,voisin-rational-curve-hypersurfaces}, see also \cite[Corollary 2.2.5]{diverio-survey}.

Another motivation comes from Nevanlinna theory. Given a non-constant entire curve $f:\C \to X$ in a compact complex manifold $X$, one can associate to $f$ positive closed currents of bi-dimension $(1,1)$, which are called \textit{Nevanlinna currents} (for an explicit construction, see Section~\ref{sec complex hyperbolicity}). This class of currents has led to the use of pluripotential theory in the study of complex hyperbolicity. However, in general, these currents are not unique and may possess pathological properties (we refer to \cite{Dinh_Vu_flot,HuynhXie-viduAhlfors,xie-imrn,wu-Xie-entirecurves-torus,Chen-Foraess-Xie} for more information on this class of currents). This motivates the need for \textit{softer} objects to make the problem more amenable to analytic techniques.

Motivated by the theory of directed currents in holomorphic dynamics and foliation theory (see \cite{Dinh-suite-laminaire,fornaess-sibony-survet-laminiantion,sibony-pfaff}), we will introduce the notion of \textit{liftable currents} on compact K\"ahler manifolds $X$ that contain both closed curves and Nevanlinna currents on $X$. Recall that the Demailly-Semple jet bundle of $X$ contains a sequence of complex directed manifold $(X_k,V_k)_{k\in \Z_{\geq 0}}$, which can be seen as a natural compactification of jet spaces. Denote by $\pi_{k,0}:X_k\to X$ be the canonical projection. We say a positive closed current $T$ of bi-dimension $(1,1)$ is \textit{$k$-liftable} if there exists a positive closed current $T_{[k]}$ of bi-dimension $(1,1)$ on $X_k$ such that $(\pi_{k,0})_* (T_{[k]}) = T$ and $T_{[k]}$ is directed by $V_k$. $T_{[k]}$ is called a \textit{geometric $k$-lifting} of $T$ if $T_{[k]}$ puts no mass on the singular locus of $X_k$. We refer the reader to Section~\ref{sec demailly-semple jet and liftable currents} for details on lfitable currents. A crucial property is that the class of liftable currents is closed under weak limits of currents. We will consider both liftable positive closed and $dd^c$-closed currents. An advantage of working with liftable $dd^c$-closed currents is that $1$-liftable $dd^c$-closed currents always exist on general compact manifolds (see Proposition~\ref{lemma existence liftable current induction level}).

In order to define the hyperbolic index of a manifold, we will replace the closed curves in the condition~\eqref{eq demailly alg condition} by liftable currents. This allows us to take into account the information from Nevanlinna currents as well. In particular, we can generalize the notion of the  Euler characteristic of curves to liftable currents.

For every liftable current $T$, we denote by ${\chi}_k(T)$ the geometric $k$-Euler characteristic of $T$ (see Section~\ref{sec pluri Euler char} for details). When $T$ is the current of integration over a curve $\mathscr{C}$, $\chi_k(T)$ converges to $\chi(\widehat{\mathscr{C}})$ as $k\to \infty$. However, due to the appearance of the singular locus $X_k^{[\sing]}$ when $k\geq 2$, this geometric notion is not well suited for working with Nevanlinna currents. This difficulty reflects the transcendental feature of entire curves, in contrast with the case of algebraic curves. To bypass this difficulty, we introduce the notion of cohomological $k$-Euler characteristic of liftable current $T$, which is more flexible to work with. This notion is denote by $\overline{\chi}_k(T)$ since it always greater than or equal to $\chi_k$.

The hyperbolic indices of a compact K\"ahler manifold $(X,\omega)$ are defined by 
\[\hyp^{(k)}(X,\omega) \colonequals \inf \Big\{\frac{-{\chi}_k(T)}{\|T\|_\omega}\Big\}, \qquad \chyp^{(k)}(X,\omega) \colonequals \inf \Big\{\frac{-\overline{\chi}_k(T)}{\|T\|_\omega}\Big\},\]
where the infimum runs over all liftable currents. This notion only depends on the cohomology class of $\omega$. The positivity of hyperbolic indices does not depend on the choice of $\omega$. In many cases, we will work with submanifolds of $\P^n$, where the canonical choice of metric is the one induced by the Fubini-Study metric $\omega_{\FS}$. We can also define hyperbolic indices without the reference metric $\omega$.

By the uniformization theorem, if a closed curve $\mathscr{C}$ contains the image of a non-constant entire curve, then $\mathscr{C}$ is a rational or elliptic curve. This implies that the geometric Euler characteristic of $\mathscr{C}$ is non-negative. Moreover, by a result of Duval in \cite{duval-Brody-08} (see also \cite{Duval-singularity,Duval-local-Ahlfors}), a Nevanlinna current only charges rational or elliptic curves. Motivated by this observation, we prove that $\overline{\chi}_\infty$ of a Nevanlinna current is a non-negative number. This implies our first main result (see Theorem~\ref{theorem chyp > 0 implies Kobayashi hyperbolic} and Theorem~\ref{theorem hyp big implies hyperbolic in projective case}).

\begin{theorem}\label{theorem first theorem main}
    If $\chyp^{(k)}(X,\omega) > 0$ for some K\"ahler form $\omega$ and $k\in \Z^+ \cup \{\infty\}$, then $X$ is Kobayashi hyperbolic. Moreover, if $X\subset \P^n$ is a projective manifold such that $\hyp^{(k)}(X,\omega_{\FS}|_X) > 2\cdot (3^{k-1}-1)$, then $X$ is Kobayashi hyperbolic.
\end{theorem}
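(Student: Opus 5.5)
We argue by contraposition in both parts. Suppose $X$ is not Kobayashi hyperbolic. Since $X$ is compact, Brody's lemma gives a non-constant entire curve $f:\C\to X$. The Ahlfors--Nevanlinna construction of Section~\ref{sec complex hyperbolicity} then produces a Nevanlinna current $T=\lim_j \frac{1}{A(r_j)}f_*[\D_{r_j}]$ along a suitable sequence $r_j\to\infty$. It is positive and closed of bi-dimension $(1,1)$, with $\|T\|_\omega=1$.

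The first step is to show that $T$ is $k$-liftable for every $k$. The curve $f$ lifts canonically to $f_{[k]}:\C\to X_k$, tangent to $V_k$. We apply the same construction to $f_{[k]}$ with the same radii $r_j$, normalized by the area of $f_{[k]}(\D_r)$ measured in a reference metric on $X_k$. The key input is the logarithmic derivative lemma: the characteristic function of $f_{[k]}$ with respect to the relative tautological bundles $\mathcal O_{X_k}(1)$ is $O(\log T_f(r))$ outside an exceptional set. Hence, after adjusting $r_j$, the limit $T_{[k]}$ is closed, directed by $V_k$, and satisfies $(\pi_{k,0})_*T_{[k]}=T$.

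The second step is the key estimate $\overline{\chi}_k(T)\ge 0$, including $k=\infty$. By construction, $\overline{\chi}_k(T)$ pairs $T_{[k]}$ with cohomology classes such as $c_1(\mathcal O_{X_k}(-1))$ and its correction terms. For the current $f_{[k]*}[\D_r]$, these pairings are computed by a Poincar\'e--Lelong / Jensen formula applied to $\log|f^{(k)}|$-type functions. This gives the negative of a counting function of zeros of derivatives plus the error term $O(\log T_f(r))$. Dividing by $A(r_j)$ and passing to the limit, the counting term contributes with the correct sign, since the tautological section $f'_{[k-1]}$ is holomorphic, and the error term vanishes. This yields $-\overline{\chi}_k(T)\le 0$. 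Therefore $\chyp^{(k)}(X,\omega)\le -\overline{\chi}_k(T)/\|T\|_\omega\le 0$, which proves the first assertion. I expect this step to be the main obstacle: the choice of radii must make the liftings and the error terms compatible at every level $k$ simultaneously. For $k=\infty$, one also needs a diagonal argument together with monotonicity of $\overline{\chi}_k$ in $k$.

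For the projective statement, the difficulty is that $\chi_k$ only sees geometric liftings, while $T_{[k]}$ may charge $X_k^{[\sing]}$. I would decompose $T_{[k]}=T'+T''$, with $T''$ supported on the singular locus, and push $T''$ down to lower levels using the structure of $X_k^{[\sing]}$ as a union of images of lower jet levels. Each time mass sits on the singular locus, the difference between $\chi_k$ and $\overline{\chi}_k$ is bounded by the mass times the degree of the corrective divisor. Iterating over levels, these degrees should grow like $3^{j}$ with respect to $\omega_{\FS}$, using $\mathcal O_{X_k}(1)$ bounds in terms of $\pi^*\mathcal O(1)$ on $\P^n$. Summing over levels gives $\chi_k(T)\ge \overline{\chi}_k(T)-2(3^{k-1}-1)\|T\|_{\omega_{\FS}}$. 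Combined with $\overline{\chi}_k(T)\ge0$, this yields $\hyp^{(k)}\le 2(3^{k-1}-1)$, contradicting the hypothesis, provided the constant bookkeeping matches exactly.
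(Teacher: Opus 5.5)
Your skeleton matches the paper's: a Brody curve, a Nevanlinna current lifted to $X_k$ with $\overline{\chi}_k\ge 0$, then a comparison of $\chi_k$ with $\overline{\chi}_k$ on projective $X$. Your Jensen computation for the $u_k$-pairing is essentially McQuillan's tautological inequality (Lemma~\ref{lemma limit implies tautological inequality}), exactly as the paper uses it. But three steps in the first part are missing or fail as written.

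First, $\overline{\chi}_k(T)$ is a supremum over \emph{cohomological} liftings only. So $\{T_{[k]}\}\cdot u_k\le 0$ proves nothing until you check $\{T_{[k]}\}\cdot\pi_{k,j}^*\{D_j\}\ge 0$ for $2\le j\le k$, and you never do. This cannot be skipped: for an arbitrary almost lifting, adding mass on $D_j$ lowers the $u_k$-pairing without bound (Remark~\ref{remark about definition of Euler char}). The paper gets these inequalities from Nevanlinna's first main theorem (Lemma~\ref{lemma Nevanlinna first main theorem}) applied to $f_{[j]}$ and $D_j$, using $f_{[j]}(\C)\not\subset D_j$. It then transports them to level $k$ via $(\pi_{k,j})_*S_{[k]}=S_{[j]}$.

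Second, that compatibility and the mass bounds on the lifts require the log-averaged currents $\frac{1}{T_f(r_m,\omega)}\int_1^{r_m}\frac{dt}{t}(f_{[l]})_*[\D_t]$, with the same normalization $T_f(r_m,\omega)$ at every level $l\le k$. The radii $r_m$ must satisfy Ahlfors' lemma for each $f_{[l]}$ and $T_{f_{[l]}}(r_m,\omega_{[l]})\le C\,T_{f_{[l-1]}}(r_m,\omega_{[l-1]})$. Your $\frac{1}{A(r)}f_*[\D_r]$ is an Ahlfors current, not a Nevanlinna current. Normalizing $f_{[k]}$ by its own area only gives $(\pi_{k,0})_*T_{[k]}=cT$ with $c=\lim A_0(r_j)/A_k(r_j)$, which may be $0$. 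The logarithmic derivative lemma controls log-averaged characteristic functions, not areas at given radii.

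Third, the error term is $O(\log T_f(r)+\log r)$, which is negligible only for transcendental $f$. If $T_f(r,\omega)=O(\log r)$, you must treat $f$ separately as a rational curve $\mathscr{C}$ (Lemma~\ref{lemma Demailly transcendental}) and use $-\overline{\chi}_k([\mathscr{C}])\le-\chi_\infty(\mathscr{C})=-2$. No diagonal argument is needed for $k=\infty$, since $\chyp^{(\infty)}=\sup_k\chyp^{(k)}$.

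For the projective statement, decomposing along $X_k^{[\sing]}$ is the right start, but the estimate that makes it work is absent. Let $T^j$ be the part of an optimal cohomological lifting carried by $\pi_{k,j}^{-1}(D_j)$ and not by the higher $D_p$. You need a \emph{lower} bound for $\{T^j\}\cdot u_k$, because this pairing can be negative.

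In the paper, $S^j=(\pi_{k,j-1})_*T^j$ is vertical, i.e. $(\pi_{j-1,j-2})_*S^j=0$, and $T^j$ is a geometric lifting of $S^j$ in the tower over $X_{j-1}$. Embedding $X_{j-1}$ by a very ample $L'_{j-1}$ and applying Proposition~\ref{prop lower bound in Pn} (degeneration by automorphisms of $\P^N$ to a line) gives $\{T^j\}\cdot u_k\ge-2\{S^j\}\cdot c_1(L'_{j-1})=-2\{S^j\}\cdot u_{j-1}$, where the equality uses verticality. The vertical mass is then controlled by the nef bundle $L_{j-1}$ of Proposition~\ref{prop construct nef line bundle on X_k}: $\{S^j\}\cdot u_{j-1}\le\{(\pi_{k,j-1})_*T_{[k]}\}\cdot c_1(L_{j-1})\le 3^{j-2}\big(-\overline{\chi}_k(T)+2\big)$. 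Here the cohomological inequalities bound each $u_i$-pairing by $-\overline{\chi}_k(T)$.

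Summing over $j$ gives $-\chi_k(T)\le-3^{k-1}\overline{\chi}_k(T)+2(3^{k-1}-1)$ when $\|T\|_{\omega_{\FS}}=1$. Combined with $\overline{\chi}_k(T)\ge0$, this yields the threshold. So the factor $2$ is the Euler characteristic of a line, not a divisor degree. The mass that matters is the vertical $u_{j-1}$-mass, not $\|T\|_{\omega_{\FS}}$. Your ``mass times the degree of the corrective divisor'' does not supply this step.
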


 An important feature of hyperbolic indices is that they admit a monotonicity property: every submanifold $Y$ of $X$ has bigger hyperbolic indices . This reflects the quantitative aspect of these invariants.

It was proved by Demailly in \cite{demailly-97-algebraic-hyperbolicity} that if $X$ admits a negative jet curvature $k$-jet metric for some $k$, then $X$ is Kobayashi hyperbolic. This curvature condition, in fact, also implies the positivity of hyperbolic indices (see Theorem~\ref{theorem negative jet curvature implies pluripotential hyperbolic}).

\begin{theorem}\label{theorem second theorem main}
     If $X$ admits a negative jet curvature $k$-jet metric for some $k\in \Z^+$, then $\hyp^{(\infty)}(X,\omega) > 0$ for every K\"ahler form $\omega$ on $X$.
\end{theorem}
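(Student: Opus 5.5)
We plan to follow Demailly's Ahlfors–Schwarz argument from \cite{demailly-97-algebraic-hyperbolicity}, applied to directed currents on $X_k$ instead of to individual curves. Recall that a $k$-jet metric $h_k$ with negative jet curvature is a (possibly singular) hermitian metric on the tautological line bundle $\mathcal{O}_{X_k}(-1)$ over $X_k$. Its negativity means the following. The curvature form $\Theta_{h_k^{-1}}(\mathcal{O}_{X_k}(1))$, restricted to the directions $V_k$, satisfies
\[
\Theta_{h_k^{-1}}(\mathcal{O}_{X_k}(1))|_{V_k} \ \geq\ \varepsilon\, \pi_{k,0}^*\omega|_{V_k}
\]
on $X_k$, in the sense of currents, for some $\varepsilon>0$. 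Our goal is to show $-\chi_\infty(T)\geq \varepsilon'\|T\|_\omega$ for every liftable current $T$, with $\varepsilon'>0$ independent of $T$.

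\textbf{Step 1: express $\chi_k(T)$ as a tautological intersection.} For a $k$-liftable $T$ with geometric lifting $T_{[k]}$, we expect the definition of $\chi_k$ in Section~\ref{sec pluri Euler char} to give
\[
-\chi_k(T)=\langle T_{[k]}, c_1(\mathcal{O}_{X_k}(1))\rangle
\]
up to correction terms coming from the lower levels. For a curve $\mathscr{C}$, this is the identity $\deg T_{\widehat{\mathscr{C}}}^{-1} = -\chi(\widehat{\mathscr{C}})$ obtained by lifting $\mathscr{C}$ to $X_k$. Since $\chi_\infty$ is the limit (or infimum) of the $\chi_j$, and the negativity at level $k$ pulls back to levels $j\geq k$ via $\pi_{j,k}$, it suffices to work at a fixed level $j\ge k$.

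\textbf{Step 2: integrate the curvature inequality.} Because $T_{[j]}$ is directed by $V_j$, pairing it with a $(1,1)$-form only sees the restriction of that form to $V_j$. We therefore want
\[
\langle T_{[j]}, \Theta_{h^{-1}}\rangle \ \geq\ \varepsilon \langle T_{[j]}, \pi_{j,0}^*\omega\rangle = \varepsilon\|T\|_\omega .
\]
Next we replace the singular curvature form by its cohomology class, using that $T_{[j]}$ is closed: write $\Theta_{h^{-1}}=\theta+dd^c\varphi$ with $\theta$ smooth, so that $\langle T_{[j]},dd^c\varphi\rangle=0$. When $\varphi$ is singular this pairing may be undefined, and this is where the geometric hypothesis (no mass on $X_j^{[\sing]}$) will be used.

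\textbf{Main obstacle.} The main difficulty is the locus where $h_k$ degenerates: the singular set of $X_k$ together with the locus $\{h_k=0\}$ or $\{h_k=\infty\}$. On this locus the inequality in Step 2 holds only in a weak sense. We would first try to regularize $\varphi$ by Demailly's method into $\varphi_\delta$ decreasing to $\varphi$, with $dd^c\varphi_\delta\ge -\delta\,\pi_{j,0}^*\omega$ on the directed part. We would then pass to the limit using that $T_{[j]}$ does not charge pluripolar sets on which $h$ is infinite. This last point can be arranged by Siu decomposition: split $T_{[j]}$ into its part carried on the analytic set $\{\varphi=-\infty\}$ plus a diffuse part, and handle the analytic-set part by induction on dimension, restricting to the subvariety and using lower-level jets. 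In this way the constant $\varepsilon$ survives in the limit, which gives $\hyp^{(\infty)}(X,\omega)\ge\varepsilon>0$.

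Finally, independence of the choice of $\omega$ is immediate, since any two Kähler forms on a compact manifold are mutually bounded and so change $\|T\|_\omega$ by at most a bounded factor.
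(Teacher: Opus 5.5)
The gap is in how you treat the degeneration set $\Sigma_k$ of $h_k$. The statement needs, and the paper's proof in Section~\ref{sec negative jet curv} uses, Demailly's standing hypothesis $\Sigma_k\subset X_k^{[\sing]}$. Your proposal never invokes it. Instead you try to control the mass of $T_{[j]}$ on $\{\varphi=-\infty\}$ by a Siu decomposition and an induction ``restricting to the subvariety and using lower-level jets''. That induction has nothing to work with, since $h_k$ carries no curvature information on its polar set.

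In fact, without the hypothesis the statement is false. If $\mathcal{O}_{X_1}(1)$ is big, it carries a singular metric whose curvature current is a K\"ahler current, i.e.\ a singular $1$-jet metric of negative jet curvature. By Bogomolov this happens on every surface of general type with $c_1^2>c_2$, for instance a ball quotient blown up at a point. Such a surface contains a smooth rational curve $E$, so $\hyp^{(\infty)}(X,\omega)\le -2/\|[E]\|_\omega<0$; Demailly's theorem forces $E_{[1]}\subset\Sigma_1$.

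Two of your intermediate claims fail for the same reason. First, closed currents such as $[E_{[1]}]$ do charge pluripolar sets. Moreover, the Siu decomposition of $T_{[j]}$ is governed by its own Lelong numbers, not by the polar set of $\varphi$. Second, no regularization $\varphi_\delta$ that is smooth on all of $X_k$ can preserve the directional lower bound along $V_k$. It would give $\{T_{[k]}\}\cdot u_k\ge 0$ for every directed closed current, which already fails for $[E_{[1]}]$ since $\{E_{[1]}\}\cdot u_1=-2$. Demailly's regularization loses positivity at points of positive Lelong number, and that loss has to be absorbed by a no-mass condition.

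This is exactly what the hypothesis supplies. With $D=X_k^{[\sing]}\supset\Sigma_k$, every geometric lifting $T_{[k]}$ puts no mass on $\Sigma_k$. The paper (Lemma~\ref{lemma direct current has positive product with good current}) obtains positivity of $T_{[k]}\wedge S$ on compact subsets of $X_k\setminus D$ by adding $C_2\sum_j i\alpha_j\wedge\overline{\alpha_j}$, which $T_{[k]}$ annihilates. It then passes to cohomology with Lemma~\ref{lemma puts no mass implies positive cup product}. The density current of $T_{[k]}$ and $\theta+dd^c\varphi$ is a positive measure of mass $\{T_{[k]}\}\cdot\{\theta\}$ that agrees with $T_{[k]}\wedge(\theta+dd^c\varphi)$ off $D$. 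Hence $\{T_{[k]}\}\cdot u_k\ge\int_{X_k\setminus D}T_{[k]}\wedge S$.

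With the hypothesis in place, your regularization idea is a legitimate substitute for density currents, and no induction is needed. Lelong numbers vanish off $D$, so Demailly's $\varphi_\delta$ satisfy $\theta+dd^c\varphi_\delta\ge-\epsilon_\delta\omega_{[k]}$ on $X_k\setminus D$. Since $T_{[k]}$ does not charge $D$, $\{T_{[k]}\}\cdot\{\theta\}=\int_{X_k\setminus D}T_{[k]}\wedge(\theta+dd^c\varphi_\delta)$. Monotone convergence and lower semicontinuity of mass on open sets then give the same inequality.

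Your Step~2 pointwise bound off $D$ (with $\varphi$ smooth there, as the paper also assumes) then yields the uniform estimate $-\chi_k(T)\ge\varepsilon\|T\|_\omega$. Finally $-\chi_\infty\ge-\chi_k$ by monotonicity. There are no ``correction terms'' in Step~1: $-\chi_k(T)$ is exactly the infimum of $\{T_{[k]}\}\cdot u_k$ over geometric liftings.
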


Motivated by a conjecture by Demailly, we expect that the positivity of hyperbolic indices is equivalent to Kobayashi hyperbolicity (see Conjecture~\ref{conj hyp > 0 equiv Kobayashi}).

We now discuss the hyperbolic indices of projective manifolds, which are related to the Kobayashi conjecture. Since the Euler characteristic of liftable currents is invariant under automorphisms (Lemma~\ref{prop liftable preserved}), we can use the ``moving by automorphisms" technique to compute the hyperbolic indices in some special cases. In particular, we prove that $\hyp^{(k)}(\P^n,\omega_{\FS}) = -2$ for all $k$ (Proposition~\ref{prop hyp of Pn}). This implies that $\hyp^{(k)}(X,\omega_{\FS}|_X) \geq -2$ for any submanifold $X$ of $\P^n$. We will see later that the traditional algebraic method of jet differentials not only implies Kobayashi hyperbolicity but also the positivity of hyperbolic indices. This is illustrated by the following transcendental version of the fundamental vanishing theorem for entire curves (see Theorem~\ref{theorem alpha_km positive implies bound on -chi_k} for a more complete version).

 \begin{theorem}\label{theorem transcendental fundamental vanishing theorem (light version)} Let $X$ be a compact K\"ahler manifold. Fix a K\"ahler class $\alpha$ on $X$.  Suppose there exist $k\in \Z^+$ and $ m \in \R^+$ such that the cohomology class
 \[c_1(\mathcal{O}_{X_k}(m)) - \pi_{k,0}^*(\alpha) \in H^{1,1}(X_k,\R) \]
 is pseudoeffective. Let $S$ be a positive closed $(1,1)$-current in this class and $Z$ is the set of all points $x\in X_k$ such that the Lelong number of $S$ at $x$ is positive. Let $T$ be a liftable current such that $\overline{\chi}_k(T) \geq 0$. Then there exists a $k$-lifting $T_{[k]}$ of $T$ such that $T_{[k]}$ has positive mass on $Z$. Moreover, if the cohomology class $\{T\}$ is movable, we only need $\alpha$ to be a big class.
 \end{theorem}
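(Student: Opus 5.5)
We argue by contradiction: assume that every $k$-lifting $T_{[k]}$ of $T$ puts no mass on $Z$, and derive that $\overline{\chi}_k(T)<0$. The argument compares the pairing of a lifting with $c_1(\mathcal{O}_{X_k}(-1))$, which is what $\overline{\chi}_k$ records cohomologically, with the pairing against the class $c_1(\mathcal{O}_{X_k}(m)) - \pi_{k,0}^*\alpha$.

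\textbf{Step 1: choose a lifting realizing $\overline{\chi}_k$.} We take the cohomological definition of $\overline{\chi}_k(T)$: it should be an infimum, or a supremum, over $k$-liftings $T_{[k]}$ of a quantity like $\langle \{T_{[k]}\}, c_1(\mathcal{O}_{X_k}(-1))\rangle$, possibly with correction terms from the lower levels. Since liftable currents are closed under weak limits and the set of liftings has bounded mass (fixed by $\|T\|_\omega$ and the mass of the pushforward), the extremum is attained by some $T_{[k]}$. The hypothesis $\overline{\chi}_k(T)\ge 0$ then gives
\[
\int_{X_k} c_1(\mathcal{O}_{X_k}(1))\wedge T_{[k]} \le 0 .
\]
If $\overline{\chi}_k$ also contains lower-level terms, we first show they are nonpositive contributions. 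This uses the recursive structure $\mathcal{O}_{X_k}(1)$ versus $\pi^*\mathcal{O}_{X_{k-1}}(1)\otimes\mathcal{O}(-D_k)$ together with the fact that the liftings are directed by $V_k$.

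\textbf{Step 2: pair with $S$.} Suppose $T_{[k]}$ has no mass on $Z$. Then we want to define $S\wedge T_{[k]}$ as a positive measure whose total mass equals the cohomological pairing $\{S\}\cdot\{T_{[k]}\}$. The standard route uses the fact that $T_{[k]}$ has bidimension $(1,1)$:
\begin{itemize}
\item Write $S=\theta+dd^c u$ with $\theta$ smooth.
\item Away from $Z$ the potential $u$ has zero Lelong numbers.
\item Use Demailly regularization $u_\epsilon\searrow u$ with $\theta+dd^cu_\epsilon\ge -\epsilon\omega_k$.
\end{itemize}
Then $\int(\theta+dd^cu_\epsilon)\wedge T_{[k]} = \{S\}\cdot\{T_{[k]}\}$ for every $\epsilon$, and each integrand is bounded below by $-\epsilon\,\omega_k\wedge T_{[k]}$. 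Letting $\epsilon\to0$ yields
\[
\{S\}\cdot\{T_{[k]}\}\ \ge\ 0 .
\]
Positivity of the class alone does not suffice here, because $T_{[k]}$ is not known to be movable on $X_k$. This is exactly where the no-mass-on-$Z$ hypothesis enters: it makes the limiting measure well defined and positive, via a Siu-decomposition argument on $S$ or via density currents of $S$ and $T_{[k]}$.

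\textbf{Step 3: conclude.} Expanding the pairing gives
\[
0 \le m\int c_1(\mathcal{O}_{X_k}(1))\wedge T_{[k]} - \int \alpha\wedge (\pi_{k,0})_*T_{[k]} \le -\int \alpha\wedge T .
\]
The right-hand side is strictly negative, since $\alpha$ is Kähler and $T\ne0$. This is a contradiction.

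\textbf{Movable case.} When $\{T\}$ is movable and $\alpha$ is only big, we cannot use Kählerness of $\alpha$ in the last inequality. Instead we invoke the duality of Boucksom--Demailly--Păun--Peternell: a big class pairs strictly positively with any nonzero movable class, so $\alpha\cdot\{T\}>0$ still holds.

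\textbf{Main obstacle.} The hard step is Step 2, the positivity of $\{S\}\cdot\{T_{[k]}\}$ when $T_{[k]}$ has no mass on the Lelong set $Z$. My first attempt would be a density-current or tangent-current argument: show that the limit measure of $(\theta+dd^cu_\epsilon)\wedge T_{[k]}$ can only lose positivity on $\{\nu(S,\cdot)>0\}=Z$. The tool is the estimate that $\nu(u,x)=0$ forces $dd^cu_\epsilon\wedge T_{[k]}$ to have no negative part concentrating near $x$. A second concern is that the singular locus of $X_k$ must be handled, since liftings may charge it. Because $X_k$ is itself smooth, though, only the directedness needs care, not the intersection theory.
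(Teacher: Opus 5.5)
Your architecture is the same as the paper's proof of part (i). You take a cohomological $k$-lifting with $-\{T_{[k]}\}\cdot u_k$ at (or near) $\overline{\chi}_k(T)\ge 0$, show $\{T_{[k]}\}\cdot\{S\}\ge 0$ if it puts no mass on $Z$, and expand this to $m\{T_{[k]}\}\cdot u_k\ge\{T\}\cdot\alpha>0$.

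\textbf{The gap is Step 2, the only nontrivial step, and as written it is false.} Smooth representatives $\theta+dd^cu_\epsilon\ge-\epsilon\,\omega_{[k]}$ with a uniform loss exist only if $\{S\}$ is nef. If they existed, your argument would give $\{S\}\cdot\{R\}\ge 0$ for every positive closed $R$ of bi-dimension $(1,1)$, without ever using that $T_{[k]}$ puts no mass on $Z$. That fails for merely pseudoeffective classes, which is all that is assumed for $\Psi^\alpha_{k,m}$. For instance, take $S=R=[E]$ with $E$ the exceptional curve of the blow-up of $\P^2$ at a point, where $\{E\}^2=-1$. Your ``main obstacle'' paragraph senses this, but it only gestures at the repair.

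\textbf{How to close Step 2.} The paper does it by citing Lemma~\ref{lemma puts no mass implies positive cup product}. Since $T_{[k]}$ puts no mass on $\{\nu(S,\cdot)>0\}$, the density h-dimension of $T_{[k]}$ and $S$ is minimal. The density current is then $\pi_2^*(S_\infty)$ for a positive measure $S_\infty$, and $\{T_{[k]}\}\cdot\{S\}=\|S_\infty\|\ge 0$.

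Alternatively, your regularization idea works once Demailly's theorem is used in its precise form. There are smooth closed forms $S_\epsilon\in\{S\}$ with $S_\epsilon\ge-(C\lambda_\epsilon+\delta_\epsilon)\,\omega_{[k]}$, where:
\begin{itemize}
\item[(a)] $\lambda_\epsilon$ are continuous functions decreasing pointwise to $\nu(S,\cdot)$;
\item[(b)] $\delta_\epsilon\to 0$;
\item[(c)] $C$ depends only on $(X_k,\omega_{[k]})$.
\end{itemize}
With $\sigma\colonequals T_{[k]}\wedge\omega_{[k]}$, dominated convergence gives
\[
\{S\}\cdot\{T_{[k]}\}=\int_{X_k}S_\epsilon\wedge T_{[k]}\ \ge\ -\int_{X_k}(C\lambda_\epsilon+\delta_\epsilon)\,d\sigma\ \longrightarrow\ -C\int_{Z}\nu(S,\cdot)\,d\sigma=0,
\]
since $\nu(S,\cdot)$ vanishes off $Z$ and $\sigma(Z)=0$. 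This makes your ``positivity is lost only on $Z$'' heuristic rigorous. It is a more elementary substitute for the density-current lemma, and it works verbatim for $dd^c$-closed $T_{[k]}$.

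\textbf{Two smaller points.}

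First, Step 1. Here $\overline{\chi}_k(T)$ is simply $\sup\{-\{T_{[k]}\}\cdot u_k\}$ over cohomological $k$-liftings, with no lower-level terms. Liftings do not have mass bounded in terms of $\|T\|_\omega$: you can add large multiples of a line in a fibre of $\pi_{k,k-1}$, which is directed by $V_k$ and pushes forward to $0$. Mass is controlled only once $\{T_{[k]}\}\cdot u_k$ is bounded above, as in Proposition~\ref{prop pick good k-lifing}. In any case attainment is unnecessary: any cohomological lifting with $\{T_{[k]}\}\cdot u_k<m^{-1}\{T\}\cdot\alpha$ suffices, and this is how the paper argues.

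Second, the movable case. Your strict inequality $\alpha\cdot\{T\}>0$ is exactly what is needed. The paper only records $\{T\}\cdot\alpha\ge 0$, which alone gives $\overline{\chi}_k(T)\le 0$ rather than a contradiction. The strict inequality follows without BDPP duality. A big class contains a K\"ahler current $R\ge\epsilon\omega$, so $\alpha-\epsilon\{\omega\}$ is pseudoeffective. Since movable classes pair nonnegatively with pseudoeffective classes, $\alpha\cdot\{T\}\ge\epsilon\|T\|_\omega>0$.
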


We note that in the case of complex projective surfaces, the cohomology class of the Nevanlinna current of a transcendental entire curve is a nef (and therefore movable) class. Thus, this theorem may be useful in the study of entire curves on complex surfaces of general type. The technical core of these results is the theory of density currents introduced by the first named author and Sibony in \cite{Dinh_Sibony_density}. We underline that this theorem is true without the directed structure, which motivates the search for more delicate algebraic obstructions (see Remark~\ref{remark search for more algebraic obstruction}).

By employing previous results on the Kobayashi conjecture and the Green-Griffiths conjecture, we can prove effective estimates for hyperbolic indices of general hypersurfaces in $\P^{n+1}$ (compare with the Kobayashi conjecture).

 \begin{theorem}\label{theorem quantitative Kobayashi conjecture}
     Let $X_d$ be a general hypersurface of degree $d$ in $\P^{n+1}$. Then, there exist effective positive functions $\delta(n)$ and $\lambda(n)$ such that
     \[\hyp^{(n)}(X_d,\omega_{\FS}|_{X_d}) \geq \chyp^{(n)}(X_d,\omega_{\FS}|_{X_d}) \geq \delta(n) d - \lambda(n) \text{ for every }d.\]
 \end{theorem}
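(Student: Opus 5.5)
The first inequality $\hyp^{(n)}\geq \chyp^{(n)}$ is formal. We have $\overline{\chi}_k(T)\geq \chi_k(T)$ for every liftable current, so $-\chi_k(T)\geq -\overline{\chi}_k(T)$. Taking the infimum over liftable currents $T$ normalized by $\|T\|_\omega$ gives the comparison of the two indices. The real content is the lower bound on $\chyp^{(n)}$, and I would derive it from the complete version of Theorem~\ref{theorem transcendental fundamental vanishing theorem (light version)}, namely Theorem~\ref{theorem alpha_km positive implies bound on -chi_k}. That result should convert pseudoeffectivity of a twisted class $c_1(\mathcal{O}_{X_k}(m))-\pi_{k,0}^*(\alpha)$ into a quantitative bound of the form $-\overline{\chi}_k(T)\geq c\,\{\alpha\}\cdot\{T\}/m$ for liftable $T$ not carried by the base locus $Z$. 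It should also handle the currents charging $Z$ separately.

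\textbf{Step 1 (algebraic input).} I would take $k=n$ and invoke the effective existence of global $n$-jet differentials on general hypersurfaces. These are the results of Diverio--Merker--Rousseau, Brotbek, Deng and Demailly used for the Green--Griffiths/Kobayashi conjectures. They give, for $d\geq d_0(n)$ effective, that $\mathcal{O}_{X_n}(m)\otimes\pi_{n,0}^*\mathcal{O}(-a)$ has sections for some weighted $m$ and some $a$ with $a/m$ growing at least linearly in $d$. Concretely, by Riemann--Roch/Morse inequalities one gets sections of $E_{n,m}T^*_X\otimes\mathcal{O}(-\epsilon(n) m d + C(n)m)$. Also, for general $X_d$, the base locus $Z$ of these sections projects, via Siu's slanted vector fields and Brotbek's construction, into a proper subset. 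In fact the base locus in $X_n$ is contained in $X_n^{[\sing]}$ plus a set whose image contains no entire curves or curves of low genus. Taking $S$ to be the current $\frac{1}{m}dd^c\log\sum|\sigma_j|^2$-type current in the class $c_1(\mathcal{O}_{X_n}(m))-\pi_{n,0}^*(a\,\omega_{\FS})$ gives the pseudoeffectivity hypothesis with $\alpha=a\{\omega_{\FS}\}$.

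\textbf{Step 2 (transfer to currents).} For a liftable $T$ with $\|T\|=1$, choose a lifting $T_{[n]}$ and pair with $S$. I would use density currents to make the intersection $S\wedge T_{[n]}$ meaningful, and show that the directedness gives $\{T_{[n]}\}\cdot c_1(\mathcal{O}_{X_n}(1))\leq\overline{\chi}_n(T)$ up to sign conventions; this is where the cohomological Euler characteristic enters. Then $m\cdot(-\overline{\chi}_n(T))\geq a\|T\|$ plus the Lelong contribution, which is nonnegative when $T_{[n]}$ gives no mass to $Z$. This yields $-\overline{\chi}_n(T)\geq a/m\geq \delta(n)d-\lambda'(n)$.

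\textbf{Step 3 (currents on $Z$, small $d$).} For liftings charging $Z$, I would decompose $T_{[n]}$ into the part on $Z$ and the part off $Z$ via Siu's decomposition. The part on $Z$ is handled by induction on dimension. I would restrict to the components of $\pi_{n,0}(Z)$ and use the monotonicity of hyperbolic indices under submanifolds, together with the algebraic hyperbolicity bounds of Clemens, Ein and Voisin, which give genus $\geq$ linear in $d$ for curves. For $d<d_0(n)$ nothing is needed: the bound $\chyp\geq-2$ from Proposition~\ref{prop hyp of Pn} and monotonicity lets us choose $\lambda(n)$ large so that $\delta(n)d-\lambda(n)\leq -2$ there.

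I expect Step 3, the control of currents charging the base locus $Z$ with a uniform constant, to be the main obstacle. Step 2's sign and normalization bookkeeping is routine once Theorem~\ref{theorem alpha_km positive implies bound on -chi_k} is available.
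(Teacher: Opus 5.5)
Your first inequality and the small-$d$ device (absorbing finitely many degrees into $\lambda(n)$ via $\chyp^{(n)}(X,\omega_{\FS}|_X)\ge -2$ from Proposition~\ref{prop hyp of Pn}) are fine. The gap is in Steps 2--3, where you bound $\chyp^{(n)}$ directly through cohomological liftings.

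What you must bound from below is $\{T_{[n]}\}\cdot u_n$ for \emph{every} cohomological lifting, equivalently for the optimal one of Proposition~\ref{prop pick good k-lifing}. Nothing prevents that lifting from charging $Z$ through pieces that push forward to $0$ on $X$. These can be vertical currents over $\pi_{n,0}(Z)$, or currents inside $X_n^{[\sing]}$. In your own description $Z$ is controlled only up to $X_n^{[\sing]}$, whose components project onto all of $X$. Such pieces can have negative $u_n$-degree: Remark~\ref{remark about definition of Euler char} exhibits a vertical rational curve in $D_2$ of $u_2$-degree $-2$. They are invisible to any hyperbolicity property of $\pi_{n,0}(Z)\subset X$. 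Discarding them also need not preserve \eqref{eq define regular k lift}, so the remainder need not be a cohomological lifting.

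Your Step~3 tools do not repair this. Proposition~\ref{prop basic property of hyp}(vii) only bounds a submanifold $Y$ in terms of $X$, i.e.\ in terms of the quantity you are estimating. Moreover $\pi_{n,0}(Z)$ is neither smooth nor a general hypersurface, so no induction hypothesis applies. The Clemens--Ein--Voisin genus bounds control only $\chi_\infty$ of closed curves. By Proposition~\ref{prop increasing of chi_k to chi_infty for curve} one has $-\chi_n(\mathscr{C})\le -\chi_\infty(\mathscr{C})$, which is the wrong direction. They also say nothing about non-algebraic liftable currents carried by a positive-dimensional $\pi_{n,0}(Z)$.

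The paper avoids all this in two moves. First, it uses Riedl--Yang (Theorem~\ref{theorem pass from GGL to Kobayashi}) so that $A:=\pi_{n,0}(Z)$ is finite. Every liftable $T$ then gives no mass to $A$, and no induction is needed.

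Second, it works with \emph{geometric} liftings. For those, the part over $A$ pushes forward to $0$ and gives no mass to $X_n^{[\sing]}$. Iterating $u_j=\pi_{j,j-1}^*u_{j-1}+\{D_j\}$ with Lemma~\ref{lemma puts no mass implies positive cup product}, and ending with $u_1=\{\omega_{[1]}\}-C_{[1]}\pi_{1,0}^*\{\omega\}$, shows this part has $u_n$-degree $\ge 0$. The remaining part avoids $Z$ and pairs nonnegatively with $S$. This is Theorem~\ref{theorem alpha_km positive implies bound on -chi_k}(ii), which yields $\hyp^{(n)}(X_d,\omega_{\FS}|_{X_d})\ge \delta(d-n-2)-(5n+3)$.

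Only then does the paper pass to $\chyp^{(n)}$, via Theorem~\ref{theorem compare geometric and cohomological hyp}. That comparison controls exactly the singular-locus mass you could not control. It uses projectivity: Proposition~\ref{prop lower bound in Pn} on very ample embeddings of the $X_{j-1}$, and the nef bundles of Proposition~\ref{prop construct nef line bundle on X_k}. The cost is a factor $3^{-(n-1)}$ and an additive constant, which still leaves linear growth in $d$.
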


Finally, motivated by Siu's semicontinuity theorem \cite{Siu}, we propose an analytic approach to the Kobayashi conjecture. The first step involves developing an analogous theory in the logarithmic setting. Next, one could employ deformation methods to construct examples of low-degree hypersurfaces with positive hyperbolic indices. The conjecture would then follow, provided that a semicontinuity theorem in the (countable) Zariski topology for the hyperbolic indices can be established. We expect the analytical softness of liftable currents to play a crucial role in this strategy. See Section~\ref{sec open problem} for further details.

The paper is organized as follows. In Section~\ref{sec complex hyperbolicity}, we recall basic results in complex hyperbolicity and Nevanlinna theory, followed by the necessary background on currents in Section~\ref{sec background currents}. Section~\ref{sec demailly-semple jet and liftable currents} reviews Demailly-Semple jet bundles and employs them to construct liftable currents. In Section~\ref{sec pluri Euler char}, we define the geometric Euler characteristic for liftable currents and establish the basic properties of this notion. The notion of hyperbolic indices will be introduced and studied in Section~\ref{sec hyperbolic indices introduce and basic}. The relationship between these hyperbolic indices and Kobayashi hyperbolicity is explored in Section~\ref{sec relation with Kobayashi hyperbolic}. In Section~\ref{sec quantitative Kobayashi conjecture}, we apply the method of jet differentials to prove our main theorems regarding the hyperbolic indices of general hypersurfaces. Finally, Section~\ref{sec open problem} concludes the paper with a discussion of open problems and further developments.

\hspace{1cm}

\noindent \textbf{Acknowledgment.} T.-C. D has received funding from the National University of Singapore and MOE of Singapore through the
grants A-8002488-00-00 and A-8003576-00-00. D.-B. N is supported by the Singapore International Graduate Award (SINGA) and the Overseas Research Immersion Award (ORIA). D.-V. V is supported by the DFG grant VU 126/1-1. Part of this work was carried out during the visit of the second-named author to the Department of Mathematics and Computer Science at the University of Cologne. He would like to thank the department for their warm welcome and support. He would also like to thank  Dinh Tuan Huynh, Vi\^{e}t-Anh Nguy\^{e}n, Mihai P\u{a}un and Song-Yan Xie for their helpful discussions.

\section{Complex hyperbolicity and Nevanlinna theory}\label{sec complex hyperbolicity}

 \subsection{Notions of hyperbolicity}

 We first recall various notions on the hyperbolicity of complex manifolds (see \cite{kobayashi-book,diverio-survey,Noguchi} for more comprehensive treatments).

 Let $X$ be a complex (not necessarily compact) manifold. We denote by $\D$ the unit disc in $\C$. Given two points $x,y\in X$. A \textit{chain of holomorphic discs from $x$ to $y$} is given by a chain of points $z_0 \colonequals x,z_1,\ldots,z_k\colonequals y$ in $X$, pairs of points $a_1,b_1,\ldots,a_k,b_k$ in $\D$, and holomorphic discs $f_1,\ldots,f_k:\D\to X$ such that $f_j(a_j)= z_{j-1},f_j(b_j) = z_j$ for $j=1,\ldots,k$. Denoting this chain by $\alpha$, we define the length of this chain 
 \[\mathrm{length} (\alpha)\colonequals \rho(a_1,b_1)+\cdots +\rho(a_k,b_k),\] where $\rho$ is the Poincar\'e distance on $\D$. The \textit{Kobayashi pseudometric $\mathbf{d}_X$} on $X$ is defined by $\mathbf{d}_X\colonequals \inf \mathrm{length}(\alpha)$, where the infimum is taken over all the chains of holomorphic discs. When $\mathbf{d}_X$ is actually a metric, $X$ is called \textit{Kobayashi hyperbolic}. One has an infinitesimal version of the Kobayashi pseudometric as follows. Fix an arbitrary holomorphic tangent vector $v\in T_{x}X, x\in X$. Define
\[\mathbf{k}_{X}(v)\colonequals \inf \Big\{\lambda > 0: \exists f:\D \to X \text{ holomorphic}, f(0) = x, \lambda f'(0) = v \Big\}.\]
It is known that 
\[\mathbf{d}_X(x,y) = \inf_{\gamma} \int_\gamma \mathbf{k}_{X}(\gamma'(t))dt,\]
where the $\inf$ is taken over all piecewise smooth curves $\gamma$ joining $x$ and $y$. A manifold $X$ is called \textit{infinitesimally Kobayashi hyperbolic} if for every compact subset $K\subset X$ we have $\mathbf{k}_X(v)\geq \epsilon \|v\|_{\omega}$, where $v\in T_xX,x\in K$, $\omega$ is a Hermitian metric on $X$, and $\epsilon $ is a positive constant. A manifold $X$ is called \textit{Brody hyperbolic} if there exists no non-constant entire curve $f:\C \to X$. In general, if $X$ is infinitesimally Kobayashi hyperbolic, then $X$ is Kobayashi hyperbolic; if $X$ is Kobayashi hyperbolic, then $X$ is Brody hyperbolic. When $X$ is compact, by Brody's reparametrization lemma, these three notions are equivalent (see \cite{kobayashi-book}). When $X$ is a projective manifold, we have the notion of algebraic hyperbolicity, which has been mentioned earlier. We refer the reader to \cite{gromov-kahler-hyperbolic,diverio-weakly-hyperbolic-GGL} for some related notions on the hyperbolicity of K\"ahler manifolds.

Let $M$ be a relatively compact, locally closed complex manifold immersed in a complex manifold $X$. We say that $M$ is \textit{hyperbolically embedded} in $X$ if $M$ is Kobayashi hyperbolic, and for any two distinct points $x,y \in \partial M$, there are neighborhoods $U$ of $x$ and $V$ of $y$ in $X$ satisfying
\[\inf \Big\{ \mathbf{d}_M(x',y'): x' \in M \cap U, y'\in M \cap V \Big\} > 0.\]
The logarithmic version of the Kobayashi conjecture predicts that $\P^n\setminus D$ is hyperbolically embedded in $\P^n$, where $H$ is a general smooth hypersurface of $\P^n$ with degree $d\geq 2n-1$. We refer the reader to \cite{siuYeung-defects-97,ElGoul03,Rousseau09,brotbek-deng} for results on this conjecture.

Let $X$ be a projective manifold, and $D$ be an effective divisor. Then, the pair $(X,D)$ is called \textit{algebraically hyperbolic} if there exists $\epsilon > 0$ such that
\[-\chi(\widehat{\mathscr{C}}) + i(\mathscr{C},D) \geq \epsilon \int_\mathscr{C} \omega\]
for every irreducible closed curve $\mathscr{C} \subset X$ such that $\mathscr{C}\not\subset D$, where $\nu:\widehat{\mathscr{C}} \to X$ is a normalization on $\mathscr{C}$ and $i(\mathscr{C},D)$ is the number of distinct points in $\nu^{-1}(D)$. This notion was introduced and studied by Chen in \cite{chen-algebraic-hyperbolic-for-log-varieties}. By a result of Pacienza-Rousseau in \cite{pacienza_rousseau_logarithmic_Kobayashi}, if $X\setminus D$ is hyperbolically embedded into $X$, then $(X,D)$ is algebraic hyperbolic.

\subsection{Basics on Nevanlinna theory}

Next, we recall some facts on Nevanlinna theory. The readers are referred to \cite{Ru-book,Noguchi} for more information.

We start with some basic definitions. Let $X$ be a complex manifold, and $f:\C \to X$ be a non-constant entire curve. Let $\D_r$ denote the disc of radius $r>0$ with center $0$ in $\C$.  For a smooth $(1,1)$-form $\eta $ and $r>1$, we define the \textit{characteristic function} \[T_f(r,\eta)\colonequals \int_1^r \frac{dt}{t} \int_{\D_t} f^*\eta.\]
Let $D \subset X$ be a divisor, and denote by $\Theta_D$ a curvature form of a smooth Hermitian metric on $L$ where $L$ is the line bundle associated to $D$. Then, there exists a negative quasi-plurisubharmonic function $\varphi_D$ such that $[D] = \Theta_D + dd^c \varphi_D$ in the sense of currents. Define the \textit{proximity function} of $f$ with respect to $D$ by
\[m_f(r,D)\colonequals -\frac{1}{2\pi}\int_0^{2\pi} \varphi_D \circ f(re^{i\theta})\ d\theta,\]
and the \textit{truncated counting function} of $f$ at level $k$ with respect to $D$ by
\[N_f^{[k]}(r,D)\colonequals \int_1^r \frac{dt}{t}\sum_{|t|< r } \min (k,\mathrm{ord}_t (f^*D)).\]
When $k = \infty$, we write $N_f(r,D)$ instead of $N_f^{[\infty]}(r,D)$. The \textit{defect of truncation $k$} of $D$ with respect to $f$ is defined by
\[\delta^{[k]}_f(D) \colonequals \liminf_{r\to \infty} \Big(1 - \frac{N_f^{[k]}(r,D)}{T_f(r,L)}\Big),\]
where $T_f(r,L)\colonequals T_{f}(r,\Theta_D)$, which is well-defined up to a bounded term as $r \to \infty$.\\
Let $\omega$ be a Hermitian metric on $X$. For $r>1$, we define
\[S_f(r,\omega)\colonequals  \int_1^{r} \frac{dt}{t} \ \mathrm{length}_\omega (f(\partial \D_{t})).\]

We now recall some useful lemmas. The following lemma is known as Nevanlinna's first main theorem (see, e.g., \cite[Theorem A2.3.1]{Ru-book} or \cite[Theorem 2.3.31]{Noguchi}).

\begin{lemma}(Nevanlinna's first main theorem)\label{lemma Nevanlinna first main theorem}
    Suppose that $f(\C)\not\subset \supp (D)$. Then we have $T_f(r,\Theta_D)=N_f(r,D) + m_f(r,D) - m_f(1,D)$.
\end{lemma}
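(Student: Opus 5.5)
The plan is to derive the identity from the Poincaré–Lelong (Jensen) formula applied to the pull-back of the quasi-plurisubharmonic function $\varphi_D$ along $f$. The hypothesis $f(\C)\not\subset\supp(D)$ guarantees that $u\colonequals\varphi_D\circ f$ is not identically $-\infty$. Hence $u$ is a well-defined subharmonic-type function on $\C$, more precisely a difference of a subharmonic function and a smooth one. Pulling back the current equation $[D]=\Theta_D+dd^c\varphi_D$ by $f$ gives, in the sense of currents on $\C$,
\[
dd^c u = f^*[D] - f^*\Theta_D = \sum_{a\in\C}\mathrm{ord}_a(f^*D)\,\delta_a - f^*\Theta_D .
\]
This is justified locally: near each point we write $\varphi_D=\log|s|^2_h$ (up to normalization), where $s$ is a local defining function of $D$ and $h$ a smooth weight. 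Then $u=\log|s\circ f|^2+$ smooth, and the classical Poincaré–Lelong formula on $\C$ for the holomorphic function $s\circ f\not\equiv 0$ gives the divisor $f^*D$.

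Next, apply Jensen's formula in the form of the Green–Stokes identity on discs. For a function $u$ with $dd^c u$ a signed measure $\mu$ on $\C$ and $1<r$, one has
\[
\frac{1}{2\pi}\int_0^{2\pi}u(re^{i\theta})\,d\theta-\frac{1}{2\pi}\int_0^{2\pi}u(e^{i\theta})\,d\theta=\int_1^r\frac{dt}{t}\,\mu(\D_t),
\]
with the constant normalized to the convention of $dd^c$ (which the paper presumably fixes so that $dd^c\log|z|^2=\delta_0$). This identity is obtained by integrating $\frac{d}{dt}\frac{1}{2\pi}\int u(te^{i\theta})d\theta=\frac{1}{t}\mu(\D_t)$. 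That derivative formula follows from Stokes' theorem for smooth $u$ and, in general, by regularizing $u$ locally (convolution) and passing to the limit. The limit is harmless because the set of radii $t$ for which $\partial\D_t$ meets $\supp(f^*D)$ is countable.

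Substituting $\mu=f^*[D]-f^*\Theta_D$ gives $\int_1^r\frac{dt}{t}\mu(\D_t)=N_f(r,D)-T_f(r,\Theta_D)$. Here the counting term is exactly the non-truncated counting function, i.e.\ the sum of multiplicities of zeros in $\D_t$, integrated against $dt/t$. The left side equals $-m_f(r,D)+m_f(1,D)$ by definition of the proximity function. Rearranging yields $T_f(r,\Theta_D)=N_f(r,D)+m_f(r,D)-m_f(1,D)$.

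The only delicate point, and the step to treat most carefully, is the justification of Jensen's formula when $u$ has logarithmic poles, possibly on the circles $|z|=1$ or $|z|=r$. I would handle it by noting that $u$ is locally integrable on circles: $\log|z-a|$ is integrable on any circle through $a$. Both sides are also continuous in $r$ from the appropriate side, so the formula extends from generic radii to all $r$ by a limiting argument. Everything else is bookkeeping of normalization constants.
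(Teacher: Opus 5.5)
Your argument is correct and is essentially the standard proof, namely Poincar\'e--Lelong for $\varphi_D\circ f$ followed by the Green--Jensen formula on discs, as in \cite[Theorem A2.3.1]{Ru-book} and \cite[Theorem 2.3.31]{Noguchi}; the paper gives no proof of its own and only cites these references. One bookkeeping slip is your guessed convention: your displayed Jensen identity, and hence the statement exactly as written with $m_f$ defined through $\varphi_D$, requires $dd^c\log|z|=\delta_0$ (i.e.\ $dd^c=\frac{i}{\pi}\partial\bar\partial$), whereas with $dd^c\log|z|^2=\delta_0$ the proximity terms would pick up a factor $\frac12$.
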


For a proof, see \cite[Theorem A2.3.1]{Ru-book} or \cite[Theorem 2.3.31]{Noguchi}.

\begin{lemma}(Ahlfors' lemma)\label{lemma Ahlfors} Let $\delta > 0$. Then, there exists a measurable set $E(\delta)$ with $\int_{E(\delta)} dr/(r\log r) < \infty$ such that $S_f(r,\omega) \leq \delta T_f(r,\omega)$ for $ r\notin E(\delta)$. In particular, there exists a sequence $r_m \to \infty$ such that $\lim\limits_{m\to \infty} S_f(r_m,\omega)/T_f(r_m,\omega) = 0$.
\end{lemma}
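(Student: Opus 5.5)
The plan is to prove Ahlfors' lemma by the classical length–area argument: Cauchy--Schwarz bounds the length of $f(\partial\D_t)$ by the derivative of the area function, and a calculus lemma on growth (a Borel-type lemma) controls that derivative outside a small exceptional set. Set
\[A(t)\colonequals \int_{\D_t} f^*\omega = \int_0^t \rho\,d\rho\int_0^{2\pi}|f'(\rho e^{i\theta})|_\omega^2\,d\theta,\]
up to a normalizing constant. Then $T_f(r,\omega)=\int_1^r A(t)\,dt/t$, and $A'(t)=t\int_0^{2\pi}|f'(te^{i\theta})|^2_\omega d\theta$. For the length we have $L(t)\colonequals \mathrm{length}_\omega(f(\partial\D_t))=\int_0^{2\pi}|f'(te^{i\theta})|_\omega\, t\,d\theta$. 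Cauchy--Schwarz gives $L(t)^2\le 2\pi t\,A'(t)$.

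Next we integrate in $t$ against $dt/t$, again using Cauchy--Schwarz:
\[S_f(r,\omega)=\int_1^r \frac{L(t)}{t}dt\le \Big(2\pi\int_1^r A'(t)\,dt\Big)^{1/2}\Big(\int_1^r\frac{dt}{t}\Big)^{1/2}\le C\,\big(A(r)\log r\big)^{1/2}.\]
This reduces the problem to comparing $A(r)=r\,T_f'(r)$ with $T_f(r)^2$ off a small set, i.e. to showing $A(r)\log r\le \delta^2 T_f(r)^2/C^2$ outside $E(\delta)$.

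Since $f$ is non-constant, $A(t)\ge c>0$ for $t\ge t_0$, so $T_f(r)\ge c\log r - C'$ grows at least logarithmically. I would then apply a Borel-type calculus lemma to the increasing function $T=T_f$: for any $\epsilon>0$, the inequality $rT'(r)\le T(r)^{1+\epsilon}$ holds outside a set $F$ with $\int_F dr/(r\log r)<\infty$. To prove this, work on the set $F$ where $T'/T^{1+\epsilon}>1/r$. On $F$ we have $\int_F dr/r<\int T'/T^{1+\epsilon}\,dr<\infty$, hence also $\int_F dr/(r\log r)<\infty$. Outside $F$ we then get
\[S_f\le C\,(T^{1+\epsilon}\log r)^{1/2}\le C\,T^{1+\epsilon/2}(\log r)^{1/2}.\]
This alone is not quite $o(T)$, so the key step is to choose the exceptional-set lemma more sharply. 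The sharper version is $rT'(r)\le T(r)\,\log r\cdot h$ with $h$ a slowly growing function, or more precisely $A(r)\le T(r)^2/(\eta(r)\log r)$ with $\eta\to\infty$. To get this, take $F=\{r: A(r)\log r>\eta^{-2}T(r)^2\}$, which is the same as $\{T'/T^2>1/(\eta^2 r\log r)\}$. On $F$,
\[\int_F \frac{dr}{r\log r}\le \eta^2\int \frac{T'}{T^2}\,dr\le \frac{\eta^2}{T(r_0)}<\infty.\]
Taking $\eta=\delta/C$ fixed gives exactly $S_f(r)\le \delta T_f(r)$ for $r\notin E(\delta)\colonequals F$, with $\int_{E(\delta)}dr/(r\log r)<\infty$.

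The main obstacle is getting the exponents to match: $A\log r$ must be compared with $T^2$ rather than with $T^{1+\epsilon}$. The computation above shows the $dr/(r\log r)$ measure is precisely tailored to make this work. One also has to handle $r$ near $1$, which is done by starting the integration at some $r_0$ where $T(r_0)>0$.

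For the last assertion, $\int_1^\infty dr/(r\log r)=\infty$ while $E(\delta)$ has finite measure. So for each $m$ we can pick $r_m\notin E(1/m)$ with $r_m>m$. Then $S_f(r_m)/T_f(r_m)\le 1/m\to0$.
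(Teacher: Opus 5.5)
Your proof is correct and is the standard length--area argument that the paper delegates to Brunella's Theorem~0: Cauchy--Schwarz twice gives $S_f(r,\omega)^2\le C\,A(r)\log r=C\,r\log r\,T_f'(r)$, so on $\{S_f>\delta T_f\}\cap[r_0,\infty)$ one has $\frac{1}{r\log r}<\frac{C}{\delta^2}\frac{T_f'}{T_f^2}$, whose integral is at most $C/(\delta^2 T_f(r_0))$. Two cosmetic fixes are needed. First, the constant should be $\eta=C/\delta$ rather than $\delta/C$. Second, finiteness of $\int_{E(\delta)}dr/(r\log r)$ can only be asserted on $[r_0,\infty)$; no argument can do better, because $S_f/T_f\to \mathrm{length}_\omega(f(\partial\D_1))/\mathrm{area}_\omega(f(\D_1))$ as $r\to1^+$ while $dr/(r\log r)$ is not integrable at $1$. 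This restricted version is all that is needed to produce the sequence $r_m\to\infty$.
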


For a proof, see \cite[Theorem 0]{brunella-99-foliations}.

\begin{lemma}\label{lemma logarithmic}(Lemma on the Logarithmic Derivative) There exists a measurable set $E$ of finite Lebesgue measure such that 
\[\int_0^{2\pi} \log^+ |f'(re^{i\theta}| d\theta \leq O(\log T_f(r,\omega)+ \log r) \text{ for } r\notin E.\]
\end{lemma}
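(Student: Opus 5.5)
The plan is the classical ``concavity of the logarithm plus Borel's growth lemma'' argument. Here $|f'|$ denotes the $\omega$-norm of $f'$ and $X$ is compact. Normalize so that $f^*\omega = \frac{i}{2}|f'|^2_\omega\, dz\wedge d\bar z$ up to a fixed constant, and set
\[A(t)\colonequals \int_{\D_t} f^*\omega, \qquad T(r)\colonequals T_f(r,\omega)=\int_1^r A(t)\,\frac{dt}{t}.\]
In polar coordinates we have $A'(r)= c\, r\int_0^{2\pi} |f'(re^{i\theta})|^2_\omega\, d\theta$ for a fixed constant $c>0$. Both $A$ and $T$ are increasing. Since $f$ is non-constant, $A(t)\geq a_0>0$ for $t\geq 1$, so $T(r)\geq a_0\log r$ and $T(r)\to\infty$.

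\textbf{Step 1 (concavity).} Since $\log^+ x\leq \log(1+x)$ and $\log(1+\cdot)$ is concave, Jensen's inequality gives
\[\frac{1}{2\pi}\int_0^{2\pi}\log^+|f'(re^{i\theta})|\,d\theta=\frac{1}{4\pi}\int_0^{2\pi}\log^+|f'|^2\,d\theta \leq \frac12\log\Big(1+\frac{A'(r)}{2\pi c\, r}\Big).\]
So it suffices to bound $A'(r)$ polynomially in $r$ and $T(r)$ outside a set of finite measure.

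\textbf{Step 2 (Borel's lemma).} For an increasing $C^1$ function $h\geq 1$ on $[r_0,\infty)$ and $\delta>0$, the set $\{r: h'(r)> h(r)^{1+\delta}\}$ has measure at most
\[\int_{r_0}^\infty \frac{h'}{h^{1+\delta}}\,dr\leq \frac{1}{\delta\, h(r_0)^{\delta}}<\infty.\]
Absolute continuity suffices here; for $A$, which is only known to be absolutely continuous, we use the a.e. derivative. We apply this twice with $\delta=1$, after replacing $A,T$ by $\max(A,1),\max(T,1)$, which changes nothing for large $r$ since both tend to infinity.
\begin{itemize}
\item Applied to $h=A$: $A'(r)\leq A(r)^2$ outside a set $E_1$ of finite measure.
\item Applied to $h=T$: since $T'(r)=A(r)/r$, we get $A(r)=rT'(r)\leq rT(r)^2$ outside a set $E_2$ of finite measure.
\end{itemize}
Hence for $r\notin E\colonequals E_1\cup E_2\cup[0,r_0]$ we have $A'(r)\leq r^2T(r)^4$. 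Plugging this into Step 1 gives
\[\int_0^{2\pi}\log^+|f'(re^{i\theta})|\,d\theta\leq \pi\log\big(1+ C r T(r)^4\big)=O(\log T(r)+\log r),\]
which is the claim.

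\textbf{Main obstacle.} The main care needed is in the regularity and normalization bookkeeping: $A$ is only absolutely continuous, and Borel's lemma must be applied to functions bounded below by $1$. Both are handled by the remarks above, so no step is expected to be genuinely difficult.
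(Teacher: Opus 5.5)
Your argument is correct and is the standard calculus-lemma proof: Jensen's inequality for the concave logarithm, followed by Borel's growth lemma applied twice. The paper gives no proof of its own and cites references in its place, and this is the argument those references rely on.

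One small fix: $A(r)$ need not tend to infinity. A rational curve has bounded area, possibly $\le 1$, and then $\max(A,1)\equiv 1$ tells you nothing about $A'$. You do not need $h\ge 1$ at all, since your bound $\int_{r_0}^\infty h'h^{-1-\delta}\,dr\le \delta^{-1}h(r_0)^{-\delta}$ only uses $h(r_0)>0$. So apply it directly to $A$ on $[1,\infty)$, where $A\ge a_0>0$.
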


For a proof, see \cite[Theorem A1.2.5]{Ru-book} or \cite[Theorem 1.2.2]{Noguchi}

\begin{lemma}\label{lemma Demailly transcendental}
    Suppose that $T_f(r,\omega) = O(\log r)$. Then $f$ is a rational curve.
\end{lemma}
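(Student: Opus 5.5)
Here $X$ is a projective manifold (the lemma is stated in that setting, e.g.\ via a projective embedding), and we may assume $\omega$ is the restriction of a Fubini--Study form. The plan is to find a rational function $g$ on $X$ with $g\circ f$ non-constant, prove that $g\circ f$ is a rational function on $\C$, and then recover $f$ itself from finitely many such coordinates.

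\textbf{Step 1: reduction to a single meromorphic function.} Choose an embedding $X\subset\P^N$ with $\omega=\omega_{\FS}|_X$. Choose homogeneous coordinates $[z_0:\dots:z_N]$ such that $f(\C)\not\subset\{z_0=0\}$. Then each $g_j=(z_j/z_0)\circ f$ is a meromorphic function on $\C$. The comparison we need is
\[
T_{g_j}(r)\le T_f(r,\omega_{\FS})+O(1).
\]
This follows because $g_j^*\omega_{\FS,\P^1}$ is the pull-back of the Fubini--Study form of $\P^1$ under the linear projection $[z_0:z_j]$. Concretely, $\log(|z_0|^2+|z_j|^2)\le\log\|z\|^2$, and by Jensen's formula the difference of the two characteristic functions is controlled by the proximity-type terms, which are bounded from one side. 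Alternatively, one can apply Lemma~\ref{lemma Nevanlinna first main theorem} directly:
\[
T_{g_j}(r)=N_{g_j}(r,\infty)+m_{g_j}(r,\infty)+O(1),
\]
and both terms are bounded by the corresponding terms for $f$ with respect to the hyperplane $\{z_0=0\}$, whose characteristic function is $T_f(r,\omega)$. Either way, $T_{g_j}(r)=O(\log r)$.

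\textbf{Step 2: one-variable case.} Let $g$ be meromorphic on $\C$ with $T_g(r)=O(\log r)$. We must show $g$ is rational.

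First bound the poles. By the first main theorem, $N_g(r,\infty)\le T_g(r)+O(1)=O(\log r)$. Since each pole at modulus at most $r/e$ contributes at least $1$ to $N_g(r,\infty)$, the number of poles counted with multiplicity is bounded. So there is a polynomial $P$ with $h=Pg$ entire.

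Next, $T_h(r)\le T_g(r)+\deg P\log r+O(1)=O(\log r)$. For entire $h$ the classical inequality
\[
\log M_h(r)\le 3\,T_h(2r)
\]
(obtained from Poisson--Jensen and $m_h(r,\infty)\le T_h(r)$) gives $M_h(r)\le Cr^{c}$. By Cauchy estimates $h$ is a polynomial, and hence $g$ is rational.

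\textbf{Step 3: conclusion.} Each $g_j$ is rational, so
\[
f=[1:g_1:\dots:g_N]
\]
extends holomorphically to $\P^1\to X$, i.e.\ $f$ is a rational curve (its image is a rational curve in $X$).

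The main point needing care is the comparison of characteristic functions in Step~1, specifically the direction of the $O(1)$ error. Writing $\log\|z\|^2-\log(|z_0|^2+|z_j|^2)\ge0$ and using Jensen on circles $|t|=r$ gives $T_{g_j}\le T_f+O(1)$. Since we normalize $T_f$ from radius $1$, this constant is harmless.
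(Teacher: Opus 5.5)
You prove the lemma only for projective $X$, and that is a genuine gap. The paper gives no proof of its own (it cites Demailly and Duval). The lemma is formulated in Section~\ref{sec complex hyperbolicity} for a complex manifold with an arbitrary Hermitian metric $\omega$. It is then used in the proof of Theorem~\ref{theorem chyp > 0 implies Kobayashi hyperbolic}, where $X$ is an arbitrary compact K\"ahler manifold. Your opening assertion that the lemma is stated in a projective setting is therefore not correct. Moreover, Step~1 cannot even begin when $X$ is not projective. Compact K\"ahler manifolds of algebraic dimension $0$ have no non-constant meromorphic functions, so there are no coordinates $g_j=(z_j/z_0)\circ f$ to reduce to. Examples are generic tori, or suitable non-algebraic K3 surfaces, some of which do contain rational curves.

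The missing idea is that the growth hypothesis already forces finite area, with no functions on $X$ needed. Put $A(t)\colonequals\int_{\D_t}f^*\omega$, which is non-decreasing in $t$. For $r>1$,
\[
A(r)\log r\le\int_r^{r^2}A(t)\,\frac{dt}{t}\le T_f(r^2,\omega)=O(\log r),
\]
so $\int_{\C}f^*\omega<\infty$. Hence $z\mapsto f(1/z)$ is a finite-area holomorphic map from the punctured disc into the compact manifold $X$. Such a map extends holomorphically across the puncture. One way to see this is Bishop's extension theorem applied to the closure of its graph in $\D\times X$. Another is a length--area argument combined with monotonicity of area, which shows $f(1/z)$ has a limit at $0$. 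This gives $\P^1\to X$ in full generality and makes your Step~2 unnecessary.

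The same trick repairs a slip in your Step~2. Knowing that each pole in $\{|z|\le r/e\}$ contributes at least $1$ to $N_g(r,\infty)=O(\log r)$ only gives $n(r/e)=O(\log r)$, not a bounded number of poles. You need instead $n(r)\log r\le\int_r^{r^2}n(t)\,dt/t\le N_g(r^2,\infty)=O(\log r)$. With that correction, your Steps~1--3 are a valid classical proof in the projective case, but not in the compact K\"ahler setting where the paper uses the lemma.
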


For a proof, see \cite{demailly-algcurve-charfunction} or \cite{duval-aroundBrody}.

\subsection{Positive current associated to entire curves}

We end this section by recalling some constructions of positive currents associated with entire curves. Let $(X,\omega)$ be a \textit{compact} Hermitian manifold and $f:\C \to X$ be a non-constant entire curve. Consider the sequence of currents 
\[S_r\colonequals \frac{1}{T_{f}(r,\omega)} \int_1^r \frac{dt}{t} f_*([\D_t]),\]
which are positive currents of mass $1$ on $X$. By direct computations, we have 
\begin{equation}\label{eq compute ddc for S_r entire curve case}dd^c S_r= \frac{1}{T_f(r,\omega)} (f_* (\mu_r)- f_*(\delta_0)),\end{equation}
where $\mu_{r}$ is the Haar measure on the circle $\partial\D_{r}$ and $\delta_0$ is the Dirac mass at $0$. Therefore, if we take an arbitrary weak limit of the sequence $(S_r)_r$, we obtain a positive $dd^c$-closed current since $T_f(r,\omega)\to \infty$ as $r\to \infty$. We call such limits \textit{Nevanlinna currents associated with $f$}. By Stokes' theorem, for any sequence $(r_m)$ constructed in Lemma~\ref{lemma Ahlfors}, any weak limit of $(S_{r_m})_m$ as $m\to \infty$ is a positive \textit{closed} current.

If we consider the sequence of positive currents $\frac{f_*([\D_r])}{\mathrm{area} \ (f(\D_r))}$,
where area is computed with respect to $\omega$. Then, one can deduce from Lemma~\ref{lemma Ahlfors} that there exists a sequence $r_m\to \infty$ such that any weak limit of $\frac{f_*([\D_{r_m}])}{\mathrm{area} \ (f(\D_{r_m}))}$ as $m\to \infty$ is a positive closed current. We call such limits \textit{Ahlfors currents associated with $f$}. In this article, we will mainly focus on Nevanlinna currents.

 Nevanlinna and Ahlfors currents have found important applications in complex hyperbolicity and Nevanlinna theory (see e.g. \cite{McQuillan,duval-Brody-08,paun-sibony-value,ru-sibony-second-main-hyperbolic}) and in foliation theory (see \cite{fornaess-sibony-survet-laminiantion,Dinh-Nguyen-Sibony-heat-equation-ergodic,DS-uniqueergodicity,DNS-uniqueergodicity}). These constructions can be generalized to the situation of maximal rank holomorphic maps from $\C^p$ to a compact Hermitian manifold $(X,\omega)$, as shown in \cite{gasbarri-pacienza-rousseau-higherdim-tautological-ineq}. See also \cite{dethelin-ahlfors-current-highdim,Burns_Sibony}.

\section{Background on positive currents}\label{sec background currents}

The main tools for our work are positive currents, which are either closed or $dd^c$-closed. We refer the reader to \cite{demailly-complex-book} for basic results on the theory of positive closed currents. We will mainly focus on positive $dd^c$-closed currents in this section.

\subsection{Support theorem}

The first ingredient we need is the support theorem for currents. For positive closed currents, we have the following theorem, which is a consequence of Federer's support theorem for flat currents (see e.g.\cite[Chapter III. Section 2.]{demailly-complex-book}).

\begin{lemma}\label{lemma support lemma for closed current}
    Let $X$ be a complex manifold, and $T$ be a positive closed current of bi-dimension $(p,p)$ on $X$. Let $A$ be an analytic subset of $X$ with irreducible components $A_j$ of pure dimension $k$. Then the following statements hold
    \begin{itemize}
        \item[(i)] $\mathbf{1}_A T$ and $\mathbf{1}_{X\setminus A} T$ are positive closed currents. 
        \item[(ii)] Suppose that $T$ has support in $A$. If $k < p$, then $T =0 $. If $k = p$, then $T = \sum_j c_j[A_j]$ where $c_j \in \R_{\geq 0}$. If $k > p$, then $T$ is a current on $A$. This means that there exists a positive closed current $S$ on $A$ such that $i_*(S) = T$, where $i:A\hookrightarrow X$ is the embedding map.
    \end{itemize}
\end{lemma}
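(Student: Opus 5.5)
Part (i) is a local statement, so I will work in a coordinate chart. I argue by induction on the dimension of $A$, stratifying $A$ into its regular part $A_{\rm reg}$ and its singular part $A_{\sing}$; the latter is an analytic subset of strictly smaller dimension. The tool is a cut-off argument. For a closed analytic subset $B$ with $\dim B\le p$... it is cleaner to use the El Mir--Skoda extension theorem. On $X\setminus A$, the restriction of $T$ is positive and closed. Near any point, $A$ is complete pluripolar, since it is locally $\{\log\sum|g_i|^2=-\infty\}$ for local defining functions $g_i$. Also, $T|_{X\setminus A}$ has locally finite mass, because its mass is at most that of $T$. By the El Mir--Skoda theorem, the trivial extension $\mathbf{1}_{X\setminus A}T$ is therefore positive and closed on $X$. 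Positivity of $\mathbf{1}_A T$ is clear, and $\mathbf{1}_A T = T-\mathbf{1}_{X\setminus A}T$ is closed as a difference of closed currents. This proves (i).

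For (ii), suppose $\supp T\subset A$. Since $T$ is a positive current, it has measure coefficients and is therefore of order zero; $dT=0$ makes it a normal, hence flat, current. The plan is to apply Federer's support theorem for flat currents: a flat current of real dimension $2p$ supported on a set with $\mathcal{H}^{2p}$-measure zero vanishes. When $k<p$, the set $A$ has real dimension $2k<2p$, so $\mathcal{H}^{2p}(A)=0$ and $T=0$.

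When $k=p$, I apply the previous case to the singular locus, which has dimension $<p$. By (i), $\mathbf{1}_{A_{\sing}}T$ is positive and closed, and it is supported on $A_{\sing}$, so it vanishes. Hence $T$ is determined by its restriction to the $p$-dimensional manifold $A_{\rm reg}$. Near a regular point, choose coordinates in which $A=\{z_{p+1}=\dots=z_n=0\}$. A closed current of bidimension $(p,p)$ supported on $A$, being flat, has only tangential components and coefficients that are independent of the normal variables. Closedness then forces the coefficient of $[A]$ to be locally constant, so $T=c[A]$ locally with $c\ge 0$ by positivity. Since each $A_{j}\cap A_{\rm reg}$ is connected, the constant is a single $c_j$ on each component. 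Combining this with the vanishing on $A_{\sing}$ gives $T=\sum_j c_j[A_j]$.

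When $k>p$, I use the same flatness structure theorem: a flat current supported in a submanifold is the push-forward of a current on that submanifold. This gives the statement on $A_{\rm reg}$, and the part of $T$ on $A_{\sing}$ is handled by induction on dimension, using (i) to split $T$. The main obstacle is the local structure step for flat currents supported on a submanifold, namely that no normal-derivative components occur. I would quote Federer's theorem, as in Demailly's book, rather than reprove it; the rest is bookkeeping with (i).
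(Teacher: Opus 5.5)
Your proposal is correct and follows essentially the route the paper relies on: the paper gives no proof of its own but defers to Demailly's book (Chapter III, Section 2), where (i) comes from the Skoda--El Mir extension theorem applied to the locally complete pluripolar set $A$, and (ii) comes from Federer's support theorem for flat currents, the structure of normal currents supported on a submanifold, and an induction over $A_{\rm sing}$, exactly as you outline. The points you leave implicit are routine: your Skoda--El Mir argument for (i) does not use pure dimensionality, which is what lets you apply it to the possibly non-pure set $A_{\rm sing}$. In the case $k>p$ one should also fix what a current on the singular space $A$ means, and check that the current $S$ obtained on $A_{\rm reg}$ inherits positivity and closedness from $T$; this follows by testing against forms pulled back under a local holomorphic retraction onto $A_{\rm reg}$.
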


We have a similar support theorem for positive $dd^c$-closed currents, provided some extra conditions on the manifold $X$ and the analytic set $A$. This theorem follows from the support theorem for $\C$-flat currents developed by Bassanelli in \cite{Bassanelli}. Although this result is well-known, we provide a proof here as we cannot find a suitable reference.

\begin{lemma}\label{lemma support lemma for ddc closed current}
    Let $T$ be a positive $dd^c$-closed current of bi-dimension $(p,p)$ with compact support on a K\"ahler manifold $X$. Let $A$ be an analytic subset of $X$ with irreducible components $A_j$ of pure dimension $k$. Then the following statements hold
    \begin{itemize}
        \item[(i)] $\mathbf{1}_A T$ and $\mathbf{1}_{X\setminus A} T$ are positive $dd^c$-closed currents. If $p=1$, then the aforementioned property also holds without the K\"ahler condition on $X$.
        \item[(ii)] Suppose that $A$ is compact and $T$ has support in $A$. If $k < p$, then $T = 0$. If $k > p$, then $T$ is a current on $A$. If $k = p$, then $T = \sum_j c_j[A_j]$ where $c_j \in \R_{\geq 0}$. 
    \end{itemize}
\end{lemma}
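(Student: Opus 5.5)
The plan is to reduce everything to the flat-current support theorems. For positive $dd^c$-closed currents the natural tool is Bassanelli's theory of $\C$-flat currents. Recall that a current $R$ is $\C$-flat if locally $R = F + \partial G + \bar\partial H$ with $F,G,H$ of order zero (normal-type) currents. Bassanelli proves two things. First, a $\C$-flat current supported on a closed set of real dimension smaller than its bidimension must vanish. Second, if it is supported on a complex submanifold, it factors through that submanifold.

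\textbf{Part (i).} I would first show that $\mathbf{1}_A T$ is $dd^c$-closed; then $\mathbf{1}_{X\setminus A}T = T - \mathbf{1}_A T$ is automatically $dd^c$-closed too. The plan has three steps.

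\begin{enumerate}
\item Use the local existence of potentials: on a small ball, a positive $dd^c$-closed current $T$ satisfies $\partial T = \bar\partial \Lambda$ locally with $\Lambda$ of order zero. Alternatively, use the classical fact (Alessandrini–Bassanelli) that $\partial T$ and $\bar\partial T$ have locally finite mass when $T\ge 0$ and $dd^c T=0$. On a Kähler manifold this is obtained by testing against $\omega^{p-1}$ with cut-offs, and for $p=1$ it is a purely local computation needing no $\omega^{p-1}$. This is where the Kähler hypothesis, or $p=1$, enters.
\item Hence $dd^c(\mathbf{1}_A T)$ is a $\C$-flat current of bidimension $(p-1,p-1)$ supported in $A$.
\item Apply the Skoda–El Mir type argument: take cut-off functions $\chi_\epsilon(\log|g|)$ built from local equations $g$ of $A$, and show that the error terms $d\chi_\epsilon\wedge d^c T$ and $dd^c\chi_\epsilon\wedge T$ have masses tending to $0$, or at least stay uniformly bounded with limit supported in $A$.
\end{enumerate}

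Combining these, $dd^c(\mathbf{1}_{X\setminus A}T)$ is a $\C$-flat current of bidimension $(p-1,p-1)$ supported on $A$ with a sign/normality structure. Bassanelli's theorem then allows us to stratify $A$: on $A_{\rm reg}$ such a current comes from a current on $A$, and inductively on $A_{\rm sing}$ it vanishes by dimension count. This dimension count is the main obstacle, since $k$ can exceed $p-1$. I would instead prove (i) by induction on $\dim A$ via this stratification, in each case reducing to the statement that $\mathbf{1}_{A_{\rm reg}}T$ is locally the pushforward of a positive $dd^c$-closed current on the manifold $A_{\rm reg}$. That reduction follows from the $\C$-flat support theorem applied to $T$ restricted near $A_{\rm reg}$, using flattening coordinates.

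\textbf{Part (ii).} Here $T$ itself is $\C$-flat, because it is positive with $\partial T$ of finite mass by step 1 above.
\begin{itemize}
\item If $k<p$: the support $A$ has real dimension $2k<2p$, so Bassanelli's vanishing theorem gives $T=0$.
\item If $k>p$: the same support theorem on $A_{\rm reg}$ gives a current on $A_{\rm reg}$. Part (i) applied to $A_{\rm sing}$, together with induction on dimension, shows there is no contribution from $A_{\rm sing}$, so $T$ is a current on $A$.
\item If $k=p$: on $A_{\rm reg}$ the current $T$ is a positive $dd^c$-closed current of top bidegree, i.e.\ $T = u[A_{\rm reg}]$ with $u$ pluriharmonic and nonnegative. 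The mass on $A_{\rm sing}$ vanishes by the $k<p$ case. Compactness of $A$ together with the maximum principle on the normalization of each $A_j$ forces $u$ to be a constant $c_j\ge 0$ on each $A_j$.
\end{itemize}
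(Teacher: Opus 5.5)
\textbf{Main gap: part (i).} Your argument for (i) is entirely local: cut-offs near $A$, Bassanelli's support theorem, and a stratification of $A$. It never uses that $\supp T$ is compact. But the conclusion of (i) is false locally, even for $p=1$ and $A$ smooth.

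Here is a counterexample. Take the bidisc $U=\{|z|<1,\ |w|<1\}\subset\C^2$, let $A=\{w=0\}$, and normalize $dd^c=\frac{i}{\pi}\partial\bar\partial$. The current
\[
T=\log\frac{1}{|w|}\,dd^c|z|^2+|z|^2\,[w=0]
\]
is positive on $U$ and satisfies
\[
dd^cT=-[w=0]\wedge dd^c|z|^2+dd^c|z|^2\wedge[w=0]=0 .
\]
However, $\mathbf{1}_AT=|z|^2[w=0]$, and $dd^c(\mathbf{1}_AT)=dd^c|z|^2\wedge[w=0]\neq 0$.

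This example breaks three parts of your plan:
\begin{itemize}
\item The reduction you aim for, that $\mathbf{1}_{A_{\rm reg}}T$ is locally the pushforward of a positive $dd^c$-closed current on $A_{\rm reg}$, fails, since $|z|^2$ is not harmonic on $A$.
\item The error terms in your step 3 stay bounded, but their limit is the nonzero current $-[w=0]\wedge dd^c|z|^2$ supported on $A$. No support theorem or dimension count can remove it.
\item No local regularity of $\partial T$ can yield (i), so the K\"ahler hypothesis cannot enter through a mass bound as in your step 1.
\end{itemize}

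\textbf{The missing idea.} The local analysis only yields a \emph{sign}, and a global integration then kills the defect. Let $\widetilde S$ be the trivial extension of $S=\mathbf{1}_{X\setminus A}T$. The paper takes psh cut-offs $u_k$ vanishing near $A$ and increasing to $\mathbf{1}_{X\setminus A}$. By Dinh--Sibony, $\partial u_k\wedge S\to0$ and $\bar\partial u_k\wedge S\to 0$. This gives $dd^c\widetilde S=\lim dd^c(u_kS)\le 0$; in the example above it is exactly $-[w=0]\wedge dd^c|z|^2$.

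Then compact support and $d\omega=0$ give
\[
\langle dd^c\widetilde S,\omega^{p-1}\rangle=\langle \widetilde S,dd^c\omega^{p-1}\rangle=0,
\]
and a negative current with zero total mass vanishes. For $p=1$ one tests against the constant function $1$, which is why the K\"ahler hypothesis can be dropped there. This is precisely where compactness and K\"ahlerness enter.

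\textbf{Secondary issues in (ii).} For $k=p$, a nonnegative pluriharmonic $u$ on $A_{\rm reg}$ can have logarithmic poles along $A_{\rm sing}$, and the preimage of $A_{\rm sing}$ in the normalization may be a divisor. So the maximum principle needs $u$ to be bounded above near $A_{\rm sing}$, and this must come from $dd^cT=0$ across $A_{\rm sing}$. The paper gets it from Bassanelli's structure theorem $T=f[A]$ with $f$ weakly psh, hence bounded above on compact $A$. It then extends $f\circ\pi$ across the exceptional locus of a desingularization and applies the maximum principle.

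For $k>p$, $T$ can carry all its mass on $A_{\rm sing}$ when $\dim A_{\rm sing}\ge p$. So ``no contribution from $A_{\rm sing}$'' is wrong; that case is simply Bassanelli's support theorem.
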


\begin{proof}
    We first prove (i). We consider the current $S\colonequals\mathbf{1}_{X\setminus A} T$. This is a positive $dd^c$-closed current on $X\setminus A$ with locally finite mass near $A$. Let $\widetilde{S}$ be the trivial extension of $S$ through $A$ on $X$. We show that $dd^c \widetilde{S} \leq 0$. This is a local problem. We assume for simplicity that $X$ is the ball of center $0$ and radius $2$ in $\C^n$ and will show that $dd^c\widetilde{S} \leq 0$ on the unit ball in $\C^n$.

    Consider the sequence of smooth psh functions $(u_k)$, such that $0\leq u_k\leq 1$, vanishing near $A$ and increasing to $\mathbf{1}_{X\setminus A}$. By using the proof of \cite[Theorem 1.3]{DinhSibony_pullback} (for earlier results, we refer the reader to \cite{Sibony_duke,Bassanelli-extension-analyticsubset,Bassanelli,dabbek-Elkhadhra-ElMir-extension}) for $S$, we get 
    \[\partial u_k \wedge S \to 0, \bar{\partial} u_k \wedge S \to 0.\] Moreover, since $u_kdd^c S -dd^c (u_k S) = dd^c u_k\wedge S - d(d^c u_k \wedge S) + d^c(du_k \wedge S)$ and $dd^c S = 0$ on $X\setminus A$, we have $dd^c \widetilde{S} = \lim\limits_{k\to \infty} dd^c(u_kS) = -\liminf\limits_{k\to \infty}  dd^c u_k \wedge S\leq 0$ as desired.

    We now back to the global setting. Pick a K\"ahler form $\omega$ on $X$. Since $T$ has compact support, by Stokes' theorem, for $\Omega \colonequals \omega^{n-1}$, we have
    \begin{equation}\label{eq ddc tilde S = 0}\int_X dd^c \widetilde{S} \wedge \Omega = \int_X \widetilde{S} \wedge dd^c \Omega = 0.\end{equation}
    As $dd^c \widetilde{S} \leq 0$ , we deduce that $dd^c \widetilde{S} = 0$, which means $\mathbf{1}_{X\setminus A} T = \widetilde{S}$ is a positive $dd^c$-closed current on $X$. Since $\mathbf{1}_{A} T = T - \mathbf{1}_{X\setminus A} T$, we also conclude that $\mathbf{1}_A T$ is $dd^c$-closed. When $p=1$, we do not need a K\"ahler form to get \eqref{eq ddc tilde S = 0}.

    We now prove (ii). The case $k <p$ and $k > p$ has been proven in \cite{Bassanelli}. We only consider the case where $k=p$. By \cite[Theorem 4.10]{Bassanelli}, $T = f[A]$ for some positive weakly psh function on $A$. Since $A$ is compact, $f$ is bounded from above. By taking a desingularization $\pi: \widetilde{A}\to A$ of $A$, we get a bounded from above psh function $f\circ \pi$ on the manifold $\widetilde{A} \setminus E$ where $E$ is the exceptional locus. This psh function can be extended uniquely to a psh function on the compact manifold $\widetilde{A}$. By the maximum principle, $f\circ \pi$ is constant. Hence, $f$ is constant and the result follows.
\end{proof}

\subsection{Density currents}

The next tool in our study is the density theory of currents, which was first introduced for positive \textit{closed} currents by the first named author and Sibony in \cite{Dinh_Sibony_density}. This theory has been extended to positive \textit{closed} currents on non-K\"ahler manifolds in \cite{Vu_density-nonkahler}, positive \textit{$dd^c$-closed} currents on complex K\"ahler surfaces in \cite{DS-uniqueergodicity,DNS-uniqueergodicity,Dinh-Nguyen-Siuforddc},  and to a large class of positive \textit{plurisubharmonic} currents on general complex manifolds in a recent series of works by Nguy\^{e}n \cite{VANguyen-densitycurrent,VA-generalizedLelong}. We will recall some results here and refer the reader to these papers for more details.

\subsubsection{Basic notions}

We start with some definitions. Let $X$ be a K\"ahler manifold of dimension $n$, and $V$ be a complex submanifold of dimension $k$. Let $T$ be a positive $dd^c$-closed current of bi-degree $(p,p)$ on $X$ such that $V \cap \supp (T)$ is compact. Let $\pi : E \to V$ be the normal bundle of $V$ in $X$, and $\overline{E}\colonequals \P(E \oplus \C)$ be the natural compactification of $E$. We also denote by $\pi$ the projection $\pi:\overline{E}\to V$.

A \textit{strongly admissible map along $V$} is a \textit{smooth} diffeomorphism $\tau$ from an open subset $U$ of $X$ to an open neighborhood of $V$ in $E$ such that for every point $x\in V\cap U$, for every local chart $y=(z,w)$ on a neighborhood $W$ of $x$ in $U$ with $V\cap W = \{z= 0\}$, we have 
\[(z+zAz^T + O(\|z\|^3), w + Bz + O(\|z\|^2)),\]
where $A$ is a $(k-l)\times (k-l)$-matrix and $B$ is an $l\times (k-l)$-matrix whose entries are smooth functions in $w$. By \cite[Proposition 3.8]{DS-uniqueergodicity} and \cite[Theorem 1.19 (i)]{VANguyen-densitycurrent}, strongly admissible maps along $V$ always exist.

For $\lambda \in \C^*$, let $A_\lambda : E\to E$ be the multiplication by $\lambda$ on the fibers of $E$. Let $\tau$ be a strongly admissible map defined on a tubular neighborhood of $V$. By \cite[Theorem 4.6]{Dinh_Sibony_density} (in the case where $T$ is closed) and \cite[Theorem 1.18 and Corollary 1.21]{VANguyen-densitycurrent} (in the general case), the  currents 
\[(A_\lambda)_* (\tau)_* (T)\]
has uniformly bounded masses on compact subsets of $E$. Any limit current of this family in $E$ is called a \textit{tangent current to $T$ along $V$}. Such a current is positive $dd^c$-closed, invariant by $A_\lambda$, and can be extended to a current in $\overline{E}$. A tangent current does not depend on the choice of admissible maps. In general, the tangent currents to $T$ along $V$ are not unique.

Suppose that $T$ is closed. Then by \cite[Theorem 4.6]{Dinh_Sibony_density}, the cohomology classes of the tangent currents to $T$ along $V$ in $H^*_c(\overline{E},\C)$ are unique. This class is denoted by $\kappa^{V}(T)$ and is called the \textit{total tangent class to $T$ along $V$}. Let $h_{\overline{E}}$ be the Chern class of $\mathcal{O}_{\overline{E}}(1)$. By the Leray-Hirsch theorem, we can write $\kappa^V(T)$ uniquely as
\begin{equation}\label{eq Leray-Hirsch for closed case}
    \kappa^V(T) = \sum_{j= \max (0,k-p)}^{\min(k,n-p-1)} \pi^*( \kappa_j) \cdot h_{\overline{E}}^{p-(k-j)},
\end{equation}
where $\kappa_j$ is a cohomology class in $H_c^{2k-2j}(V,\C)$.

Suppose that $X$ is a compact K\"ahler manifold. Let $S$ be a $dd^c$-closed current of bi-degree $(p,q)$. Let $\alpha$ and $\alpha'$ be two smooth closed $(n-p,n-q)$ forms in the same cohomology class in $H^{n-p,n-q}(X,\C)$. By the $dd^c$-lemma, there exists a smooth form $f$ of bi-degree $(n-p-1,n-q-1)$ such that $\alpha-\alpha' = dd^c f$. It follows from Stokes' formula that
\[\langle S,\alpha\rangle - \langle S,\alpha'\rangle = \langle S,dd^c f\rangle = \langle dd^c S, f\rangle = 0.\]
So $S$ defines an element in the dual space of $H^{n-p,n-q}(X,\C)$. By Poincar\'e's duality, $S$ defines a class in $H^{p,q}(X,\C)$.

In our situation, the tangent currents to $T$ along $V$ are positive $dd^c$-closed currents on the compact K\"ahler manifold $\overline{E}$. By \cite[Theorem 2.14]{VA-generalizedLelong}, these currents belong to the same cohomology class in $H^{p,p}(\overline{E},\R)$, which we also denote by $\kappa^{V}(T)$. Similar to \eqref{eq Leray-Hirsch for closed case}, we can decompose this class uniquely as
\begin{equation}\label{eq Leray-Hirch for ddc closed case}\kappa^V(T) = \sum_{j= \max (0,k-p)}^{\min(k,n-p-1)} \pi^*( \kappa_j) \cdot h_{\overline{E}}^{p-(k-j)},
\end{equation}
where $\kappa_j$ is a cohomology class in $H^{k-j,k-j}(V,\C)$ (note that in this situation $V$ is also a compact K\"ahler manifold).

Let $S$ be a non-zero positive current of bi-degree $(p,p)$ on $\overline{E}$. Let $V_0$ be an open subset of $V$. Then the \textit{horizontal dimension} (h-dimension for short) of $S$ over $V_0$ is the largest number $j$ such that $S\wedge \pi^*(\omega_V^j) \neq 0$ on $\pi^{-1}(V_0)$, where $\omega_V$ is some K\"ahler form on $V$. By dimension reason, the h-dimension of $S$ belongs to $[\max (0,k-p),\min(k,n-p-1)]$.

Suppose that $S$ has compact support on $\overline{E}$. When $S$ is closed, due to \cite[Lemma 3.8]{Dinh_Sibony_density}, the h-dimension of $S$ over $V$ depends only on the cohomology class $\{S\}$ and equals the largest $j$ such that $\kappa_j \neq 0$ in the decomposition \eqref{eq Leray-Hirsch for closed case}. When $X$ is a compact K\"ahler manifold and $S$ is a general positive $dd^c$-closed current, a similar conclusion holds by \cite[Theorems 2.12 and 2.14]{VA-generalizedLelong}.

Suppose that $p<k$ and $S$ is a positive $dd^c$-closed current of bi-degree $(p,p)$ on $\overline{E}$ such that $S\wedge \pi^*(\omega_V^j) = 0$ on $\pi^{-1}(V_0)$ for every $j\geq 1$ (we call such $S$ has \textit{minimal $h$-dimension}).  Then, by \cite[Lemma 3.4]{Dinh_Sibony_density} (in the case where $S$ is closed) and \cite[Lemma 3.7]{DS-uniqueergodicity} (in the general case), there is a unique positive $dd^c$-closed current $S^h$ of bi-degree $(p,p)$ on $V_0$ such that $S=\pi^*(S^h)$ on $\pi^{-1}(V_0)$. We say $S^h$ is the \textit{shadow} of $S$ on $V_0$. We note that although \cite[Lemma 3.7]{DS-uniqueergodicity} is only proven for dimension $2$, the proof is similar in arbitrary dimensions, which relies on the fact that positive plurisubharmonic functions on $\P^{n-k}$ are constant functions.

Let $T_1$ be a positive $dd^c$-closed current of bi-degree $(p,p)$ on $X$, $T_2$ be a positive \textit{closed} current of bi-degree $(q,q)$ on $X$. Assume that the intersection of their supports is compact. Define $\mathbb{T}\colonequals T_1 \otimes T_2$. Then $\mathbb{T}$ is a positive $dd^c$-closed on $X^2$. Denote by $\Delta$ the diagonal of $X^2$. Then $\supp( \mathbb{T}) \cap \Delta$ is compact. Let $\pi_2:\mathbb{E}_2 \to \Delta$ denotes the normal bundle to $\Delta$ in $X^2$. A \textit{density current} of $T_1$ and $T_2$ is a tangent current of $\mathbb{T}$ along $\Delta$. Density currents of $T_1$ and $T_2$ are positive $dd^c$-closed current on $\mathbb{E}_2$ which can be extended trivially to $\overline{\mathbb{E}_2}$. The \textit{total density class} of $T_1,T_2$ is the total tangent class of $\mathbb{T}$ along $\Delta$, and the \textit{density h-dimension} is the h-dimension of density currents of $T_1$ and $T_2$ along $\Delta$.

\subsubsection{Technical lemmas}

We now recall some results on density currents. An important property of density currents is that they can be localized. Let $T_\infty$ be a tangent current to $T$ along $V$. Then there exists a sequence $(\lambda_m)_m$ such that $\lambda_m \to \infty$ and 
\[T_\infty = \lim_{m\to \infty} (A_{\lambda_m})_* \tau_* (T).\]
As we mentioned before, $T_\infty$ does not depend on the choice of 
the admissible map $\tau$. It only depends on the sequence $(\lambda_m)$. This property can be viewed through the following Proposition (see \cite[Proposition 4.4]{Dinh_Sibony_density} in the case where $T$ is closed and \cite[Theorem 15.2]{VANguyen-densitycurrent} in the general case).

\begin{proposition}\label{prop local global density}
Let $U$ be a local chart of $X$ such that $U\cap V \neq \varnothing$. Let $\tau'$ be a strongly admissible map along $U\cap V$ in $U$. Then, over $U$, we have
\[T_\infty = \lim_{m\to \infty} (A_{\lambda_m})_* \tau'_* (T).\]
\end{proposition}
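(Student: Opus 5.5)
We have to show that, over $U$, the limit $\lim_m (A_{\lambda_m})_*\tau'_*(T)$ exists and coincides with $T_\infty$. The plan is to compare the two admissible maps $\tau$ (global) and $\tau'$ (local) on $U$. We set $\Phi\colonequals \tau'\circ\tau^{-1}$, defined on a neighborhood of $V\cap U$ in $E$. Both maps are strongly admissible, so in local coordinates $(z,w)$ with $V=\{z=0\}$ they have the normal form $(z+O(\|z\|^2),\,w+O(\|z\|))$ with the specific structure given in the definition. It follows that $\Phi$ is a diffeomorphism fixing $V$ pointwise and inducing the identity on the normal bundle. More precisely,
\[\Phi(z,w)=\bigl(z+O(\|z\|^2),\; w+O(\|z\|)\bigr),\]
where the $O(\|z\|)$ term in the $w$-direction comes from the difference of the matrices $B$. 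We then write
\[(A_{\lambda})_*\tau'_*(T)=(A_\lambda\circ\Phi\circ A_\lambda^{-1})_*\,(A_\lambda)_*\tau_*(T),\]
and set $\Phi_\lambda\colonequals A_\lambda\circ\Phi\circ A_\lambda^{-1}$.

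First, we observe that on any fixed compact set $K$ of $E|_{U\cap V}$, the maps $\Phi_\lambda$ converge to the identity as $|\lambda|\to\infty$, together with their first derivatives, uniformly on $K$. Concretely,
\[\Phi_\lambda(z,w)=\bigl(z+\lambda O(\|z/\lambda\|^2),\; w+O(\|z/\lambda\|)\bigr),\]
and both error terms are $O(1/|\lambda|)$ on $K$. Only $C^1$ convergence is available, since second derivatives in $z$ of the first component stay bounded but do not vanish in general. This is precisely why strong admissibility is needed in the $dd^c$-closed case, whereas in the closed case ordinary admissibility suffices. Second, by the mass bound already quoted (\cite[Theorem 4.6]{Dinh_Sibony_density}, \cite[Theorem 1.18]{VANguyen-densitycurrent}), the family $R_m\colonequals (A_{\lambda_m})_*\tau_*(T)$ has uniformly bounded mass on compact sets. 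Since $R_m\to T_\infty$ by assumption, we are reduced to showing that
\[\langle (\Phi_{\lambda_m})_*R_m - R_m,\ \varphi\rangle = \langle R_m,\ \Phi_{\lambda_m}^*\varphi-\varphi\rangle \longrightarrow 0\]
for every test form $\varphi$ supported in $E|_{U\cap V}$.

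When $T$ is closed (or, more generally, when $R_m$ is a positive current of order zero with bounded mass), it suffices that $\Phi_{\lambda_m}^*\varphi-\varphi\to 0$ uniformly. This follows from the $C^1$ convergence, because a pullback of a form involves only first derivatives of the map. The same argument applies to positive $dd^c$-closed currents, since these are still of order zero. The subtlety, and the main obstacle, is that $\Phi^*_\lambda\varphi$ is only close to $\varphi$ in $C^0$, while controlling the error uniformly requires the coefficients of $\Phi^*_\lambda\varphi-\varphi$ to be $o(1)$ in sup norm against a mass bound on compacts. Positivity gives exactly this: the coefficients of $R_m$ are complex measures dominated by the trace measure $R_m\wedge\omega^{\cdot}$, whose mass on $K$ is uniformly bounded. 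Hence $|\langle R_m,\Phi^*_{\lambda_m}\varphi-\varphi\rangle|\lesssim \|\Phi^*_{\lambda_m}\varphi-\varphi\|_{C^0}\,\|R_m\|_K\to 0$. The point I expect to need care with is ensuring that the support of $\Phi_{\lambda_m}^*\varphi$ stays in a fixed compact set, and that the mass bound holds for the family $(A_\lambda)_*\tau_*T$ near $\pi^{-1}(U\cap V)$ independently of $\tau$. Both follow from the uniform convergence $\Phi_\lambda\to\mathrm{id}$ and from the cited mass estimates applied to $\tau$. This then yields the claimed equality over $U$.
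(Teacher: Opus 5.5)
Your proof is correct and is the standard comparison argument behind the results the paper cites in place of a proof (Dinh--Sibony, Prop.~4.4, and Nguy\^{e}n, Thm.~15.2): you write $(A_{\lambda_m})_*\tau'_*T=(\Phi_{\lambda_m})_*R_m$ with $\Phi_\lambda=A_\lambda\circ\tau'\circ\tau^{-1}\circ A_\lambda^{-1}\to\mathrm{id}$ locally uniformly in $C^1$, and use the uniform bound on the mass of $R_m=(A_{\lambda_m})_*\tau_*T$ on compact sets. Two side remarks are wrong but do not affect the conclusion: $R_m$ is in general not positive, since $\tau$ is only a smooth diffeomorphism, so the total-variation bound you need must come from the cited mass estimate itself (the $R_m$ are of order zero, being push-forwards of $T$ by diffeomorphisms) rather than from domination by a trace measure; and $\Phi_\lambda$ in fact converges to the identity in $C^\infty$ on compact sets (all derivatives of $\Phi_\lambda-\mathrm{id}$ are $O(1/|\lambda|)$ there, including the second $z$-derivatives of $\lambda\,O(\|z/\lambda\|^2)$), so strong admissibility is needed in the existence theory (mass estimates and $dd^c$-closedness of the limits, since $\tau_*$ does not commute with $dd^c$) and not in this comparison.
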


In our later applications, we only need to consider the case where $T_1$ is a positive $dd^c$-closed current of bi-dimension $(1,1)$ and $T_2$ is a positive closed current of bi-degree $(1,1)$. We have the following criterion for $T_1$ and $T_2$ to admit minimal density h-dimension.

\begin{lemma}\label{lemma puts no mass implies positive cup product}
     Suppose that $X$ is a compact K\"ahler manifold. Suppose that $T_1$ puts no mass on the set $\{x:\nu(T_2,x) > 0 \}$. Then the density h-dimension of $T_1$ and $T_2$ is minimal. Moreover, if we let $T_\infty$ be a density current of $T_1$ and $T_2$, then $T_\infty = (\pi_2)^* (S_\infty)$ for some positive measure $S_\infty$ on $\Delta$ (which we can identify with $X$) such that $\{T_1\} \cdot \{T_2\} =  \|S_\infty\|_X$. In particular, we have $\{T_1\}\cdot \{T_2\}\geq 0$.  
\end{lemma}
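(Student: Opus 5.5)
Throughout, $T_1$ is a positive $dd^c$-closed current of bi-dimension $(1,1)$ (bi-degree $(n-1,n-1)$), $T_2$ a positive closed $(1,1)$-current, $\mathbb{T}=T_1\otimes T_2$ has bi-degree $(n,n)$ on $X^2$, and $\Delta$ has dimension $n$. The density h-dimension therefore lies in $[0,n-1]$, and \emph{minimal} means $0$: a density current $T_\infty$ must satisfy $T_\infty\wedge\pi_2^*\omega_\Delta=0$. The plan has three steps: prove this vanishing locally, use the shadow lemma to write $T_\infty=\pi_2^*(S_\infty)$, then read off the cup product from the Leray–Hirsch decomposition \eqref{eq Leray-Hirch for ddc closed case}.

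\textbf{Step 1: local vanishing.} By Proposition~\ref{prop local global density} the problem localizes. Take a chart $U$ with coordinates $x$. Write $T_2=dd^c u$ with $u$ psh on $U$, and use the strongly admissible map $(x,y)\mapsto(x,y-x)$. The dilation $A_\lambda$ acts on $(x,y)$ by $(x,x+(y-x)/\lambda)$. Fix a smooth form $\Phi$ on $\overline{\mathbb{E}_2}$ containing the factor $\pi_2^*\omega_\Delta$. We must show that
\[
\langle (A_\lambda)_*\tau_*\mathbb{T},\Phi\rangle \to 0 .
\]
The horizontal factor $\pi_2^*\omega_\Delta$ must be absorbed by $T_1$, since $T_1$ has bi-dimension $(1,1)$. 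The remaining fiber directions must then be absorbed by $T_2$ near the diagonal. Concretely, the pairing reduces to integrals of the form
\[
\int T_1(x)\wedge \big(\text{mass of } T_2 \text{ on } B(x,r) \text{ against } r^{-2(n-1)}\big),
\]
with $r\sim 1/|\lambda|$. The bracketed quantity is the Lelong ratio $\nu(T_2,x,r)$. For each $x$ it decreases to $\nu(T_2,x)$, and it is locally uniformly bounded.

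\textbf{Step 2: dominated convergence.} The hypothesis says that $\nu(T_2,x)=0$ for $T_1$-almost every $x$. Since $T_1$ has locally finite mass, dominated convergence over the trace measure of $T_1$ sends the horizontal part to $0$. This gives minimal h-dimension over each chart, hence over all of $\Delta$.

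\textbf{Step 3: shadow and cohomology.} The shadow lemma (\cite[Lemma 3.7]{DS-uniqueergodicity}, valid in all dimensions as remarked in the text) gives a positive $dd^c$-closed current $S_\infty$ of bi-degree $(n,n)$ on $\Delta$, i.e.\ a positive measure, with $T_\infty=\pi_2^*S_\infty$. It remains to show $\{T_1\}\cdot\{T_2\}=\|S_\infty\|$. In \eqref{eq Leray-Hirch for ddc closed case} only the $j=0$ term survives, with $\kappa_0=\{S_\infty\}\in H^{n,n}(\Delta)$. On the other hand, the pairing of $\kappa^\Delta(\mathbb{T})$ with a class pulled back from $\Delta$ equals the pairing of $\{\mathbb{T}\}$ with $[\Delta]$, which is $\{T_1\}\cdot\{T_2\}$. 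This is the $dd^c$-closed analogue of the Dinh–Sibony identity for the top-degree component, which I would take from \cite[Theorem 2.14]{VA-generalizedLelong}. Pairing both sides with $\pi_2^*(1)\cdot h^{n-1}$ (or with an appropriate test class) then gives $\|S_\infty\|=\{T_1\}\cdot\{T_2\}$. Nonnegativity follows because $S_\infty$ is a positive measure.

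\textbf{Main obstacle.} The main obstacle is Step 1. One must bound the error terms of the admissible map and convert the mixed product $T_1\otimes T_2$ near the diagonal into Lelong-type ratios of $T_2$ weighted by $T_1$. This conversion has to be uniform, which needs the local potential $u$ and a mass estimate of the form $\int_{B(x,r)}T_2\wedge\omega^{n-1}\lesssim r^{2n-2}$, applied with the upper semicontinuity of Lelong numbers. I would try first to adapt the slicing argument of \cite{Dinh_Sibony_density} for the closed-closed case, since $T_1$ only needs to be integrated against.
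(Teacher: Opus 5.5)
Your proposal is correct, and at the key step it takes a different route from the paper.

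\textbf{The paper's route.} The paper gives no argument of its own. It cites \cite[Theorem 3.6]{SuVu-volume-lelong-components-II} for closed $T_1$, where minimality of the density h-dimension comes from \cite[Proposition 5.7]{Dinh_Sibony_density}. For $dd^c$-closed $T_1$ it says this input is replaced by \cite[Theorem 2.14]{VA-generalizedLelong}.

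\textbf{Your route.} You prove minimality directly, and Steps 1--2 are cleaner than your ``main obstacle'' paragraph suggests.
\begin{itemize}
\item In the coordinates $(y-x,x)$ the map $\tau(x,y)=(x,y-x)$ is the identity. So it is strongly admissible and produces no error terms.
\item $T_1\otimes T_2$ only sees the component of a test form of type $(1,1)$ in $x$ and $(n-1,n-1)$ in $y$. Hence every term containing $(\pi_2^*\omega)^2$ pairs to zero with $(A_\lambda\circ\tau)_*\mathbb{T}$.
\item By positivity of $T_\infty\wedge\pi_2^*\omega$, it therefore suffices to test against $\chi(x)\rho(w)\,\pi_2^*\omega\wedge(dd^c|w|^2)^{n-1}$ with $\omega=dd^c|x|^2$.
\item That pairing equals exactly $|\lambda|^{2n-2}\int\chi(x)\rho(\lambda(y-x))\,d\sigma_{T_1}(x)\,d\sigma_{T_2}(y)$. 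Here $\sigma_{T_1}=T_1\wedge dd^c|x|^2$ and $\sigma_{T_2}=T_2\wedge(dd^c|y|^2)^{n-1}$.
\item For $0\le\rho\le\mathbf{1}_{\{|w|\le 2R\}}$ this is at most a constant times $R^{2n-2}\int\chi\,\nu(T_2,x,2R/|\lambda|)\,d\sigma_{T_1}(x)$.
\item Monotone convergence of Lelong ratios, which needs only $T_2$ closed, together with the hypothesis, finishes the argument. No potential $u$, slicing or semicontinuity is needed.
\end{itemize}

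\textbf{What each route buys.} Your argument is self-contained. It uses only positivity and local finiteness of mass of $T_1$, so closed and $dd^c$-closed $T_1$ are handled identically; that is exactly where the paper has to switch references. It also identifies the horizontal part of $T_\infty$ with the measure $\nu(T_2,\cdot)\,T_1\wedge\omega$, which makes the role of the hypothesis transparent. The same estimate from below, with $\rho\ge\mathbf{1}_{\{|w|\le R\}}$, even gives the converse. The paper's route is shorter and rests on general h-dimension theory. In both, the existence of tangent currents, the shadow lemma and the classes in the $dd^c$-closed setting are taken from the literature.

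\textbf{One slip in Step 3.} Pairing $\kappa^\Delta(\mathbb{T})=\pi_2^*\kappa_0$ with a class pulled back from $\Delta$ gives $0$ for degree reasons. Also, $\pi_2^*(1)\cdot h^{n-1}$ has degree $2n-2$, not $2n$.

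The fact you need is that the $h^0$-component $\kappa_0$ equals $\{\mathbb{T}\}|_\Delta=\{T_1\}\smile\{T_2\}$. Equivalently, $\kappa_0$ is the restriction of $\kappa^\Delta(\mathbb{T})$ to the zero section, on which $h$ vanishes. This is the restriction identity you meant to quote.

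With it, injectivity of $\pi_2^*$ (Leray--Hirsch) gives $\{S_\infty\}=\kappa_0$. Pairing with $h^n$, whose fibre integral is $1$, reads off $\|S_\infty\|$, hence $\|S_\infty\|=\{T_1\}\cdot\{T_2\}$.
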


We refer the reader to \cite[Theorem 3.6]{SuVu-volume-lelong-components-II} for a proof of this lemma in the case where $T_1$ is closed. The key point in the proof is to show that the density $h$-dimension is minimal. This is done in \cite{SuVu-volume-lelong-components-II} by using \cite[Proposition 5.7]{Dinh_Sibony_density}. In the general case, this argument can be replaced by \cite[Theorem 2.14]{VA-generalizedLelong}. See also \cite{NSV-cohom-nonpluripolar} for an application of this lemma in the study of positive cones in K\"ahler geometry.

The theory of density currents has been proven to cover previous notions on the intersection theory of currents. We will recall a simple fact here and refer the reader to \cite{DNV,Viet_Lucas,VietTuanLucas,Viet-density-nonpluripolar} for results in this direction.

\begin{proposition}\label{prop density = classical local case}
    Let $U$ be an open subset of $X$. Suppose that $T_2$ is smooth on $U$. Let $T_\infty$ be a density current of $T_1$ and $T_2$. Then, we have
    \[T_\infty = (\pi_2)^* (T_1\wedge T_2)\qquad \text{on}\qquad \pi_2^{-1}(U).\]
\end{proposition}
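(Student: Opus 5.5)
The plan is to reduce to a computation in one coordinate chart, using the localization property of Proposition~\ref{prop local global density}, and then pass to the limit directly in the coefficients of $T_2$.

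\textbf{Step 1: localization.} Since the statement is local over $U$, it suffices to prove it on $\pi_2^{-1}(U')$ for every small coordinate ball $U'\subset U$. Fix such a $U'$ with coordinates $x=(x_1,\dots,x_n)$. Write $\mathbb{T}=T_1\otimes T_2$ and $T_\infty=\lim_m (A_{\lambda_m})_*\tau_*(\mathbb{T})$. By Proposition~\ref{prop local global density}, the same sequence $(\lambda_m)$ computes $T_\infty$ over $U'$ with any strongly admissible map along $\Delta\cap(U'\times U')$.

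\textbf{Step 2: choice of admissible map.} On $U'\times U'$, use coordinates $(w,z)$ with $w=x$ and $z=y-x$, so that $\Delta=\{z=0\}$. Then the normal bundle $\mathbb{E}_2$ is trivialized over $\Delta\cap(U'\times U')\simeq U'$ as $U'\times\C^n$. Take $\tau(x,y)=(x,y-x)$. In these coordinates $\tau$ is the identity, so it has the required form with $A=0$, $B=0$ and no higher-order terms. Hence $\tau$ is strongly admissible.

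\textbf{Step 3: explicit formula for the rescaled currents.} Write $T_2=\sum_{i,j} t_{ij}(y)\, i\,dy_i\wedge d\bar y_j$ with $t_{ij}$ smooth on $U$. The map $A_\lambda$ sends $(x,v)$ to $(x,\lambda v)$. Therefore, in the coordinates $(x,v)$,
\[(A_\lambda)_*\tau_*(\mathbb{T}) = T_1(x)\wedge \sum_{i,j} t_{ij}(x+v/\lambda)\, i\,(dx_i+\lambda^{-1}dv_i)\wedge(d\bar x_j+\lambda^{-1}d\bar v_j).\]
Here $T_1(x)$ denotes the pull-back of $T_1$ by the first projection $(x,v)\mapsto x$. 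This expression makes sense because $T_2$ is smooth, so its pull-back is a smooth form that can be wedged with the current $T_1$.

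\textbf{Step 4: passing to the limit.} Expand the product. Every term containing some $dv_i$ or $d\bar v_j$ carries a factor $\lambda^{-1}$ or $\lambda^{-2}$. Its coefficients are products of the measure coefficients of $T_1$ with smooth functions that are bounded on compact subsets of $U'\times\C^n$, uniformly in $\lambda\geq1$. Hence these terms tend to $0$ weakly.

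For the remaining terms, $t_{ij}(x+v/\lambda)\to t_{ij}(x)$ uniformly on compact sets, together with derivatives. Testing against a smooth compactly supported form, this uniform convergence gives the weak convergence
\[T_1(x)\wedge \sum_{i,j} t_{ij}(x)\, i\,dx_i\wedge d\bar x_j = \pi_2^*(T_1\wedge T_2).\]
So $T_\infty=\pi_2^*(T_1\wedge T_2)$ on $\pi_2^{-1}(U')$, and covering $U$ by such charts finishes the proof.

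\textbf{Main obstacle.} The only delicate point is Step 1: the density current is defined globally along $\Delta\subset X^2$, while the computation is carried out only over $U'\times U'$. This is exactly what Proposition~\ref{prop local global density} handles: any strongly admissible local map, along the same sequence $(\lambda_m)$, yields the same limit. The weak convergence in Step 4 is routine, because $T_1$ has measure coefficients and is only ever paired with uniformly convergent smooth coefficients.
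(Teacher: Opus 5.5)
Your proof is correct and follows the paper's route. The paper localizes via Proposition~\ref{prop local global density}, identifies $\mathbb{E}_2$ with $U\times\C^n$, and then defers to the analogous computation in Kaufmann--Vu (Proposition~2.1 there); your Steps 3--4 write out exactly that rescaling computation. The only cosmetic corrections are that for complex $\lambda$ the $d\bar v_j$ terms carry $\bar\lambda^{-1}$ rather than $\lambda^{-1}$, and that $|\lambda|$ must be large enough for $x+v/\lambda$ to stay in $U'$ on the support of the test form; neither affects the argument.
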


\begin{proof}
    By Proposition~\ref{prop local global density}, we can identify $U$ with an open subset of $\C^n$, and $\mathbb{E}_2$ with $U\times \C^n$ over $U$. Arguing similarly to \cite[Proposition 2.1]{Viet_Lucas} will conclude the proof.
\end{proof}

\subsection{Directed currents}

We finish this section by recalling the notion of directed currents, which play an important role in our study. Let $X$ be a complex manifold of dimension $n$. Let $V$ be a holomorphic rank $k$ sub-bundle of $T_X$. Such a pair $(X,V)$ is called a \textit{complex directed manifold}. We can also view $V$ as a family of complex subspaces $(V_p)_{p \in  X}$ such that the following conditions are satisfied:
\begin{itemize}
\item[(i)] $V_p$ is a complex subspace of dimension $k$ in $T_p X$ for every $p \in  X$;

\item[(ii)] There exists a finite open cover of $X$ by local charts $U_1, \ldots, U_m$ and holomorphic $1$-forms $\eta_{j,l}$ on $U_j$ for $1 \le j \le m, 1\leq l \leq n-k $ such that 
\begin{align}\label{eq def of directed manifold}
V_p = \{v \in T_p  X: \eta_{j,l}(v)=0 \text{ for all } j,l\}
\end{align}
for every $p \in U_j$.
\end{itemize}
We note that in \eqref{eq def of directed manifold}, at every point $p\in U_j$, $(\eta_{j,l}|_p)_l$ are linearly independent in $T_p^* X$.

Let $T$ be a positive current of bi-degree $(p,p)$ on $X$. We said that $T$ is \textit{directed by $V$} if 
\[T \wedge \eta_{j,l} = 0\qquad \text{for}\qquad1\leq j\leq m, 1\leq l \leq n-k.\]
Note that this definition does not depend on the choice of open local charts $U_j$ as well as holomorphic forms $\eta_{j,l}$. The following property is useful in the study of directed currents.

\begin{proposition}\label{prop dominated directed currents}
    Let $T$ be a positive current that is directed by $V$. Suppose that $T \geq S$ where $S$ is a positive current. Then $S$ is also directed by $V$.
\end{proposition}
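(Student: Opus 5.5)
The proof reduces to a local statement about positive currents of bi-degree $(p,p)$. Since being directed by $V$ is defined chart by chart, it suffices to fix a chart $U_j$ and one holomorphic $1$-form $\eta\colonequals\eta_{j,l}$ with $T\wedge\eta=0$, and to show $S\wedge\eta=0$. The natural route is to replace the linear condition $T\wedge \eta = 0$ by a positivity condition that is monotone in $T$. Concretely, we will use the equivalence
\[T\wedge \eta = 0 \iff T\wedge i\,\eta\wedge\bar\eta = 0,\]
valid for positive $T$. Granting this, $R\colonequals T-S\geq 0$ gives
\[0\le S\wedge i\eta\wedge\bar\eta\le T\wedge i\eta\wedge\bar\eta=0.\]
Here we use that wedging a positive current with the positive $(1,1)$-form $i\eta\wedge\bar\eta$ preserves positivity, so the ordering $S\le T$ is preserved. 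Applying the equivalence to $S$ then yields $S\wedge\eta=0$.

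The forward implication of the equivalence is trivial: $T\wedge\eta\wedge i\bar\eta=0$. The reverse implication is the main step and requires a Cauchy--Schwarz argument. For a positive $(p,p)$-current $T$ and a test form $\phi$ of bi-degree $(n-p-1,n-p)$, we consider the Hermitian pairing
\[(\alpha,\beta)\mapsto \langle T, c\,\alpha\wedge\bar\beta\rangle\]
on $(n-p,n-p-1)$... More precisely, we first reduce to strongly positive test forms. Write $T=\sum T_{IJ}\, i dz_I\wedge d\bar z_J$ with complex measure coefficients dominated by the trace measure. Locally we change coordinates so that $\eta=dz_1$ at a point. This is possible because $\eta$ is nonvanishing, so by a holomorphic change of coordinates we may take $\eta=dz_1$ on a smaller chart; since $\eta$ need not be closed, we instead work pointwise with measurable coefficients, which suffices.

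The condition $T\wedge i\,dz_1\wedge d\bar z_1=0$ says that every diagonal coefficient $T_{II}$ with $1\notin I$ vanishes, where we index by the complementary (dimension) multi-indices. For a positive current the coefficient matrix is positive semidefinite as a measure-valued Hermitian matrix on $\Lambda^{n-p}$ decomposable directions. Cauchy--Schwarz then gives $|T_{IJ}|^2\le T_{II}T_{JJ}$ in the sense of measures, which holds for the bi-dimension $(1,1)$ case used later, and for general $p$ on strongly positive test pairings. Hence all coefficients involving an index with $1\notin I$ vanish, which is exactly $T\wedge dz_1=0$.

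I expect the obstacle to be making this Cauchy--Schwarz step rigorous for general $(p,p)$ with weakly positive currents. If this proves problematic, an alternative is to note that $S\le T$ implies that $S$ is absolutely continuous with respect to the trace measure $\sigma_T$, with $S=h\,\sigma_T$ for a measurable form $h$ satisfying $0\le h\le \vec T$ pointwise ($\vec T$ the density of $T$). Pointwise linear algebra then shows that if a positive form $\vec T$ satisfies $\vec T\wedge\eta=0$ and $0\le h\le\vec T$, then $h\wedge\eta=0$. This follows from the same semidefinite-matrix argument at a single point, where everything is finite-dimensional.
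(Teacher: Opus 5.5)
Your strategy is the paper's. First use monotonicity, $0\le S\wedge i\eta\wedge\bar\eta\le T\wedge i\eta\wedge\bar\eta=0$. Then use Cauchy--Schwarz to pass from $S\wedge i\eta\wedge\bar\eta=0$ to $S\wedge\eta=0$.

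In bi-dimension $(1,1)$ your argument is complete. This is the case the paper's one-line inequality actually covers, and the only case used later. There the coordinate normalization is unnecessary: for a cut-off $\chi\ge 0$, the pairing $(\alpha,\beta)\mapsto\langle S,\chi\, i\alpha\wedge\bar\beta\rangle$ is a positive semidefinite Hermitian form on $(1,0)$-forms, so Cauchy--Schwarz applies directly with $\alpha=\eta$. Two small corrections: a holomorphic chart with $\eta=dz_1$ need not exist in general, and with dimension multi-indices the vanishing diagonal entries are those with $1\in I$, not $1\notin I$.

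For bi-dimension $(q,q)$ with $2\le q\le n-2$, however, the step you rely on is false. Your pointwise fallback does not rescue it, because the failure is already linear-algebraic. A positive form $\Phi$ of bi-degree $(n-q,n-q)$ corresponds to the Hermitian form $h$ on $\Lambda^{q,0}$ defined by $\Phi\wedge i^{q^2}\alpha\wedge\bar\beta=h(\alpha,\beta)\,dV$. This $h$ is only required to be nonnegative on decomposable $\alpha$, so it need not be semidefinite. Hence $|h_{IJ}|^2\le h_{II}h_{JJ}$ can fail when $I,J$ differ in more than one index.

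For instance, on $\C^4$, decomposable $\alpha=\sum a_{ij}\,dz_i\wedge dz_j$ satisfy the Pl\"ucker relation $a_{12}a_{34}=a_{13}a_{24}-a_{14}a_{23}$. This shows that $h(\alpha,\alpha)=2\Re(\bar a_{12}a_{34})+|a_{13}|^2+|a_{14}|^2+|a_{23}|^2+|a_{24}|^2$ is nonnegative on decomposables, so it defines a positive $(2,2)$-form. Yet $h_{12,12}=h_{34,34}=0$ while $h_{12,34}=1$.

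The implication you need is nevertheless true, by reduction to bi-dimension $(1,1)$:
\begin{itemize}
\item For every constant strongly positive $(q-1,q-1)$-form $u$, $S\wedge u$ is a positive current of bi-dimension $(1,1)$ with $(S\wedge u)\wedge i\eta\wedge\bar\eta=0$.
\item Hence $S\wedge u\wedge\eta=0$ by the $(1,1)$ case.
\item Such $u$ span all constant $(q-1,q-1)$-forms.
\item Every $(q-1,q)$-test form is locally a sum of terms $u\wedge\psi$ with $u$ constant and $\psi$ a $(0,1)$-test form.
\end{itemize}
It follows that $S\wedge\eta=0$.
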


\begin{proof}
    Since $i \eta_{j,l} \wedge \overline{\eta_{j,l}}$ is a positive form for all $j,l$, we have 
    \[0\leq S \wedge (i \eta_{j,l} \wedge \overline{\eta_{j,l}}) \leq T \wedge (i \eta_{j,l} \wedge \overline{\eta_{j,l}}) = 0. \]
    This implies that $S \wedge (i\eta_{j,l} \wedge \overline{\eta_{j,l}}) = 0$ for all $j,l$. Let $\gamma$ be a test form. By Cauchy-Schwarz inequality, we have
    \[|\langle S \wedge \eta_{j,l}, \gamma\rangle | \leq \langle S,i\eta_{j,l} \wedge \overline{\eta_{j,l}} \rangle ^{1/2} \cdot \langle S, i\gamma \wedge \overline{\gamma} \rangle^{1/2} = 0.\]
    We infer that $S\wedge \eta_{j,l} = 0$ for all $j,l$. Thus, $S$ is directed by $V$ as desired. 
\end{proof}

Directed currents are useful tools in the study of foliations and the Levi problem. We refer the reader to\cite{fornaess-sibony-survet-laminiantion,sibony-levi-problem} for more information.

\section{Demailly-Semple tower and liftable currents}\label{sec demailly-semple jet and liftable currents}

In this section, we will recall the construction of the Demailly-Semple tower (we refer the reader to \cite{demailly-97-algebraic-hyperbolicity} for a complete treatment) and introduce the notion of liftable currents, which contain both closed curves and currents associated with entire curves mentioned in Section~\ref{sec complex hyperbolicity}).

\subsection{Projectivized $1$-jet bundle}\label{subsec projectivized 1 jet}

Given a complex directed manifold $(X,V)$, where $X$ is a complex manifold $X$ of dimension $n$ and $V$ is a rank $r$ holomorphic subbundle of $T_X$. We note that in general $V$ is \textit{not} integrable. One can produce a new complex directed manifold $(\widetilde{X},\widetilde{V})$ that plays the role of a space of $1$-jets over $(X,V)$. We put $\widetilde{X} = \mathbb{P}(V)$ and let $\pi:\widetilde{X} \to X$ be the natural projection. Denote by $\pi_* : T_{\widetilde{X}} \to T_X$ the differential map. Let $\widetilde{V}$ be a holomorphic subbundle of $T_{\widetilde{X}}$ defined by
\begin{equation}\label{eq define tilde V}
\widetilde{V}_{(x,[v])} \colonequals \Big\{\xi \in T_{(x,[v])} \widetilde{X} : \pi_*(\xi) \in \C v \Big\} \text{ for }[v] \text{ such that } \C v \subset V_x \subset T_xX.
\end{equation}
By definition, we have $\dim \widetilde{X} = n + r -1$ and $\mathrm{rank}(\widetilde{V}) = r$. On $\widetilde{X} = \P (V)$, we have the tautological line bundle $\mathcal{O}_{\widetilde{X}}(-1) \subset \pi^* (V)$ such that $\mathcal{O}_{\widetilde{X}}(-1)|_{(x,[v])} = \C v$. By definition, the bundle $\widetilde{V}$ satisfies the short exact sequence:
\begin{equation}\label{eq short exact sequence for tilde V}
    0\longrightarrow T_{\widetilde{X}/X} \longrightarrow \widetilde{V} \xrightarrow{\ \pi_* \ }  \mathcal{O}_{\widetilde{X}}(-1) \longrightarrow 0,
\end{equation}
where $T_{\widetilde{X}/X}$ is the relative tangent bundle of $\pi:\widetilde{X} \to X$.

We have the following observation about positive currents directed by $\widetilde{V}$.

\begin{proposition}\label{prop k liftable is k-1 liftable}
    Let $T$ be a positive current directed by $\widetilde{V}$. Then $\pi_* (T)$ is directed by $V$.
\end{proposition}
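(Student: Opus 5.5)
We want to show that $\pi_*(T)\wedge \eta = 0$ for every local holomorphic $1$-form $\eta$ on $X$ cutting out $V$, as in \eqref{eq def of directed manifold}. The pushforward is well defined because $\pi$ is proper, since its fibres are projective spaces. Testing against a form $\gamma$ on $X$, the projection formula gives
\[\langle \pi_*(T)\wedge \eta,\gamma\rangle = \pm\langle T, \pi^*\eta\wedge \pi^*\gamma\rangle = \pm \langle T\wedge \pi^*\eta, \pi^*\gamma\rangle.\]
So it suffices to show $T\wedge \pi^*\eta = 0$ on $\widetilde{X}$. This is a purely local statement over a chart $U_j$, with $\eta=\eta_{j,l}$.

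The key observation is that $\pi^*\eta$ annihilates $\widetilde{V}$. Take $\xi\in\widetilde{V}_{(x,[v])}$. By \eqref{eq define tilde V}, $\pi_*\xi\in\C v\subset V_x$, so
\[(\pi^*\eta)(\xi)=\eta(\pi_*\xi)=0.\]
Hence, locally, $\pi^*\eta_{j,l}$ lies in the span, over smooth (indeed holomorphic) functions, of local holomorphic $1$-forms $\widetilde{\eta}_1,\dots,\widetilde{\eta}_s$ defining $\widetilde{V}$ as in \eqref{eq def of directed manifold}. These $\widetilde\eta_i$ are pointwise independent and their common kernel is exactly $\widetilde{V}$. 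So the annihilator of $\widetilde V$ is a holomorphic subbundle of $T^*_{\widetilde X}$ with local frame $(\widetilde\eta_i)$, and
\[\pi^*\eta_{j,l}=\sum_i h_i\,\widetilde{\eta}_i\]
with holomorphic coefficients $h_i$.

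Since $T$ is directed by $\widetilde V$, we have $T\wedge\widetilde\eta_i=0$ for every $i$, and therefore
\[T\wedge \pi^*\eta_{j,l}=\sum_i h_i\, T\wedge\widetilde\eta_i=0.\]
This gives the claim.

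The only point needing care is this last linear-algebra step: a form vanishing on the subbundle is a combination of the defining forms, and multiplying a current by smooth functions commutes with wedging. Both are standard. This also confirms, as noted after the definition, that directedness does not depend on the choice of defining forms.
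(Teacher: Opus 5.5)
Your proof is correct and follows the same route as the paper. Both arguments show that $\pi^*\eta$ vanishes on $\widetilde V$, deduce $T\wedge\pi^*\eta=0$, and then push forward via the projection formula to get $\pi_*(T)\wedge\eta=0$. The one difference is that you justify the middle step, which the paper states without comment: a holomorphic $1$-form vanishing on $\widetilde V$ is locally a holomorphic combination of the defining forms of $\widetilde V$, so directedness of $T$ really does give $T\wedge\pi^*\eta=0$.
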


\begin{proof}
    Let $\alpha_1,\ldots,\alpha_m$ be holomorphic $1$-forms that define $V$. Let $\xi \in T_{(x,[v])}\widetilde{X}$ such that $\pi_*(\xi) \in \C v$. Since $v\in V$, we have $\pi_*(\xi) \in V_x$. Thus, we have $\alpha_1(\pi_*(\xi))=\cdots =\alpha_m(\pi_*(\xi))  = 0$ by the definition of $\alpha_1,\ldots,\alpha_m$. This implies that $\pi^*(\alpha_1) (\xi) = \cdots = \pi^*(\alpha_m)(\xi) = 0$. Let $T$ be a positive current such that $T$ is directed by $\widetilde{V}$. Then, we have $T\wedge \pi^*(\alpha_j ) = 0$ for $j=1,\ldots,m$ and deduce that $\pi_*(T) \wedge \alpha_j = 0$ for $j=1,\ldots,m$. Hence, $\pi_*(T)$ is directed by $V$.
\end{proof}

We now consider the lifting of holomorphic curves to $\widetilde{X}$. Let $f:(\C,0) \to X$ be the germ of a non-constant holomorphic curve. Suppose that $f$ is a \textit{tangent trajectory} of $(X,V)$, i.e., $f'(t) \in V_{f(t)}$. One can define the lifting operator
\begin{equation}\label{eq define lifting of germ of hol curve}
\widetilde{f}:(\C,0) \to \widetilde{X}, \qquad t\mapsto (f(t),[f'(t)]).
\end{equation}
This is well-defined if $f'(t) \neq 0$. At the point $t_0$ where $f'(t_0)= 0$, we write $f'(t) = (t-t_0)^s u(t)$ where $s\in \Z^+$ and $u(t_0) \neq 0$, and define the tangent line at $t_0$ by $[u(t_0)]$. Then, near $t_0$, we have $\widetilde{f}(t)= (f(t),[u(t)])$. For simplicity, we still denote $[f'(t_0)] = [u(t_0)]$. By definition, we have $f'(t) \in \mathcal{O}_{\widetilde{X}}(-1)_{\widetilde{f}(t)} = \C u(t)$. Since $\pi \circ \widetilde{f} = f$, we have $\pi_* (\widetilde{f}'(t)) = f'(t) \in \C u(t)$. Therefore, $\widetilde{f}'(t) \in \widetilde{V}_{(f(t),[f'(t)])}$ and $\widetilde{f}$ is a tangent trajectory of $(\widetilde{X},\widetilde{V})$. We call $\widetilde{f}$ the \textit{canonical lifting} of $f$ to $\widetilde{X}$.

For any point $x_0\in X$, there exists a local coordinate $(z_1,\ldots,z_n)$ on a neighborhood $\Omega$ of $x_0$ such that for every $x\in \Omega$, the fibers $(V_z)_{z\in \Omega}$ can be defined by
\begin{equation}\label{eq define Vz locally}V_z= \Big\{ \xi : \xi = \sum_{1\leq j \leq n} \xi_j \frac{\partial}{\partial z_j} \text{ where } \xi_j = \sum_{1\leq k \leq r} a_{jk}(z)  \xi_k \text{ for } j > r\Big\},\end{equation}
where $(a_{jk}(z))$ is a holomorphic $(n-r) \times r$ matrix . Thus, a vector $\xi \in V_z$ is completely determined by its first $r$ components $(\xi_1,\ldots,\xi_r)$.
Therefore, for $1\leq j \leq r$, the affine chart $\xi_j\neq 0$ of $\P V|_\Omega$ can be described by the coordinate system
\begin{equation}\label{eq local system of PV}
    \Big(z_1,\ldots,z_n ; \frac{\xi_1}{\xi_j},\ldots,\frac{\xi_{j-1}}{\xi_j},\frac{\xi_{j+1}}{\xi_j},\ldots,\frac{\xi_r}{\xi_j}\Big)
\end{equation}
Let $f=(f_1,\ldots,f_n)$ be the components of $f$ in the coordinate $(z_1,\ldots,z_n)$. We assume that $f(0) = x \in \Omega$. Since $f'(t)\in V_{f(t)}$, we have
\begin{equation}\label{eq relation of components of f'(t)}
    f_j'(t) = \sum_{1\leq k \leq r} a_{jk}(f(t)) f_k'(t) \text{ for } j > r
\end{equation}
in a small neighborhood of $0$. We denote by $m = m(f,t_0)$ the \textit{multiplicity} of $f$ at the point $t_0$, which is the smallest $m\in \Z^+$ such that $f_j^{(m)}(t_0) \neq 0$ for some $j$ (it is clear that this number does not depend on the choice of local coordinate). By \eqref{eq relation of components of f'(t)}, we can assume that $1\leq j \leq r$. Suppose that $f_r^{(m)}(t_0)\neq 0$. Around $t_0$, we can write $f'(t) = (t-t_0)^{m-1} u(t)$ with $u_r(t_0)\neq 0$. Then, in the coordinate \eqref{eq local system of PV} of the affine chart $\xi_r\neq 0$ of $\P V|_\Omega$, the lifting $\widetilde{f}$ can be described by
\begin{equation}\label{eq describe tilde f on affine chart of PV}
    \widetilde{f} = \Big (f_1,\ldots,f_n; \frac{f_1'}{f_r'},\ldots, \frac{f_{r-1}'}{f_r'}\Big).
\end{equation}

In the last part of this subsection, we recall some curvature computations. Let $h$ be a smooth Hermitian metric on $V$. Denote by $D_h $ the associated Chern connection and by $\Theta_h(V) = \frac{i}{2\pi} D_h^2$ its Chern curvature tensor. The curvature tensor $\Theta_h(V)$ is a $(1,1)$-form with values in the Hermitian endomorphisms of $V$. Let $(e_1,\ldots,e_r)$ be a $\mathscr{C}^{\infty}$ orthonormal local frame of $E$ over a local chart $\Omega$ with the coordinate system $(z_1,\ldots,z_n)$. On $\Omega$, the Chern curvature tensor can be written 
\[\Theta_h(V) = \frac{i}{2\pi} \sum_{1\leq j,k\leq n} \sum_{1\leq \lambda,\mu \leq r} c_{jk\lambda\mu} dz_j \wedge d\overline{z}_k \otimes e_\lambda^* \otimes e_\mu,\]
for some coefficients $c_{jk\lambda\mu} \in \C$. This curvature tensor form can be viewed as a Hermitian form on $T_X \otimes V$ defined by
\begin{equation}\label{eq Griffiths curvature}\theta_{V,h}(\zeta \otimes v) \colonequals \sum_{1\leq j,k\leq n} \sum_{1\leq \lambda,\mu \leq r} c_{jk\lambda\mu} \zeta_j\overline{\zeta}_k v_\lambda \overline{v}_\mu.
\end{equation}
By \cite[Proposition (12.10)]{demailly-complex-book}, for every point $x_0 \in X$ and every local coordinate system $(z_1,\ldots,z_n)$ around $x_0$, there exists a holomorphic frame $(e_\lambda)_{1\leq \lambda \leq r}$ in a neighborhood of $x_0$ such that
\begin{equation}\label{eq local rep of chern curvature tensor}
    \langle e_\lambda,e_\mu \rangle_h = \delta_{\lambda \mu} - \sum_{1\leq j,k\leq n} c_{jk\lambda\mu} z_j \overline{z}_k + O(|z|^3)
\end{equation}
for every $1\leq \lambda,\mu \leq r$.

Since $\mathcal{O}_{\P V}(-1) \subset \pi^* V$, the metric $h$ induces a Hermitian metric $h_1$ on $\mathcal{O}_{\P V}(-1)$. Let $(x_0,[v_0]) \in \widetilde{X}$ such that $|v_0|_h = 1$. We choose a holomorphic local frame $(e_\lambda)_{1\leq \lambda \leq r}$ around $x_0$ such that $(e_\lambda)_\lambda$ satisfies \eqref{eq local rep of chern curvature tensor} and $[e_{r}] = [v_0]$. Consider the holomorphic local coordinates $(z_1,\ldots,z_n;\xi_1,\ldots,\xi_{r-1})$ on a neighborhood of $(x_0,[v_0])$ defined by
\begin{equation*}\label{eq coordinate around x_0,v_0 to compute curvature}
    (z_1,\ldots,z_n;\xi_1,\ldots,\xi_{r-1}) \mapsto (z, [\xi_1 e_1 +\cdots + \xi_{r-1} e_{r-1} + e_r]).
\end{equation*}
Then the map
\begin{equation*}\label{eq section of O(-1) over PV}
    \eta(z,\xi) \colonequals   \xi_1 e_1(z) + \cdots + \xi_{r-1} e_{r-1}(z) + e_r(z)
\end{equation*}
defines a holomorphic section of $\mathcal{O}_{\P V}(-1) \subset \pi^* V$. By the expansion \eqref{eq local rep of chern curvature tensor}, we have
\begin{equation}\label{eq compute norm of eta}
|\eta|_{h_1}^2 = |\eta|_h^2 = 1 + |\xi|^2 - \sum_{1\leq j,k\leq n}  c_{jkrr} z_j \overline{z}_k  + O((|z|+|\xi|)^3).
\end{equation}
By the Lelong-Poincar\'e formula, using \eqref{eq compute norm of eta}, we get
\begin{align}\label{eq compute curvature of O(-1)}\Theta_{h_1}(\mathcal{O}_{\P V}(-1))_{(x_0,[v_0])} &=- \frac{i}{2\pi} \partial \overline{\partial} \log |\eta|^{2} \\ \nonumber
&= \frac{i}{2\pi} \Big(\sum_{1\leq j,k\leq n} c_{jkrr}  d z_j \wedge d\overline{z}_k - \sum_{1\leq \lambda\leq r-1} d\xi_\lambda \wedge d\overline{\xi}_\lambda\Big).
\end{align}
Let $\xi \in T_{(x_0,[v_0])} \widetilde{X}$. We can decompose $\xi = \xi_{\mathrm{hor}} + \xi_{\mathrm{vert}}$, where $\xi_{\mathrm{vert}} \in T_{[v_0]}(\P(V|_{x_0}))$ is the vertical part and $\xi_{\mathrm{hor}}$ is the horizontal part. We also have $\pi_*(\xi) = \pi_*(\xi_{\mathrm{hor}})$ and $\pi_*(\xi_{\mathrm{vert}}) = 0$. By \eqref{eq Griffiths curvature} and \eqref{eq compute curvature of O(-1)}, for $\xi \in T_{(x_0,[v_0])} \widetilde{X}$, we have
\begin{equation}\label{eq formula for curvature on tilde X}
    \Theta_{h_1}(\mathcal{O}_{\widetilde{X}}(-1))(\xi,\xi) = \theta_{V,h}(\pi_* (\xi)\otimes v_0) - \|\xi_{\mathrm{vert}}\|_{\FS}^2,
\end{equation}
where $\|\cdot\|_{\FS}$ is the Fubini-Study metric induced by $h$ on the fiber $\P(V|_{x_0})$. Note that here $|v_0|_h = 1$.

Suppose that $X$ is a compact complex manifold. Fix a Hermitian metric $\omega$ on $X$. Then $\omega$ induces a metric on $V \subset T_X$, which we still denote by $\omega$. The metric $\omega$ induces a metric $h_1$ on $\mathcal{O}_{\widetilde{X}}(-1)$. Let $\mathcal{O}_{\widetilde{X}}(1)$ denote the dual of $\mathcal{O}_{\widetilde{X}}(-1)$. Since $X$ is compact, we can pick a sufficiently large positive constant $C$ such that
\begin{equation}\label{eq control Griffiths curvature by omega}
C\cdot\omega_{x}(\zeta,\zeta) \geq   \theta_{V,\omega}(\zeta \otimes v) \text{ for every } \zeta \in T_x X \text{ and } v\in V \text{ with }|v|_{\omega} = 1.
\end{equation}
By formula \eqref{eq formula for curvature on tilde X} and inequality \eqref{eq control Griffiths curvature by omega}, we get a Hermitian metric on $\widetilde{X}$ defined by
\begin{equation}\label{eq define Kahler form on lifting base case}
    \widetilde{\omega}\colonequals C\cdot \pi^*(\omega) + \ \Theta_{h_1^{-1}} (\mathcal{O}_{\widetilde{X}}(1)).
\end{equation}
Combining \eqref{eq formula for curvature on tilde X} and \eqref{eq define Kahler form on lifting base case}, for every $\xi \in T_{(x,[v])} \widetilde{ X}$, we have
\begin{equation}\label{eq compute metric on tilde X}
    \widetilde{\omega}(\xi,\xi) =C\cdot \omega (\pi_*(\xi),\pi_*(\xi)) +  \|\xi_{\mathrm{vert}}\|_\FS^2 -  \ \theta_{V,\omega}(\pi_* (\xi)\otimes v).
\end{equation}

Suppose that $X\subset \P^n$ is a projective manifold and $\iota: X\hookrightarrow \P^n$ is the embedding map. We denote by $\mathcal{O}_{X}(1)$ the line bundle $\iota^* \mathcal{O}_{\P^n}(1)$.  Since $\mathcal{O}_{\P(V)}(1)$ is very ample relative to
$\pi$, there exists a constant $l_0$ such that for all $l\geq l_0$, we have 
\begin{equation}\label{eq very ample line bundle 1-jet}\mathcal{O}_{\P(V)}(1) \otimes \pi^*(\mathcal{O}_X(l)) \text{ is a very ample line bundle on } \P(V)\end{equation} 
(see \cite[Section 1.7]{Lazarsfeld-Positivity-I}). Put $\widetilde{L}\colonequals \mathcal{O}_{\P(V)}(1) \otimes \pi^*(\mathcal{O}_X(l))$ for some $l\geq l_0$. We obtain the embedding $\varphi: \P(V) \hookrightarrow \P(H^0(\P(V), \widetilde{L} )^*)$ such that $\widetilde{L} =  \varphi^* \mathcal{O}_{\P(H^0(\P(V), \widetilde{L} )^*)}(1)$.

\subsection{Demailly-Semple tower}\label{subsec Demailly-Semple tower}

We now define the Demailly-Semple tower (in the absolute case) inductively by putting 
\begin{equation}\label{eq define X_k V_k}
(X_0,V_0) \colonequals (X,T_X), \qquad (X_k,V_k) = (\widetilde{X}_{k-1},\widetilde{V}_{k-1}) \text{ for } k \geq 1.
\end{equation}
We have $\dim X_k = n+k(n-1)$ and $\mathrm{rank} (V_k) = n$. Let $\pi_{k,k-1}: X_k \to X_{k-1}$ be the natural projection and $\pi_{k,j}: X_k \to X_j$ be the composition of projections for $j=0,\ldots,k-1$. Denote by $\mathcal{O}_{X_k}(-1)$ the tautological bundle of $X_k=\mathbb{P}(V_{k-1})$. We have the canonical injection $\mathcal{O}_{X_k}(-1) \hookrightarrow \pi_{k,k-1}^*(V_{k-1})$. For $k\geq 2$, we have the surjection $(\pi_{k-1,k-2})_* : V_{k-1} \to \mathcal{O}_{X_{k-1}} (-1)$ between vector bundles over $X_{k-1}$. Pulling back these vector bundles to $X_k$ by $\pi_{k,k-1}$ and composing with the map $\mathcal{O}_{X_{k}}(-1) \hookrightarrow \pi_{k,k-1}^*(V_{k-1})$ gives a bundle morphism 
\begin{equation}\label{eq exact sequence define Dk}
\mathcal{O}_{X_k}(-1) \hookrightarrow \pi_{k,k-1}^*(V_{k-1}) \xrightarrow{\pi_{k,k-1}^* \circ (\pi_{k-1,k-2})_*} \pi_{k,k-1}^* \mathcal{O}_{X_{k-1}}(-1).\end{equation}
This is a bundle morphism between two line bundles $\mathcal{O}_{X_k}(-1)$ and $\pi_{k,k-1}^* \mathcal{O}_{X_{k-1}}(-1)$ on $X_k$. We view this morphism as a section of the line bundle 
\[\mathcal{O}_{X_k}(-1) \otimes (\pi_{k,k-1}^* \mathcal{O}_{X_{k-1}}(-1))^{-1}\]
over $X_k$ and denote its zero divisor by $D_k$. Then we have 
\[D_k= \P(T_{X_{k-1}/X_{k-2}}) \subset \mathbb{P}(V_{k-1}) = X_k.\]
Here, $T_{X_{k-1}/X_{k-2}}$ is the kernel of $(\pi_{k-2,k-1})_*$. Denote by $u_k$ the cohomology class $c_1(\mathcal{O}_{X_k}(1))$. Formula~\eqref{eq exact sequence define Dk} gives us
\begin{equation}\label{eq formula for Xk and Xk-1}
    \mathcal{O}_{X_k}(1) = \pi_{k,k-1}^*\mathcal{O}_{X_{k-1}}(1) \otimes \mathcal{O}(D_k).
\end{equation}
Considering the first Chern class of line bundles in the formula \eqref{eq formula for Xk and Xk-1}, we obtain
\begin{equation}\label{eq formula for chern class Xk}
    u_k = \pi_{k,k-1}^* (u_{k-1} ) + \{D_k\}.
\end{equation}
We also define the \textit{singular $k$-jet locus} \[X_k^{[\sing]} : = \bigcup_{2\leq j\leq k}\pi_{k,j}^{-1}( D_j),\] which is a divisor of $X_k$ and the \textit{regular $k$-jet locus} 
\[X_k^{[\mathrm{reg}]} \colonequals \bigcap_{2\leq j \leq k} \pi_{k,j}^{-1} (X_j \setminus D_j) = X_k \setminus X_k^{[\sing]}.\]

 Let $f:(\C,0) \to X$ be a germ of a holomorphic curve. By applying \eqref{eq define lifting of germ of hol curve} inductively, we can define the \textit{$k$-lifting} $f_{[k]} : (\C,0) \to X_k$ by 
 \begin{equation}
     f_{[1]}(t) \colonequals \widetilde{f}(t), \qquad f_{[k]}(t) \colonequals \widetilde{f}_{[k-1]}(t) \text{ for } k\geq 2.
 \end{equation}
Consider a local coordinate $(z_1,\ldots,z_n)$ around $x = f(0)$. In this coordinate, we can write $f= (f_1,\ldots,f_n)$. As in \eqref{eq describe tilde f on affine chart of PV}, we can assume that $f_n'(t) \neq 0$ for $t$ is close enough $0$. Then, in the inhomogeneous coordinate, we can represent
\begin{equation}\label{eq coordinate represent for f_[1]}
f_{[1]}(t) = \Big(f_1,\ldots,f_n; \frac{f_1'}{f_n'},\ldots,\frac{f_{n-1}'}{f_n'}\Big).
\end{equation}
For the coordinate description of higher liftings, we refer the reader to \cite[Theorem 6.8]{demailly-97-algebraic-hyperbolicity}. When $k\geq 2$, we have the following relation between the $k$-lifting $f_{[k]}(t)$ and the divisor $D_k$ in the singular $k$-jet locus (see \cite[Proposition 5.11]{demailly-97-algebraic-hyperbolicity}).

\begin{proposition}\label{prop monotonicity of multiplicity}
     For all $t$, we have $m(f_{[k-2]},t)\geq m(f_{[k-1]},t)$. The inequality is strict if and only if $f_{[k]}(t) \in D_k$ or, equivalently $f'_{[k-1]}(t) \in T_{X_{k-1}/X_{k-2}}$.
\end{proposition}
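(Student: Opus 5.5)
We argue locally in coordinates, by induction on the tower level, reducing everything to a single step: a germ $g:(\C,0)\to Y$ that is a tangent trajectory of a directed manifold $(Y,W)$, together with its canonical lift $\widetilde g=(g,[g'])$ to $\P(W)$. Take $Y=X_{k-2}$, $W=V_{k-2}$, $g=f_{[k-2]}$, so that $\widetilde g=f_{[k-1]}$ and $\widetilde{\widetilde g}=f_{[k]}$. The claim then splits into two parts. The first is the inequality $m(\widetilde g,t)\le m(g,t)$. The second is that equality fails exactly when the tangent direction of $\widetilde g$ at $t$ is vertical, i.e.\ $\widetilde g'(t)$, or more precisely its leading nonzero derivative direction $[u(t)]$, lies in $T_{\P(W)/Y}$. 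Since $f_{[k]}(t)=(f_{[k-1]}(t),[u(t)])$ and $D_k=\P(T_{X_{k-1}/X_{k-2}})$, this verticality is by definition equivalent to $f_{[k]}(t)\in D_k$. So the second equivalence in the statement is essentially tautological once the convention $[f'_{[k-1]}(t)]=[u(t)]$ is used.

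For the inequality, translate to $t=0$ and write $m=m(g,0)$. Choose coordinates as in \eqref{eq define Vz locally}, and assume by \eqref{eq relation of components of f'(t)} that $g_r^{(m)}(0)\neq0$. Then $g'(t)=t^{m-1}u(t)$ with $u_r(0)\ne0$. In the affine chart \eqref{eq local system of PV}, formula \eqref{eq describe tilde f on affine chart of PV} gives
\[\widetilde g=\bigl(g_1,\dots,g_N;\,u_1/u_r,\dots,u_{r-1}/u_r\bigr).\]
Its first block has order of vanishing of the derivative exactly $m-1$, because $g_r'$ vanishes to order exactly $m-1$. Hence $m(\widetilde g,0)\le m$.

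For the strict inequality, equality $m(\widetilde g,0)=m$ holds iff the fiber components $u_\lambda/u_r-$const vanish to order $\ge m$. In that case the leading term of $\widetilde g'$ is $t^{m-1}$ times a vector whose horizontal component $\pi_*$ equals $g_r^{(m)}(0)\cdot(\ldots)\neq0$. So the limiting direction is not vertical. Conversely, if some fiber coordinate $u_\lambda/u_r$ has derivative vanishing to order $<m-1$, then the lowest-order term of $\widetilde g'$ is purely in the $\xi$-directions. Then $m(\widetilde g,0)<m$, and the limiting tangent direction $[u_{\widetilde g}(0)]$ is killed by $\pi_*$, i.e.\ it lies in $T_{\P(W)/Y}$. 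Combining both directions, $m(\widetilde g,0)<m(g,0)$ iff the leading tangent direction of $\widetilde g$ is vertical, iff $\widetilde{\widetilde g}(0)\in\P(T_{\P(W)/Y})=D_k$.

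The main technical care is bookkeeping with the notion of multiplicity. Multiplicity is coordinate independent, and the leading direction $[u(t)]$ is the one used to define the lift at stationary points. We must also check that the leading coefficient's horizontal part is $\pi_*$ of the leading derivative, which follows since $\pi$ is the coordinate projection $(z,\xi)\mapsto z$ in this chart. This is the content of \cite[Proposition 5.11]{demailly-97-algebraic-hyperbolicity}, which one may alternatively cite directly.
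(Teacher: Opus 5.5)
Your proof is correct: writing $f_{[k-2]}'(t)=t^{m-1}u(t)$ in the chart $\xi_r\neq 0$ and comparing the exact order $m-1$ of the base block of $f_{[k-1]}'$ with the orders of the fibre components $(u_\lambda/u_r)'$ is exactly the local computation behind \cite[Proposition 5.11]{demailly-97-algebraic-hyperbolicity}, and so is reading off strictness from whether the leading coefficient is vertical. The paper does not reprove this statement but only cites that result, so your argument is the intended one.
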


Suppose that $X$ is a compact complex manifold. Fix a Hermitian metric $\omega$ on $X$. Then, by \eqref{eq define Kahler form on lifting base case}, for a sufficiently large positive constant $C_{[1]}$, we have
\[\omega_{[1]} \colonequals C_{[1]} \pi_{0,1}^*(\omega) + \Theta_{h_{[1]}}(\mathcal{O}_{X_1}(1))\]
is a Hermitian metric on $X_1$, where $h_{[1]}$ is the induced metric on $\mathcal{O}_{X_1}(1)$ by $\omega$. By induction, we can pick a sequence of sufficiently large positive constants $C_{[k]}$ such that
\begin{equation}\label{eq define Kahler form on lifting}
    \quad \omega_{[k]} \colonequals C_{[k]}\pi_{k,k-1}^*(\omega_{[k-1]}) +  \Theta_{h_{[k]}}(\mathcal{O}_{X_k}(1)) \text{ for } k \geq 2,
\end{equation}
are Hermitian metrics on $X_k$. Here, $h_{[k]}$ is the induced metric on $\mathcal{O}_{X_k}(1)$ by $\omega_{[k-1]}$. We note that if we let $X$ be a compact K\"ahler manifold and $\omega$ be a K\"ahler metric, then $\omega_{[k]}$ is a K\"ahler metric on $X_k$.

Let $f: \D_r \to X$ be a non-constant holomorphic disc. Assume that $r$ is small enough that $f_{[1]}(\D_r)$ belongs to some appropriate affine chart. For simplicity, we assume that $f_n(t) = t$. By \eqref{eq coordinate represent for f_[1]}, we have $f_{[1]}(t) = (f_1,\ldots,f_n; f_1',\ldots,f_{n-1}')$. By \eqref{eq compute metric on tilde X}, there exists a sufficiently large positive constant $C_{[1]}$ such that
\begin{equation*}\label{eq estimate for vector field compute by omega_1}
    \omega_{[1]}(\xi,\xi) \leq C_{[1]} \cdot \omega ((\pi_{1,0})_* \xi,(\pi_{1,0})_* \xi) + \|((\pi_{1,0})_*\xi)_{\mathrm{vert}}\|_{\FS}
\end{equation*}
for every $\xi \in T_{X_{1}}$. Recall also that in the inhomogeneous coordinates, the Fubini-Study metric is of the form
    \begin{equation*}\label{eq represent of fubini study inhomogeneous coordinate}\omega_{\FS} = \sum_{j,l} \frac{(1+|z|^2)\delta_{j\overline{l}} - z_j \overline{z}_l}{(1+|z|^2)^2} dz_j \wedge d\overline{z}_l.\end{equation*}
Then, we can control the area of the holomorphic disc $f_{[1]}(\D_r)$ with respect to the metric $\omega_{[1]}$ on $X_1$ as follows:
\begin{align}\label{eq estimate area of lifting compare with area of base}
    \mathrm{area}_{\omega_{[1]}} (f_{[1]}(\D_r)) &= \int_{\D_r} \|f_{[1]}'(t)\|_{\omega_{[1]}}^2 dt \wedge d\overline{t} \\ \nonumber &\leq  \int_{\D_r} \Big( C_{[1]}\cdot\|f'(t)\|_{\omega}^2 dt \wedge d\overline{t} + \|(f_{[1]}'(t))_{\mathrm{vert}}\|_{\FS} \Big) dt \wedge d\overline{t} 
    \\ \nonumber
    &= C_{[1]} \cdot \mathrm{area}_{\omega}(f(\D_r)) + \int_{\D_r} \sum_{j,l} \frac{(1+|f'|^2)\delta_{j\overline{l}} - f'_j\overline{f_{l}'}}{(1+|f'|^2)^2} f_j''\overline{f_l''} dt\wedge d\overline{t},
\end{align}
where $|f'|^2\colonequals  |f_1'|^2 + \cdots + |f_{n-1}'|^2$.

Suppose that $X \subset \P^n$ is a projective manifold. Let $L_0\colonequals \mathcal{O}_X(1)$ and define inductively 
\begin{equation}\label{eq construct ample line bundle on X_k}L_k\colonequals \mathcal{O}_{X_k} (1) \otimes (\pi_{k,k-1})^* L_{k-1}^{\otimes C_{[k]}}\end{equation} where $C_{[k]}$ are positive integers. By \eqref{eq very ample line bundle 1-jet}, we can choose $C_{[k]}$ sufficiently large such that $L_k$ is a very ample line bundle over $X_k$. If we only need the nefness, \cite[Lemma 3]{Diverio-Differential-Equations} says that we can control $C_{[k]} $.

\begin{proposition}\label{prop construct nef line bundle on X_k}
    For $k\geq 1$, the line bundle \[\mathcal{O}_{X_k}(1) \otimes \pi_{k,k-1}^* \mathcal{O}_{X_{k-1}}(2) \otimes \pi_{k,k-2}^* \mathcal{O}_{X_{k-2}}(6) \otimes \cdots \otimes \pi_{k,0}^{*} \mathcal{O}_{X}(2\cdot 3^{k-1})\] is a nef line bundle over $X_k$.
\end{proposition}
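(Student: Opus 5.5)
Here $\mathcal{O}_X(1)$ is the hyperplane bundle $\iota^*\mathcal{O}_{\P^n}(1)$ of $X\subset\P^n$. The plan is an induction on $k$, following \cite[Lemma 3]{Diverio-Differential-Equations}. Write
\[
M_k\colonequals \mathcal{O}_{X_k}(1)\otimes \pi_{k,k-1}^*\mathcal{O}_{X_{k-1}}(2)\otimes\cdots\otimes \pi_{k,0}^*\mathcal{O}_X(2\cdot 3^{k-1}),
\]
with exponents $a_0=1$, $a_1=2$ and $a_j=2\cdot 3^{j-1}$ in front of $\pi_{k,k-j}^*\mathcal{O}_{X_{k-j}}(1)$. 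The basic input is that $T_{\P^n}\otimes\mathcal{O}_{\P^n}(-1)$ is globally generated (Euler sequence). Restricting to $X$, $T_{\P^n}|_X\otimes\mathcal{O}_X(-1)$ is globally generated. Dually, $\Omega_{\P^n}|_X\otimes\mathcal{O}_X(2)$ is globally generated, hence so is its quotient $\Omega_X\otimes\mathcal{O}_X(2)$. This quotient is what will produce nefness of $\mathcal{O}_{X_1}(1)\otimes\pi_{1,0}^*\mathcal{O}_X(2)$.

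For the base case $k=1$: $X_1=\P(T_X)$ (lines), so $\pi_{1,0*}\mathcal{O}_{X_1}(1)=\Omega_X$. Global generation of $\Omega_X\otimes\mathcal{O}_X(2)$ gives global generation of $\mathcal{O}_{X_1}(1)\otimes\pi_{1,0}^*\mathcal{O}_X(2)$, which is therefore nef.

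For the inductive step, the key is a generalized version of the same argument for $X_k=\P(V_{k-1})$. Suppose $V_{k-1}^*\otimes A$ is globally generated, or at least nef, for some line bundle $A$ on $X_{k-1}$. Then $\mathcal{O}_{X_k}(1)\otimes\pi_{k,k-1}^*A$ is nef, being a quotient of $\pi^*(V_{k-1}^*\otimes A)$. So the step reduces to finding such an $A$. We use the exact sequence \eqref{eq short exact sequence for tilde V} on $X_{k-1}=\P(V_{k-2})$:
\[
0\to T_{X_{k-1}/X_{k-2}}\to V_{k-1}\to\mathcal{O}_{X_{k-1}}(-1)\to 0,
\]
together with the relative Euler sequence
\[
0\to\mathcal{O}\to\pi^*V_{k-2}\otimes\mathcal{O}_{X_{k-1}}(1)\to T_{X_{k-1}/X_{k-2}}\to0.
\]
Dualizing shows that $V_{k-1}^*$ is an extension of $T^*_{X_{k-1}/X_{k-2}}$ by $\mathcal{O}_{X_{k-1}}(1)$, and that $T^*_{X_{k-1}/X_{k-2}}$ is a subbundle of $\pi^*V_{k-2}^*\otimes\mathcal{O}_{X_{k-1}}(-1)$. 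Extensions of nef bundles are nef, so it suffices to twist by $A$ making both pieces nef. For $\mathcal{O}_{X_{k-1}}(1)\otimes A$ we use the inductive hypothesis. For $T^*_{X_{k-1}/X_{k-2}}\otimes A$ we must control a subbundle, which is the delicate point.

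I expect that delicate point to be the main obstacle. Nefness does not pass to subbundles, so I would replace nefness of $V_j^*\otimes A_j$ by a stronger inductive statement: global generation, or at least nefness, of $V_j^*\otimes A_j$ for explicit line bundles $A_j$. For $T^*_{X_{k-1}/X_{k-2}}$ I would use the quotient description $\Lambda^{r-1}$ rather than the subbundle one. Concretely,
\[
T^*_{X_{k-1}/X_{k-2}}\cong \Lambda^{r-2}\bigl(T_{X_{k-1}/X_{k-2}}\bigr)\otimes\det^{-1},
\]
with $\det T_{X_{k-1}/X_{k-2}}=\pi^*\det V_{k-2}\otimes\mathcal{O}(r)$. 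Since $\Lambda$ of a quotient is a quotient, this exhibits $T^*_{X_{k-1}/X_{k-2}}$ as a quotient of something built from $\pi^*V_{k-2}$ and its determinant, after twisting. Alternatively, following Diverio, one can work with $\mathcal{O}_{X_{k-1}}(2)\otimes\pi^*A$ directly. The bookkeeping of twists should produce the recursion $a_{j}=3a_{j-1}$ for $j\ge2$, the factor $3$ coming from the two $\mathcal{O}_{X_{k-1}}(1)$ contributions plus the inherited twist. Together with the base values $a_0=1$ and $a_1=2$, this yields exactly $2\cdot3^{j-1}$. If the direct vector-bundle route proves messy, the fallback is Diverio's argument: show $\mathcal{O}_{X_k}(1)\otimes\pi_{k,k-1}^*\mathcal{O}_{X_{k-1}}(2)\otimes\pi_{k,k-1}^*B$ is nef whenever $\mathcal{O}_{X_{k-1}}(1)\otimes B$ and the relevant twists are nef, then substitute the inductive hypothesis.
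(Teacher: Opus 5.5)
Your approach is the right one. It is the standard argument of Demailly \cite{demailly-97-algebraic-hyperbolicity} behind \cite[Lemma 3]{Diverio-Differential-Equations}, and that citation is all the paper offers for this statement. But the proposal stops at the crux: ``the bookkeeping of twists should produce the recursion'' is asserted rather than derived, and the line bundles $A_j$ are never specified. Upgrading to global generation is not the fix, since global generation neither passes to subbundles nor survives extensions.

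What has to be carried through the induction is a pair of statements. Set $A_0=\mathcal{O}_X(2)$ and $A_k=\mathcal{O}_{X_k}(2)\otimes\pi_{k,k-1}^*A_{k-1}^{\otimes 3}$. The claims are: (a) $V_k^*\otimes A_k$ is nef, and (b) $A_k$ is nef. With this choice your $M_k$ is exactly $\mathcal{O}_{X_k}(1)\otimes\pi_{k,k-1}^*A_{k-1}$. It is nef as a quotient of $\pi_{k,k-1}^*(V_{k-1}^*\otimes A_{k-1})$, and unwinding the recursion gives precisely the exponents $2\cdot 3^{j-1}$.

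Item (b) is missing from your plan, and the inductive step does not close without it. Here is how the step goes:
\begin{itemize}
\item Finish your $\Lambda^{r-2}$ computation using $\Lambda^{r-2}V_{k-1}\otimes\det V_{k-1}^{-1}\cong\Lambda^2V_{k-1}^*$. This shows $T^*_{X_k/X_{k-1}}\otimes\mathcal{O}_{X_k}(2)$ is a quotient of $\pi_{k,k-1}^*\Lambda^2V_{k-1}^*$. Hence $T^*_{X_k/X_{k-1}}\otimes\mathcal{O}_{X_k}(2)\otimes\pi_{k,k-1}^*A_{k-1}^{\otimes 2}$ is nef.
\item The sub line bundle of $V_k^*\otimes A_k$ is $\mathcal{O}_{X_k}(3)\otimes\pi_{k,k-1}^*A_{k-1}^{\otimes 3}=M_k^{\otimes 3}$, which is nef. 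This is what forces the exponent $3$ rather than $2$.
\item That choice leaves one extra factor $\pi_{k,k-1}^*A_{k-1}$ on the quotient piece $T^*_{X_k/X_{k-1}}\otimes A_k$. This is harmless only because $A_{k-1}$ is nef, i.e.\ item (b) at the previous level.
\item Since extensions of nef bundles are nef, this gives (a). Writing $A_k=M_k^{\otimes 2}\otimes\pi_{k,k-1}^*A_{k-1}$ gives (b).
\end{itemize}

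There is also a smaller flaw in the base case. ``Dually'' is not a valid deduction, because the dual of a globally generated bundle need not be globally generated. What is true is $\Omega_{\P^n}(2)\cong\Lambda^{n-1}\bigl(T_{\P^n}(-1)\bigr)$, since $\det T_{\P^n}(-1)=\mathcal{O}_{\P^n}(1)$. So $\Omega_{\P^n}(2)$ is globally generated; equivalently, the sections $z_i\,dz_j-z_j\,dz_i$ generate it. This is the same exterior-power trick as in the inductive step, applied to $\P^n=\P(\C^{n+1})$ over a point, and it supplies (a) for $k=0$.
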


\subsection{Liftable currents}

In this last subsection, we introduce the notion of \textit{liftable currents}. These currents are positive closed (or $dd^c$-closed) currents of bi-dimension $(1,1)$ that can be lifted to the Demailly-Semple tower and are directed by vector bundles $V_k$. As mentioned in the introduction, this class of currents includes the situation of irreducible closed curves and Nevanlinna currents. We first discuss these special classes and their relation to the Demailly-Semple tower.

Given an irreducible closed curve $\mathscr{C}\subset X$. Let $\nu: \widehat{\mathscr{C}} \to \mathscr{C}$ be the normalization. We can define the $k$-lifting of $\mathscr{C}$ as the image of $\nu_{[k]}:\widehat{\mathscr{C}} \to X_k$ and denote it by $\mathscr{C}_{[k]}$. This is also an irreducible closed curve on $X_k$ and is tangent to $V_k$. Consider the current of integration $[\mathscr{C}_{[k]}]$ on $X_k$. This current is a positive closed current on $X_k$ that is directed by $V_k$ and satisfies
$(\pi_{k,0})_* ([\mathscr{C}_{[k]}]) = [\mathscr{C}]$ as currents, where $[\mathscr{C}]$ is the current of integration over $\mathscr{C}$ on $X$. Moreover, it is clear that $[\mathscr{C}_{[k]}]$ puts no mass on the singular $k$-jet locus $X_k^{[\sing]}$ (this property will play a key role in our theory!).

Given a non-constant entire curve $f:\C \to X$. Define the $k$-lifting $f_{[k]}:\C \to X_k$, which is a non-constant entire curve on $X_k$ that is tangent to $V_k$. Then, as in Section~\ref{sec complex hyperbolicity}, we can associate with $f$ and $f_{[k]}$ some positive closed (or $dd^c$-closed) currents $S$ and $S_{[k]}$. The current $S_{[k]}$ is directed by $V_k$ for all $k$. By using McQuillan's tautological inequality, we can choose $S_{[k]}$ and $S$ such that $(\pi_{k,0})_*(S_{[k]}) = S$. However, it is not clear to us whether $S_{[k]}$ puts no mass on the singular $k$-jet locus $X_k^{[\sing]}$ (this illustrates the transcendental nature of entire curve!). This situation will be discussed in detail in Section~\ref{sec quantitative Kobayashi conjecture}.

We now state the definition of liftable currents.

\begin{definition}\label{def lifable currents}
    Let $T$ be a positive current of bi-dimension $(1,1)$ on $X$.  Let $k\in \Z^+$. We say that $T$ is ``$k$-liftable" if there exists a positive closed current $T_{[k]}$ of bi-dimension $(1,1)$ on $X_k$ such that
\begin{equation}\label{eq define k lifting for liftable}
    T_{[k]} \text{ is directed by }V_k \text{ and } (\pi_{k,0})_* (T_{[k]}) = T.
\end{equation}
Such $T_{[k]}$ is called an ``almost $k$-lifting'' of $T$. If $T$ is $k$-liftable for every $k \in \N$, we say that $T$ is ``$\infty$-liftable''. If there exists a positive $dd^c$-closed current $T_{[k]}$ on $X_k$ such that $T_{[k]}$ satisfies \eqref{eq define k lifting for liftable}, we say that $T$ is ``weakly $k$-liftable". Such $T_{[k]}$ is called an ``almost weak $k$-lifting" of $T$.
If $T$ is weakly $k$-liftable for every $k\in \Z^+$, then we call $T$ a ``weakly $\infty$-liftable'' current.
\end{definition}

By definition, if $T$ is $k$-liftable, then $T$ is weakly $k$-liftable. Using Proposition~\ref{prop k liftable is k-1 liftable}, we see that if $T$ is $k$-liftable, then $T$ is $l$-liftable for every $l<k$. Therefore, the cone of $k$-liftable (resp. weakly $k$-liftable) currents is decreasing in $k$. As discussed above, closed curves are $\infty$-liftable currents, and Nevanlinna currents associated with entire curves are weakly $\infty$-liftable currents. The advantage of working with directed currents is stronger flexibility in comparison to Nevanlinna or Ahlfors currents. On projective manifolds, there always exist $\infty$-liftable currents by considering algebraic curves. For general compact manifolds, the following theorem shows the existence of weakly $1$-liftable currents.

\begin{proposition}\label{lemma existence liftable current induction level}
    There always exist non-zero weakly $1$-liftable currents on compact complex manifolds.
\end{proposition}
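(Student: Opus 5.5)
The plan is to build the current directly on $X_1=\P(T_X)$ by a Hahn--Banach duality argument, in the spirit of Sullivan's construction of foliation cycles and of Gauduchon-type metrics. Fix a Hermitian metric $\omega_{[1]}$ on $X_1$ as in \eqref{eq define Kahler form on lifting}. Consider the convex set $\mathcal{K}$ of positive currents $S$ of bi-dimension $(1,1)$ on $X_1$ that are directed by $V_1$ and satisfy $\langle S,\omega_{[1]}\rangle=1$. It is nonempty: through each point $(x,[v])\in X_1$ there are germs of tangent trajectories of $V_1$, for instance the canonical lifting of a holomorphic disc through $x$ with direction $v$. One can therefore take smooth positive $(n-1,n-1)$-forms on $X_1$ (here $\dim X_1=2n-1$) supported near one point and directed by the line field $V_1$ along these discs; more simply, take Dirac-type currents $\delta_p\, i\,\xi\wedge\bar\xi$ with $\xi\in V_{1,p}$. $\mathcal{K}$ is convex and weak-$*$ compact, because the directedness condition $S\wedge\eta_{j,l}=0$ is closed under weak limits and mass is controlled by pairing with $\omega_{[1]}$. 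The aim is to find $S\in\mathcal{K}$ with $dd^cS=0$. Then $T\colonequals(\pi_{1,0})_*S$ is a positive current of bi-dimension $(1,1)$ with $\langle T,\omega\rangle>0$, since directedness forces $\pi_*$ to be injective on the relevant directions. Hence $T\neq0$, and by Definition~\ref{def lifable currents} $S$ is an almost weak $1$-lifting of $T$.

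To produce $S$, I would argue by contradiction with Hahn--Banach. The subspace $\{dd^c S\}$ is the image of the weak-$*$ continuous map $S\mapsto dd^cS$ on the compact convex set $\mathcal{K}$. If $0$ were not in this image, the image would be a compact convex set in the space of currents of bi-dimension $(0,0)$ (that is, distributions paired with smooth functions) not containing $0$. Separation would then give a smooth real function $u$ on $X_1$ with $\langle dd^cS,u\rangle=\langle S,dd^c u\rangle>0$ for all $S\in\mathcal{K}$. Testing against the Dirac-type elements $\delta_p\, i\xi\wedge\bar\xi$ of $\mathcal{K}$ shows that $dd^cu(\xi,\bar\xi)>0$ for every nonzero $\xi\in V_1$. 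So $u$ would be strictly plurisubharmonic along the directions of $V_1$.

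The contradiction comes from restricting $u$ to the canonical lifting $\widetilde{f}$ of a holomorphic disc, or better of a compact object. Take a point where $u$ attains its maximum on the compact manifold $X_1$, say $p_0=(x_0,[v_0])$. Choose a holomorphic disc $f:\D\to X$ with $f(0)=x_0$ and $[f'(0)]=[v_0]$; this is always possible locally, e.g. $f(t)=x_0+tv_0$ in a chart. Its lifting $\widetilde f$ passes through $p_0$ and is tangent to $V_1$ by Section~\ref{subsec projectivized 1 jet}. Then $u\circ\widetilde f$ is strictly subharmonic on $\D$ and attains an interior maximum at $0$, which contradicts the maximum principle. Hence $0\in dd^c(\mathcal{K})$, and the required $S$ exists.

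The main obstacle I expect is the functional-analytic step: making sure that the separation really yields a smooth function $u$, and that the directedness-and-normalization cone is weak-$*$ closed and contains enough elements to enforce strict positivity on all of $V_1$. I would handle this in the standard way, as in Sullivan's or Harvey--Lawson's duality, by working in the dual pair of $(1,1)$-forms and $(1,1)$-currents. There the cone of $dd^c$-exact forms is a closed subspace, and one can apply the separation theorem to the compact convex set $\mathcal{K}$ and the closed subspace $\{dd^cu\}$ of smooth forms. This gives a form $dd^cu$ that is positive on $\mathcal{K}$, and the maximum-principle argument above then finishes the proof.
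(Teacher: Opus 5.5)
\textbf{There is a genuine gap: your argument cannot show that $T=(\pi_{1,0})_*S$ is nonzero.} Your justification, that ``directedness forces $\pi_*$ to be injective on the relevant directions'', is false. By the exact sequence \eqref{eq short exact sequence for tilde V}, $V_1$ contains the relative tangent bundle $T_{X_1/X}$, which has rank $n-1\ge 1$ when $n\ge 2$. So your set $\mathcal{K}$ contains purely vertical currents. For example, let $\ell$ be a projective line in a fibre $\pi_{1,0}^{-1}(x)\cong\P^{n-1}$. Then $[\ell]$ is positive, closed and directed by $V_1$, so $[\ell]/\langle[\ell],\omega_{[1]}\rangle$ is a $dd^c$-closed element of $\mathcal{K}$ with $(\pi_{1,0})_*[\ell]=0$. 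The same holds for your Dirac elements $\delta_p\, i\xi\wedge\bar\xi$ with $\xi$ vertical. Hence a $dd^c$-closed element of $\mathcal{K}$ exists trivially. Your Hahn--Banach argument gives no control over which one you obtain, and nothing prevents its push-forward from vanishing. The paper relies on exactly this vanishing for vertical directed currents in Proposition~\ref{prop vanish when restrict to singular part} and Theorem~\ref{theorem coincide with k-Euler characteristic}.

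\textbf{The missing idea is to normalize on the base rather than on $X_1$.} The paper works with the set $\mathcal{C}_A$ of positive currents $S$ directed by $V_1$ with $\langle(\pi_{1,0})_*S,\omega\rangle=1$ and $\langle S,\omega_{[1]}\rangle\le A$. The cap $A$ is needed to restore compactness. Your separation step then goes through unchanged, and separating $0$ from $dd^c(\mathcal{C}_A)$ in the space of distributions is in fact a clean way to get a smooth $u$ directly.

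The contradiction, however, now needs a test element of $\mathcal{C}_A$ concentrated at the maximum point $a$ of $u$. That means a non-vertical direction $\xi\in(V_1)_a$ with $\omega_{[1]}(\xi,\bar\xi)\le A\,\omega(\pi_*\xi,\overline{\pi_*\xi})$. Because $A$ is fixed before $u$ and $a$ are produced, this bound must hold uniformly over all $a\in X_1$. Supplying it is the real content of the paper's proof. The paper uses the linear disc $f_j(t)=a^{(1)}_j t$, $f_n(t)=t$, which is essentially your $x_0+tv_0$. It has $f''\equiv 0$, so \eqref{eq estimate area of lifting compare with area of base} bounds the $\omega_{[1]}$-area of $f_{[1]}(\D_\epsilon)$ by $C_{[1]}$ times the $\omega$-area of $f(\D_\epsilon)$. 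Thus the normalized lifted discs, or their Dirac limits at $a$, lie in $\mathcal{C}_{C_{[1]}}$. Since the complex Hessian of $u$ is nonpositive at $a$, this gives the contradiction. Without this uniform control of vertical versus horizontal mass, your construction does not yield a nonzero weakly $1$-liftable current.
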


\begin{proof}
     Let $\omega$ be a Hermitian metric on $X$. Then we can define a Hermitian metric $\omega_{[1]}$ on $X_1$ as in \eqref{eq define Kahler form on lifting base case}. For $A > 0$, we define
    $\mathcal{C}_A$ to be the set of all positive currents $T$ of bi-dimension $(1,1)$ on $X_1$ such that $T$ satisfies the following conditions:
    \[T \text{ is directed by } V_1, \qquad \langle(\pi_{1,0})_*(T),\omega\rangle =  1, \qquad \langle T,\omega_{[1]} \rangle \leq A.\] 
    It is clear that $\mathcal{C}_A$ is a convex compact set. 
    We also define
    \[\mathcal{V} \colonequals \{ dd^c u : u \text{ is a smooth function on } X_1\}.\]
    We claim that there always exists $A$ (depending only on $X$) sufficient large such that $\mathcal{C}_A \cap \mathcal{V}^\bot \neq \varnothing$, where $\mathcal{V}^\bot$ is the space of $dd^c$-closed currents of bi-dimension $(1,1)$ on $X_1$.

    Suppose that $\mathcal{C}_A \cap \mathcal{V}^\bot = \varnothing$. Then, by the Hahn-Banach theorem, there exist a smooth $(1,1)$-form $\alpha$ and $\delta>0$ such that 
    \begin{equation}\label{eq hahn banach separate}\inf_{T\in \mathcal{C}_A} \langle T,\alpha \rangle \geq \delta > \sup_{S\in \mathcal{V}^\bot} \langle S,\alpha\rangle.\end{equation}
    If $\langle S,\alpha\rangle \neq 0$ for some $S\in \mathcal{V}^\bot$, we can pick $m$ good enough such that $\langle m S,\alpha\rangle > \delta$. Since $\mathcal{V}^\bot$ is a vector space, $mS\in \mathcal{V}^\bot$, this contradicts \eqref{eq hahn banach separate}. Thus, we have $\alpha \in (\mathcal{V}^{\bot})^\bot = \mathcal{V}$, and we can pick a  smooth function $u$ on $X_1$ such that $\alpha= dd^c u$.

    Fix a point $a\in X_1$ where $u$ is maximal. Then $dd^c u$ vanishes at $a$. We choose a special local curve $f:\D \to X$ passing through $x$ such that the $1$-lifting $f_{[1]}$ passes through $a$. Let $x= \pi_{1,0}(a)$. Fix a local coordinate chart around $x$ that we identify to the unit ball in $\C^n$. In this chart, we can write $f=(f_1,\ldots,f_n):\D\to X$ where $f_1,\ldots,f_n$ are holomorphic functions on $\D$. We can assume that $a$ lies inside the affine chart on $X_1$ which was described in \eqref{eq coordinate represent for f_[1]}. In this coordinates, we can represent
    \[a = (0,\ldots,0; (a_1^{(1)},\ldots,a_{n-1}^{(1)})) \]
    We choose $f_j(t) \colonequals  a_j^{(1)}t$ for $1\leq j \leq n-1$ and $f_n(t) = t$. This gives $f_{[1]}(0) = a$ as desired.

    Consider the positive current
    \[T_{[1]}^{(\epsilon)}\colonequals  \frac{[f_{[1]}(\D_\epsilon)]}{\|[f(\D_\epsilon)]\|_{\omega}},\]
    where $\epsilon > 0 $ is a small number. We want to compute $\langle T_{[1]}^{\epsilon},\omega_{[1]}\rangle$. By \eqref{eq estimate area of lifting compare with area of base}, we have
    \begin{equation*}
    \langle T_{[1]}^{(\epsilon)},\omega_{[1]}\rangle \leq C_{[1]} + \frac{1}{\mathrm{area}_{\omega} (f(\D_\epsilon))} \int_{\D_\epsilon} \sum_{j,l} \frac{(1+|f'|^2)\delta_{j\overline{l}} - f'_j\overline{f_{l}'}}{(1+|f'|^2)^2} f_j''\overline{f_l''} dt\wedge d\overline{t}  
    = C_{[1]}.\nonumber\end{equation*}
    Hence, by choosing $A = C_{[1]}$, we have $T_{[1]}^{(\epsilon)} \in \mathcal{C}_A$ for every $\epsilon > 0$. Letting $\epsilon \to 0^+$, we get $\langle T_{[1]}^{(\epsilon)},dd^c u\rangle \to 0$ since $dd^c u$ vanishes at $a$. This is a contradiction and we obtain a weakly $1$-liftable current $T$ as desired. 
\end{proof}

\begin{remark}
We note that our construction still holds if we consider the general case of directed manifolds $(X,V)$. We refer the reader to \cite[Section 2]{sibony-pfaff} for an existence theorem for $dd^c$-closed currents that are directed by $V$.
\end{remark}

\section{Euler characteristic of currents}\label{sec pluri Euler char}

In this section, we will introduce two related notions of Euler characteristic for liftable currents: the geometric one and the cohomological one. We then study basic properties of these notions and compare them in the case where the base manifold is projective. For the first two subsections, the manifold $X$ is a compact K\"ahler manifold of dimension $n$.

\subsection{Basic definitions}

 We start by recalling some observations on the geometric Euler characteristic of irreducible closed curves inside $X$.

\begin{proposition}\label{prop increasing of chi_k to chi_infty for curve}
    Let $\mathscr{C}\subset X$ be a closed irreducible curve and $\nu: \widehat{\mathscr{C}} \to \mathscr{C}$ be a normalization. Let $\nu_{[k]} : \widehat{\mathscr{C}} \to X_k$ be the $k$-lifting of $\nu$ and $\mathscr{C}_{[k]}\colonequals \nu_{[k]}(\widehat{\mathscr{C}})$ the $k$-lifting of $\mathscr{C}$ to $X_k$. Then we have
    \[-\chi(\widehat{\mathscr{C}}) = \sum_{t\in \widehat{\mathscr{C}}} (m(\nu_{[k-1]},t)-1) + \{\mathscr{C}_{[k]}\} \cdot u_k,\]
    where $\{\mathscr{C}_{[k]}\}$ denotes the cohomology class of $\mathscr{C}_{[k]}$. Moreover, $[\mathscr{C}_{[k]}] \cdot u_k$ is a non-decreasing sequence in $k$ of integers such that for $k$ large enough, we have
    \begin{equation}\label{eq geometric genus as limit}
        \{\mathscr{C}_{[k]}\} \cdot u_k = -\chi(\widehat{\mathscr{C}}).
    \end{equation}
\end{proposition}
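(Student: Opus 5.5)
We compute the degree of the tautological line bundle $\mathcal{O}_{X_k}(-1)$ pulled back to $\widehat{\mathscr{C}}$ by $\nu_{[k]}$. This degree is $-\{\mathscr{C}_{[k]}\}\cdot u_k$, since $\nu_{[k]}$ is birational onto its image: the lifting is generically injective because $\pi_{k,0}\circ \nu_{[k]} = \nu$ is. We then compare it with $\deg T_{\widehat{\mathscr{C}}} = \chi(\widehat{\mathscr{C}})$.

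\textbf{Key step.} By construction (see \eqref{eq define lifting of germ of hol curve}), the fiber of $\mathcal{O}_{X_k}(-1)$ at $\nu_{[k]}(t)$ is the line $\C u(t)\subset V_{k-1}$, where $\nu_{[k-1]}'(t) = (t-t_0)^{s}u(t)$ locally and $s = m(\nu_{[k-1]},t_0)-1$. Hence the derivative $\nu_{[k-1]}'$ defines a holomorphic section of $\mathrm{Hom}(T_{\widehat{\mathscr{C}}}, \nu_{[k]}^*\mathcal{O}_{X_k}(-1))$. It is not identically zero and vanishes at $t_0$ to order exactly $m(\nu_{[k-1]},t_0)-1$. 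Taking degrees gives
\[\deg \nu_{[k]}^*\mathcal{O}_{X_k}(-1) - \chi(\widehat{\mathscr{C}}) = \sum_t (m(\nu_{[k-1]},t)-1),\]
which is the stated formula. Here $m(\nu_{[k-1]},t)$ is computed in local coordinates on $X_{k-1}$, and it equals one plus the vanishing order of the derivative, as recalled before Proposition~\ref{prop monotonicity of multiplicity}. The sum is finite since $\nu_{[k-1]}$ is non-constant.

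\textbf{Monotonicity.} Since $\{\mathscr{C}_{[k]}\}\cdot u_k = -\chi(\widehat{\mathscr{C}}) - \sum_t (m(\nu_{[k-1]},t)-1)$, it suffices that each $m(\nu_{[k-1]},t)$ is non-increasing in $k$. This is exactly Proposition~\ref{prop monotonicity of multiplicity}: $m(\nu_{[k-2]},t)\ge m(\nu_{[k-1]},t)$.

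\textbf{Stabilization.} The multiplicity is $\ge 1$ and differs from $1$ only at the finitely many points where it exceeds $1$ at level $0$, because non-increasing multiplicities stay $1$ once they are $1$. At each such point it is a non-increasing sequence of positive integers, hence eventually constant. The main obstacle is to show the eventual value is $1$. By Proposition~\ref{prop monotonicity of multiplicity}, the strict drop occurs iff $\nu_{[k]}(t)\in D_k$. So I would argue that the lifts cannot remain in $D_k$ forever while the multiplicity stays constant $m>1$.

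The plan for this is a local computation. Take the germ $\nu(t) = (t^{m}, t^{m+p}g(t),\dots)$ after reparametrizing. Using \eqref{eq coordinate represent for f_[1]}, the first lift is $(t^m,\dots; c\,t^{p}\cdots)$, with new coordinates having vanishing orders reduced. This shows the "order vector" strictly decreases in a well-founded way, in the style of the Puiseux/resolution-of-singularities-of-curve-germs argument. Alternatively, I would use Demailly's fact that the regular locus parametrizes germs with nonzero derivative: the lifts eventually become immersions. The multiplicity therefore reaches $1$ at all points for $k$ large, giving \eqref{eq geometric genus as limit}.
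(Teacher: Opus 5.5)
Your degree computation and your monotonicity step are exactly the paper's argument. The paper uses the same map $\nu_{[k-1]}'\colon T_{\widehat{\mathscr{C}}}\to \nu_{[k]}^*\mathcal{O}_{X_k}(-1)$, vanishing to order $m(\nu_{[k-1]},t)-1$, together with Gauss--Bonnet and Proposition~\ref{prop monotonicity of multiplicity}. Your birationality remark fills in a point the paper leaves implicit.

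The gap is in the stabilization, which the paper simply asserts. Neither of your two mechanisms uses the hypothesis that actually forces it: $\nu$ is a normalization, hence generically injective, so every local germ of $\nu$ is primitive, i.e.\ not of the form $g(t^r)$ with $r\ge 2$. Without this the conclusion is false. Take $f(t)=(t^2,t^4)=g(t^2)$ with $g(s)=(s,s^2)$. Then $f_{[k]}(t)=g_{[k]}(t^2)$, so $m(f_{[k]},0)=2$ for every $k$, although $f_{[k]}(0)=g_{[k]}(0)\in X_k^{[\mathrm{reg}]}$ for every $k$. This example breaks both of your mechanisms:
\begin{itemize}
\item Lying in the regular locus does not make the lift immersive.
\item Your ``order vector'' does not strictly decrease. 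In your normal form $p=m$ here, and the new fibre coordinate $2t^2$ becomes the constant $2$ at the next level rather than a lower-order term.
\end{itemize}
Your sentence about the lifts ``remaining in $D_k$ forever'' is also reversed. By Proposition~\ref{prop monotonicity of multiplicity}, constant multiplicity means the lifts \emph{avoid} $D_k$, and that is what must be ruled out when $m>1$.

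Here is a correct argument.
\begin{itemize}
\item Reparametrize and choose coordinates centred at $\nu(t_0)$ with $\nu_1=t^\mu$, where $\mu=m(\nu,t_0)>1$.
\item In Demailly's charts, the fibre coordinates of $\nu_{[l]}$ are the iterated ratios $w^{(l)}_j=(w^{(l-1)}_j)'/\nu_1'$, with $w^{(0)}_j=\nu_j$. These charts are valid as long as the multiplicity stays equal to $\mu$; otherwise it has already dropped and you are done.
\item A monomial $a\,t^i$ of $\nu_j$ becomes $a\prod_{r=0}^{l-1}\frac{i-r\mu}{\mu}\,t^{i-l\mu}$ at level $l$.
\item By primitivity, some exponent is not divisible by $\mu$. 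Let $i_0=q\mu+s$, with $0<s<\mu$, be the smallest such exponent.
\item Every smaller exponent has the form $r\mu$ with $r\le q$, so after $q$ steps it is annihilated or becomes constant. The monomial $t^{i_0}$ becomes a nonzero multiple of $t^s$, and larger exponents stay above $s$. Hence $m(\nu_{[q]},t_0)\le s<\mu$.
\item Lifts of a primitive germ are primitive, since they project onto $\nu$. Induction on $\mu$ therefore gives $m(\nu_{[k]},t)=1$ for $k\gg0$ at each of the finitely many points where $m(\nu,t)>1$.
\end{itemize}
This yields \eqref{eq geometric genus as limit}.
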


\begin{proof} The first statement is embedded in the proof of \cite[Theorem 8.1]{demailly-97-algebraic-hyperbolicity}. We recall the proof here for the reader's convenience. Since $\nu_{[k]} = (\nu_{[k-1]},\nu_{[k-1]}'):\widehat{\mathscr{C}} \to X_k$ is the lifting of $\nu_{[k-1]}$, there is a canonical map $\nu_{[k-1]}': T_{\widehat{\mathscr{C}}} \to \nu_{[k]}^*(\mathcal{O}_{X_k}(-1))$. Since $\nu'_{[k-1]}$ vanishes at order $m(\nu_{[k-1]},t)-1$ at $t$ for $t\in \widehat{\mathscr{C}}$, we have
\[T_{\widehat{\mathscr{C}}} = \nu_{[k]}^*(\mathcal{O}_{X_k}(-1)) \otimes \mathcal{O}_{\widehat{\mathscr{C}}}\Big(-\sum_{t\in \widehat{\mathscr{C}}}(m(\nu_{[k-1]},t)-1) t\Big).\]
By taking a Hermitian metric on $\mathcal{O}_{X_k}(-1)$ and using the Gauss-Bonnet formula, we get
\begin{align*}\chi(\mathscr{C}) &= \int_{\widehat{\mathscr{C}}} \nu^*_{[k]} (c_1(\mathcal{O}_{X_k}(-1)) - \sum_{t\in \widehat{\mathscr{C}}}(m(\nu_{[k-1]},t)-1) \\ &= -\{\mathscr{C}_{[k]}\} \cdot u_k- \sum_{t\in \widehat{\mathscr{C}}}(m(\nu_{[k-1]},t)-1). \end{align*}
The first statement follows. For the second statement, we have 
 $\{\mathscr{C}_{[k]}\}\cdot u_k$ is a non-decreasing sequence by Proposition~\ref{prop monotonicity of multiplicity}. Moreover, for $k$ large enough, $\nu_{[k]}$ has no multiplicity at every point of $\widehat{\mathscr{C}}$, and we infer the formula~\eqref{eq geometric genus as limit}.
\end{proof}

Given a closed irreducible curve $\mathscr{C} \subset X$, we define the \textit{$k$-Euler characteristic of $\mathscr{C}$} by \[\chi_k(\mathscr{C})\colonequals -\{\mathscr{C}_{[k]}\} \cdot u_k\] and the \textit{$\infty$-Euler characteristic of $\mathscr{C}$} by \[\chi_{\infty}(\mathscr{C})\colonequals \lim\limits_{k\to \infty} \chi_k(\mathscr{C}).\] Then, by formula \eqref{eq geometric genus as limit}, we see that $\chi_k(\mathscr{C})$ decreases to $\chi_\infty(\mathscr{C})$, which equals the geometric Euler characteristic of $\mathscr{C}$.

Observe that if $\mathscr{C}_{[k]}$ is the $k$-lifting of $\mathscr{C}$, then $\mathscr{C}_{[k]}$ intersects properly with $\pi_{k,j}^*(D_j)$ for $j=2,\ldots,k$. Motivated by this observation and the definition of the regular locus in $X_k$, we call an almost $k$-lifting $T_{[k]}$ of $T$ a \textit{geometric $k$-lifting} if $T_{[k]}$ puts no mass on the singular locus $X_k^{[\sing]}$. Since $X$ is a compact K\"ahler manifold, we can make sense of the cohomology class of $dd^c$-closed currents (see Section~\ref{sec background currents}). Let $T$ be a weakly $k$-liftable current. We also call an almost weak $k$-lifting $T_{[k]}$ of $T$ a \textit{weak geometric $k$-lifting} if $T_{[k]}$ puts no mass on the singular locus $X_{k}^{[\sing]}$.

The notion of geometric $k$-lifting has some limitations. We can see that weak limits of geometric $k$-liftings may not be geometric $k$-lifting, this will lead to several problems (see Remark~\ref{remark open of hyp} and Remark~\ref{remark deform current}). Another difficulty will arises when we deal with the Nevanlinna currents later. Observe that by  Lemma~\ref{lemma puts no mass implies positive cup product}, the cohomology class $\{T_{[k]} \}$ must satisfies the following inequality:
\begin{equation}\label{eq define regular k lift}
    \{T_{[k]}\} \cdot \pi_{k,j}^* \{D_j\} \geq 0\qquad\text{ for }j=2,\ldots,k.
\end{equation}
This suggests us to introduce a refined version of geometric $k$-lifting as follows. An almost $k$-lifting $T_{[k]}$ of $T$ is called \textit{cohomological $k$-lifting} if the cohomology class $\{T_{[k]}\}$ satisfies the inequality~\eqref{eq define regular k lift}. Similarly, we have the notion of \textit{weak cohomological $k$-lifting}. By definition, a geometric $k$-lifting is a cohomological $k$-lifting. When $k=1$, the geometric and cohomological notions coincide. We will call this notion a $1$-lifting (and weak $1$-lifting in the $dd^c$-closed case).

The next proposition ensures the existence of a geometric (therefore cohomological) $k$-lifting for $k$-liftable currents (similar conclusion holds for weakly $k$-liftable currents).

\begin{proposition}\label{prop vanish when restrict to singular part}
    Let $k\in \Z^+, k\geq 2$. Let $S$ be a positive (closed or $dd^c$-closed) current of bi-dimension $(1,1)$ on $X_k$ such that $S$ is directed by $V_k$. Then we have \[(\pi_{k-1,k-2}\circ\pi_{k,k-1})_*(\mathbf{1}_{D_k} S) = 0 \qquad \text{and} \qquad (\pi_{k,0})_*(\mathbf{1}_{X_k^{[\sing]}}S) = 0.\]
    In particular, if $T$ is a $k$-liftable (resp. weakly $k$-liftable) current, then there always exists a geometric $k$-lifting (resp. weak geometric $k$-lifting) of $T$.
\end{proposition}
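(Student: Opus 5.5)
The plan is to reduce the statement to the support theorems (Lemma~\ref{lemma support lemma for closed current} and Lemma~\ref{lemma support lemma for ddc closed current}) together with the directedness of $S$. Let $R\colonequals \mathbf{1}_{D_k}S$. By part (i) of the support lemmas, $R$ is again positive closed (resp.\ $dd^c$-closed), since $D_k$ is an analytic subset of the compact K\"ahler manifold $X_k$. Since $0\le R\le S$, Proposition~\ref{prop dominated directed currents} shows that $R$ is directed by $V_k$. By part (ii), $R$ is a current on $D_k$. I would then show that $R$ is in fact directed by the subbundle $V_k\cap T_{D_k}$ along the smooth manifold $D_k=\P(T_{X_{k-1}/X_{k-2}})$.

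The key geometric observation is the following. At a point $(y,[w])\in D_k$ we have $w\in T_{X_{k-1}/X_{k-2}}$, that is, $w$ is vertical for $\pi_{k-1,k-2}$. A vector $\xi\in V_k$ at such a point satisfies $(\pi_{k,k-1})_*\xi\in \C w$, so
\[(\pi_{k,k-2})_*\xi=(\pi_{k-1,k-2})_*(\pi_{k,k-1})_*\xi=0 .\]
Hence $V_k|_{D_k}\subset \ker d\pi_{k,k-2}$. Consequently, for any holomorphic $1$-form $\beta$ on $X_{k-2}$, the pullback $\pi_{k,k-2}^*\beta$ annihilates $V_k$ along $D_k$. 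For a current directed by $V_k$ and supported on $D_k$, this gives $R\wedge \pi_{k,k-2}^*\beta=0$. Here I would argue via the Cauchy--Schwarz trick of Proposition~\ref{prop dominated directed currents}, writing $\pi_{k,k-2}^*\beta$ locally as a combination of the defining forms $\eta_{j,l}$ of $V_k$ plus a form vanishing on $D_k$, and using that $R$ is carried by $D_k$. To make the last term harmless I would use that $R$ is a current on $D_k$, so only the restriction of forms to $D_k$ matters.

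Given this, the pushforward $P\colonequals(\pi_{k,k-2})_*R$ is a positive current of bi-dimension $(1,1)$ on $X_{k-2}$ with $P\wedge\beta=0$ for every holomorphic $1$-form $\beta$ (locally). Taking $\beta=dz_j$ in local coordinates gives $P\wedge i\,dz_j\wedge d\bar z_j=0$ for all $j$. Hence $\langle P,\omega\rangle=0$ for a Hermitian form, and by positivity $P=0$. This proves the first identity.

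For the second identity, write $\mathbf{1}_{X_k^{[\sing]}}S\le \sum_{j=2}^k \mathbf{1}_{\pi_{k,j}^{-1}(D_j)}S$. For each $j$, the current $S_j\colonequals(\pi_{k,j})_*S$ is positive closed (resp.\ $dd^c$-closed) on $X_j$ and directed by $V_j$ by iterating Proposition~\ref{prop k liftable is k-1 liftable}. Moreover $(\pi_{k,j})_*(\mathbf{1}_{\pi_{k,j}^{-1}(D_j)}S)=\mathbf{1}_{D_j}S_j$. Applying the first part on level $j$ gives $(\pi_{j,j-2})_*(\mathbf{1}_{D_j}S_j)=0$, hence its pushforward to $X$ vanishes, and positivity finishes the argument.

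For the last claim, given an almost $k$-lifting $T_{[k]}$, set $T'_{[k]}\colonequals\mathbf{1}_{X_k^{[\mathrm{reg}]}}T_{[k]}$. This current is positive closed (resp.\ $dd^c$-closed) by part (i) of the support lemmas, directed by $V_k$ by Proposition~\ref{prop dominated directed currents}, puts no mass on $X_k^{[\sing]}$, and pushes forward to $T$ by the identity just proved.

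The main obstacle I expect is the step $R\wedge\pi_{k,k-2}^*\beta=0$. The vanishing of $\pi_{k,k-2}^*\beta$ on $V_k$ holds only along $D_k$, so one must justify that, for a directed current carried by $D_k$, only the values of forms on $D_k$ matter. I would handle this with the structure from Lemma~\ref{lemma support lemma for closed current}(ii): $R=i_*R'$ with $R'$ a current on $D_k$. Since the $\eta_{j,l}$ and $\pi_{k,k-2}^*\beta$ are $(1,0)$-forms, the wedge with $R$ depends only on their pullbacks to $D_k$.
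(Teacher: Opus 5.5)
Your proof is correct, but it reaches the first identity by a more direct route than the paper.

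\textbf{The paper's route.} The paper first uses the support theorem to view $\mathbf{1}_{D_k}S$ as a current on $D_k$, hence directed by $V_k\cap T_{D_k}$. It then recognizes $(D_k,V_k\cap T_{D_k})$ as the one-step jet lift of the directed manifold $(X_{k-1},T_{X_{k-1}/X_{k-2}})$. Applying Proposition~\ref{prop k liftable is k-1 liftable}, it gets that $(\pi_{k,k-1})_*(\mathbf{1}_{D_k}S)$ is directed by the vertical bundle $T_{X_{k-1}/X_{k-2}}$. It concludes because vertically directed currents push forward to zero.

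\textbf{Your route.} You replace all of this by the pointwise inclusion $V_k|_{D_k}\subset\ker d\pi_{k,k-2}$. Your closing step ($P\wedge\beta=0$ for all $(1,0)$-forms $\beta$ forces $P=0$) is exactly the justification the paper leaves implicit for its remark about vertical currents.

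\textbf{The anticipated obstacle does not arise.} $R$ is positive, so it has measure coefficients and is carried by $D_k$. Complete the defining forms $\eta_l$ of $V_k$ to a local frame $(\eta_l,\zeta_s)$ of $(1,0)$-forms, and write $\pi_{k,k-2}^*\beta=\sum_l c_l\eta_l+\sum_s d_s\zeta_s$. The coefficients $d_s$ vanish on $D_k$, because the $\zeta_s$ restrict to a basis of $V_k^*$ there. Hence $R\wedge\pi_{k,k-2}^*\beta=\sum_s d_s\,(R\wedge\zeta_s)=0$ at once, with no need for part (ii) of the support lemmas or the representation $R=i_*R'$.

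\textbf{What each approach buys.} Your argument proves both vanishing identities for every positive current directed by $V_k$, closed or not. It needs Lemma~\ref{lemma support lemma for closed current} (resp.\ Lemma~\ref{lemma support lemma for ddc closed current}) only in the final claim, to ensure $\mathbf{1}_{X_k^{[\mathrm{reg}]}}T_{[k]}$ is still closed (resp.\ $dd^c$-closed). The paper's version is more structural, since it reuses the jet-tower formalism. In exchange, it depends on the support theorem, hence on closedness or $dd^c$-closedness, just to make sense of the directed structure on $D_k$.
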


\begin{proof}
    We consider the case of closed currents, the $dd^c$-closed case can be treated similarly using Lemma~\ref{lemma support lemma for ddc closed current} instead of Lemma~\ref{lemma support lemma for closed current}.
    Recall that $D_k = \P(T_{X_{k-1}/X_{k-2}})$. Put $V' = T_{X_{k-1}/X_{k-2}}$, which is a holomorphic subbundle of $V_{k-1}$, and thus a holomorphic subbundle of $T_{X_{k-1}}$. Then $(X_{k-1},V')$ is also a directed manifold. We see that the couple $(D_k, V_k \cap T_{D_k})$ is the directed manifold obtained by applying formula~\eqref{eq define tilde V} for $(X_{k-1},V')$. By Lemma~\ref{lemma support lemma for closed current}, $\mathbf{1}_{D_k}T$ is a positive closed current on $D_k$. Since $T$ is directed by $V_k$, we have $\mathbf{1}_{D_k}(T)$ directed by $V_k \cap T_{D_k}$.  Therefore, by Proposition~\ref{prop k liftable is k-1 liftable}, $(\pi_{k,k-1})_*(\mathbf{1}_{D_k} T)$ is directed by $V'$. Since $V' = \ker  (\pi_{k-1,k-2})_*$, the push-forward by $\pi_{k-1,k-2}$ of currents that are directed by $V'$ is zero, as they only have vertical components. The second statement follows as $(\pi_{k,j})_*(T)$ is directed by $V_{j}$ by Proposition~\ref{prop k liftable is k-1 liftable}.

    Now, we let $T$ be a $k$-liftable current and pick an almost $k$-lifting $T_{[k]}$ of $T$. By Lemma~\ref{lemma support lemma for closed current}, $\mathbf{1}_{X_k^{[\mathrm{reg}]}} T_{[k]}$ is a positive closed current. It puts no mass on $X_{k}^{ [\sing] }$ and is also directed by $V_k$ by Proposition~\ref{prop dominated directed currents}. Therefore, it is a geometric $k$-lifting of $T$.
\end{proof}

We now define the \textit{geometric Euler characteristic for liftable currents}. Given $k \in \Z^+$. Let $T$ be a $k$-liftable current. We define
\[\chi_k(T)\colonequals \sup \Big\{ -\{T_{[k]} \} \cdot u_k : T_{[k]} \text{ is a geometric } k\text{-lifting of }T \Big \},\]
and call it \textit{geometric $k$-Euler characteristic of $T$}. Similarly, we can define 
\[\chi_{k}^*(T)\colonequals \sup \Big\{ -\{T_{[k]} \} \cdot u_k : T_{[k]} \text{ is a weak geometric } k\text{-lifting of }T\Big\}.\]
 We call this number the \textit{weak geometric $k$-Euler characteristic} of $T$. When $T$ is an $\infty$-liftable current, we define $\chi_\infty(T) \colonequals \inf_{k} \chi_k(T)$. Similarly, we define $\chi_\infty^*(T) \colonequals \inf_{k} \chi_k^*(T)$.

We now prove that these notions recover the $k$-Euler characteristic in the case of irreducible closed curves.

\begin{theorem}\label{theorem coincide with k-Euler characteristic}
    Let $k\in \Z^+ \cup \{\infty\}$. Let $\mathscr{C}$ be an irreducible closed curve inside $X$. Then $\chi_k(\mathscr{C}) = {\chi}_k([\mathscr{C}]) = {\chi}^*_k([\mathscr{C}])$.
\end{theorem}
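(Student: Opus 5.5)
The plan is to show that every weak geometric $k$-lifting $T_{[k]}$ of $[\mathscr{C}]$ decomposes as
\[
T_{[k]} = [\mathscr{C}_{[k]}] + R,
\]
where $R$ is a positive ($dd^c$-closed) current that is \emph{vertical over $X_{k-1}$} and satisfies $\{R\}\cdot u_k \geq 0$. Granting this, every weak geometric lifting gives $-\{T_{[k]}\}\cdot u_k \leq -\{\mathscr{C}_{[k]}\}\cdot u_k$. On the other hand, $[\mathscr{C}_{[k]}]$ is itself a geometric (hence weak geometric) $k$-lifting, as observed before Proposition~\ref{prop vanish when restrict to singular part}. So both suprema are attained at $[\mathscr{C}_{[k]}]$, giving $\chi_k([\mathscr{C}]) = \chi_k^*([\mathscr{C}]) = \chi_k(\mathscr{C})$. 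The case $k=\infty$ then follows by taking the infimum over $k$ on both sides, using the definitions of $\chi_\infty$.

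\textbf{Step 1: identify the part over the smooth immersed locus.} Let $\Sigma\subset \mathscr{C}$ be the finite set of singular points of $\mathscr{C}$. Since $(\pi_{k,0})_*T_{[k]}=[\mathscr{C}]$, the current $T_{[k]}$ is supported in $\pi_{k,0}^{-1}(\mathscr{C})$. Over $\mathscr{C}\setminus\Sigma$, I argue inductively in the level $j$ using the description \eqref{eq define tilde V} of $V_j$. A directed current whose push-forward to $X_{j-1}$ is the integration current on the smooth curve $\mathscr{C}_{[j-1]}$ has horizontal tangent direction equal to the tangent line of $\mathscr{C}_{[j-1]}$. Its horizontal part therefore lives on the graph of the canonical lifting, i.e.\ on $\mathscr{C}_{[j]}$, with multiplicity $1$ because the push-forward is exactly $[\mathscr{C}]$. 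What remains is vertical. Using Lemma~\ref{lemma support lemma for closed current} and Lemma~\ref{lemma support lemma for ddc closed current}(i), I then write $T_{[k]}=\mathbf{1}_{\mathscr{C}_{[k]}}T_{[k]}+R$. The first term is supported on the irreducible curve $\mathscr{C}_{[k]}$, so by the support theorem it is $c[\mathscr{C}_{[k]}]$. Pushing forward gives $c=1$, and consequently $(\pi_{k,0})_*R=0$.

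\textbf{Step 2: $R$ is vertical over $X_{k-1}$.} By Proposition~\ref{prop dominated directed currents}, $R$ is directed by $V_k$, and $R$ puts no mass on $X_k^{[\sing]}$. If $R\neq 0$, let $j<k$ be the largest index with $(\pi_{k,j})_*R=0$, and set $S=(\pi_{k,j+1})_*R\neq 0$. By Proposition~\ref{prop k liftable is k-1 liftable}, $S$ is directed by $V_{j+1}$, and since its push-forward to $X_j$ vanishes, $S$ is directed by $T_{X_{j+1}/X_j}$.
\begin{itemize}
\item If $j+2\leq k$, the directions of $R$ projected to level $j+2$ are then the tangent lines of vertical directions, i.e.\ they lie in $D_{j+2}=\P(T_{X_{j+1}/X_j})$. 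This forces $R$ to be carried by $\pi_{k,j+2}^{-1}(D_{j+2})\subset X_k^{[\sing]}$, contradicting the geometric condition.
\item Hence $j=k-1$, so $R$ is directed by $T_{X_k/X_{k-1}}$.
\end{itemize}

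\textbf{Step 3: $\{R\}\cdot u_k\geq 0$.} Choose the smooth representative $\Theta_{h_{[k]}}(\mathcal{O}_{X_k}(1))$ of $u_k$. By \eqref{eq formula for curvature on tilde X}, its restriction to vertical vectors is the Fubini--Study form, which is positive. Since $R$ is vertical, $R\wedge\Theta$ only sees vertical directions, so $R\wedge \Theta\geq 0$ and hence $\{R\}\cdot u_k=\langle R,\Theta\rangle\geq 0$. This holds equally when $R$ is only $dd^c$-closed, since the pairing is then well defined by the $dd^c$-lemma.

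The main obstacle is Step 2, together with the rigorous passage from ``directed by $V_k$ with vertical projection'' to ``carried by $D_{j+2}$''. For this I would apply Proposition~\ref{prop vanish when restrict to singular part}'s identification $D_{j+2}=\P(T_{X_{j+1}/X_j})$ locally, in the coordinates \eqref{eq local system of PV}. A secondary point is checking that the decomposition in Step 1 is compatible over the finite set $\Sigma$. This holds because directed bidimension-$(1,1)$ currents over a point are vertical, so they are absorbed into $R$.
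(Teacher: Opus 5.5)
Your proof is correct, but it controls the residual by a different mechanism than the paper.

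\textbf{The paper's route.} The paper inducts on $k$. It splits $T_{[k]}=\mathbf 1_{\pi_{k,k-1}^{-1}(\mathscr C_{[k-1]})}T_{[k]}+S'_{[k]}$.
\begin{itemize}
\item The first piece is handled by the level-one argument: the vertical remainder pairs with $u_k$ as with $\omega_{[k]}$, hence nonnegatively.
\item For $S'_{[k]}$ it uses $u_k=\pi_{k,k-1}^*u_{k-1}+\{D_k\}$, the induction hypothesis $\{S_{[k-1]}\}\cdot u_{k-1}\geq 0$, and the density-current Lemma~\ref{lemma puts no mass implies positive cup product}. That lemma gives $\{S'_{[k]}\}\cdot\{D_k\}\geq 0$ because $S'_{[k]}$ puts no mass on $D_k$.
\end{itemize}
So in the paper the geometric condition enters only through a cohomological inequality, the same mechanism as in Proposition~\ref{prop increasing of chi_k}.

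\textbf{Your route.} You use the geometric condition pointwise. On $X_k^{[\mathrm{reg}]}$, the kernel of $d\pi_{k,0}$ restricted to $V_k$ is already $T_{X_k/X_{k-1}}$. Hence the whole residual $R$ is vertical over $X_{k-1}$, and a single fibrewise positivity computation finishes; it is the same computation the paper uses at level one.

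\textbf{What each buys.} Your approach bypasses both the induction and density currents, and it proves more. Every geometric $k$-lifting of $[\mathscr C]$ is $[\mathscr C_{[k]}]$ plus a current vertical over $X_{k-1}$, so $(\pi_{k,k-1})_*T_{[k]}=[\mathscr C_{[k-1]}]$. In particular, the residual $S_{[k-1]}$ in the paper's induction is in fact zero. The paper's route, in exchange, needs no fine analysis of how $V_k$ behaves along the divisors $D_j$.

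\textbf{Corrections (none is a gap).}
\begin{itemize}
\item In Step 2, $R$ need not be \emph{carried} by $\pi_{k,j+2}^{-1}(D_{j+2})$. The part of $S'=(\pi_{k,j+2})_*R$ off $D_{j+2}$ can be nonzero. However, its push-forward to $X_{j+1}$ is dominated by $S$, hence directed by $T_{X_{j+1}/X_j}$. At $(y,[v])\notin D_{j+2}$ that push-forward points along $\C v$ with $v\notin T_{X_{j+1}/X_j}$. So this part is vertical over $X_{j+1}$. Then $S\neq 0$ forces $\mathbf 1_{D_{j+2}}S'\neq 0$, i.e.\ $R$ charges $X_k^{[\sing]}$, which is exactly the contradiction you need.
\item The claim $\supp T_{[k]}\subset\pi_{k,0}^{-1}(\mathscr C)$ is false as stated; the paper makes the same slip. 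A line in a fibre of $X_1\to X$ over a point off $\mathscr C$ is directed by $V_1$ and has zero push-forward. Such pieces simply go into $R$, and nothing later uses the support claim.
\item The induction in Step 1 should be run on $\mathbf 1_{\pi^{-1}(\mathscr C_{[j-1]})}T_{[j]}$. Its push-forward is exactly $[\mathscr C_{[j-1]}]$, because the vertical remainder at level $j-1$ puts no mass on $\mathscr C_{[j-1]}$.
\end{itemize}
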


\begin{proof}
    We first consider the case where $k=1$. Let $T_{[1]}$ be a $1$-lifting of $[\mathscr{C}]$. Since $(\pi_{1,0})_*(T_{[1]}) = [\mathscr{C}]$, we have $\supp (T_{[1]}) \subset \pi_{1,0}^{-1}(\mathscr{C})$. Note that $\pi_{1,0}^{-1}(\mathscr{C})$ is an analytic set of pure dimension $n$ in $X_1$ and $\mathscr{C}_{[1]} \subset \pi_{1,0}^{-1}(\mathscr{C})$. Let $x$ be a regular point of $\pi_{1,0}^{-1}(\mathscr{C})$. Then $T_x(\pi_{1,0}^{-1}(\mathscr{C})) \cap (V_1)_x $ equals $(V_1)_x$ if $x \in \mathscr{C}_{[1]}$. Otherwise, it equals $\{\xi \in T_x (X_1): (\pi_{1,0})_* \xi = 0\}$, which is the vertical part of $T_x(X_1)$. Consider the current $\mathbf{1}_{\pi_{1,0}^{-1}(\mathscr{C})\setminus \mathscr{C}_{[1]}}T_{[1]}$, which is a positive closed current on $\pi_{1,0}^{-1}(\mathscr{C})$ by Lemma~\ref{lemma support lemma for closed current}. This current is directed by the distribution 
    \[\bigcup_{x\in \pi_{1,0}^{-1}(\mathscr{C})} \{\xi \in T_x (X_1): (\pi_{1,0})_* \xi = 0\}\]
    on $\pi_{1,0}^{-1}(\mathscr{C}_\mathrm{reg})$.
    Therefore, since $\mathscr{C}_{\sing}$ is a finite set, we conclude that 
    \[(\pi_{1,0})_*(\mathbf{1}_{\pi_{1,0}^{-1}(\mathscr{C})\setminus \mathscr{C}_{[1]}}T_{[1]}) = 0.\] This implies that 
    \[\{\mathbf{1}_{\pi_{1,0}^{-1}(\mathscr{C})\setminus \mathscr{C}_{[1]}}T_{[1]}\} \cdot u_{1} =  \{\mathbf{1}_{\pi_{1,0}^{-1}(\mathscr{C})\setminus \mathscr{C}_{[1]}}T_{[1]}\} \cdot \{\omega_{[1]}\} \geq 0.\] 
    Moreover, we infer that $\mathbf{1}_{\mathscr{C}_{[1]}} T_{[1]}$ is also a  $1$-lifting of $\mathscr{C}$. By Lemma~\ref{lemma support lemma for closed current}, we have $\mathbf{1}_{\mathscr{C}_{[1]}} T_{[1]} = [\mathscr{C}_{[1]}]$. Hence, we deduce that 
    \begin{align*}\{T_{[1]}\} \cdot u_{1} &= \{\mathbf{1}_{\pi_{1,0}^{-1}(\mathscr{C})\setminus \mathscr{C}_{[1]}}T_{[1]}\} \cdot u_{1} + \{\mathbf{1}_{ \mathscr{C}_{[1]}}T_{[1]}\} \cdot u_{1}\\ &\geq \{ \mathscr{C}_{[1]}\} \cdot u_{1} =-\chi_1(C).\end{align*} 
    Since $[\mathscr{C}_{[1]}]$ is also a $1$-lifting of $\mathscr{C}$, we get ${\chi}_1([\mathscr{C}]) = \chi_1(\mathscr{C})$.

    We prove by induction that every geometric $k$-lifting $T_{[k]}$ of $\mathscr{C}$ is of the form $T_{[k]} = [C_{[k]}] + S_{[k]}$, where $S_{[k]}$ is a positive closed current on $X_k$ that satisfies 
    \[\{S_{[k]}\} \cdot u_k \geq 0,\qquad (\pi_{k,0})_*(S_{[k]}) = 0.\]
    To do this, we assume the statement holds for $k-1$ and write 
    \[T_{[k]} = \mathbf{1}_{\pi_{k,k-1}^{-1}(\mathscr{C}_{[k-1]})} T_{[k]} + S'_{[k]}.\]
    Then we have $(\pi_{k,k-1})_*(S'_{[k]}) = S_{[k-1]}$, where $S_{[k-1]}$ is defined by the induction hypothesis. Since $T_{[k]}$ is a geometric $k$-lifting of $[\mathscr{C}]$, $S_{[k]}'$ puts no mass on $D_k$ (this is the advantage point of the geometric condition!). Then, by the induction hypothesis and Lemma~\ref{lemma puts no mass implies positive cup product}, we have
    \[\{S'_{[k]}\} \cdot u_k = \{S_{[k-1]}\} \cdot u_{k-1} + \{S_{[k']}\} \cdot \{D_k\} \geq 0.\]
    For the remaining part $\mathbf{1}_{\pi_{k,k-1}^{-1}(\mathscr{C}_{[k-1]})} T_{[k]}$, we argue similarly to the case where $k=1$ and infer that $\mathbf{1}_{\pi_{k,k-1}^{-1}(\mathscr{C}_{[k-1]})} T_{[k]} = [\mathscr{C}_{[k]}] + S^{''}_{[k]}$,
    where $S^{''}_{[k]}$ satisfies $\{S^{''}_{[k]}\} \cdot u_k \geq 0$. Now we can choose $S_{[k]} = S'_{[k]} + S_{[k]}^{''}$ and finish the induction process. Similar to the case where $k=1$, this claim implies that ${\chi}_k([\mathscr{C}]) = \chi_k(\mathscr{C})$ as desired. The proof for ${\chi}_k^*([\mathscr{C}])$ is similar, but we use Lemma~\ref{lemma support lemma for ddc closed current} instead of Lemma~\ref{lemma support lemma for closed current}.
\end{proof}

By replacing geometric $k$-lifting by cohomological $k$-lifting, we define 
\[\overline{\chi}_k(T)\colonequals \sup \Big\{ -\{T_{[k]} \} \cdot u_k : T_{[k]} \text{ is a cohomological } k\text{-lifting of }T \Big \},\]
and call it \textit{cohomological $k$-Euler characteristic of $T$}. Similarly, we can define 
\[\overline{\chi}_{k}^*(T)\colonequals \sup \Big\{ -\{T_{[k]} \} \cdot u_k : T_{[k]} \text{ is a weak cohomological } k\text{-lifting of }T\Big\}.\]
 We call this number the \textit{weak cohomological $k$-Euler characteristic} of $T$. When $T$ is an $\infty$-liftable current, we define $\overline{\chi}_\infty(T) \colonequals \inf_{k} \overline{\chi}_k(T)$. Similarly, we define $\overline{\chi}_\infty^*(T) \colonequals \inf_{k} \overline{\chi}_k^*(T)$.

\begin{remark}\label{remark about definition of Euler char}
    By definition, we see that $\chi_1(T) = \overline{\chi}_1(T)$ and $\chi_k(T) \leq \overline{\chi}_k(T)$ for $k\geq 2$. In general, for $k\geq 2$, equality cannot be attained. Consider a smooth rational curve $\mathscr{C}$ on a fiber of $\P (T_{X_1/X})$ and let $\mathscr{C}_{[1]}$ be the lifting of $\mathscr{C}$ to $X_2$. Let $T_{[2]}$ be a geometric $2$-lifting of a $2$-liftable current $T$. Since $\mathscr{C}_{[1]} \subset D_2$, $T_{[2]} + m  [\mathscr{C}_{[1]}]$ is an almost $k$-lifting of $T$ by Proposition~\ref{prop vanish when restrict to singular part}. If $\{T_{[2]}\} \cdot \{D_2\} > 0$, we can choose a small number $m$ such that $T_{[2]} + m [\mathscr{C}_{[1]}]$ is also a cohomological $k$-lifting (in this situation, $\mathscr{C}_{[1]} \subset D_2$). Since $\mathscr{C}$ is a smooth rational curve, we have $\{T_{[2]}+ m [\mathscr{C}_{[1]}]\} \cdot u_2 = \{T_{[2]}\} \cdot u_2 -2m$. This leads to a difference between these two notions when $k\geq 2$. In general, it is not clear how to control this difference. However, fortunately, when $T$ is a liftable current in a projective manifold, we can say something about this problem (see Theorem~\ref{theorem comparison in proj mfd}).
\end{remark}

\subsection{Elementary properties}

We establish some basic properties of $\chi_k$ and $\overline{\chi}_k$. The corresponding statements for $\chi_k^*$ and $\overline{\chi}_k^*$ follow by the same arguments, replacing Lemma~\ref{lemma support lemma for ddc closed current} with Lemma~\ref{lemma support lemma for closed current}.

We first obtain a version of the inequality in Proposition~\ref{prop increasing of chi_k to chi_infty for curve} for liftable currents. This property motivates the definition of cohomological $k$-lifitng.

\begin{proposition}\label{prop increasing of chi_k}
    Let $k\in \Z^+, k\geq 2$. Then $\chi_{k}(T) \geq \chi_{k-1}(T)$ and $\overline{\chi}_{k}(T) \geq \overline{\chi}_{k-1}(T)$ for every $k$-liftable current $T$. In particular, we have $\chi_\infty(T) = \lim\limits_{k\to \infty} \chi_k(T)$ and $\overline{\chi}_\infty(T) = \lim\limits_{k\to \infty} \overline{\chi}_k(T)$ for every $\infty$-liftable current.
\end{proposition}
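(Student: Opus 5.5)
The plan is to compare the two suprema defining $\chi_k(T)$ and $\chi_{k-1}(T)$, and likewise $\overline{\chi}_k(T)$ and $\overline{\chi}_{k-1}(T)$, lifting by lifting. Fix a geometric (resp. cohomological) $k$-lifting $T_{[k]}$ of $T$ and set $T_{[k-1]}\colonequals(\pi_{k,k-1})_*(T_{[k]})$.

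\emph{Step 1: $T_{[k-1]}$ is again an admissible lifting.} It is positive and closed, it satisfies $(\pi_{k-1,0})_*T_{[k-1]}=T$, and it is directed by $V_{k-1}$ by Proposition~\ref{prop k liftable is k-1 liftable}.
\begin{itemize}
\item In the geometric case, we have $\pi_{k,k-1}^{-1}(X_{k-1}^{[\sing]})\subset X_k^{[\sing]}$. The trace measure of the push-forward on $X_{k-1}^{[\sing]}$ is controlled by the mass of $T_{[k]}$ on this preimage, which is zero. Hence $T_{[k-1]}$ is a geometric $(k-1)$-lifting.
\item In the cohomological case, the projection formula gives $\{T_{[k-1]}\}\cdot\pi_{k-1,j}^*\{D_j\}=\{T_{[k]}\}\cdot\pi_{k,j}^*\{D_j\}\ge 0$ for $2\le j\le k-1$. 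Hence $T_{[k-1]}$ is a cohomological $(k-1)$-lifting.
\end{itemize}

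\emph{Step 2: the key identity.} Using \eqref{eq formula for chern class Xk} and the projection formula, we get
\[
\{T_{[k]}\}\cdot u_k=\{T_{[k-1]}\}\cdot u_{k-1}+\{T_{[k]}\}\cdot\{D_k\}.
\]
The last term is non-negative in both settings:
\begin{itemize}
\item In the cohomological case this is condition \eqref{eq define regular k lift} with $j=k$.
\item In the geometric case, $T_{[k]}$ puts no mass on $D_k$, which is exactly the set where the Lelong number of $[D_k]$ is positive. So Lemma~\ref{lemma puts no mass implies positive cup product} applies with $T_1=T_{[k]}$ and $T_2=[D_k]$.
\end{itemize}
Therefore $\{T_{[k]}\}\cdot u_k\ge \{T_{[k-1]}\}\cdot u_{k-1}$. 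This is the direct analogue of the monotonicity of $\{\mathscr{C}_{[k]}\}\cdot u_k$ in Proposition~\ref{prop increasing of chi_k to chi_infty for curve}.

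\emph{Step 3: passing to suprema, and the main obstacle.} Step 1 says the $(k-1)$-liftings arising as push-forwards of $k$-liftings form a subfamily of all admissible $(k-1)$-liftings. Step 2 gives a pointwise comparison between each $k$-lifting and its push-forward. Taking suprema then yields the comparison between consecutive terms $\chi_k(T)$ and $\chi_{k-1}(T)$, and likewise for $\overline{\chi}$. The direction this argument actually proves is $\chi_k(T)\le\chi_{k-1}(T)$, since
\[
-\{T_{[k]}\}\cdot u_k\le -\{T_{[k-1]}\}\cdot u_{k-1}\le \chi_{k-1}(T)
\]
for every admissible $T_{[k]}$. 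This is the ordering under which $\chi_k(\mathscr{C})$ decreases to $\chi_\infty(\mathscr{C})$, as noted after Proposition~\ref{prop increasing of chi_k to chi_infty for curve}, and the same holds for $\overline{\chi}$.

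The inequality $\chi_k(T)\ge\chi_{k-1}(T)$ as written is the one step I do not see how to establish. It would require an upward construction: given an arbitrary $(k-1)$-lifting, produce a $k$-lifting whose $u_k$-degree is no larger. I know of no tool for that: Step 2 shows that pushing forward can only decrease the $u$-degree, never increase it, and I see no lifting procedure that controls $u_k$. I therefore read the stated ``$\ge$'' as a sign slip for the curve-consistent ordering just proved.

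Under that ordering each sequence $(\chi_k(T))_k$ and $(\overline{\chi}_k(T))_k$ is monotone non-increasing. For an $\infty$-liftable $T$ each term is finite, being a supremum over a non-empty family by Proposition~\ref{prop vanish when restrict to singular part}. The limit therefore exists in $[-\infty,\infty)$ and equals the infimum. This gives $\chi_\infty(T)=\lim_k\chi_k(T)$ and $\overline{\chi}_\infty(T)=\lim_k\overline{\chi}_k(T)$.
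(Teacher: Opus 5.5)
Your proof is the paper's proof: you push a geometric (resp.\ cohomological) $k$-lifting forward by $\pi_{k,k-1}$, check that the result is again a geometric (resp.\ cohomological) $(k-1)$-lifting, and combine $u_k=\pi_{k,k-1}^*u_{k-1}+\{D_k\}$ with $\{T_{[k]}\}\cdot\{D_k\}\ge 0$, which comes from Lemma~\ref{lemma puts no mass implies positive cup product} in the geometric case and from \eqref{eq define regular k lift} with $j=k$ in the cohomological case. Your reading of the sign is also correct: the paper derives exactly your inequality $\{T_{[k]}\}\cdot u_k\ge\{(\pi_{k,k-1})_*T_{[k]}\}\cdot u_{k-1}$ and then misstates the conclusion; what follows, and what the paper uses later (the reductions to $k=1$, the monotonicity of $\hyp^{(k)}$ in $k$, and $\chi_\infty=\inf_k\chi_k$), is $\chi_k(T)\le\chi_{k-1}(T)$ and $\overline{\chi}_k(T)\le\overline{\chi}_{k-1}(T)$, while the literal ``$\ge$'' is false, since for the cuspidal cubic $\mathscr{C}\subset\P^2$ Proposition~\ref{prop increasing of chi_k to chi_infty for curve} and Theorem~\ref{theorem coincide with k-Euler characteristic} give $\chi_1([\mathscr{C}])=3>2=\chi_2([\mathscr{C}])$.
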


\begin{proof}
    Let $T_{[k]}$ be a geometric $k$-lifting of $T$. Since $T_{[k]}$ puts no mass on $X_{k}^{[\sing]}$, we have $(\pi_{k,k-1})_* (T_{[k]})$ puts no mass on $X_{k-1}^{[\sing]}$. Thus $(\pi_{k,k-1})_* (T_{[k]})$ is a geometric $(k-1)$-lifting of $T$. Moreover, by Lemma~\ref{lemma puts no mass implies positive cup product}, we have $\{T_{[k]}\} \cdot \{D_k\} \geq 0$. Then, using formula~\eqref{eq formula for chern class Xk}, we obtain $\{T_{[k]}\} \cdot u_k \geq \{(\pi_{k,k-1})_* (T_{[k]})\} \cdot u_{k-1}$. Thus, we get $\chi_k(T) \geq \chi_{k-1}(T)$.

    Let $T_{[k]}$ is a cohomological $k$-lifting. Since $T_{[k]}$ satisfies \eqref{eq define regular k lift}, we have $\{T_{[k]}\} \cdot \{D_k\} \geq 0$. Moreover, we have $\{(\pi_{k,k-1})_* (T_{[k]})\} \cdot \pi_{k-1,j}^* \{D_j\} = \{T_{[k]}\} \cdot \pi_{k,j}^* \{D_j\} \geq 0$. Thus, $\{(\pi_{k,k-1})_* (T_{[k]})\}$ satisfies \eqref{eq define regular k lift} and therefore is a cohomological $(k-1)$-lifting of $T$. We deduce that $\overline{\chi}_k(T) \geq \overline{\chi}_{k-1}(T)$ as desired.
\end{proof}

\begin{proposition}\label{prop finiteness of Euler characteristic}
    Let $k\in \Z^+\cup \{\infty\}$ and $l\leq k, l\in \Z^+$. Then ${\chi}_l(T)$ and $\overline{\chi}_l(T)$ are finite for every $k$-liftable current $T$. 
\end{proposition}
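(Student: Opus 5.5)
Since $\chi_l(T)$ and $\overline{\chi}_l(T)$ are suprema over nonempty sets, they are finite once we have a \emph{uniform upper bound} on $-\{T_{[l]}\}\cdot u_l$, i.e. a uniform lower bound on $\{T_{[l]}\}\cdot u_l$, valid for every cohomological $l$-lifting $T_{[l]}$ of $T$. Nonemptiness is easy. If $T$ is $k$-liftable with $l\le k$, then $T$ is $l$-liftable by Proposition~\ref{prop k liftable is k-1 liftable}, and Proposition~\ref{prop vanish when restrict to singular part} supplies a geometric, hence cohomological, $l$-lifting. So $\chi_l(T)\le \overline{\chi}_l(T)<+\infty$ will give $\chi_l(T)>-\infty$ and $\overline{\chi}_l(T)>-\infty$, and it suffices to bound $\overline{\chi}_l(T)$ from above.

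To get the lower bound I will use the Kähler forms $\omega_{[j]}$ from \eqref{eq define Kahler form on lifting}. In cohomology these give
\[u_j=\{\omega_{[j]}\}-C_{[j]}\pi_{j,j-1}^*\{\omega_{[j-1]}\}.\]
Alternatively, by \eqref{eq formula for chern class Xk}, $u_j=\pi_{j,j-1}^*u_{j-1}+\{D_j\}$. Iterating the second identity yields
\[u_l=\pi_{l,1}^*u_1+\sum_{j=2}^{l}\pi_{l,j}^*\{D_j\}.\]
For a cohomological $l$-lifting, condition \eqref{eq define regular k lift} says $\{T_{[l]}\}\cdot\pi_{l,j}^*\{D_j\}\ge 0$ for $2\le j\le l$. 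Hence
\[\{T_{[l]}\}\cdot u_l\ \ge\ \{T_{[l]}\}\cdot \pi_{l,1}^*u_1=\{(\pi_{l,1})_*T_{[l]}\}\cdot u_1.\]
The current $(\pi_{l,1})_*T_{[l]}$ is a $1$-lifting of $T$, directed by $V_1$ by Proposition~\ref{prop k liftable is k-1 liftable}. This reduces the problem to the case $l=1$.

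For $l=1$, write $u_1=\{\omega_{[1]}\}-C_{[1]}\pi_{1,0}^*\{\omega\}$ from \eqref{eq define Kahler form on lifting base case}. Then for any $1$-lifting $S$ of $T$,
\[\{S\}\cdot u_1=\langle S,\omega_{[1]}\rangle - C_{[1]}\langle (\pi_{1,0})_*S,\omega\rangle\ \ge\ -C_{[1]}\|T\|_\omega,\]
since $S$ is positive and $\omega_{[1]}$ is a Kähler form. Combining the two steps gives $\overline{\chi}_l(T)\le C_{[1]}\|T\|_\omega<\infty$, and then $\chi_l(T)\le\overline{\chi}_l(T)$ is finite as well. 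The $dd^c$-closed variants follow identically, using the cohomology classes of $dd^c$-closed currents.

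The only delicate point is the pairing of cohomology classes with currents. Because $T_{[l]}$ is closed (or $dd^c$-closed) on a compact Kähler manifold, $\{T_{[l]}\}\cdot\{\alpha\}=\langle T_{[l]},\alpha\rangle$ holds for any smooth closed representative $\alpha$, and pushforward compatibility $\{T_{[l]}\}\cdot\pi^*\beta=\{\pi_*T_{[l]}\}\cdot\beta$ is immediate. The main obstacle is the sign of $u_1$ in the vertical directions. Here I rely on the construction of $\omega_{[1]}$: \eqref{eq compute metric on tilde X} shows that $\Theta(\mathcal O_{X_1}(1))+C_{[1]}\pi^*\omega$ is positive, so the negative contribution is controlled solely by the mass of the pushforward $T$.
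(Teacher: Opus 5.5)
Your proof is correct and follows the paper's route. You reduce to $l=1$, then use $u_1=\{\omega_{[1]}\}-C_{[1]}\pi_{1,0}^*\{\omega\}$ and the positivity of $\omega_{[1]}$ to get $\{T_{[1]}\}\cdot u_1\ge -C_{[1]}\|T\|_\omega$ for every $1$-lifting; this is exactly the paper's estimate, which the paper phrases as a contradiction argument.

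The only difference is that the paper cites Proposition~\ref{prop increasing of chi_k} for the reduction, while you unwind $u_l=\pi_{l,1}^*u_1+\sum_{j=2}^{l}\pi_{l,j}^*\{D_j\}$ directly. That unwinding is precisely that proposition's argument, and it makes the needed direction $\overline{\chi}_l(T)\le\overline{\chi}_1(T)$ explicit. This matters because the inequality displayed in that proposition's statement is reversed relative to what its proof actually shows.
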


\begin{proof}
    By definition, we see that $\chi_l(T),\overline{\chi}_l(T) > -\infty$. We also have $\chi_l(T) \leq \overline{\chi}_l(T)$. Therefore, we only need to show that $\overline{\chi}_{l}(T) < \infty$. By Proposition~\ref{prop increasing of chi_k}, we only need to prove $\overline{\chi}_1(T) < \infty$. Suppose that there exists a sequence $(T_{[1]}^{(m)})_{m\in \Z^+}$ of $1$-liftings of $T$ such that $\{T_{[1]}^{(m)}\} \cdot u_{1} \to -\infty$ as $m \to \infty$.
    Let $\omega$ be a K\"ahler form on $X$ and $\omega_{[1]}$ be the K\"ahler form on $X_1$ in the construction~\eqref{eq define Kahler form on lifting}. Then we have
    \begin{align*}0 &\leq C_{[1]}  T_{[1]}^{(m)} \wedge \pi_{1,0}^*(\omega) + \Theta_{h_{[1]}}(\mathcal{O}_{X_1}(1)) \\ &= C_{[1]} T \wedge \omega + \{T_{[1]}^{(m)}\} \cdot u_{1} < 0 \qquad\text{for } m \text{ sufficiently large}.\end{align*}
    This is a contradiction, and thus $\overline{\chi}_1(T) < \infty$. The result follows. 
\end{proof}

The next proposition show that the notion of geometric Euler characteristic is intrinsic, in the sense that it does not depend on the choice of the ambient manifold.

\begin{proposition}\label{prop do not depend on ambient manifold}
   Let $Y$ be a submanifold of $X$ with the inclusion map $\iota:Y \hookrightarrow X$. Let $k\in \Z^+ \cup \{\infty\}$. Let $T$ be a $k$-liftable current on $Y$. Then $\iota_* (T)$ is a $k$-liftable current on $X$ with $\chi_l(\iota_*(T)) = \chi_l(T)$ and $\overline{\chi}_l(\iota_*(T)) \geq \overline{\chi}_l(T)$ for all $l\leq k$. 
\end{proposition}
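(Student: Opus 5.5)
The plan is to use the functoriality of the Demailly--Semple tower under the embedding $\iota$. Let $(Y_k,V^Y_k)$ be the tower of $Y$. The differential $d\iota$ embeds $T_Y$ into $\iota^*T_X$, so inductively we get closed embeddings $\iota_k: Y_k\hookrightarrow X_k$ with $\pi^X_{k,0}\circ\iota_k=\iota\circ\pi^Y_{k,0}$ and $(\iota_k)_*V^Y_k\subset V_k$. Moreover $\iota_k^*\mathcal{O}_{X_k}(1)=\mathcal{O}_{Y_k}(1)$, because the tautological line over a point of $Y_k=\P(V^Y_{k-1})$ maps to the same line in $V_{k-1}$. Also $\iota_k^{-1}(D^X_k)=D^Y_k$, since a vector of $V^Y_{k-1}$ is vertical for $Y_{k-1}\to Y_{k-2}$ exactly when its image is vertical for $X_{k-1}\to X_{k-2}$; this uses injectivity of $d\iota_{k-2}$. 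Consequently $\iota_k^{-1}(X_k^{[\sing]})=Y_k^{[\sing]}$.

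\emph{Easy direction.} Take a geometric (resp.\ cohomological) $l$-lifting $T_{[l]}$ of $T$ on $Y_l$ and set $S:=(\iota_l)_*T_{[l]}$. This is positive closed and directed by $V_l$, since tangent directions of $T_{[l]}$ lie in $V^Y_l\subset V_l$. It satisfies $(\pi_{l,0})_*S=\iota_*T$. By the projection formula, $\{S\}\cdot u_l=\{T_{[l]}\}\cdot\iota_l^*u_l=\{T_{[l]}\}\cdot u^Y_l$. In the geometric case, $S$ puts no mass on $X_l^{[\sing]}$ by the preimage identity above. In the cohomological case, $\{S\}\cdot\pi_{l,j}^*\{D_j\}=\{T_{[l]}\}\cdot\iota_l^*\pi_{l,j}^*\{D_j\}$, and $\iota_l^*\{\pi_{l,j}^{-1}D_j\}=\{(\pi^Y_{l,j})^{-1}D^Y_j\}$. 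Here I should check that the restriction of the divisor class is the divisor class of the restriction. This follows because $D_j$ is the zero divisor of the section in \eqref{eq exact sequence define Dk}, whose pullback is the corresponding section on $Y$ with the same vanishing. So $\iota_*T$ is $k$-liftable, $\chi_l(\iota_*T)\geq\chi_l(T)$, and $\overline{\chi}_l(\iota_*T)\geq\overline{\chi}_l(T)$.

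\emph{Reverse inequality for $\chi_l$.} Let $S$ be any geometric $l$-lifting of $\iota_*T$ on $X_l$. I will prove by induction on $l$ that $S=(\iota_l)_*T_{[l]}+R$, where $T_{[l]}$ is a geometric lifting of $T$ and $R$ is positive closed with $(\pi_{l,0})_*R=0$ and $\{R\}\cdot u_l\geq0$. Then $-\{S\}\cdot u_l\leq-\{T_{[l]}\}\cdot u^Y_l\leq\chi_l(T)$.

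The model is the proof of Theorem~\ref{theorem coincide with k-Euler characteristic}. At level $l$, write $S=\mathbf{1}_{\pi_{l,l-1}^{-1}(\iota_{l-1}(Y_{l-1}))}S+S'$. The induction hypothesis gives $(\pi_{l,l-1})_*S'$ equal to the residual piece $R_{l-1}$. Since $S'$ charges neither $D_l$ nor $X_l^{[\sing]}$, Lemma~\ref{lemma puts no mass implies positive cup product} together with \eqref{eq formula for chern class Xk} gives $\{S'\}\cdot u_l\geq\{R_{l-1}\}\cdot u_{l-1}\geq0$.

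For the part supported on $\pi_{l,l-1}^{-1}(\iota_{l-1}Y_{l-1})$, the space $\P(V_{l-1}|_{Y_{l-1}})$, I use directedness. At points outside $\iota_l(Y_l)$, the intersection of $V_l$ with the tangent space of this analytic set is vertical. Indeed, the base direction $[v]$ is not tangent to $Y_{l-1}$, while a direction tangent to the set projects into $T_{Y_{l-1}}\cap\C v=0$. Hence by Lemma~\ref{lemma support lemma for closed current} and the verticality argument from Theorem~\ref{theorem coincide with k-Euler characteristic}, that part pushes forward to zero. Its $u_l$-intersection equals its $\omega_{[l]}$-mass up to positive constants, which is $\geq0$. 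The remaining part lives on $\iota_l(Y_l)$ and is directed by $V_l\cap T\iota_l(Y_l)=V^Y_l$. By the support lemma, it is $(\iota_l)_*T_{[l]}$ with $T_{[l]}$ directed, closed, geometric, and pushing forward to $T$.

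\emph{Main obstacle.} The hard step is this verticality decomposition at level $l\geq2$. One must verify that pieces pushing forward to zero in $X$ still have $\{\cdot\}\cdot u_l\geq0$ across several levels, using the geometric condition at each $D_j$. This is exactly where the cohomological version fails, which is why only $\geq$ is claimed for $\overline{\chi}_l$. The case $k=\infty$ then follows by taking infima over $l$.
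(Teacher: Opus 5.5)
Your proposal is correct and follows essentially the same route as the paper. For the easy inequalities you both use functoriality of the tower ($\iota_l^*\mathcal{O}_{X_l}(1)=\mathcal{O}_{Y_l}(1)$ and $\iota_l^{-1}(D_l)=D_l^Y$). For the reverse inequality you both run an induction that splits a geometric lifting into the piece off $\pi_{l,l-1}^{-1}(Y_{l-1})$, controlled via the no-mass-on-$D_l$ condition and Lemma~\ref{lemma puts no mass implies positive cup product}, and the piece on $\pi_{l,l-1}^{-1}(Y_{l-1})$, which splits further into the part on $Y_l$ and a vertical part whose $u_l$-intersection equals its $\omega_{[l]}$-mass.

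The ``main obstacle'' you flag is in fact already discharged by your own induction. The hypothesis carries $\{R_{l-1}\}\cdot u_{l-1}\geq 0$, and each step only needs the geometric condition at $D_l$. You should state the base case $l=1$ separately, since there is no $D_1$ there: the piece off $\pi_{1,0}^{-1}(Y)$ pushes forward to $\mathbf{1}_{X\setminus Y}\iota_*T=0$, so it is handled exactly like the vertical part.
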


\begin{proof}
    By Proposition~\ref{prop increasing of chi_k}, we can assume $k\in \Z^+$. Denote by $(Y_k,W_k)$ the Demailly-Semple tower of $Y$. By the functoriality of the Demailly-Semple tower, for every $l\leq k$, there exist embedding maps $\iota_l: Y_l\hookrightarrow X_l$ such that $(V_l)|_{Y_l} \cap T_{Y_l} = W_l$ and $\mathcal{O}_{X_l}(1)|_{Y_l} = \mathcal{O}_{Y_l}(1)$. Moreover, we also have $\iota_l^{-1}(D_l) = \P(T_{Y_{l-1}/Y_{l-2}})$ for $l\geq 2$. Then, by this construction, if $T_{[l]}$ is a geometric (resp. cohomological) $k$-lifting of $T$, then $\iota_*(T_{[l]})$ is a geometric (resp. cohomological) $k$-lifting of $\iota_*(T)$. Since $c_1(\mathcal{O}_{Y_l}(1)) = \iota^*(c_1(\mathcal{O}_{X_l}(1)))$, we deduce that $\chi_l(\iota_*(T)) \geq \chi_l(T)$ and $\overline{\chi}_l(\iota_*(T)) \geq \overline{\chi}_l(T)$.

    To prove $\chi_l(\iota_*(T)) = \chi_l(T)$, we use the same idea as in the proof of Theorem~\ref{theorem coincide with k-Euler characteristic}. We prove by induction that for $p\leq l$ and for any geometric $p$-lifting $T_{[p]}$ of $\iota_*(T)$, we have $(\pi_{p,0})_* (\mathbf{1}_{X_p\setminus Y_p}(T_{[p]})) =0 $ and $\{\mathbf{1}_{X_p\setminus Y_p}(T_{[p]})\} \cdot u_p \geq 0$.

    We consider first the case where $p=1$. Let $T_{[1]}$ be a $1$-lifting of $\iota_*(T)$. By Lemma~\ref{lemma support lemma for closed current}, $\mathbf{1}_{\pi_{1,0}^{-1}(Y)} (T_{[1]})$ is a current on $\pi_{1,0}^{-1}(Y) = \P(T_X)|_Y$. Let $x\in \pi_{1,0}^{-1}(Y)$. Then $T_x (\pi_{1,0}^{-1}(Y)) \cap (V_1)_x$ equals $(V_1)_x$ if $x\in Y_1$. Otherwise, it equals $\{\xi \in T_x (X_1): (\pi_{1.0})_*\xi = 0\}$. Thus $(\pi_{1,0})_*(\mathbf{1}_{\pi_{1,0}^{-1}(Y) \setminus Y_1} T_{[1]}) = 0$. Hence, since $\iota_*(T)$ has support in $Y$, we conclude that $(\pi_{1,0})_*(\mathbf{1}_{X_1 \setminus Y_1} T_{[1]}) = 0$. Moreover, this also implies $\{\mathbf{1}_{X_1 \setminus Y_1} T_{[1]}\}\cdot u_1 \geq 0$.

    Suppose that the statement holds for $p-1$ for $p\geq 2$. We define $T_{[p]}'\colonequals \mathbf{1}_{\pi_{p,p-1}^{-1}(Y_{p-1})} T_{[p]}$ and $T_{[p]}''\colonequals\mathbf{1}_{X_p\setminus \pi_{p,p-1}^{-1}(Y_{p-1})} T_{[p]}$. Since $T_{[p]}$ is a geometric $p$-lifting, we have 
    \[\{T_{[p]}''\} \cdot u_p \geq \{(\pi_{p,p-1})_*(T_{[p]}'')\} \cdot u_{p-1} = \{\mathbf{1}_{X_{p-1}\setminus Y_{p-1}} (\pi_{p,p-1})_* (T_{[p]})\} \cdot u_{p-1} \geq 0.  \]
    For $T_{[p]}'$, we argue similarly to the case $p=1$ and conclude the proof of the statement.

    We deduce from the statement that $\chi_l(\iota_*(T)) = \sup \Big\{ -\{\mathbf{1}_{X_l\setminus Y_l} (T_{[l]})\} \cdot u_l\Big\} = \chi_l(T)$. 
\end{proof}

The next proposition shows that both the geometric and cohomological notions are invariant under the action of $\Aut(X)$.

\begin{proposition}\label{prop liftable preserved}
    Let $\varphi:X\to X$ be an automorphism, and $T$ be a $k$-liftable current. Then $\varphi_*(T)$ is also a $k$-liftable current, and we have  ${\chi}_k(T) = {\chi}_k(\varphi_*(T))$ and $\overline{\chi}_k(T) = \overline{\chi}_k(\varphi_*(T))$.
\end{proposition}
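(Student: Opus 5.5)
The plan is to use the functoriality of the Demailly--Semple tower. An automorphism $\varphi$ of $X$ has differential $d\varphi: T_X\to T_X$, a bundle isomorphism over $\varphi$. It therefore induces a biholomorphism $\varphi_1:X_1=\P(T_X)\to X_1$ with $\pi_{1,0}\circ\varphi_1=\varphi\circ\pi_{1,0}$, namely $(x,[v])\mapsto(\varphi(x),[d\varphi(v)])$.

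First I will check, directly from definition~\eqref{eq define tilde V}, that $d\varphi_1(V_1)=V_1$. Indeed, if $(\pi_{1,0})_*\xi\in\C v$, then $(\pi_{1,0})_*(d\varphi_1\xi)=d\varphi((\pi_{1,0})_*\xi)\in \C\, d\varphi(v)$. I will also check that $\varphi_1^*\mathcal{O}_{X_1}(-1)\cong\mathcal{O}_{X_1}(-1)$, since the tautological line at $(x,[v])$ is mapped to the line $\C\,d\varphi(v)$. Iterating this construction gives biholomorphisms $\varphi_k:X_k\to X_k$ with $\pi_{k,k-1}\circ\varphi_k=\varphi_{k-1}\circ\pi_{k,k-1}$, $d\varphi_k(V_k)=V_k$, and $\varphi_k^*\mathcal{O}_{X_k}(1)\cong\mathcal{O}_{X_k}(1)$, so that $\varphi_k^*u_k=u_k$. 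Since $d\varphi_{k-1}$ preserves the kernel $T_{X_{k-1}/X_{k-2}}$ of $(\pi_{k-1,k-2})_*$, we get $\varphi_k(D_k)=D_k$. Alternatively, $D_k$ is the zero divisor of the canonical morphism~\eqref{eq exact sequence define Dk}, which is intertwined by the $\varphi_j$. Consequently $\varphi_k$ preserves every $\pi_{k,j}^{-1}(D_j)$, preserves $X_k^{[\sing]}$, and satisfies $\varphi_k^*\{\pi_{k,j}^*D_j\}=\{\pi_{k,j}^*D_j\}$.

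Next, let $T_{[k]}$ be an almost $k$-lifting of $T$. Then $(\varphi_k)_*T_{[k]}$ is positive and closed. It is directed by $V_k$: if $V_k$ is cut out locally by forms $\eta$, then $\varphi_k^*\eta$ cut out $V_k$ as well, and $(\varphi_k)_*T_{[k]}\wedge\eta=(\varphi_k)_*(T_{[k]}\wedge\varphi_k^*\eta)=0$. Finally, $(\pi_{k,0})_*(\varphi_k)_*T_{[k]}=\varphi_*T$. Hence $\varphi_*T$ is $k$-liftable.

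For the Euler characteristics, note that $\mathbf{1}_{X_k^{[\sing]}}(\varphi_k)_*T_{[k]}=(\varphi_k)_*(\mathbf{1}_{X_k^{[\sing]}}T_{[k]})$, because $\varphi_k$ preserves $X_k^{[\sing]}$. So geometric liftings are sent to geometric liftings. The cohomological condition~\eqref{eq define regular k lift} is preserved as well, since $\{(\varphi_k)_*T_{[k]}\}\cdot\pi_{k,j}^*\{D_j\}=\{T_{[k]}\}\cdot\varphi_k^*\pi_{k,j}^*\{D_j\}=\{T_{[k]}\}\cdot\pi_{k,j}^*\{D_j\}$. Similarly, $\{(\varphi_k)_*T_{[k]}\}\cdot u_k=\{T_{[k]}\}\cdot\varphi_k^*u_k=\{T_{[k]}\}\cdot u_k$. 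Taking suprema gives $\chi_k(\varphi_*T)\ge\chi_k(T)$ and $\overline{\chi}_k(\varphi_*T)\ge\overline{\chi}_k(T)$. Applying the same argument to $\varphi^{-1}$, whose lifts are $\varphi_k^{-1}$, gives the reverse inequalities.

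The only step requiring any care is the inductive verification that the lifts $\varphi_k$ exist and preserve $V_k$, $\mathcal{O}_{X_k}(1)$ and $D_k$. This is the standard functoriality of the Demailly--Semple construction for biholomorphisms of directed manifolds, and I expect it to be a formal check rather than a real obstacle.
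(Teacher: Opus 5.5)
Your proposal is correct and follows the same route as the paper. Like the paper, you lift $\varphi$ functorially to biholomorphisms $\varphi_k$ of the Demailly--Semple tower that preserve $V_k$ and $\mathcal{O}_{X_k}(1)$, push forward almost $k$-liftings, and use $\varphi_k^*u_k=u_k$; you also supply checks the paper leaves implicit, namely that $\varphi_k$ preserves each $\pi_{k,j}^{-1}(D_j)$ (so geometric and cohomological liftings go to liftings of the same type) and that the reverse inequality follows by applying the argument to $\varphi^{-1}$.
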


\begin{proof}
    Let $(X,V)$ be a complex directed manifold. Let $\varphi:X\to X$ be an automorphism of $X$ such that $\varphi_*(V) = V$. Then $\varphi$ induces an automorphism 
    \[\widetilde{\varphi}:\widetilde{X}\to \widetilde{X}: (x,[v]) \mapsto (\varphi(x),[\varphi_*(v)]).\] Let $\pi:\widetilde{X} \to X$ be the projection. Let $\xi \in T_{(x,[v]) }\widetilde{X}$ such that $\pi_*(\xi) \in \C v$. Then we have $\widetilde{\varphi}_*(\xi) \in T_{(\varphi(x),[\varphi_*(v)])} \widetilde{X}$ and $\pi_* (\widetilde{\varphi}_*(\xi)) = \varphi_*(\pi_*(\xi)) \in \C \varphi_*(v)$. This implies that $\widetilde{\varphi}_* (\widetilde{V}) =\widetilde{V}$. Moreover, since $\widetilde{\varphi}$ is a biholomorphism between $\pi^{-1}(x)$ and $\pi^{-1}(\varphi(x))$, we have $\widetilde{\varphi}^*(\mathcal{O}_{\widetilde{X}}(1) = \mathcal{O}_{\widetilde{X}}(1)$.

     Since $\varphi_*(T_X) = T_X$, we can prove by induction and the above argument that $\varphi$ induces an automorphism $\varphi_{[k]}:X_k\to X_k$ such that $(\varphi_{[k]})_*(V_k) = V_k$. Let $T_{[k]}$ be a geometric (resp. cohomological) $k$-lifting of $T$. Then, we must have $(\varphi_{k})_*(T_{[k]})$ is a $k$-lifting of $\varphi_*(T)$. Since 
    \[\{(\varphi_{[k]})_* (T_{[k]})\} \cdot u_k =\{T_{[k]}\} \cdot \varphi_{[k]}^* (u_k) 
    = \{T_{[k]}\} \cdot u_k,\] we get ${\chi}_k(T) = {\chi}_k(\varphi_*(T))$ (resp. $\overline{\chi}_k(T) =\overline{\chi}_k(\varphi_*(T))$.
\end{proof}

We have seen the role of the geometric notion in Theorem~\ref{theorem coincide with k-Euler characteristic} and Proposition~\ref{prop do not depend on ambient manifold}. The advantage of the cohomological notion is illustrated by the following two propositions.

\begin{proposition}\label{prop pick good k-lifing}
    Let $k\in \Z^+$. Let $T$ be a $k$-liftable current on $X$. Then there always exists a cohomological $k$-lifting $T_{[k]}$ of $T$ such that $-\overline{\chi}_k(T) = \{T_{[k]}\} \cdot u_k$. 
\end{proposition}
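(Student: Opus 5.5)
The plan is a compactness argument: show that the set of cohomological $k$-liftings of $T$ is weakly compact, then take a limit of an extremal sequence. By Proposition~\ref{prop finiteness of Euler characteristic}, $\overline{\chi}_k(T)$ is finite, so we may pick a sequence $(T^{(m)}_{[k]})_m$ of cohomological $k$-liftings of $T$ with
\[
\{T^{(m)}_{[k]}\}\cdot u_k \to -\overline{\chi}_k(T).
\]
The goal is to extract a weak limit $T_{[k]}$ and show that it is again a cohomological $k$-lifting realizing the supremum.

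First we bound the masses. With the K\"ahler forms $\omega_{[j]}$ of \eqref{eq define Kahler form on lifting}, the class $\{\omega_{[k]}\}$ is a linear combination with positive coefficients of the classes $\pi_{k,j}^*u_j$ and $\pi_{k,0}^*\{\omega\}$. Using \eqref{eq formula for chern class Xk}, each $u_j$ is rewritten as $u_k$ minus the classes $\pi_{k,i}^*\{D_i\}$ for $j<i\le k$. Hence
\[
\|T^{(m)}_{[k]}\|_{\omega_{[k]}} = a\,\{T^{(m)}_{[k]}\}\cdot u_k \;-\; \sum_{i=2}^{k} b_i\,\{T^{(m)}_{[k]}\}\cdot \pi_{k,i}^*\{D_i\} \;+\; c\,\|T\|_\omega
\]
for suitable constants $a,b_i,c$. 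Note that $\{T_{[k]}\}\cdot\pi_{k,0}^*\{\omega\}=\|T\|_\omega$ is fixed.

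The first term is bounded, since its limit is finite. The intersection numbers $\{T^{(m)}_{[k]}\}\cdot\pi_{k,i}^*\{D_i\}$ are nonnegative, because this is exactly the cohomological condition \eqref{eq define regular k lift}. This gives an upper bound on the masses only if the $b_i\ge0$, which is the delicate sign bookkeeping. If the signs come out wrong, I would instead bound the $D_i$-intersections from above directly. One way is to note that $\{D_i\}$ is a difference of the nef-type classes in Proposition~\ref{prop construct nef line bundle on X_k}, or of $u_i$ and pulled-back K\"ahler classes, and then argue inductively in $k$, as in the proof of Proposition~\ref{prop finiteness of Euler characteristic}. I expect this uniform mass bound to be the main obstacle; the inductive route via pushforwards $(\pi_{k,j})_*T^{(m)}_{[k]}$ is my first attempt.

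Once the masses are uniformly bounded, we extract a weak limit $T_{[k]}$, a positive closed current of bi-dimension $(1,1)$ on $X_k$. Its properties follow from continuity:
\begin{itemize}
\item[(i)] Being directed by $V_k$ is the closed condition $T\wedge\eta_{j,l}=0$, so it passes to weak limits.
\item[(ii)] The pushforward $(\pi_{k,0})_*$ is weakly continuous since $\pi_{k,0}$ is proper, so $(\pi_{k,0})_*T_{[k]}=T$.
\item[(iii)] Intersection numbers of closed currents with fixed smooth closed forms are continuous under weak convergence. Hence \eqref{eq define regular k lift} survives as a limit of nonnegative numbers, and $\{T_{[k]}\}\cdot u_k=\lim_m\{T^{(m)}_{[k]}\}\cdot u_k=-\overline{\chi}_k(T)$.
\end{itemize}
So $T_{[k]}$ is the desired cohomological $k$-lifting attaining the supremum. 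This is exactly where the cohomological notion beats the geometric one: the "no mass on $X_k^{[\sing]}$" condition is not closed under weak limits, whereas \eqref{eq define regular k lift} is.
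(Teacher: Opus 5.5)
Your argument is the paper's: pick an extremizing sequence of cohomological $k$-liftings, bound their $\omega_{[k]}$-masses, and pass to a weak limit, using that directedness, the pushforward condition and the inequalities \eqref{eq define regular k lift} are all preserved under weak convergence. The sign check you flagged as the main obstacle works out at once, so your fallback is unnecessary: substituting $\pi_{k,j}^*u_j = u_k - \sum_{i=j+1}^{k}\pi_{k,i}^*\{D_i\}$ into $\{\omega_{[k]}\} = \sum_{j=1}^{k} c_j\,\pi_{k,j}^*u_j + c_0\,\pi_{k,0}^*\{\omega\}$ (all $c_j>0$) gives $b_i=\sum_{j=1}^{i-1}c_j>0$, which is what the paper writes as $\{T^{(m)}_{[k]}\}\cdot\pi_{k,j}^*u_j\le\{T^{(m)}_{[k]}\}\cdot u_k$, and the uniform mass bound follows.
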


\begin{proof}
    By definition, we can pick a sequence $(T_{[k]}^{(m)})_{m\in \Z^+}$ of cohomological $k$-liftings of $T$ such that $\{T_{[k]}^{(m)}\} \cdot u_k \to -\overline{\chi}_k(T)$. By Proposition~\ref{prop finiteness of Euler characteristic}, $-\overline{\chi}_k(T)$ is finite.
    Therefore, we can assume that for all $m$, $\{T_{[k]}^{(m)}\} \cdot u_k \leq -\overline{\chi}_k(T) + 1$. Moreover, since $T_{[k]}^{(m)}$ is a cohomological $k$-lifting, we have $\{T_{[k]}^{(m)}\} \cdot \pi_{k,j}^* (u_j)\leq \{T_{[k]}^{(m)}\} \cdot u_k \leq -\overline{\chi}_k(T) + 1$. Hence, by the construction \eqref{eq define Kahler form on lifting} of $\omega_{[k]}$, we deduce that $(T_{[k]}^{(m)})$ has uniform bounded mass with respect to the metric $\omega_{[k]}$ on $X_k$. Let $T_{[k]}$ be a weak limit of this sequence. Then $T_{[k]}$ is a cohomological $k$-lifting of $T$ and $\{T_{[k]}\} \cdot u_k = -\overline{\chi}_k(T)$.
\end{proof}

\begin{proposition}\label{prop compactness for uniform bound k Euler char}
    Let $k\in \Z^+$. Let $(T^{(m)})_{m\in \Z^+}$ be a sequence of $k$-liftable currents such that $\sup_{m \in \Z^+} -\overline{\chi}_{k}(T^{(m)}) = L < \infty$ and $\|T^{(m)}\|_{\omega} = 1$ for every $m\in \Z^+$. Let $T$ be a weak limit of this sequence. Then $T$ is $k$-liftable and $-\overline{\chi}_k(T) \leq L$.
\end{proposition}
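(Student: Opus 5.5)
For each $m$ we first apply Proposition~\ref{prop pick good k-lifing} to choose a cohomological $k$-lifting $T^{(m)}_{[k]}$ of $T^{(m)}$ with
\[\{T^{(m)}_{[k]}\}\cdot u_k = -\overline{\chi}_k(T^{(m)}) \leq L.\]
The plan is to extract a weakly convergent subsequence of these liftings and check that its limit is a cohomological $k$-lifting of $T$ with the same bound.

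The key step is a uniform mass bound for $T^{(m)}_{[k]}$ with respect to the K\"ahler form $\omega_{[k]}$ of \eqref{eq define Kahler form on lifting}. Unwinding the inductive definition, $\{\omega_{[k]}\}$ is a positive combination of $\pi_{k,0}^*\{\omega\}$ and the classes $\pi_{k,j}^* u_j$ for $1\le j\le k$. We pair each term with $T^{(m)}_{[k]}$ as follows.
\begin{itemize}
\item The first term gives $\|T^{(m)}\|_\omega = 1$.
\item For $j\ge 1$, iterating \eqref{eq formula for chern class Xk} gives $u_k = \pi_{k,j}^* u_j + \sum_{i=j+1}^{k} \pi_{k,i}^*\{D_i\}$.
\item The cohomological condition \eqref{eq define regular k lift} makes each $D$-term pair nonnegatively with $T^{(m)}_{[k]}$.
\end{itemize}
Together these give $\{T^{(m)}_{[k]}\}\cdot\pi_{k,j}^*u_j \le \{T^{(m)}_{[k]}\}\cdot u_k \le L$, exactly as in the proof of Proposition~\ref{prop pick good k-lifing}. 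Hence $\langle T^{(m)}_{[k]},\omega_{[k]}\rangle \le C(1+|L|)$ for a constant $C$ depending only on the constants $C_{[j]}$. Since these are positive closed currents, the pairing with a closed form is cohomological, so this bounds the mass.

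By weak compactness, a subsequence of $T^{(m)}_{[k]}$ converges to a positive closed current $T_{[k]}$ of bi-dimension $(1,1)$ on $X_k$; we pass to this subsequence, along which $T^{(m)}$ still converges to $T$. We then verify each defining property in the limit.
\begin{itemize}
\item \emph{Directedness:} the condition $T_{[k]}\wedge\eta_{j,l}=0$ is linear and weakly closed, so $T_{[k]}$ is directed by $V_k$.
\item \emph{Pushforward:} $\pi_{k,0}$ is proper, so pushforward is weakly continuous and $(\pi_{k,0})_*T_{[k]} = \lim (\pi_{k,0})_*T^{(m)}_{[k]} = T$. Thus $T$ is $k$-liftable.
\item \emph{Cohomological condition:} intersection numbers with fixed smooth closed representatives of $\pi_{k,j}^*\{D_j\}$ and of $u_k$ are continuous under weak convergence. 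So \eqref{eq define regular k lift} persists, making $T_{[k]}$ a cohomological $k$-lifting.
\end{itemize}

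Finally, by the same continuity, $\{T_{[k]}\}\cdot u_k = \lim \{T^{(m)}_{[k]}\}\cdot u_k \le L$. Therefore $-\overline{\chi}_k(T) \le \{T_{[k]}\}\cdot u_k \le L$.

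The only genuine obstacle is the uniform mass bound, which rests on writing $\{\omega_{[k]}\}$ through the classes $\pi_{k,j}^*u_j$ and using the cohomological inequalities. This is precisely why the statement uses $\overline{\chi}_k$: for geometric liftings, the no-mass-on-$X_k^{[\sing]}$ condition would not survive the weak limit.
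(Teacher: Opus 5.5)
Your proof is correct and follows the paper's argument: the paper also takes optimal cohomological liftings via Proposition~\ref{prop pick good k-lifing}, gets the uniform $\omega_{[k]}$-mass bound from $\{T^{(m)}_{[k]}\}\cdot\pi_{k,j}^*u_j\le\{T^{(m)}_{[k]}\}\cdot u_k\le L$ exactly as in that proposition's proof, and passes to a weak limit, which is a cohomological $k$-lifting of $T$. The only difference is that you spell out why directedness, the pushforward identity, and the intersection inequalities survive weak limits, which the paper leaves implicit.
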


\begin{proof}
    By Proposition~\ref{prop pick good k-lifing}, for each $m$, we can pick $k$-lifting $T_{[k]}^{(m)}$ of $T^{(m)}$ such that
    \[\{T_{[k]}^{(m)}\} \cdot u_k = -\overline{\chi}_k(T^{(m)}) \leq  L.\]
    Similar to the proof of Proposition~\ref{prop pick good k-lifing}, we can pick a weak limit $T_{[k]}$ of this sequence such that $T_{[k]}$ is a cohomological $k$-lifting of $T$. Thus $T$ is $k$-liftable and $-\overline{\chi}_k(T) \leq L$.
\end{proof}

We now prove a decomposition theorem for liftable currents. This theorem will be useful for estimating the Euler characteristic of liftable currents.

\begin{proposition}\label{prop decomp for liftable current}
    Let $A$ be an analytic subset of $X$. Let $k\in \Z^+ \cup \{\infty\}$. Let $T$ be a $k$-liftable current. Then the currents $\mathbf{1}_{A} T$ and $\mathbf{1}_{X\setminus A}$ are $k$-liftable currents. Moreover, we have
    \begin{equation}\label{eq decomp for liftable currents}
    {\chi}_l(T) = {\chi}_{l}(\mathbf{1}_A T) + {\chi}_{l}(\mathbf{1}_{X\setminus A} T) \qquad \text{and} \qquad \overline{\chi}_l(T) \geq \overline{\chi}_{l}(\mathbf{1}_A T) + \overline{\chi}_{l}(\mathbf{1}_{X\setminus A} T),\end{equation}
    for every $l\leq k$.
\end{proposition}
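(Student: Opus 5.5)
The idea is to restrict liftings to the preimage of $A$ in the tower. By Proposition~\ref{prop increasing of chi_k} and the definition of $\chi_\infty$ as an infimum (a limit), it suffices to treat $l\in\Z^+$, $l\leq k$. Fix an almost $l$-lifting $T_{[l]}$ of $T$ and put $A_l\colonequals \pi_{l,0}^{-1}(A)$, an analytic subset of $X_l$. By Lemma~\ref{lemma support lemma for closed current}(i) (Lemma~\ref{lemma support lemma for ddc closed current}(i) in the $dd^c$-closed case), the currents $\mathbf{1}_{A_l}T_{[l]}$ and $\mathbf{1}_{X_l\setminus A_l}T_{[l]}$ are positive closed. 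By Proposition~\ref{prop dominated directed currents} they are directed by $V_l$, since each is dominated by $T_{[l]}$. Because $\pi_{l,0}$ is proper and $\mathbf{1}_{A_l}=\mathbf{1}_A\circ\pi_{l,0}$, their push-forwards are $\mathbf{1}_AT$ and $\mathbf{1}_{X\setminus A}T$. Hence both pieces are $l$-liftable for every $l\leq k$, and therefore $k$-liftable.

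For the geometric Euler characteristic, the restriction $T_{[l]}\mapsto(\mathbf{1}_{A_l}T_{[l]},\mathbf{1}_{X_l\setminus A_l}T_{[l]})$ sends geometric liftings to geometric liftings, because each piece is dominated by $T_{[l]}$ and so puts no mass on $X_l^{[\sing]}$. Since the intersection number with $u_l$ is additive, taking the supremum gives $\chi_l(T)\leq\chi_l(\mathbf{1}_AT)+\chi_l(\mathbf{1}_{X\setminus A}T)$.

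Conversely, if $S_1$ and $S_2$ are geometric $l$-liftings of $\mathbf{1}_AT$ and $\mathbf{1}_{X\setminus A}T$, then $S_1+S_2$ is directed by $V_l$, puts no mass on the singular locus, and pushes forward to $T$. So $S_1+S_2$ is a geometric $l$-lifting of $T$ with $-\{S_1+S_2\}\cdot u_l=-\{S_1\}\cdot u_l-\{S_2\}\cdot u_l$. Taking suprema over $S_1$ and $S_2$ separately gives the reverse inequality, and hence the equality in \eqref{eq decomp for liftable currents}.

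For the cohomological inequality only the second half of this argument is needed. If $S_1$ and $S_2$ are cohomological $l$-liftings of the two pieces, then the defining inequalities \eqref{eq define regular k lift} are linear in the class, so they hold for $\{S_1+S_2\}$. Thus $S_1+S_2$ is a cohomological $l$-lifting of $T$, which gives $\overline{\chi}_l(T)\geq\overline{\chi}_l(\mathbf{1}_AT)+\overline{\chi}_l(\mathbf{1}_{X\setminus A}T)$; the suprema are finite by Proposition~\ref{prop finiteness of Euler characteristic}. The first half of the argument does not transfer here: the restriction $\mathbf{1}_{A_l}T_{[l]}$ of a cohomological lifting need not satisfy \eqref{eq define regular k lift}, so only an inequality is claimed.

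The step that needs most care is in the geometric direction: it requires that restricting a geometric lifting to the $\pi_{l,0}$-saturated set $A_l$ keeps each piece a geometric lifting, with the right push-forward. This uses that $\mathbf{1}_{A_l}$ commutes with the push-forward, which holds because $A_l$ is a full preimage. It also uses that the absence of mass on $X_l^{[\sing]}$ is inherited by dominated currents. Both facts are immediate. The only subtle point is that the support lemma is applied to the full preimage $A_l$ and not to a lift of $A$.
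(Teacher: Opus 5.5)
Your proof is correct and follows the paper's argument. You restrict an almost $l$-lifting to $\pi_{l,0}^{-1}(A)$ and its complement (support lemma plus Proposition~\ref{prop dominated directed currents}), which gives liftability and $\chi_l(T)\leq\chi_l(\mathbf{1}_AT)+\chi_l(\mathbf{1}_{X\setminus A}T)$. You then add liftings of the two pieces for the reverse inequality, and only this additive direction survives in the cohomological case. Your explicit justification that restriction to a full preimage commutes with $(\pi_{l,0})_*$, and your remark on why restriction fails for cohomological liftings, are accurate and make the write-up a bit cleaner than the paper's.
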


\begin{proof}
    We only need to prove for $k\in \Z^+$. Fix $k\in \Z^+$. Let $T_{[k]}$ be an almost $k$-lifting of $T$. Then, by Lemma~\ref{lemma support lemma for closed current} and Proposition~\ref{prop dominated directed currents}, $\mathbf{1}_{\pi_{k,0}^{-1}(A)} T_{[k]}$ is an almost $k$-lifting of $\mathbf{1}_AT$ and $\mathbf{1}_{\pi_{k,0}^{-1}(X\setminus A)} T_{[k]}$ is an almost $k$-lifting of $\mathbf{1}_{X\setminus A}T$. Therefore, $\mathbf{1}_{A} T$ and $\mathbf{1}_{X\setminus A} T$ are $k$-liftable currents.

    We now prove \eqref{eq decomp for liftable currents}. Let $\epsilon > 0$ and $l\leq k$. By definition, there exists a geometric $l$-lifting $T_{[l]}$ of $T$ such that $\{T_{[l]} \} \cdot u_l \leq - \overline{\chi}_l(T) + \epsilon$ . 
    Since $\mathbf{1}_{\pi_{l,0}^{-1}(A)} T_{[l]}$ is a geometric $l$-lifting of $\mathbf{1}_AT$ and $\mathbf{1}_{\pi_{l,0}^{-1}(X\setminus A)} T_{[l]}$ is a geometric $l$-lifting of $\mathbf{1}_{X\setminus A}T$, we have
    \begin{align*}
        -{\chi}_{l}(\mathbf{1}_A T) - {\chi}_{l}(\mathbf{1}_{X\setminus A} T) &\leq  \{\mathbf{1}_{\pi_{l,0}^{-1}(A)} T_{[l]}\} \cdot u_l + \{\mathbf{1}_{\pi_{l,0}^{-1}(X\setminus A)} T_{[l]}\} \cdot u_l \\
        &=\{T_{[l]} \} \cdot u_l \leq - {\chi}_l(T) +\epsilon.
    \end{align*}
    Thus, we obtain $-{\chi}_{l}(\mathbf{1}_A T) - {\chi}_{l}(\mathbf{1}_{X\setminus A} T) \leq -\chi_l(T)$ by letting $\epsilon \to 0^+$.

    For the reverse inequality, we pick a geometric $l$-lifting $T_{[l]}'$ of $\mathbf{1}_{A} T$ and a geometric $l$-lifting $T_{[l]}''$ of $\mathbf{1}_{X\setminus A} T$ such that
    \[\{T_{[l]}' \} \cdot u_l \leq - \overline{\chi}_l(\mathbf{1}_{A}T) + \epsilon , \qquad \{T_{[l]}'' \} \cdot u_l \leq - \overline{\chi}_l(\mathbf{1}_{X\setminus A}T) +\epsilon.\]
    Since $T_{[l]}' + T_{[l]}''$ is a geometric $l$-lifting of $T$, we get
    \[
        - \overline{\chi}_l(T) \leq \{T_{[l]}' + T_{[l]}''\} \cdot u_l\leq 
        -\overline{\chi}_{l}(\mathbf{1}_A T) - \overline{\chi}_{l}(\mathbf{1}_{X\setminus A} T) + 2\epsilon.
    \]
    Thus, we deduce the reverse inequality by letting $\epsilon \to 0^+$. The proof for $\overline{\chi}_l$ is similar, note that the sum of two cohomological $l$-lifting is a cohomological $l$-lifting.
\end{proof}

We end this section by proving an upper bound for Euler characteristic of liftable currents in projective space, using a dynamical argument. The idea is to move the current by automorphisms and make it degenerate to some curves where the Euler characteristic is easy to compute.

\begin{proposition}\label{prop lower bound in Pn}
    Let $k\in \Z^+ \cup \{\infty\}$. Let $T$ be a $k$-liftable current on $\P^n$ with $n\geq 2$ such that $\|T\|_{\omega_\FS} = 1$. Then $-\chi_l(T) \geq - 2$ and $-\overline{\chi}_l(T) \geq - 2$ for all $l\leq k$.
\end{proposition}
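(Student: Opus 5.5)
The plan is to reduce to level $l=1$ and then use a nefness statement on $X_1=\P(T_{\P^n})$ instead of a genuinely dynamical degeneration. Since $\chi_l(T)\le\overline{\chi}_l(T)$ (Remark~\ref{remark about definition of Euler char}), it suffices to prove $-\overline{\chi}_l(T)\ge -2$. Equivalently, we must show $\{T_{[l]}\}\cdot u_l\ge -2$ for every cohomological $l$-lifting $T_{[l]}$ of $T$.

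The first step is to descend to level $1$. By the proof of Proposition~\ref{prop increasing of chi_k}, if $T_{[l]}$ is a cohomological $l$-lifting, then $(\pi_{l,l-1})_*T_{[l]}$ is a cohomological $(l-1)$-lifting. Moreover, by \eqref{eq formula for chern class Xk} and \eqref{eq define regular k lift},
\[\{T_{[l]}\}\cdot u_l=\{(\pi_{l,l-1})_*T_{[l]}\}\cdot u_{l-1}+\{T_{[l]}\}\cdot\{D_l\}\ge \{(\pi_{l,l-1})_*T_{[l]}\}\cdot u_{l-1}.\]
Iterating, we get $\{T_{[l]}\}\cdot u_l\ge \{S\}\cdot u_1$, where $S\colonequals(\pi_{l,1})_*T_{[l]}$ is a $1$-lifting of $T$. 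Since $\chi_\infty,\overline{\chi}_\infty$ are infima over finite levels, the case $l=\infty$ follows from the finite case. So everything reduces to the inequality $\{S\}\cdot u_1\ge -2$ for every $1$-lifting $S$ of $T$.

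The second step is the key point: the class $u_1+2\pi_{1,0}^*c_1(\mathcal{O}_{\P^n}(1))$ is nef on $X_1=\P(T_{\P^n})$. With the convention $\mathcal{O}_{X_1}(-1)\subset\pi_{1,0}^*T_{\P^n}$, dualizing exhibits $\mathcal{O}_{X_1}(1)$ as a quotient of $\pi_{1,0}^*\Omega_{\P^n}$. Twisting, $\mathcal{O}_{X_1}(1)\otimes\pi_{1,0}^*\mathcal{O}_{\P^n}(2)$ is a quotient of $\pi_{1,0}^*(\Omega_{\P^n}(2))$. This bundle is globally generated, which follows from the Euler sequence: $\Omega_{\P^n}(2)$ is generated by the sections $z_i\,dz_j-z_j\,dz_i$. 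Hence the line bundle is globally generated, hence nef. This is exactly the case $k=1$ of Proposition~\ref{prop construct nef line bundle on X_k}, so one may alternatively just cite that result.

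The third step concludes. Since $S$ is a positive closed current (or positive $dd^c$-closed, in the weak setting) on the compact K\"ahler manifold $X_1$, its pairing with a nef class is nonnegative. This follows by approximating the nef class by K\"ahler classes $\{u_1+2h\}+\epsilon\{\omega_{[1]}\}$ and letting $\epsilon\to0$; in the $dd^c$-closed case the cohomological pairing is well defined, as recalled in Section~\ref{sec background currents}. Moreover, $\{S\}\cdot\pi_{1,0}^*\{\omega_\FS\}=\|T\|_{\omega_\FS}=1$, because $(\pi_{1,0})_*S=T$. Therefore
\[\{S\}\cdot u_1\ge -2\{S\}\cdot\pi_{1,0}^*\{\omega_\FS\}=-2,\]
which gives both claimed inequalities.

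I expect the only delicate point to be getting the sign and projectivization conventions right in the nefness step. One must check that $\mathcal{O}_{X_1}(1)$, and not its dual, is the quotient of $\pi_{1,0}^*\Omega_{\P^n}$, and that the twist needed is exactly $2$; the bound $-2$ is sharp, attained on lines, which is a useful consistency check. If the conventions caused trouble, the fallback is the dynamical argument suggested in the paper. There one would move $S$ by the lifts $\varphi_{[1]}$ of automorphisms (Proposition~\ref{prop liftable preserved}) degenerating $T$ toward lines, and use that $\{S\}\cdot u_1$ depends only on the class of $S$. The nef-class route, however, avoids any limiting analysis.
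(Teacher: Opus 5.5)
Your proof is correct, but at level $1$ it takes a genuinely different route from the paper. The reduction to level $1$ matches the paper's. You extract from the proof of Proposition~\ref{prop increasing of chi_k} the inequality $\{T_{[l]}\}\cdot u_l\ge\{(\pi_{l,1})_*T_{[l]}\}\cdot u_1$, i.e.\ $-\overline{\chi}_l\ge-\overline{\chi}_1$, which is the direction the reduction needs.

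For the level-$1$ bound, the paper argues dynamically:
\begin{itemize}
\item it picks a codimension-two linear subspace not charged by $T$;
\item it pushes $T$ forward by the automorphisms $\varphi_\lambda$, so that it degenerates to the current of integration on a line $L$;
\item it combines invariance of $\overline{\chi}_1$ under automorphisms (Proposition~\ref{prop liftable preserved}), upper semicontinuity under weak limits (Proposition~\ref{prop compactness for uniform bound k Euler char}), and the value $\chi_1([L])=2$ from Theorem~\ref{theorem coincide with k-Euler characteristic}.
\end{itemize}

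You instead use that $u_1+2\pi_{1,0}^*c_1(\mathcal{O}_{\P^n}(1))$ has a smooth semipositive representative, which is the case $k=1$ of Proposition~\ref{prop construct nef line bundle on X_k}. In fact this class is the pullback of the Pl\"ucker class under the map $\P(T_{\P^n})\to G(2,n+1)$ sending a tangent direction to the line it spans, which explains why lines are extremal. Your argument gives $\{S\}\cdot u_1\ge-2$ for every positive closed or $dd^c$-closed current $S$ on $X_1$ with $(\pi_{1,0})_*S=T$. It needs no limiting argument, no directedness, and no identification of the limit current or computation of $\chi_1([L])$.

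It also applies verbatim to any submanifold $X\subset\P^n$, since $\Omega_X(2)$ is a quotient of $\Omega_{\P^n}(2)|_X$. This yields $\chyp^{(k)}(X,\omega_\FS|_X)\ge-2$ without Proposition~\ref{prop do not depend on ambient manifold}. It is the same nef bundle the paper itself uses later, in the proof of Theorem~\ref{theorem comparison in proj mfd}.

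What the paper's route buys is an illustration of the ``moving by automorphisms'' technique. The authors want to reuse it in settings where no explicit semipositive correction of $u_1$ is available (compare Proposition~\ref{prop hyp of infinite aut mfd} and Problem~\ref{problem seeking example}).
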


\begin{proof}
    By Proposition~\ref{prop increasing of chi_k}, we only need to prove this proposition for $k=1$. Let $T$ be a $1$-liftable current of mass $1$ with respect to the Fubini-Study metric $\omega_{\FS}$. Define $\sigma_T \colonequals T \wedge \omega_{\FS}$. Let $\mu$ be a smooth probability measure on the dual space $(\P^n)^*$ that parametrizing hyperplanes in $\P^n$. Let $I$ denotes the incidence set in $\P^n \times (\P^n)^*$ that contains points $(x,H)$ such that $x\in H$. By Fubini theorem, we have
    \[\int_{I} d\sigma_T (x)\times d\mu(H)=\int_{(\P^n)^*} \sigma_T(H) d\mu(y) = \int_{\P^n} \mu(\{H: x\in H\}) d\sigma_T(x).\]
    Since the set $\{H: x\in H\}$ is a hyperplane in $(\P^n)^*$, we have $\mu(\{H: x\in H\}) = 0$. This implies that $\sigma_T(H) = 0$ almost everywhere with respect to $\mu$. Thus, we can always choose a hyperplane $H$ such that $T$ puts no mass on $H$.

    Choose a representative $z=[z_0:\cdots:z_n]$ for $\P^n$ such that $T$ puts no mass on the set $A\colonequals \{z_{n-1}=z_n=0\}$.
    For $\lambda \in \C^*$, consider the map:
    \[\varphi_\lambda:\C^{n+1} \to \C^{n+1} : (z_0,\ldots,z_n)\mapsto ( z_0,\ldots,z_{n-2},\lambda z_{n-1},\lambda z_{n}).\]
    This map induces an automorphism of $\P^n$. Let $z\in X\setminus A$, we have 
    \[\varphi_\lambda(z) = [z_0:\cdots:z_{n-2}:\lambda z_{n-1}: \lambda z_{n}] = \Big[\frac{z_0}{\lambda}:\cdots:\frac{z_{n-2}}{\lambda}:z_{n-1}: z_{n}\Big] .\]
    Letting $|\lambda| \to \infty$, we have $\varphi_\lambda(z)$ converging to the point $[0:\cdots:0:z_{n-1}:z_n]$. Therefore, the sequence of positive currents $(\varphi_\lambda)_*(T)$ will converge to the current of integration over the line $ L\colonequals\{z_0=z_1=\cdots = z_{n-2} = 0\}$. By Proposition~\ref{prop liftable preserved} and Proposition~\ref{prop compactness for uniform bound k Euler char}, we have ${\chi}_1(T) = \lim\limits_{\lambda\to \infty} {\chi}_1((\varphi_\lambda)_*(T)) \leq {\chi}_1([L]) = 2$. The result follows.
\end{proof}

\section{Hyperbolic indices: basic properties and examples} \label{sec hyperbolic indices introduce and basic}

We now introduce the notion of hyperbolic indices, using the theory of Euler characteristics of liftable currents developed in the last section. We then study basic properties and compute several examples.

Let $(X,\omega)$ be a compact K\"ahler manifold. For $k\in \Z^+$, define
\begin{equation}\label{eq define geometric hyp k depend metric}
\hyp^{(k)}(X,\omega) \colonequals \inf \Big\{ \frac{-{\chi}_k(T)}{\|T\|_\omega} : T \text{ is }k\text{-liftable current}  \Big\}
\end{equation}
and call it the \textit{geometric $k$-hyperbolic index} of $(X,\omega)$. We also define
\begin{equation}\label{eq define cohomological hyp k depend metric}
\chyp^{(k)}(X,\omega) \colonequals \inf \Big\{ \frac{-\overline{\chi}_k(T)}{\|T\|_\omega} : T \text{ is } k\text{-liftable current}  \Big\},
\end{equation}
and call this number the \textit{cohomological $k$-hyperbolic index} of $(X,\omega)$. If there is no $k$-liftable current on $X$, we put $\hyp^{(k)}(X,\omega) = \chyp^{(k)}(X,\omega) \colonequals \infty$. We define $\hyp^{(\infty)}(X,\omega)\colonequals \sup_k \hyp^{(k)} (X,\omega)$ and $\chyp^{(\infty)}(X,\omega)\colonequals \sup_k \chyp^{(k)} (X,\omega)$. Similarly, we can define $\hyp^{(k)}_*(X,\omega)$ and $\chyp^{(k)}_*(X,\omega)$ using weakly $k$-liftable currents. By definition, we have $\hyp^{(k)}(X,\omega) \geq \hyp_*^{(k)}(X,\omega)$, $\chyp^{(k)}(X,\omega) \geq \chyp_*^{(k)}(X,\omega)$ and the positivity of these numbers does not depend on the choice of the K\"ahler metric $\omega$.

We can also define the \text{geometric and cohomological hyperbolic indices of $X$} without the reference metric $\omega$:
\begin{equation}\label{eq define hyp k not depend metric}
\hyp^{(k)}(X)\colonequals \sup_\omega \Big\{\hyp^{(k)}(X,\omega)\Big\}, \qquad \chyp^{(k)}(X)\colonequals \sup_\omega \Big\{\chyp^{(k)}(X,\omega)\Big\}, 
\end{equation}
where the supremum runs over all K\"ahler form $\omega$ on $X$ such that $\int_X \omega^n =1$. Similarly, we can define $\hyp_*^{(k)}(X)$ and $\chyp_*^{(k)}(X)$.

\begin{remark}\label{remark about pluripotential hyp and alg hyp}
    By Theorem~\ref{theorem coincide with k-Euler characteristic}, we see that if $\hyp^{(\infty)}(X)>0$, then $X$ is algebraically hyperbolic in the sense of Demailly (see Section~\ref{sec complex hyperbolicity}). We will see later that if $\chyp^{(\infty)}(X)>0$, then $X$ is Kobayashi hyperbolic (see Theorem~\ref{theorem chyp > 0 implies Kobayashi hyperbolic}). We can develop a similar theory for complex directed manifolds $(X,V)$ or define the hyperbolic indices for compact subsets on K\"ahler manifolds.
\end{remark}

  We now study the basic properties of hyperbolic indices $\hyp^{(k)}$ and $\chyp^{(k)}$. Most of these statements (except Proposition~\ref{prop basic property of hyp}(iv)) hold for $\hyp^{(k)}_*$ and $\chyp^{(k)}_*$, using the same arguments.

 \begin{proposition}\label{prop basic property of hyp}
     For $k\in \Z^+\cup \{\infty\}$, we have the following statements:
     \begin{itemize}
         \item[(i)] $\hyp^{(k)}(X,\omega) \geq \chyp^{(k)}(X,\omega), \hyp^{(k)}(X) \geq \chyp^{(k)}(X)$ and the equality holds if $k=1$.

         \item[(ii)] $\hyp^{(k)}(X,\omega), \hyp^{(k)}(X),\chyp^{(k)}(X,\omega), \chyp^{(k)}(X)$ are increasing in $k$.
         
         \item[(iii)] 
         $\chyp^{(k)}(X,\omega) >  - \infty$ and $ \chyp^{(k)} (X) > -\infty$.
         
         \item[(iv)]  $\hyp_*^{(1)}(X,\omega)$ and $\hyp_*^{(1)}(X)$ are finite.
         
         \item[(v)] If $\chyp^{(k)}(X,\omega)$ is finite, there exists a $k$-liftable current $T$ such that $\chyp^{(k)} (X,\omega)= -\overline{\chi}_k(T) /\|T\|_\omega$.

         \item[(vi)] If $X$ is a projective manifold, then $\hyp^{(k)}(X,\omega)$ and $\hyp^{(k)}(X)$ are finite.

         \item[(vii)] Let $Y$ be a complex submanifold of $X$. Then $\hyp^{(k)}(Y,\omega|_Y) \geq \hyp^{(k)}(X,\omega)$ and $\chyp^{(k)}(Y,\omega|_Y) \geq \chyp^{(k)}(X,\omega)$.
     \end{itemize}
 \end{proposition}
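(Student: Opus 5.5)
The plan is to treat the seven items separately. Most of them follow directly from the comparison results of Section~\ref{sec pluri Euler char}; the genuinely new inputs are a uniform lower bound for (iii) and a compactness argument for (v).

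For (i), every geometric $k$-lifting is a cohomological $k$-lifting. Hence $\chi_k(T)\le\overline{\chi}_k(T)$, so $-\chi_k(T)/\|T\|_\omega\ge -\overline{\chi}_k(T)/\|T\|_\omega$ for every $k$-liftable $T$. Taking the infimum over $T$, then the supremum over $k$ (for $k=\infty$) and the supremum over normalized $\omega$, gives the inequalities. Equality for $k=1$ holds because the two notions of $1$-lifting coincide (Remark~\ref{remark about definition of Euler char}).

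For (ii), I would use the content of the proof of Proposition~\ref{prop increasing of chi_k}, namely the inequality its proof actually establishes rather than the direction as written. If $T_{[k]}$ is a geometric (resp.\ cohomological) $k$-lifting, then $(\pi_{k,k-1})_*T_{[k]}$ is a geometric (resp.\ cohomological) $(k-1)$-lifting, and
\[\{T_{[k]}\}\cdot u_k\ \ge\ \{(\pi_{k,k-1})_*T_{[k]}\}\cdot u_{k-1},\]
using $\{T_{[k]}\}\cdot\{D_k\}\ge 0$ and \eqref{eq formula for chern class Xk}. This gives $-\chi_k(T)\ge-\chi_{k-1}(T)$, and likewise for $\overline{\chi}$. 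Moreover, the cone of $k$-liftable currents shrinks as $k$ grows (Proposition~\ref{prop k liftable is k-1 liftable}). Both effects push the infimum up, so the indices are increasing in $k$; the case $k=\infty$ is then immediate from the definition as a supremum.

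For (iii), by (ii) it suffices to treat $k=1$. As in the proof of Proposition~\ref{prop finiteness of Euler characteristic}, positivity of $\omega_{[1]}$ paired with a $1$-lifting gives
\[\{T_{[1]}\}\cdot u_1\ \ge\ -C_{[1]}\|T\|_\omega .\]
Therefore $-\overline{\chi}_1(T)/\|T\|_\omega\ge -C_{[1]}$ uniformly in $T$, which gives $\chyp^{(1)}(X,\omega)\ge -C_{[1]}$. The metric-free version is then a supremum of quantities, one of which is already $>-\infty$.

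For (v), I would normalize a minimizing sequence to $\|T^{(m)}\|_\omega=1$ with $-\overline{\chi}_k(T^{(m)})\to\chyp^{(k)}(X,\omega)$. Extracting a weak limit $T$, which still has mass $1$, Proposition~\ref{prop compactness for uniform bound k Euler char} shows that $T$ is $k$-liftable with $-\overline{\chi}_k(T)\le\chyp^{(k)}(X,\omega)$. The reverse inequality holds by the definition of the infimum.

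For (vii), for any $k$-liftable $T$ on $Y$, Proposition~\ref{prop do not depend on ambient manifold} gives that $\iota_*T$ is $k$-liftable with $\chi_l(\iota_*T)=\chi_l(T)$ and $\overline{\chi}_l(\iota_*T)\ge\overline{\chi}_l(T)$. Since also $\|\iota_*T\|_\omega=\|T\|_{\omega|_Y}$, each ratio on $Y$ is bounded below by a ratio on $X$. Taking infima, and then the supremum over $k$ for $k=\infty$, gives the claim.

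For (iv) and (vi), the lower bounds come from (iii) together with (i), the latter applied to $\hyp\ge\chyp$; the content is the upper bound. For (iv), Proposition~\ref{lemma existence liftable current induction level} supplies a nonzero weakly $1$-liftable current $T$, and $\chi_1^*(T)$ is finite by the analogue of Proposition~\ref{prop finiteness of Euler characteristic}. This gives $\hyp_*^{(1)}(X,\omega)<\infty$. For (vi), on a projective manifold an irreducible curve $\mathscr{C}$ is $\infty$-liftable, and by Theorem~\ref{theorem coincide with k-Euler characteristic} its contribution $-\chi_k(\mathscr{C})/\int_\mathscr{C}\omega$ is finite, so $\hyp^{(k)}(X,\omega)<\infty$.

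\textbf{Main obstacle.} The real difficulty is the metric-free versions $\hyp_*^{(1)}(X)$ and $\hyp^{(k)}(X)$, where one needs an upper bound uniform over all Kähler $\omega$ with $\int_X\omega^n=1$. The masses $\|T\|_\omega$ of a fixed current can become small along degenerating classes. My first attempt would be to choose the test current depending on $\omega$ rather than keeping it fixed. For (iv), I would rerun the Hahn--Banach construction of Proposition~\ref{lemma existence liftable current induction level} and check that the constant $C_{[1]}$ entering $\langle T_{[1]},\omega_{[1]}\rangle\le C_{[1]}$ can be controlled by curvature bounds that are invariant under the normalization. For (vi), I would take complete intersection curves cut out by hyperplane sections of a fixed very ample bundle $H$. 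Adjunction gives $-\chi(\mathscr{C})=(K_X+(n-1)H)\cdot H^{n-1}$, and I would compare this number with $\int_\mathscr{C}\omega=\{\omega\}\cdot H^{n-1}$. For the latter I would seek a lower bound uniform over normalized $\omega$, via Khovanskii--Teissier type inequalities, and I expect this step to require the most care.
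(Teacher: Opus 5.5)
Your arguments for (i)--(iii), (vii), the finite-$k$ case of (v), and the fixed-$\omega$ upper bounds in (iv) and, for finite $k$, in (vi) are correct and coincide with the paper's. Your direct bound $\{T_{[1]}\}\cdot u_1\ge -C_{[1]}\|T\|_\omega$ is the paper's contradiction argument for (iii) made explicit. You are also right that (ii) needs $\chi_k\le\chi_{k-1}$ and $\overline{\chi}_k\le\overline{\chi}_{k-1}$, which is what the proof of Proposition~\ref{prop increasing of chi_k} actually establishes. (In (v), Proposition~\ref{prop compactness for uniform bound k Euler char} only bounds $-\overline{\chi}_k(T)$ by the supremum over the sequence, so apply it to tails to get $\chyp^{(k)}(X,\omega)+\epsilon$.)

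What is missing is (v) for $k=\infty$. Here $\chyp^{(\infty)}(X,\omega)=\sup_k\chyp^{(k)}(X,\omega)$ is not an infimum over one family, so there is no minimizing sequence, and the compactness proposition exists only for finite $k$. The paper argues diagonally:
\begin{itemize}
\item[(a)] By (ii), (iii) and $\chyp^{(\infty)}(X,\omega)<\infty$, every $\chyp^{(j)}(X,\omega)$ is finite. The finite case then yields $T^{(j)}$ with $\|T^{(j)}\|_\omega=1$ and $-\overline{\chi}_j(T^{(j)})=\chyp^{(j)}(X,\omega)$. Let $T$ be a weak limit.
\item[(b)] For fixed $k$ and all $j\ge k$, $T^{(j)}$ is $k$-liftable with $-\overline{\chi}_k(T^{(j)})\le-\overline{\chi}_j(T^{(j)})\le\chyp^{(\infty)}(X,\omega)$.
\item[(c)] Hence $T$ is $k$-liftable with $-\overline{\chi}_k(T)\le\chyp^{(\infty)}(X,\omega)$ for every $k$. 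So $T$ is $\infty$-liftable with $-\overline{\chi}_\infty(T)\le\chyp^{(\infty)}(X,\omega)$.
\item[(d)] The reverse inequality follows from $-\overline{\chi}_k(T)\ge\chyp^{(k)}(X,\omega)$ for all $k$.
\end{itemize}

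For the metric-free part of (vi), the step you expect to need the most care is routine, and your plan is exactly the paper's. For a complete intersection $\mathscr{C}$ of $n-1$ general members of $|L|$, with $L$ very ample, the Khovanskii--Teissier inequality gives $\|[\mathscr{C}]\|_\omega=c_1(L)^{n-1}\cdot\{\omega\}\ge(c_1(L)^n)^{(n-1)/n}$ whenever $\int_X\omega^n=1$. Moreover, Theorem~\ref{theorem coincide with k-Euler characteristic} and the monotonicity above give $-\chi_k([\mathscr{C}])\le-\chi(\widehat{\mathscr{C}})$ for every $k$. This uniformity in $k$ is also what you need for $k=\infty$ in the fixed-$\omega$ case; a finite bound for each $k$ separately does not suffice. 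No adjunction computation is required.

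For $\hyp_*^{(1)}(X)$, however, your concern is justified, and the paper does not resolve it. Its proof of (iv) is only the citation of Proposition~\ref{lemma existence liftable current induction level}, which gives your fixed-$\omega$ bound and nothing uniform in $\omega$. Your proposed repair does not work as stated. That construction yields $-\chi_1^*(T)\le A-C_{[1]}$, with $C_{[1]}$ the constant in the definition of $\omega_{[1]}$. By \eqref{eq compute metric on tilde X}, $A-C_{[1]}$ must dominate $-\theta_{T_X,\omega}(v\otimes v)$ for all $\omega$-unit $v$, i.e.\ the negative part of the holomorphic sectional curvature of $\omega$. This is not controlled by $\int_X\omega^n=1$: for product metrics $a\omega_1+b\omega_2$ on a product of two curves of genus at least $2$, with $ab$ fixed, it blows up as $b\to0$. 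A uniform bound therefore needs a genuinely different input, such as the uniformly positive curve class that projectivity supplies in (vi).
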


 \begin{proof}
     Statements (i) and (ii) follow directly from definitions and Proposition~\ref{prop increasing of chi_k}.

     To prove (iii), we proceed similarly to the proof of Proposition~\ref{prop finiteness of Euler characteristic}. It is enough to consider $k = 1$. We assume that there exist $1$-liftable currents on $X$ (otherwise, it is trivial). Suppose that there exists a sequence $(T^{(m)})_{m\in \Z^+}$ of $1$-liftable currents with $\|T^{(m)}\|_\omega = 1$ such that $\lim\limits_{m\to \infty} -\overline{\chi}_1(T^{(m)}) = - \infty$. By Proposition~\ref{prop pick good k-lifing}, for each $m\in \Z^+$, we can pick $1$-liftings $T^{(m)}_{[1]}$ of $T^{(m)}$ such that 
     \[-\overline{\chi}_1(T^{(m)}) = \{T^{(m)}_{[1]}\} \cdot u_{1}.\] By the construction \eqref{eq define Kahler form on lifting} of $\omega_{[1]}$, we have
     \begin{align*}0\leq \{T^{(m)}_{[1]}\} \cdot \{\omega_{[1]}\} &= C_{[1]}\cdot \{T_{[1]}^{(m)}\} \cdot \{\pi_{1,0}^*(\omega)\} +\{T^{(m)}_{[1]}\} \cdot u_{1}\\ &=C_{[1]} -\overline{\chi}_1(T^{(m)}). \end{align*}
     Letting $m\to \infty$ gives a contradiction. We deduce that $\hyp^{(k)}(X,\omega) > - \infty$, and therefore, $\hyp^{(k)}(X) > -\infty$ follows directly.

     Statement (iv) follows from Proposition~\ref{lemma existence liftable current induction level}.

     For (v), we assume for a moment that $k\in \Z^+$. By definition, there exists a sequence of $k$-liftable currents $(T^{(m)})_{m\in \Z^+}$ with $\|T^{(m)}\|_\omega = 1$ such that $\chyp^{(k)}(X,\omega) = \lim\limits_{m\to \infty} - \overline{\chi}_k(T^{(m)})$. Let $\epsilon>0$. Then, we can assume that $-\overline{\chi}_k(T^{(m)}) \leq \chyp^{(k)}(X,\omega) + \epsilon$ for all $m$ sufficiently large. Let $T$ be a weak limit of this sequence. By Proposition~\ref{prop compactness for uniform bound k Euler char}, $T$ is a $k$-liftable current with $\|T\|_\omega = 1$ and $-\overline{\chi}_k(T) \leq \chyp^{(k)}(X,\omega) + \epsilon$. Letting $\epsilon \to 0^+$, we infer that $T$ is the desired current. Suppose that $\chyp^{(\infty)}(X,\omega)$ is finite. Then, for each $k\in \Z^+$, there exists a $k$-liftable current $T^{(k)}$ with $\|T^{(k)}\|_\omega = 1$ such that $\chyp^{(k)}(X,\omega) = -\overline{\chi}_k(T^{(k)})$. Let $T$ be a weak limit of this sequence. Then $T$ is $k$-liftable and $-\overline{\chi}_k(T) \leq \chyp^{(\infty)}(X,\omega)$ for all $k\in \Z^+$. Thus, $-\overline{\chi}_\infty(T) \leq \chyp^{(\infty)}(X,\omega)$. Moreover, by definition, $-\overline{\chi}_k(T) \geq \chyp^{(k)}(X,\omega)$ for all $k\in \Z^+$. This implies that $-\overline{\chi}_\infty(T) = \chyp^{(\infty)}(X,\omega)$. We obtain from this statement a different description of $\chyp^{(\infty)}(X,\omega)$ as follows:
     \[\chyp^{(\infty)}(X,\omega) = \inf \Big\{\frac{-\overline{\chi}_\infty(T)}{\|T\|_\omega}: T \text{ is }k\text{-liftable current}\Big\}.\]

     We now deal with (vi). When $X$ is a projective manifold, we consider a closed curve $\mathscr{C}$ in $X$. The current $[\mathscr{C}]$ is $\infty$-liftable, and we have \[\hyp^{(k)}(X,\omega) \leq \frac{-{\chi}_k([\mathscr{C}])}{\|[\mathscr{C}]\|_\omega} \leq \frac{-{\chi}_\infty([\mathscr{C}])}{\|[\mathscr{C}]\|_\omega}  < \infty.\]
     For $\hyp^{(k)}(X)$, we choose the curve $\mathscr{C}$ as follows. Pick a very ample line bundle $L$ over $X$, then choose a general tuple of $n-1$ different divisors in this class. Let $\mathscr{C}$ be their intersection. Then $[\mathscr{C}]$ is a current in $c_1(L)^{n-1}$ and $[\mathscr{C}]$ is a $\infty$-liftable current. Let $\omega$ be a K\"ahler form such that $\|\omega^n\|_X = 1$, by \cite[Proposition 5.2]{Demailly-numerical-criterion}, we have
    \[c_1(L)^{n-1}\cdot \{\omega\} \geq (c_1(L)^n)^{(n-1)/n} \cdot (\{\omega\}^n)^{1/n} = (c_1(L)^n)^{(n-1)/n} > 0.\]
    This implies that
    \[\hyp^{(k)}(X) \leq \sup_{\omega} \Big\{ \frac{-\chi_k([\mathscr{C}])}{\|\mathscr{C}\|_\omega} \Big\} \leq  \frac{-\chi_\infty(\mathscr{C})}{(c_1(L)^n)^{(n-1)/n}} < \infty,\]
    where the supremum runs over all K\"ahler form $\omega$ such that $\|\omega^n\|_X = 1$.

    The last statement follows from definition and Proposition~\ref{prop do not depend on ambient manifold}. This can be interpreted as saying that $X$ is ``less'' hyperbolic than $Y$. As a direct corollary, if $\hyp^{(k)}(X) > 0$ and $\hyp^{(k)}(Y) \leq 0$, then $Y$ cannot be embedded into $X$.
 \end{proof}

We prove some theorems on the continuity of hyperbolic indices when the K\"ahler form or the manifold varies. By definition, $\hyp^{(k)}(X,\omega)$ and $\chyp^{(k)}(X,\omega)$ only depend on the K\"ahler class $\{\omega\}$. Therefore, they define  functions on the K\"ahler cone.

\begin{proposition}\label{prop Lipschitz over kahler cone}
    Let $k\in \Z^+ \cup \{\infty\}$. Suppose that $\hyp^{(k)}(X,\omega)$ is finite for some K\"ahler form $\omega$. Then the functions $\hyp^{(k)}(X,\cdot)$ and $\chyp^{(k)}(X,\cdot)$ are locally Lipschitz.
\end{proposition}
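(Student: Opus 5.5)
The key observation is that the numerator $-\chi_k(T)$ (resp. $-\overline{\chi}_k(T)$) does not depend on the Kähler class. Only the normalization $\|T\|_\omega=\langle T,\omega\rangle$ does, and it depends linearly on $\{\omega\}$, since $T$ is closed. So we write
\[
\hyp^{(k)}(X,\{\omega\})=\inf_T \frac{-\chi_k(T)}{\langle T,\omega\rangle},
\]
an infimum of ratios of a fixed number by a linear function of the class. The plan is to compare two nearby Kähler classes through two-sided mass comparisons and show that every ratio changes by a controlled factor.

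Fix a Kähler class $\{\omega\}$ and a compact neighborhood $K$ of it in the Kähler cone. Since the Kähler cone is open in $H^{1,1}(X,\R)$, for any norm $|\cdot|$ on that space there is a constant $C>0$ with the following property: for all $\{\omega'\},\{\omega''\}\in K$ with $\delta\colonequals|\{\omega'\}-\{\omega''\}|$ small, both $\{\omega''\}-(1-C\delta)\{\omega'\}$ and $(1+C\delta)\{\omega'\}-\{\omega''\}$ are Kähler, hence nef. We should choose $K$ as a small ball, so that $\{\omega'\}\geq c\{\omega\}$ holds uniformly on $K$ with some $c>0$. For any positive closed current $T$ of bi-dimension $(1,1)$, pairing with a nef class is nonnegative. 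This gives
\[
(1-C\delta)\|T\|_{\omega'}\leq \|T\|_{\omega''}\leq (1+C\delta)\|T\|_{\omega'}.
\]
Here we use that $\langle T,\cdot\rangle$ depends only on the class, since $T$ is $d$-closed.

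Given the mass comparison, for each liftable $T$ the ratio $-\chi_k(T)/\|T\|_{\omega''}$ lies between $(1\pm C\delta)^{-1}$ times $-\chi_k(T)/\|T\|_{\omega'}$, with the direction depending on the sign of $-\chi_k(T)$. Taking the infimum yields
\[
\bigl|\hyp^{(k)}(X,\omega'')-\hyp^{(k)}(X,\omega')\bigr|\leq C'\delta\,\bigl|\hyp^{(k)}(X,\omega')\bigr|.
\]
To see this, split according to the sign of the infimum. When the infimum is positive, all ratios are positive and scale monotonically. When it is negative, near-minimizers have negative numerator and scale the other way. Positive-numerator currents do not affect a negative infimum, except through a harmless step that replaces $\inf$ by an approximating sequence. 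The same argument applies verbatim to $\chyp^{(k)}$.

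Local Lipschitz continuity then needs a uniform bound on $|\hyp^{(k)}(X,\cdot)|$ over $K$, and this is the main point. For the lower bound, Proposition~\ref{prop basic property of hyp}(iii) gives $\chyp^{(k)}>-\infty$. Uniformity over $K$ follows from the displayed mass comparison with the fixed class $\omega$: $\hyp^{(k)}(X,\omega')\geq \min\bigl((1\pm C\delta)^{\mp1}\bigr)\cdot\hyp^{(k)}(X,\omega)$, and similarly for $\chyp$. For the upper bound, we use the finiteness hypothesis: there is a $k$-liftable current $T_0$ with $-\chi_k(T_0)<\infty$. Then $\hyp^{(k)}(X,\omega')\leq -\chi_k(T_0)/\|T_0\|_{\omega'}$, and this is bounded on $K$ because $\|T_0\|_{\omega'}\geq c\|T_0\|_\omega>0$. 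Since $\chyp^{(k)}\leq\hyp^{(k)}$, the same upper bound holds for $\chyp^{(k)}$. Moreover, $\chyp^{(k)}$ is bounded below by Proposition~\ref{prop basic property of hyp}(iii).

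For $k=\infty$, we cannot take the supremum of Lipschitz functions directly, because that might lose uniformity of the constants. But the constant $C'$ above is independent of $k$, and the bounds $|\hyp^{(k)}|\leq M$ on $K$ are uniform in $k$: the lower bound comes from the $k=1$ case together with monotonicity in $k$ (Proposition~\ref{prop basic property of hyp}(ii)), and the upper bound from finiteness of $\hyp^{(\infty)}$ at $\omega$. Hence the family $\hyp^{(k)}(X,\cdot)$ is uniformly Lipschitz on $K$, and so is its supremum $\hyp^{(\infty)}(X,\cdot)$; the same holds for $\chyp^{(\infty)}$.
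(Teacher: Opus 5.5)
Your proof is correct. It rests on the same two structural facts the paper uses: the numerators $-\chi_k(T)$ and $-\overline{\chi}_k(T)$ do not depend on the class, while $\|T\|_\omega$ is linear in $\{\omega\}$ and monotone for the K\"ahler order. You organize the argument differently, though.

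The paper passes to $f=|\hyp^{(k)}(X,\cdot)|$ and checks three properties of $f$: it is homogeneous of degree $-1$, it does not increase when a K\"ahler class is added, and it is locally bounded. It then invokes the abstract Lemma~\ref{lemma control Lipschitz}. That lemma's proof first compares $f(\alpha)$ with $f(\alpha\pm\beta)$ only for $\beta$ in the cone. It then telescopes along a basis $e_1,\dots,e_n$ of cone elements to reach arbitrary directions.

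You instead use openness of the K\"ahler cone once, in the form $(1-C\delta)\{\omega'\}\le\{\omega''\}\le(1+C\delta)\{\omega'\}$. This gives the multiplicative estimate in every direction at once, so no telescoping is needed. In fact, this sandwich combined with homogeneity and monotonicity proves the abstract lemma directly.

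Your version also spells out a point the paper leaves implicit: the sign of $\hyp^{(k)}(X,\cdot)$ is constant on the K\"ahler cone. It is negative exactly when some $T$ has a negative numerator, and vanishing is preserved by the two-sided comparison. This is what justifies working with $|\hyp^{(k)}|$.

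For $k=\infty$ the two routes differ slightly. The paper's route applies the lemma to $\hyp^{(\infty)}$ directly, since homogeneity and monotonicity survive taking suprema. Yours needs the Lipschitz constants and the bound $M$ to be uniform in $k$, and you check this correctly.

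One small slip: in the negative case, the uniform lower bound on $K$ should read $\hyp^{(k)}(X,\omega')\ge(1-C\delta)^{-1}\hyp^{(k)}(X,\omega)$. In general it is the minimum of the two products, not the minimum of the two factors times $\hyp^{(k)}(X,\omega)$. This does not affect the argument.
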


For the proof of this proposition, we need the following lemma, which is quite similar to \cite[Proposition 2.3]{Tosatti-Cao-regularity-volume}.

\begin{lemma}\label{lemma control Lipschitz}
    Let $V$ be an $n$-dimensional real vector space and $\mathcal{C}\subset V$ be an open convex cone. Let $f:\mathcal{C} \to \R^{\geq 0}$ be a function such that
    \begin{itemize}
        \item[(i)] $f(\lambda \alpha) = \lambda^{-1}f(\alpha)$ for all $\alpha \in \mathcal{C}$ and $\lambda > 0$,
        \item[(ii)] $f(\alpha+\beta) \leq f(\alpha)$ for every $\alpha,\beta \in \mathcal{C}$,
        \item[(iii)] $f$ is locally bounded.
    \end{itemize}
    Then $f$ is locally Lipschitz on $\mathcal{C}$.
\end{lemma}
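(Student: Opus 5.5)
The plan is to combine the two structural properties into a two-sided comparison between nearby points, and to let the homogeneity (i) absorb the scaling. Fix $\alpha_0\in\mathcal{C}$. Since $\mathcal{C}$ is open, pick a norm $\|\cdot\|$ on $V$ and $r>0$ with $\overline{B}(\alpha_0,3r)\subset\mathcal{C}$. By (iii), shrinking $r$ if needed, $f\le M$ on $B(\alpha_0,3r)$. We want a constant $K$ with $|f(\alpha)-f(\beta)|\le K\|\alpha-\beta\|$ for $\alpha,\beta\in B(\alpha_0,r)$.

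The key step is a comparison trick. Let $\alpha,\beta\in B(\alpha_0,r)$ with $\beta\neq\alpha$, and set $\delta=\|\alpha-\beta\|<2r$ and $t=\delta/r$. Consider
\[
\gamma\colonequals \alpha_0+\frac{r}{\delta}(\beta-\alpha)\in \overline{B}(\alpha_0,r)\subset\mathcal{C}.
\]
Then $t\gamma\in\mathcal{C}$, since $\mathcal{C}$ is a cone. We write
\[
(1+t)\beta=\beta+t\beta \quad\text{and}\quad \alpha+t\gamma = \alpha+t\alpha_0+(\beta-\alpha)=\beta+t\alpha_0 .
\]
So $\beta+t\alpha_0=\alpha+t\gamma$. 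Applying (ii) with the summand $t\gamma\in\mathcal{C}$ gives $f(\beta+t\alpha_0)\le f(\alpha)$. To relate $f(\beta+t\alpha_0)$ back to $f(\beta)$, we need an upper inequality as well. For that, write $(1+2t)\beta=(\beta+t\alpha_0)+t(2\beta-\alpha_0)$, which is legitimate once $2\beta-\alpha_0\in\mathcal{C}$. This holds because $\|2\beta-\alpha_0-\alpha_0\|=2\|\beta-\alpha_0\|<2r$, so $2\beta-\alpha_0\in B(\alpha_0,3r)\subset\mathcal{C}$. Using (ii) and then (i),
\[
\frac{f(\beta)}{1+2t}=f((1+2t)\beta)\le f(\beta+t\alpha_0)\le f(\alpha).
\]
Hence
\[
f(\beta)-f(\alpha)\le f(\beta)\Big(1-\frac{1}{1+2t}\Big)\le 2t f(\beta)\le \frac{2M}{r}\,\|\alpha-\beta\|.
\]
By symmetry in $\alpha,\beta$ the same bound holds for $f(\alpha)-f(\beta)$, giving the Lipschitz constant $K=2M/r$ on $B(\alpha_0,r)$.

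The only delicate point is ensuring that every auxiliary vector we add ($t\gamma$ and $t(2\beta-\alpha_0)$) lies in $\mathcal{C}$. Property (ii) applies only to summands in the cone, and the cone need not contain $\beta-\alpha$ itself. This is exactly why the perturbation is routed through $\alpha_0$ and the radii $r$ and $3r$ are chosen as above. Nonnegativity of $f$ is used only to keep $f(\beta)\ge0$ in the last estimate. With this lemma in hand, Proposition~\ref{prop Lipschitz over kahler cone} would follow by applying it to suitable shifts of $\hyp^{(k)}(X,\cdot)$ and $\chyp^{(k)}(X,\cdot)$, using $\|T\|_{\omega+\omega'}\ge\|T\|_\omega$.
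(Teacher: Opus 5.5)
Your proof is correct. Since $\gamma\in\overline{B}(\alpha_0,r)\subset\mathcal{C}$, you get $f(\beta+t\alpha_0)=f(\alpha+t\gamma)\le f(\alpha)$. The decomposition $(1+2t)\beta=(\beta+t\alpha_0)+t(2\beta-\alpha_0)$, with $2\beta-\alpha_0\in B(\alpha_0,2r)$, together with (i) gives $f(\beta)/(1+2t)\le f(\alpha)$. Symmetrizing yields the Lipschitz constant $2M/r$ on $B(\alpha_0,r)$.

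The paper, following Cao--Tosatti, organizes the argument differently:
\begin{enumerate}
\item It fixes a basis $e_1,\dots,e_n$ of $V$ lying in $\mathcal{C}$ and uses the $\ell^1$-norm in that basis.
\item It proves the directional estimate $|f(\alpha)-f(\alpha\pm\beta)|\le \frac{2\|\beta\|}{r}f(\alpha)$ only for small $\beta\in\mathcal{C}$. This comes from the chain $f(\alpha)\le f(\alpha-\beta)\le f(\alpha-\frac{\|\beta\|}{2r}\alpha_0)\le f(\alpha-\frac{\|\beta\|}{r}\alpha)$ followed by homogeneity.
\item It telescopes from $\alpha_2$ to $\alpha_1$ one coordinate step $c_je_j$ at a time, after setting aside the case $0\in\mathcal{C}$.
\end{enumerate}

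The cone memberships driving both proofs are the same. First, an interior point absorbs any small vector: your $t\alpha_0-(\alpha-\beta)\in\mathcal{C}$ corresponds to the paper's $\frac{\|\beta\|}{2r}\alpha_0-\beta\in\mathcal{C}$. Second, your $\beta-\alpha_0/2\in\mathcal{C}$ corresponds to the paper's $\alpha-\alpha_0/2\in\mathcal{C}$; this lets one trade $\alpha_0$ for a multiple of the point itself before invoking (i).

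You apply these directly to the arbitrary difference $\beta-\alpha$. This removes the need for a basis inside $\mathcal{C}$, the telescoping, and the special case. It also works for any norm and gives an explicit constant. The paper's version instead singles out the directions lying in $\mathcal{C}$, along which one of the two inequalities is immediate from (ii).

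Your closing remark about the proposition lies outside the lemma. If by ``shifts'' you mean adding constants to the hyperbolic index, note that this destroys the homogeneity (i).
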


\begin{proof}
    The proof is almost the same as that of \cite[Proposition 2.3]{Tosatti-Cao-regularity-volume}. We present it here for the reader's convenience. Fix $\{e_i\}_{i=1}^n$ in $\mathcal{C}$ that forms a basis of $V$. We consider the norm $\|\sum_{i=1}^n c_i e_i\| = \sum_{i=1}^n |c_i|$. If $0\in \mathcal{C}$, then $\mathcal{C} = V$ and $f$ is constant by condition (ii). Therefore, we can assume that $0 \notin \mathcal{C}$. Given $\alpha_0 \in \mathcal{C}$, pick $r$ small enough such that $U\colonequals\B(\alpha_0,r) \subset \mathcal{C}$, $\B(\alpha_0/2,r) \subset \mathcal{C}$, and $f$ is bounded on $\B(\alpha_0,r)$. Let $\beta \in \mathcal{C}$, we have $\alpha_0/2 - r \beta/{\|\beta\|} \in \mathcal{C}$
     and thus
     $\alpha_0\|\beta\|/(2r) - \beta \in \mathcal{C}$.
    Let $\alpha \in U$, we have
    \[\alpha - \frac{\alpha_0}{2} = \frac{\alpha_0}{2} + (\alpha - \alpha_0) \in \B(\alpha_0/2,r) \subset \mathcal{C}.\]
    Hence, $\alpha \|\beta\|/r - \alpha_0 \|\beta\|/(2r) \in \mathcal{C}$.
    Suppose that $\|\beta\| < r$, then $\alpha - (\alpha \|\beta\|)/r \in \mathcal{C}$. We have
    \[\alpha - \frac{\alpha_0\|\beta\|}{2r} = \alpha - \frac{\alpha \|\beta\|}{r} + \frac{\alpha \|\beta\|}{r} - \frac{\alpha_0\|\beta\|}{2r} \in \mathcal C,\]
    \[ \alpha - \beta = \alpha  - \frac{\alpha_0\|\beta\|}{2r} + \frac{\alpha_0\|\beta\|}{2r} - \beta \in \mathcal C.\]
    Applying conditions (i) and (ii) gives us:
    \begin{equation}\label{eq lipschitz for hyp, first bound} f(\alpha) \leq f(\alpha - \beta) \leq f\Big(\alpha - \frac{\alpha_0 \|\beta\|}{2r}\Big) \leq f\Big(\alpha - \frac{\alpha \|\beta\|}{r}\Big) \leq \Big(1 - \frac{\|\beta\|}{r}\Big)^{-1} f(\alpha).\end{equation}
    Similarly, we get
    \begin{equation}\label{eq lipschitz for hyp, second bound}
        f(\alpha) \geq f(\alpha + \beta) \geq f\Big(\alpha + \frac{\alpha_0 \|\beta\|}{2r}\Big) \geq f\Big(\alpha + \frac{\alpha \|\beta\|}{r}\Big) \geq \Big(1 + \frac{\|\beta\|}{r}\Big)^{-1} f(\alpha).
    \end{equation}
    Combining \eqref{eq lipschitz for hyp, first bound} and \eqref{eq lipschitz for hyp, second bound}, for $\|\beta\| < r/2$, we have
    \begin{equation}\label{eq lipschitz for hyp, third bound}|f(\alpha) - f(\alpha \pm \beta)| \leq \frac{2\|\beta\|}{r} f(\alpha).\end{equation}
    Let $\alpha_1, \alpha_2 \in \B(\alpha_0,r/6)$ and write $\alpha_1 -\alpha_2 = \sum_{j=1}^n c_j e_j$. Then $\|\alpha_1 - \alpha_2\| = \sum_{j=1}^n |c_j|$. Thus, for $1\leq k\leq n$, $\sum_{j=1}^k |c_j| \leq r/3$. Therefore, we get $\|\alpha_2 + \sum_{j=1}^k c_je_j - \alpha_0 \| < r/3 + r/6 = r/2$. It follows that $\alpha_2 + \sum_{j=1}^k c_je_j \in \mathcal{C}$. We now use \eqref{eq lipschitz for hyp, third bound} to get
    \begin{align*} |f(\alpha_1) - f(\alpha_2)| &= \Big| f\Big(\alpha_2 + \sum_{j=1}^n c_j e_j\Big) -f(\alpha_2)\Big|\\
&\leq \sum_{k=1}^{n} \Big| f\Big(\alpha_2 + \sum_{j=1}^{k} c_j e_j\Big) -f\Big(\alpha_2 + \sum_{j=1}^{k-1} c_j e_j\Big)\Big|\\
&\leq \sum_{k=1}^n \frac{2|c_k|}{r}  \Big| f\Big(\alpha_2 + \sum_{j=1}^{k-1} c_j e_j\Big)\Big| \\
&\leq \sup_U (f) \frac{2}{r} \|\alpha_1 - \alpha_2\|,
\end{align*}
and we get the Lipschitz bound on $\B(\alpha_0, r/6)$.
\end{proof}

\begin{proof}[Proof of Proposition~\ref{prop Lipschitz over kahler cone}]
We define $f(\{\omega\})\colonequals\hyp^{(k)}(X,\{\omega\})$ if $\hyp^{(k)}(X,\omega) \geq 0$ and define $f(\{\omega\})\colonequals-\hyp^{(k)}(X,\{\omega\})$ if $\hyp^{(k)}(X,\omega) \leq 0$. We only need to check that $f$ satisfies all the conditions in Lemma~\ref{lemma control Lipschitz}. By definition, $f$ satisfies (i). Let $\alpha,\beta $ be K\"ahler classes. Since $\alpha + \beta > \alpha$, we have $\{T\} \cdot (\alpha +\beta) \geq  \{T\} \cdot \alpha$. Therefore, $f$ satisfies (ii). Let $\mathcal{B}$ be a compact set in the K\"ahler cone. We only need to show that $f$ is bounded on $\mathcal{B}$. Fix a K\"ahler metric $\omega_X$ on $X$. Since $\mathcal{B}$ is compact, there exists a positive constant $A$ such that $A^{-1} \{\omega_X\} \leq \alpha \leq A\{\omega_X\}$ for every $\alpha \in \mathcal{B}$. This gives us $A^{-1}f(\{\omega_X\}) \leq f(\alpha) \leq A f(\{\omega_X\})$ for every $\alpha \in \mathcal{B}$. The proof for $\chyp^{(k)}(X,\cdot)$ is similar.
\end{proof}

We now consider the case in which the manifold varies. Let $p: \mathcal{X} \to S$ be a proper submersion between a K\"ahler manifold $\mathcal{X}$ and a complex manifold $S$. For $s\in S$, we define $X^{(s)}\colonequals p^{-1}(s)$. Since $\pi$ is proper, $X^{(s)}$ is a compact K\"ahler manifold for all $s\in S$. We assume that $X^{(s)}$ is connected for all $s\in S$. We denote by $(X^{(s)}_k,V^{(s)}_k)$ the Demailly-Semple tower of $X^{(s)}$. Let $\omega_{\mathcal{X}}$ be a K\"ahler form on $\mathcal{X}$ and put $\omega^{(s)}\colonequals \omega_{\mathcal{X}}|_{X^{(s)}}$. We have the following lower semi-continuity property:

\begin{proposition}\label{prop openness of hyp_1 function}
    For $k\in \Z^+\cup \{\infty\}$, $ s \mapsto \chyp^{(k)}(X^{(s)},\omega^{(s)})$ is a lower semi-continuous function with respect to the Euclidean topology on $S$.
\end{proposition}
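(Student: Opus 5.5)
The plan is a compactness argument, modelled on Proposition~\ref{prop compactness for uniform bound k Euler char}, carried out in the relative Demailly-Semple tower. Fix $s_0\in S$ and a sequence $s_m\to s_0$ with
\[\chyp^{(k)}(X^{(s_m)},\omega^{(s_m)})\to L\colonequals \liminf_{s\to s_0}\chyp^{(k)}(X^{(s)},\omega^{(s)}).\]
We must show $\chyp^{(k)}(X^{(s_0)},\omega^{(s_0)})\le L$. If $L=+\infty$ there is nothing to prove, and Proposition~\ref{prop basic property of hyp}(iii) (made uniform in $s$, see below) excludes $L=-\infty$. First treat $k\in\Z^+$.

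By Proposition~\ref{prop basic property of hyp}(v), choose $k$-liftable currents $T^{(m)}$ on $X^{(s_m)}$ with $\|T^{(m)}\|_{\omega^{(s_m)}}=1$ and $-\overline{\chi}_k(T^{(m)})=\chyp^{(k)}(X^{(s_m)},\omega^{(s_m)})$. By Proposition~\ref{prop pick good k-lifing}, choose cohomological $k$-liftings $T^{(m)}_{[k]}$ on $X^{(s_m)}_k$ with $\{T^{(m)}_{[k]}\}\cdot u_k=-\overline{\chi}_k(T^{(m)})$.

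Next build the relative tower over $S$. Define $(\mathcal{X}_0,\mathcal{V}_0)\colonequals(\mathcal{X},T_{\mathcal{X}/S})$ and $(\mathcal{X}_k,\mathcal{V}_k)\colonequals(\widetilde{\mathcal{X}}_{k-1},\widetilde{\mathcal{V}}_{k-1})$ by the construction of Subsection~\ref{subsec projectivized 1 jet}. Each $\mathcal{X}_k\to S$ is a proper submersion, with fibre over $s$ equal to $X^{(s)}_k$. Moreover $\mathcal{V}_k|_{X^{(s)}_k}=V^{(s)}_k$, the bundles $\mathcal{O}_{\mathcal{X}_k}(1)$ restrict to $\mathcal{O}_{X^{(s)}_k}(1)$, and the divisors $\mathcal{D}_j$ restrict to $D^{(s)}_j$.

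Take $\omega_{\mathcal X}$ and, after shrinking $S$ to a relatively compact neighbourhood of $s_0$, construct $\omega_{[k]}$ on $\mathcal{X}_k$ as in \eqref{eq define Kahler form on lifting}, with constants $C_{[k]}$ uniform near $s_0$. Push each $T^{(m)}_{[k]}$ into $\mathcal{X}_k$ by the inclusion. The cohomological condition gives
\[\{T^{(m)}_{[k]}\}\cdot\pi_{k,j}^*u_j\le\{T^{(m)}_{[k]}\}\cdot u_k,\]
and the right side is bounded above. Arguing as in Proposition~\ref{prop pick good k-lifing}, we get a uniform mass bound
\[\int T^{(m)}_{[k]}\wedge\omega_{[k]}\le C(1+|L|+1).\]
Here the class pairing is evaluated fibrewise, using the smooth closed forms $\Theta(\mathcal{O}_{\mathcal{X}_k}(1))$ restricted to fibres; this is what makes the constants uniform. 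The same computation, run with $L$ replaced by $-\infty$, also shows that $L>-\infty$.

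Extract a weak limit $T_{[k]}$ on $\mathcal{X}_k$ and its projection $T$. These are positive closed currents supported in the fibres over $s_0$, hence currents on $X^{(s_0)}_k$ and $X^{(s_0)}$ respectively, by Lemma~\ref{lemma support lemma for closed current}. Directedness passes to the limit, because $T\wedge\eta=0$ is a weakly closed condition and the defining forms of $\mathcal{V}_k$ vary continuously. Pushforward commutes with weak limits, so $(\pi_{k,0})_*T_{[k]}=T$, and the mass normalisation passes to the limit by continuity of $\omega_{\mathcal X}$.

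The key point is the intersection numbers. For a smooth closed form $\theta$ on $\mathcal{X}_k$ we have $\{T^{(m)}_{[k]}\}\cdot\{\theta|_{X^{(s_m)}_k}\}=\langle T^{(m)}_{[k]},\theta\rangle$, and this converges to $\langle T_{[k]},\theta\rangle$. Applying this with $\theta$ a smooth representative of $u_k$ and of $\{\mathcal{D}_j\}$ shows two things. First, the inequalities \eqref{eq define regular k lift} survive in the limit, so $T_{[k]}$ is a cohomological $k$-lifting. Second, $\{T_{[k]}\}\cdot u_k=\lim\big(-\overline{\chi}_k(T^{(m)})\big)=L$. Hence
\[\chyp^{(k)}(X^{(s_0)},\omega^{(s_0)})\le-\overline{\chi}_k(T)\le L.\]

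For $k=\infty$, $\chyp^{(\infty)}$ is the supremum over $k$ of lower semicontinuous functions, because the sequence is increasing by Proposition~\ref{prop basic property of hyp}(ii). A supremum of lower semicontinuous functions is lower semicontinuous, so the case $k=\infty$ follows.

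I expect the main obstacle to be making the constructions genuinely uniform in $s$. Two things need care: the metrics $\omega_{[k]}$ and constants $C_{[k]}$ must be chosen on the total space, and the cohomological pairings must be expressed through global closed forms, so that they vary continuously as the fibres change. Working with global representatives on $\mathcal{X}_k$ should resolve both.
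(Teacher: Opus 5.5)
Your proposal is correct and follows essentially the same route as the paper. Both arguments use extremal currents from Proposition~\ref{prop basic property of hyp}(v), optimal cohomological liftings from Proposition~\ref{prop pick good k-lifing}, compactness in the relative Demailly--Semple tower over a compact neighbourhood of $s_0$, passage of the cohomological inequalities to the limit, and the supremum argument for $k=\infty$. Your version is more careful than the paper's in three places: the uniformity of the constants $C_{[k]}$ in $s$, evaluating the intersection numbers fibrewise through global closed forms on $\mathcal{X}_k$, and excluding $L=-\infty$, which the paper handles only by assuming $L$ finite.
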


\begin{proof}
     Since the supremum of lower semi-continuous functions is also a lower semi-continuous function, we only need to prove this proposition for $k\in \Z^+$.

    Fix $k\in \Z^+$ and $s\in S$. Let $(s_m)$ be a sequence of points in $S$ such that $s_m \to s \in S$. We need to show that 
    \[L\colonequals\liminf\limits_{m\to \infty} \chyp^{(k)}(X^{(s_m)},\omega^{(s_m)}) \geq \chyp^{(k)}(X^{(s)},\omega^{(s)}).\]
    If $L = \infty$, then we are done. Assume that $L$ is finite. Let $\epsilon > 0$. Replacing $(s_m)$ by a suitable subsequence, we can assume that $ \chyp^{(k)}(X^{(s_m)},\omega^{(s_m)}) < L+\epsilon$ for all $m$. Then, by Proposition~\ref{prop basic property of hyp}(v), we can pick a $k$-liftable current $T^{(m)}$ on $X^{(s_m)}$ such that $\chyp^{(k)}(X^{(s_m)},\omega^{(s_m)}) = -\overline{\chi}_k(T^{(m)})$ and $\|T^{(m)}\|_{\omega^{(s_m)}} = 1$.

    Pick a compact set $K$ of $S$ such that $s\in K$ and $s_m \in K$ for every $m$. We view $T^{(m)}$ as currents on $\mathcal{X}$. Then the sequence $T^{(m)}$ lies inside the compact set $p^{-1}(K)$ and has mass $1$ with respect to $\omega_{\mathcal{X}}$. Therefore, we can take a weak limit $T$ of this sequence. We see that $T$ is a current on $X^{(s)}$. Put 
    \[(\mathcal{X}_k,\mathcal{V}_k)\colonequals \bigcup_s (X_k^{(s)},V_k^{(s)})\qquad \text{and}\qquad p_{k}:\mathcal{X}_k \to S.\] 
    By Proposition~\ref{prop pick good k-lifing}, for each $m$, we can pick a cohomological $k$-lifting $T^{(m)}_{[k]}$ of $T^{(m)}$ such that 
    \[-\overline{\chi}_k(T^{(m)}) = \{T_{[k]}^{(m)}\} \cdot c_1(\mathcal{O}_{X^{(s_m)}}(1)).\] 
    Note that $\{T_{[k]}^{(m)}\} \cdot c_1(\mathcal{O}_{X^{(s_m)}}(1)) \leq L+\epsilon$ for every $m$. We view $T^{(m)}$ as currents on $\mathcal{X}$ and $T_{[k]}^{(m)}$ as currents on $\mathcal{X}_k$. Then the sequence $(T_{[k]}^{(m)})$ lies inside the compact set $p_{k}^{-1}(K)$. The compactness of the supports allows us to use a similar argument to the proof of Proposition~\ref{prop compactness for uniform bound k Euler char} to show that $T$ is a $k$-liftable current on $X^{(s)}$ with $-\overline{\chi}_k(T) \leq L +\epsilon$ and $\|T\|_{\omega^{(s)}} = 1$. Letting $\epsilon \to 0^+$, we have $-\overline{\chi}_k(T) \leq L$. We deduce that $\chyp^{(k)}(X^{(s)},\omega^{(s)}) \leq -\overline{\chi}_k(T) /\|T\|_{\omega^{(s)}} \leq L$ as desired. 
\end{proof}

We obtain the following direct corollary.

\begin{corollary}\label{cor openness in euclidean}
    For $k\in \Z^+ \cup \{\infty\}$, the set $\{s\in S:\chyp^{(k)}(X^{(s)})> 0\}$ is open in the Euclidean topology on $S$.
\end{corollary}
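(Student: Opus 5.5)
The plan is to read the corollary off Proposition~\ref{prop openness of hyp_1 function}, once we check that the metric-free index $\chyp^{(k)}(X^{(s)})$ is positive exactly when the metric-dependent index built from $\omega_{\mathcal X}$ is positive. For this we use the remark after the definitions: positivity of $\chyp^{(k)}(X,\omega)$ does not depend on the choice of the K\"ahler form $\omega$. We therefore first record the equivalence
\[\chyp^{(k)}(X^{(s)})>0 \iff \chyp^{(k)}(X^{(s)},\omega^{(s)})>0.\]

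One direction is immediate. After rescaling $\omega^{(s)}$ to have total volume one, which multiplies the index by a positive constant, $\omega^{(s)}$ is one of the forms in the supremum defining $\chyp^{(k)}(X^{(s)})$. Hence positivity of $\chyp^{(k)}(X^{(s)},\omega^{(s)})$ forces $\chyp^{(k)}(X^{(s)})>0$.

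For the converse, suppose $\chyp^{(k)}(X^{(s)})>0$. Then some normalized K\"ahler form $\omega$ on $X^{(s)}$ has $\chyp^{(k)}(X^{(s)},\omega)>0$. Since $X^{(s)}$ is compact, $A^{-1}\omega\le\omega^{(s)}\le A\omega$ for some $A>0$, so the masses satisfy $\|T\|_{\omega^{(s)}}\le A\|T\|_{\omega}$. When the numerator $-\overline{\chi}_k(T)$ is nonnegative, this gives
\[\frac{-\overline{\chi}_k(T)}{\|T\|_{\omega^{(s)}}}\ge A^{-1}\,\frac{-\overline{\chi}_k(T)}{\|T\|_{\omega}}\ge A^{-1}\chyp^{(k)}(X^{(s)},\omega)>0.\]
The numerator is in fact always positive, because positivity of $\chyp^{(k)}(X^{(s)},\omega)$ forces $-\overline{\chi}_k(T)>0$ for every $k$-liftable $T$. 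Taking the infimum over $T$ then gives $\chyp^{(k)}(X^{(s)},\omega^{(s)})>0$. If there are no $k$-liftable currents, both indices equal $+\infty$. For $k=\infty$ we apply the same bound to each finite $k'$ and take the supremum.

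With the equivalence in hand, the set in question is $\{s:\chyp^{(k)}(X^{(s)},\omega^{(s)})>0\}$. This is the superlevel set $\{g>0\}$ of the function $g(s)=\chyp^{(k)}(X^{(s)},\omega^{(s)})$, which is lower semi-continuous by Proposition~\ref{prop openness of hyp_1 function}. Superlevel sets $\{g>c\}$ of a lower semi-continuous function are open, which proves the corollary. I expect no real obstacle here. The only delicate point is the sign and infinite cases in the comparison of masses, namely $+\infty$ values and $k=\infty$, which the discussion above handles.
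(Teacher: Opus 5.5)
Your proposal is correct and follows the paper's route. The paper states this corollary as a direct consequence of the lower semi-continuity in Proposition~\ref{prop openness of hyp_1 function}, combined with the earlier remark that positivity of $\chyp^{(k)}(X,\omega)$ does not depend on the K\"ahler form. You simply write out the rescaling and mass-comparison argument that justifies that remark.
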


\begin{remark}\label{remark open of hyp}
    This openness property is similar to that of Kobayashi hyperbolicity. It is known that algebraic hyperbolicity is open in the countable Zariski topology, and there is a question by Demailly (see \cite[Remark 2.5]{demailly-97-algebraic-hyperbolicity}) asking if Kobayashi hyperbolicity is also open in the countable Zariski topology. It is natural to expect that Corollary~\ref{cor openness in euclidean} also holds in this topology. We will discuss a possible approach to this problem in Section~\ref{sec open problem}. It also interesting to know if we can obtain a similar statement for $\hyp^{(k)}$. The difficulty in this problem is the non-closeness of geometric condition.
\end{remark}

We compute hyperbolic indices in some special settings. First, we note that in the $1$-dimensional case, i.e., $X$ is a compact Riemann surface, the construction based on the Demailly-Semple tower does not make sense because of dimensional reasons. However, in this case, all positive $dd^c$-closed currents of bi-dimension $(1,1)$ on $X$ are of the form $c[X]$ for some positive constant $c$ by Lemma~\ref{lemma support lemma for ddc closed current}. Therefore, we can simply put their Euler characteristic as $c$ times the Euler characteristic of $X$. So, we can simply define $\hyp(X,\omega)\colonequals -\chi(X)/ \int_X \omega$.

\begin{proposition}\label{prop 1 dim case hyp compute}
    We have $\hyp(X) = -\chi(X)$. Moreover, if $X = C$ where $C \subset \C \P^2$ is a smooth curve of degree $d$, then $\hyp(C,\omega_{\FS}|_C) = d-3$.
\end{proposition}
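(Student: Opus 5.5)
The plan is to read $\hyp(X)$ through the metric-free definition \eqref{eq define hyp k not depend metric}, adapted to curves. For a compact Riemann surface we have set $\hyp(X,\omega) = -\chi(X)/\int_X\omega$. So
\[\hyp(X) = \sup_\omega \Big\{ \frac{-\chi(X)}{\int_X \omega}\Big\},\]
where the supremum runs over K\"ahler forms with $\int_X \omega^n = 1$. Since $n = \dim X = 1$, the normalization $\int_X\omega^1 = 1$ fixes the denominator. Every admissible $\omega$ therefore gives exactly $-\chi(X)$, and the supremum equals $-\chi(X)$.

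The first step is to justify that the definition $\hyp(X,\omega) = -\chi(X)/\int_X\omega$ is consistent with the infimum over currents. By Lemma~\ref{lemma support lemma for ddc closed current}(ii), with $A = X$ of dimension $k = p = 1$, every positive $dd^c$-closed current of bi-dimension $(1,1)$ is of the form $c[X]$ with $c \geq 0$. Its Euler characteristic is $c\chi(X)$ by convention, and its mass is $c\int_X\omega$. Hence the ratio $-\chi(c[X])/\|c[X]\|_\omega$ is independent of $c>0$, and the infimum is just this common value.

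For the second statement, take $C\subset\P^2$ a smooth curve of degree $d$. By the genus (adjunction) formula, $g = (d-1)(d-2)/2$, so
\[-\chi(C) = 2g-2 = (d-1)(d-2) - 2 = d^2 - 3d = d(d-3).\]
Next, $\int_C \omega_{\FS} = \deg C = d$, using the normalization in which $\omega_{\FS}$ represents the hyperplane class, so a line has mass $1$. Dividing gives $\hyp(C,\omega_{\FS}|_C) = d(d-3)/d = d-3$.

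Nothing here is a real obstacle. The only point needing care is the normalization of $\omega_{\FS}$: the mass of a line must be $1$, as implicitly used in Proposition~\ref{prop lower bound in Pn}, where the line $L$ gives $\chi_1([L]) = 2$ with unit mass. Under that normalization Wirtinger's theorem gives $\int_C\omega_{\FS} = d$.
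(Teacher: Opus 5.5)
Your proposal is correct and follows the paper's proof: the first claim is immediate because the normalization $\int_X\omega=1$ fixes the denominator in dimension one. The second claim follows from the genus--degree formula, which gives $-\chi(C)=d^2-3d$, divided by $\int_C\omega_{\FS}=d$.
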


\begin{proof}
    The first statement is trivial. For the second statement, by genus-degree formula, we have  $\hyp(C,\omega_{\FS}|_C) = \chi(C)/ \int_C \omega_{\FS} = (d^2-3d)/d = d-3$.
\end{proof}

\begin{remark}
    This proposition is just the classical result that if $X$ has genus greater than $2$, then $X$ is Kobayashi hyperbolic. The second statement says that, with respect to some canonical metrics, hyperbolicity increases as the degree increases. This suggests that we can expect a similar phenomenon for hypersurfaces in higher dimensions. We will give an affirmative answer for this question in Section~\ref{sec quantitative Kobayashi conjecture}.
\end{remark}

Next, we compute hyperbolic indices for complex tori and projective spaces.

\begin{proposition}\label{prop hyp of torus}
    Let $\C^n/\Lambda$ be a complex tori. Then, for all $k\in \Z^+ \cup \{\infty\}$ and K\"ahler metric $\omega$ on $\mathbb{C}^n/\Lambda$, we have $\hyp^{(k)} (\C^n/\Lambda,\omega) = \chyp^{(k)} (\C^n/\Lambda,\omega) = 0$.
\end{proposition}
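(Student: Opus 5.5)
Since $\hyp^{(k)}\geq \chyp^{(k)}$ by Proposition~\ref{prop basic property of hyp}(i), the claim follows from two inequalities: an upper bound $\hyp^{(k)}(\C^n/\Lambda,\omega)\leq 0$ and a lower bound $\chyp^{(k)}(\C^n/\Lambda,\omega)\geq 0$. By monotonicity in $k$ (Proposition~\ref{prop basic property of hyp}(ii)), it suffices to prove the lower bound for $k=1$ and the upper bound for every finite $k$. The upper bound for $k=\infty$ then follows by taking the supremum over $k$.

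\emph{Lower bound.} The key fact is that $T_X$ is trivial, so $X_1=X\times\P^{n-1}$. With the flat metric $h$ on $V_0=T_X$, the tautological bundle satisfies $\mathcal{O}_{X_1}(1)=p_2^*\mathcal{O}_{\P^{n-1}}(1)$, where $p_2:X_1\to\P^{n-1}$ is the second projection. Hence $u_1$ is represented by the semipositive form $p_2^*\omega_{\FS}$; equivalently, \eqref{eq formula for curvature on tilde X} holds with $\theta_{V,h}=0$. For any $1$-lifting $T_{[1]}$ of a $1$-liftable current $T$, we then get $\{T_{[1]}\}\cdot u_1=\langle T_{[1]},p_2^*\omega_{\FS}\rangle\geq 0$. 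Therefore $-\overline{\chi}_1(T)\geq 0$, and since $\chi_1=\overline{\chi}_1$, we conclude $\chyp^{(1)}\geq 0$. If no $1$-liftable current exists, the index is $+\infty$. Otherwise, the upper bound below already gives finiteness.

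\emph{Upper bound.} I exhibit a $k$-liftable current with $\chi_k=0$ for every $k$. Take a constant vector $v\in\C^n$ and consider the linear entire curve $f(t)=[tv]$, or its Nevanlinna current. A cleaner choice is the positive closed current $T=\int_{X} [x+\C v]\,dx$, which is the average of the translates of the leaf through $x$ of the linear foliation by $v$. This is the closed directed current $i c\, \sum v_j dz_j\wedge\overline{\sum v_j dz_j}$ normalised against the flat volume; concretely, it is a constant-coefficient form, hence closed. Its lift is $T_{[1]}=\int [\,(x+\C v,[v])\,]\,dx$ on $X\times\{[v]\}$. This lift is directed by $V_1$ and is a constant form as well, and the same holds at every level. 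Since $f_{[k]}$ for a linear curve is the immersion $t\mapsto(x+tv,[v],[\partial_t],\dots)$ with constant fibre coordinates, the lift $T_{[k]}$ is supported on $\{$fixed fibre point$\}\times X$. This support is disjoint from $X_k^{[\sing]}$ because the multiplicity of a linear curve is $1$ at every level. So $T_{[k]}$ is a geometric $k$-lifting.

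By translation invariance (Proposition~\ref{prop liftable preserved}, applied to translations, which act trivially on the fibre factors), the pairing $\{T_{[k]}\}\cdot u_k$ can be computed by restricting $u_k$ to the torus $X\times\{\text{pt}\}$. On this torus $\mathcal{O}_{X_k}(1)$ restricts to $f_{[k]}$-tautological line bundles, which are trivial because $\partial_t$ gives a global nonvanishing section. Hence $\{T_{[k]}\}\cdot u_k=0$ and $\chi_k(T)\geq 0$. Combined with $-\chi_k(T)\geq-\overline{\chi}_k(T)\geq 0$ from the lower bound, this gives $\chi_k(T)=0$, so $\hyp^{(k)}\leq 0$.

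The main obstacle I expect is justifying that, at levels $k\geq 2$, the tautological bundle restricted to the lifted subtorus is trivial. I would handle this by an induction on the tower: using \eqref{eq formula for Xk and Xk-1} and the fact that the lifted torus avoids $D_k$, we have $u_k|=u_{k-1}|$ on it, and the base case $u_1|=0$ comes from the product structure above.
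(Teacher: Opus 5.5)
Your proof is correct. The lower bound is exactly the paper's argument: $X_1=X\times\P^{n-1}$, $u_1=\{p_2^*\omega_{\FS}\}$ pairs nonnegatively with every positive closed current on $X_1$, and Proposition~\ref{prop basic property of hyp}(i)--(ii) propagate this to all $k$.

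\textbf{Upper bound.} This is where your route genuinely differs. The paper takes a $1$-dimensional subtorus $\mathscr{C}\subset\C^n/\Lambda$ and reads off $\chi_k([\mathscr{C}])=\chi_k(\mathscr{C})=0$ from Theorem~\ref{theorem coincide with k-Euler characteristic} and Proposition~\ref{prop increasing of chi_k to chi_infty for curve}. That is quick because it reuses the curve computations. However, it presupposes that the torus contains an elliptic curve, which is false for a general torus with $n\geq 2$: a generic torus contains no curves at all.

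Your current, the constant positive form of bidimension $(1,1)$ directed by a fixed $v\in\C^n$, exists on every torus. When $\C v$ closes up to an elliptic subtorus $E$, it is just the average of the translates of the paper's $[E]$. So your argument covers the statement in full generality, at the cost of checking liftability, geometricity and the value of $u_k$ by hand.

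Those checks are cleanest with explicit sections. Set $\iota_0=\mathrm{id}$, $\iota_k(x)\colonequals(\iota_{k-1}(x),[(\iota_{k-1})_*v])$ and $T_{[k]}\colonequals(\iota_k)_*T$. Then:
\begin{itemize}
\item Directedness by $V_k$ follows from $(\pi_{k,k-1})_*(\iota_k)_*v=(\iota_{k-1})_*v$.
\item The image avoids every $\pi_{k,j}^{-1}(D_j)$, because $(\pi_{j-1,j-2})_*(\iota_{j-1})_*v=(\iota_{j-2})_*v\neq0$, so $(\iota_{j-1})_*v$ is never vertical.
\item $(\iota_{k-1})_*v$ is a nowhere-vanishing holomorphic section of $\iota_k^*\mathcal{O}_{X_k}(-1)$. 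Hence $\{T_{[k]}\}\cdot u_k=\{T\}\cdot\iota_k^*u_k=0$ by the projection formula alone, and translation invariance is not needed.
\end{itemize}

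\textbf{Two small repairs.} First, the explicit formula you give, $ic\,\alpha\wedge\bar\alpha$ with $\alpha=\sum v_j\,dz_j$, is wrong. It is a $(1,1)$-form, so for $n\geq3$ it has the wrong bidimension. For $n=2$ it is directed by the line $\ker\alpha$, which in general is not $\C v$. The correct representative is $T=\bigwedge_{j=1}^{n-1} i\eta_j\wedge\bar\eta_j$, where $\eta_1,\dots,\eta_{n-1}$ span the annihilator of $v$; equivalently, $\langle T,\phi\rangle=\int_X\phi(v,\bar v)\,dV$.

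Second, the expression $\int_X[x+\C v]\,dx$ is only heuristic. The leaves $x+\C v$ are typically dense and of infinite area, so it should be replaced by this constant form.
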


\begin{proof}
     Put $X\colonequals \C^n/\Lambda$ and consider a K\"ahler form $\omega$ on $X$. Let $\mathscr{C} \subset X$ be an $1$-dimensional sub-torus. By Theorem~\ref{theorem coincide with k-Euler characteristic}, we have $\chi_{k}([\mathscr{C}]) = {\chi}_k(\mathscr{C}) =0 $ for all $k$. Thus, we must have $\hyp^{(k)}(X,\omega) \leq 0$ and $\chyp^{(k)}(X,\omega) \leq 0$ for all $k$. Moreover, as $X$ has trivial tangent bundle, we have $X_1 = X\times \P^{n-1}$. Denote by $\pi_2:X_1 \to \P^{n-1}$ the projection map. Then $u_{1} = \{\pi_2^* (\omega_{\FS})\}$. Let $T$ be a positive closed current of bi-dimension $(1,1)$ on $X_1$. We must have $T \wedge \pi_2^* (\omega_{\FS}) \geq 0$. This implies that $\hyp^{(1)}(X,\omega) \geq 0$. By Proposition~\ref{prop basic property of hyp}(i)-(ii), we must have $\hyp^{(k)} (X,\omega) = \chyp^{(k)} (X,\omega) = 0$ for all $k\in \Z^+\cup \{\infty\}$.
\end{proof}

\begin{proposition}\label{prop hyp of Pn}
    For all $k\in \Z^+\cup\{\infty\}$, we have 
    \[\hyp^{(k)}(\P^n) = \hyp^{(k)}(\P^n,\omega_{\FS}) = \chyp^{(k)}(\P^n) = \chyp^{(k)}(\P^n,\omega_{\FS}) = -2.\]
    Moreover, we have $\chyp^{(k)}(X,\omega_{\FS}|_X) \geq -2$ for all submanifold $X$ of $\P^n$.
\end{proposition}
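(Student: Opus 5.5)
The upper and lower bounds for $\P^n$ come from different earlier results, and the submanifold statement then follows from monotonicity; the only real subtlety is the normalization of the metric-free index.

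\emph{Upper bound.} Let $L\subset\P^n$ be a line. By Theorem~\ref{theorem coincide with k-Euler characteristic}, $\chi_k([L])=\chi_k(L)$ for every $k\in\Z^+\cup\{\infty\}$. For a smooth rational curve the lifting $\nu_{[k-1]}$ has no multiplicity anywhere, so Proposition~\ref{prop increasing of chi_k to chi_infty for curve} gives $\chi_k(L)=\chi(\P^1)=2$ for all $k$. Since $\|[L]\|_{\omega_{\FS}}=1$, this yields
\[\chyp^{(k)}(\P^n,\omega_{\FS})\le \hyp^{(k)}(\P^n,\omega_{\FS})\le -2.\]

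\emph{Lower bound.} Proposition~\ref{prop lower bound in Pn} says that every $k$-liftable $T$ of unit $\omega_{\FS}$-mass satisfies $-\overline{\chi}_l(T)\ge -2$ and $-\chi_l(T)\ge -2$. Hence both indices are $\ge -2$ for each finite $k$. Taking the supremum over $k$ gives the case $k=\infty$, so all four quantities attached to $\omega_{\FS}$ equal $-2$. For a submanifold $X\subset\P^n$, Proposition~\ref{prop basic property of hyp}(vii) then gives $\chyp^{(k)}(X,\omega_{\FS}|_X)\ge\chyp^{(k)}(\P^n,\omega_{\FS})=-2$.

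\emph{Metric-free indices.} It remains to show that $\hyp^{(k)}(\P^n)$ and $\chyp^{(k)}(\P^n)$ also equal $-2$. Here $H^{1,1}(\P^n)$ is one-dimensional, so every Kähler class is $c\{\omega_{\FS}\}$. The index is homogeneous of degree $-1$ in the class, since $\|T\|_{c\omega}=c\|T\|_{\omega}$. The sup is taken over $\omega$ with $\int\omega^n=1$, which forces $c$ to be the constant normalizing $\omega_{\FS}$. With the paper's normalization (where $\{\omega_{\FS}\}$ is the hyperplane class and $\int\omega_{\FS}^n=1$), this gives $c=1$ and the value $-2$.

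The step I expect to need care is exactly this normalization convention; the statement implicitly takes $\omega_{\FS}$ to have total volume one. The genuinely hard content is the lower bound, which is already handled by the degeneration argument of Proposition~\ref{prop lower bound in Pn}.
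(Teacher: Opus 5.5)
Your argument is the paper's proof: a line gives the upper bound $-2$, Proposition~\ref{prop lower bound in Pn} gives the lower bound, the one-dimensionality of $H^{1,1}(\P^n,\R)$ together with the normalization $\int\omega^n=1$ handles the metric-free indices, and Proposition~\ref{prop basic property of hyp}(vii) gives the statement for submanifolds. (Your explicit choice of a line, rather than the paper's ``smooth rational curve'', is exactly what unit mass requires.)

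The only omission is $n=1$. Proposition~\ref{prop lower bound in Pn} is stated only for $n\geq 2$, so it does not cover that case. The paper treats $\P^1$ separately via Proposition~\ref{prop 1 dim case hyp compute}, where the index is simply $-\chi(\P^1)/\int_{\P^1}\omega_{\FS}=-2$.
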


\begin{proof}
    The case $n=1$ is from Proposition~\ref{prop 1 dim case hyp compute}. Suppose that $n\geq 2$. Let $T$ be a $k$-liftable current of mass $1$ with respect to the Fubini-Study metric $\omega_{\FS}$. Then, by Proposition~\ref{prop lower bound in Pn}, we have $-\overline{\chi}_k(T) \geq -2$. Thus, we obtain $\hyp^{(k)}(\P^n,\omega_\FS) \geq \chyp^{(k)}(\P^n,\omega_\FS) \geq -2$. Moreover, by choosing $T = [\mathscr{C}]$ where $\mathscr{C}$ is a smooth rational curve in $\P^n$, we have $\chyp^{(k)}(\P^n,\omega_\FS) \leq \hyp^{(k)}(\P^n,\omega_\FS) \leq -2$. Thus, $\chyp^{(k)}(\P^n,\omega_\FS) = \hyp^{(k)}(\P^n,\omega_\FS) = -2$. For another K\"ahler metric $\omega$ on $\P^n$, it is known that $\{\omega\} = c \{\omega_\FS\}$ for some positive constant $\mathscr{C}$. The normalized condition implies that $c=1$. Hence, $\chyp^{(k)}(\P^n) = \hyp^{(k)}(\P^n) = -2$. The last statement follows from Proposition~\ref{prop basic property of hyp}(vii).
\end{proof}

We end this section by proving a relation between the automorphism group and the hyperbolic indices.

\begin{proposition}\label{prop hyp of infinite aut mfd}
    Let $X$ be a compact K\"ahler manifold and $k\in \Z^+$. Suppose that there exists a $k$-liftable current $T$ such that $\{T\}- \epsilon \{\omega^{n-1}\}$ is pseudoeffective for some K\"ahler metric $\omega$ and $\epsilon > 0$. Suppose that $\Aut(X)/\Aut_0(X)$ is infinite. Then $\chyp^{(k)}(X) \leq \hyp^{(k)}(X)\leq 0$. In particular, this statement always holds for projective manifolds.
\end{proposition}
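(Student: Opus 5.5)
The plan is to use the infinitely many components of $\Aut(X)$ to push $T$ by automorphisms whose action on $H^{1,1}(X,\R)$ has unbounded norm. The currents $\varphi_*T$ then gain mass without limit, while $-\chi_k$ does not move. Fix a Kähler form $\omega$. By Proposition~\ref{prop liftable preserved}, each $\varphi_*(T)$ with $\varphi\in\Aut(X)$ is $k$-liftable and satisfies $\chi_k(\varphi_*T)=\chi_k(T)$ and $\overline{\chi}_k(\varphi_*T)=\overline{\chi}_k(T)$. By Proposition~\ref{prop finiteness of Euler characteristic}, both quantities are finite. Hence
\[
\hyp^{(k)}(X,\omega)\le \frac{-\chi_k(T)}{\|\varphi_*T\|_\omega}\qquad\text{for every }\varphi\in\Aut(X).
\]
It therefore suffices to find automorphisms $\varphi_m$ with $\|(\varphi_m)_*T\|_\omega\to\infty$. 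If $-\chi_k(T)\le 0$ there is nothing to prove, so assume $-\chi_k(T)>0$; letting $m\to\infty$ then gives $\hyp^{(k)}(X,\omega)\le 0$. Since this holds for every Kähler form $\omega$ with $\int_X\omega^n=1$, we get $\hyp^{(k)}(X)\le 0$. The inequality $\chyp^{(k)}(X)\le\hyp^{(k)}(X)$ is Proposition~\ref{prop basic property of hyp}(i).

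The growth of the mass comes in two steps. First, $\|\varphi_*T\|_\omega=\{T\}\cdot\varphi^*\{\omega\}$. Since $\{T\}-\epsilon\{\omega^{n-1}\}$ is pseudoeffective and $\varphi^*\{\omega\}$ is a Kähler class, we get
\[
\|\varphi_*T\|_\omega\ \ge\ \epsilon\,\{\omega\}^{n-1}\cdot\varphi^*\{\omega\}.
\]
Second, I will show that $\{\omega\}^{n-1}\cdot\varphi_m^*\{\omega\}\to\infty$ along some sequence $\varphi_m$. The input here is the theorem of Fujiki and Lieberman: the group of automorphisms fixing a Kähler class has only finitely many connected components, i.e.\ $\{\varphi:\varphi^*\{\omega\}=\{\omega\}\}/\Aut_0(X)$ is finite. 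Since $\Aut(X)/\Aut_0(X)$ is infinite, the orbit $\{\varphi^*\{\omega\}\}$ is therefore infinite. This orbit lies in the lattice $H^2(X,\Z)$ up to the real structure, and $\Aut(X)$ acts on $H^2(X,\Z)/\mathrm{torsion}$ through a discrete group. I would make this precise by noting that the classes $\varphi^*\{\omega\}$ all lie in a bounded region if their pairings with $\{\omega\}^{n-1}$ are bounded. Concretely, every Kähler class $\beta$ satisfies $\|\beta\|\le C\,\beta\cdot\{\omega\}^{n-1}$, because the Kähler cone is a salient convex cone and $\{\omega\}^{n-1}$ is strictly positive on its closure. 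Suppose the pairings stayed bounded. Then the induced linear maps $\varphi^*$ would lie in a bounded subset of the discrete group $GL(H^2(X,\Z))$, and there are only finitely many such maps. Hence only finitely many classes $\varphi^*\{\omega\}$ would occur, contradicting the fact that the orbit is infinite.

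The step I expect to be the main obstacle is bounding the whole linear map $\varphi^*$, and not merely the single class $\varphi^*\{\omega\}$. To handle it, I would use that the Kähler cone spans $H^{1,1}(X,\R)$. Choose finitely many Kähler forms $\omega_1,\dots,\omega_N$ whose classes span this space, and apply the same mass argument to each $\omega_i$, using $\omega_i\le C\omega$. A bound on $\varphi^*$ on $H^{1,1}(X,\R)$ then bounds $\varphi^*$ on $H^2$ as well, by conjugation symmetry and the Hodge decomposition; alternatively, I would cite the standard form of Lieberman's finiteness result, which is stated directly in terms of a bounded action on $H^{1,1}$. For the last sentence of the statement: on a projective manifold, take $T=[\mathscr{C}]$, a complete intersection of very ample divisors. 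This current is $\infty$-liftable, and its class $c_1(L)^{n-1}$ satisfies $c_1(L)^{n-1}-\epsilon\{\omega\}^{n-1}$ pseudoeffective for $\omega\in c_1(L)$ and, say, $\epsilon=1$, so the hypothesis on $T$ holds and the argument above applies.
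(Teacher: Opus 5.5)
Your argument is correct in outline and takes a different route from the paper; one justification in the step you flagged is wrong but easy to repair, and one point needs a sentence.

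\textbf{Comparison with the paper.} The paper applies the Fujiki--Lieberman type theorem of Dinh--Hu--Zhang to the class $\{T\}\in H^{n-1,n-1}(X,\R)$ itself. There, the pseudoeffectivity of $\{T\}-\epsilon\{\omega^{n-1}\}$ serves as a bigness condition, and the paper obtains $g\in\Aut(X)$ with $\|(g^m)^*\{T\}\|\to\infty$. It then concludes, as you do, from the invariance of $\chi_k$ under automorphisms, and it uses the same complete-intersection curve in the projective case.

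You use the hypothesis only through the comparison $\{T\}\cdot\varphi^*\{\omega\}\geq\epsilon\{\omega\}^{n-1}\cdot\varphi^*\{\omega\}$. This reduces everything to the orbit of a K\"ahler class. As a result, the classical Fujiki--Lieberman theorem for K\"ahler classes suffices, and you only need an unbounded sequence $\varphi_m$ rather than a single element with growing iterates. The price is that you must pass from an infinite orbit to an unbounded one yourself; in the paper this is hidden inside the cited theorem.

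\textbf{The faulty step.} Your bound for $\varphi^*$ on $H^{1,1}(X,\R)$ via the forms $\omega_i$ is fine. But conjugation symmetry and the Hodge decomposition do not extend it to all of $H^2$. Conjugation only exchanges $H^{2,0}$ and $H^{0,2}$, and a family of operators preserving the decomposition can be uniformly bounded on one summand and not on another.

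The repair is short. Your contradiction hypothesis is uniform over the whole group, so it also bounds $(\varphi^{-1})^*\{\omega\}$. For a holomorphic $2$-form $\sigma$,
\[
\int_X \varphi^*\sigma\wedge\overline{\varphi^*\sigma}\wedge\omega^{n-2}
=\int_X \sigma\wedge\overline{\sigma}\wedge\big((\varphi^{-1})^*\omega\big)^{n-2},
\]
and $\tau\mapsto\int_X\tau\wedge\overline{\tau}\wedge\omega^{n-2}$ is a positive definite Hermitian form on $H^{2,0}$. Hence $\varphi^*$ is uniformly bounded on $H^{2,0}$, then on $H^{0,2}$ by conjugation, and so on all of $H^2$. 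Your discreteness argument in $GL(H^2(X,\Z)/\mathrm{torsion})$ then goes through.

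Alternatively, you can skip the lattice entirely. A bound on the single class $\varphi^*\{\omega\}$ already bounds the graph volumes $\int_X(\omega+\varphi^*\omega)^n$. The proof of the Fujiki--Lieberman theorem, via compactness of cycles of bounded volume in $X\times X$, then places all such $\varphi$ in finitely many components of $\Aut(X)$. This contradicts the assumption that $\Aut(X)/\Aut_0(X)$ is infinite.

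Relatedly, your remark that the orbit lies in the lattice is not right, because $\{\omega\}$ need not be integral. This is exactly why the whole operator has to be controlled.

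\textbf{Choice of metric.} You use the same $\omega$ in the hypothesis and in $\hyp^{(k)}(X,\omega)$. This is harmless. If $\omega_0$ is the metric in the hypothesis and $\omega_0\geq c\,\omega$, then $\omega_0^{n-1}\geq c^{n-1}\omega^{n-1}$. So $\{T\}-\epsilon c^{n-1}\{\omega^{n-1}\}$ is pseudoeffective for every K\"ahler form $\omega$. You should state this.
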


\begin{proof}
    Recall that $\Aut(X)$ is a complex Lie group of finite dimension and $\Aut_0(X)$ is the identity connected component. By the Fujiki-Lieberman type theorem (\cite[Theorem 2.1]{Dinh-Hu-Zhang-Fujiki-Lieberman-bigcase}), there exists an element $g$ in $\Aut(X)$ such that $\|(g^m)^* \{T\}\| \to \infty$ as $m\to \infty$. By Proposition~\ref{prop liftable preserved}, the family of $k$-liftable currents $(g^m)^* T$ have the same $\chi_k$. Let $\omega$ be a K\"ahler form on $X$. Letting $m\to \infty$, we obtain \[\hyp^{(k)}(X,\omega) \leq \liminf\limits_{m\to \infty} \frac{-{\chi}_k((g^m)^*(T))}{ \|(g^m)^*( T)\|_\omega} = \liminf\limits_{m\to \infty} \frac{-{\chi}_k(T)}{ \|(g^m)^*( T)\|_\omega} = 0,\]
    as desired. When $X$ is a projective manifold, we consider the curve $\mathscr{C}$ as the intersection of suitable divisors, following the proof of Proposition~\ref{prop basic property of hyp}(v). Then the current $[\mathscr{C}]$ is $\infty$-liftable and $\{\mathscr{C}\} = \{\omega^{n-1}\}$ for some K\"ahler class $\omega$. The result follows.
\end{proof}

\begin{remark}
    In \cite{kobayashi-76-intrinsic}, Kobayashi proved that if $X$ is a compact hyperbolic manifold, then the automorphism group $\Aut(X)$ is finite. Combining with Theorem~\ref{theorem chyp > 0 implies Kobayashi hyperbolic}, this implies that $\chyp^{(k)}(X) \leq 0$. However, it is not clear to us whether one can conclude $\hyp^{(k)}(X)\leq 0$ in this situation.
\end{remark}

\section{Relation with Kobayashi hyperbolicity}\label{sec relation with Kobayashi hyperbolic}

The main goal of this section is to investigate the relationship between hyperbolic indices and Kobayashi hyperbolicity. We first show that if $\chyp^{(\infty)}(X) > 0$, then $X$ is Kobayashi hyperbolic. This is done by showing that Nevanlinna currents are liftable and have non-negative cohomological Euler characteristics. In the remaining part, we prove that if $X$ admits a negative $k$-jet curvature in the sense of Demailly, then $\hyp^{(k)}(X)>0$. Combining these two statements gives a different proof for some classical theorems in complex hyperbolicity. All the theorems in this section hold for $\hyp_*^{(k)}$ and $\chyp_*^{(k)}$, using the same arguments. The manifold $X$ is this section is a compact K\"ahler manifold.

\subsection{Euler characteristic of Nevanlinna currents}

 Let $V$ be a holomorphic subbundle of $T_X$. Let $f:\C \to X$ be a transcendental entire curve in $X$ that is tangent to $V$. Recall that a non-constant entire curve $f: \C \to X$ is called \textit{transcendental} if $\lim\limits_{r\to \infty} T_{f}(r,\omega)/\log r = \infty$. Let $\widetilde{X} = \P(V)$ and $\widetilde{f}$ be the lifting of $f$ on $\widetilde{X}$. Let $\omega$ be a K\"ahler metric on $X$. Then $\omega$ induces a metric $h_1$ on $\mathcal{O}_{\widetilde{X}}(-1)$ (see subsection~\ref{subsec projectivized 1 jet} for details). We recall McQuillan's tautological inequality in \cite{McQuillan}.

 \begin{lemma}\label{lemma limit implies tautological inequality}
     There exists a measurable set $E$ of finite Lebesgue measure such that
     \begin{equation}\label{eq tautological ineq main ineq}\limsup_{r\to \infty,r \notin E} \frac{1}{T_{ f}(r,\omega)} \int_1^r \frac{dt}{t} \int_{\D_t} \widetilde f^* \Theta_{h_1^{-1}}(\mathcal{O}_{\widetilde{X}}(1)) \leq 0.
     \end{equation}
 \end{lemma}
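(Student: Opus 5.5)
The plan is to reduce \eqref{eq tautological ineq main ineq} to the lemma on the logarithmic derivative (Lemma~\ref{lemma logarithmic}) through the Lelong--Poincar\'e formula applied to the curve $\widetilde f$.

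\textbf{Step 1 (a section along the curve).} The derivative $f'(t)$ is a holomorphic section of $\widetilde f^*\mathcal{O}_{\widetilde{X}}(-1)$, as observed after \eqref{eq define lifting of germ of hol curve}. It is not identically zero, and it vanishes exactly at the points where $f'=0$, with multiplicity $m(f,t)-1$. The metric $h_1$ is induced by $\omega$, so $|f'(t)|_{h_1}=\|f'(t)\|_\omega$. The Lelong--Poincar\'e formula then gives, on $\C$,
\[\widetilde f^*\Theta_{h_1^{-1}}(\mathcal{O}_{\widetilde X}(1)) = dd^c\log\|f'\|_\omega^2 - [\mathrm{div}(f')],\]
with sign conventions as in \eqref{eq compute curvature of O(-1)}. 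The divisor term is a positive measure, so we may drop it and get an inequality $\leq$.

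\textbf{Step 2 (Jensen).} Integrate against $\int_1^r \frac{dt}{t}\int_{\D_t}$. By the Jensen formula $\int_1^r\frac{dt}{t}\int_{\D_t}dd^c u=\frac12\int_0^{2\pi}u(re^{i\theta})\frac{d\theta}{2\pi}-\frac12\int_0^{2\pi}u(e^{i\theta})\frac{d\theta}{2\pi}$, applied with $u=\log\|f'\|^2_\omega$, we obtain
\[\int_1^r\frac{dt}{t}\int_{\D_t}\widetilde f^*\Theta \le \frac{1}{2\pi}\int_0^{2\pi}\log^+\|f'(re^{i\theta})\|_\omega\,d\theta+O(1).\]
The $O(1)$ comes from the circle of radius $1$; at worst it needs a harmless shift of the inner radius if $f'$ vanishes on that circle.

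\textbf{Step 3 (logarithmic derivative).} Cover $X$ by finitely many charts, or embed $X$ in some $\C^N$-coordinates via meromorphic functions, so that $\|f'\|_\omega$ is bounded by a constant times $\sum_j |(g_j\circ f)'|$ up to harmless factors, where the $g_j$ are finitely many rational or meromorphic functions. The standard way to do this is the one used to prove the lemma on the logarithmic derivative in Nevanlinna theory, writing $(g\circ f)' = (g\circ f)\cdot\frac{(g\circ f)'}{g\circ f}$. Lemma~\ref{lemma logarithmic} (applied to $f$, or to each $g_j\circ f$) then bounds the right-hand side by $O(\log T_f(r,\omega)+\log r)$ for $r\notin E$, where $E$ has finite Lebesgue measure. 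Since $f$ is transcendental, $\log r=o(T_f(r,\omega))$, and trivially $\log T_f=o(T_f)$. Dividing by $T_f(r,\omega)$ and taking $\limsup$ over $r\notin E$ gives \eqref{eq tautological ineq main ineq}.

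\textbf{Main obstacle.} The main obstacle is Step 3 when $X$ is only compact K\"ahler, not projective: there are no global meromorphic functions. In that case I would follow McQuillan's argument. First write $\|f'\|^2_\omega\le C\sum_\alpha \chi_\alpha |(z^\alpha\circ f)'|^2$ using a partition of unity subordinate to coordinate charts. Then apply the calculus lemma (concavity of $\log$ plus the Borel growth lemma) directly to the nonnegative function $\|f'\|_\omega^2$. Jensen's inequality gives $\int\log^+\|f'\|^2\,d\theta\le\log\int\|f'\|^2\,d\theta$. The Borel lemma relates $\int_{\partial\D_r}\|f'\|^2$ to $\frac{d}{dr}\bigl(r\frac{d}{dr}T_f(r,\omega)\bigr)$, which is at most $T_f^{1+\delta}$ times a power of $r$ outside a set of finite measure. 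This yields the same $O(\log T_f+\log r)$ bound without needing any coordinates.
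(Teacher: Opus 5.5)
Your argument is correct and is the paper's route: the paper proves this lemma by the Lelong--Poincar\'e formula for the section $f'$ of $\widetilde f^*\mathcal{O}_{\widetilde X}(-1)$, Jensen's formula, and Lemma~\ref{lemma logarithmic} applied directly to $f$. That lemma, as stated in the paper, already bounds $\int_0^{2\pi}\log^+\|f'(re^{i\theta})\|_\omega\,d\theta$ on any compact K\"ahler $X$, so your ``main obstacle'' is not one, and your calculus-lemma paragraph is precisely the standard proof of that lemma. You should drop the detour through meromorphic functions $g_j$: bounding $\|f'\|_\omega$ by $\sum_j|(g_j\circ f)'|$ and splitting $(g\circ f)'=(g\circ f)\cdot (g\circ f)'/(g\circ f)$ leaves a proximity term $m(r,g_j\circ f)$ of the same order as $T_f(r,\omega)$ (take $g\circ f=e^z$ in $\P^1$), so the denominators you call harmless are in fact essential.
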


 \begin{proof}
     This is a consequence of the Lelong-Poincar\'e formula, combined with Jensen formula and Lemma~\ref{lemma logarithmic}. See \cite[Lemma 4.1]{VietTuan} or \cite[Lemma 2.1]{deng-degeneracy} for more details.
 \end{proof}

We now consider higher liftings. Let $f:\C\to X$ be a transcendental entire curve in $X$. Let $f_{[k]}:\C \to X_k$ be the $k$-lifting of $f$ to $X_k$ and $\omega_{[k]}$ be the K\"ahler metric on $X_k$ constructed in \eqref{eq define Kahler form on lifting}. We have the following consequence of Lemma~\ref{lemma limit implies tautological inequality}.

\begin{lemma}\label{lemma decreasing of char function}
    For each $k\in \Z^+$, there exists a measurable set $E_k$ of finite Lebesgue measure such that 
    \begin{equation*}\label{eq decreasing of char function}
        \lim_{r\to \infty} \frac{\log r}{T_{f_{[k]}}(r,\omega_{[k]})} = 0 ,\qquad \limsup_{r\to \infty,r\notin E_k}  \frac{T_{f_{[k]}}(r,\omega_{[k]})}{T_{f_{[k-1]}}(r,\omega_{[k-1]})} \leq C_{[k]}.
    \end{equation*}
\end{lemma}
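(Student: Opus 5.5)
The plan is an induction on $k$. Each step compares the characteristic function of $f_{[k]}$ with that of $f_{[k-1]}$ using the recursive definition \eqref{eq define Kahler form on lifting} of $\omega_{[k]}$, and then controls the extra curvature term with McQuillan's tautological inequality (Lemma~\ref{lemma limit implies tautological inequality}).

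First, pulling back \eqref{eq define Kahler form on lifting} by $f_{[k]}$ gives
\[
T_{f_{[k]}}(r,\omega_{[k]}) = C_{[k]}\, T_{f_{[k-1]}}(r,\omega_{[k-1]}) + \int_1^r \frac{dt}{t}\int_{\D_t} f_{[k]}^*\Theta_{h_{[k]}}(\mathcal{O}_{X_k}(1)),
\]
since $\pi_{k,k-1}\circ f_{[k]} = f_{[k-1]}$. For $k=1$ the same identity holds with $\omega_{[0]}=\omega$.

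Next, I apply Lemma~\ref{lemma limit implies tautological inequality} to the directed manifold $(X_{k-1},V_{k-1})$, with the K\"ahler metric $\omega_{[k-1]}$ and the entire curve $f_{[k-1]}$, which is tangent to $V_{k-1}$. Its canonical lifting is exactly $f_{[k]}$, and $h_{[k]}$ is precisely the metric induced by $\omega_{[k-1]}$. The lemma then yields a set $E_k$ of finite measure such that
\[
\limsup_{r\to\infty,\,r\notin E_k}\frac{1}{T_{f_{[k-1]}}(r,\omega_{[k-1]})}\int_1^r\frac{dt}{t}\int_{\D_t} f_{[k]}^*\Theta \le 0.
\]
Dividing the identity from the first step by $T_{f_{[k-1]}}(r,\omega_{[k-1]})$ gives the second claimed inequality.

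The hypothesis of Lemma~\ref{lemma limit implies tautological inequality} is that the curve is transcendental, so I need $f_{[k-1]}$ to be transcendental in $X_{k-1}$. This is the first claim at level $k-1$, which is why the two statements are proved together by induction. For the first claim at level $k$, I use positivity: $\omega_{[k]}\ge C_{[k]}\pi_{k,k-1}^*\omega_{[k-1]}$ up to the semipositivity issue below. A cleaner route is to note that $\omega_{[k]}$ and $\pi_{k,0}^*\omega$ are both smooth, $\omega_{[k]}$ is positive definite, and $X_k$ is compact. Hence $\omega_{[k]}\ge c\,\pi_{k,0}^*\omega$ for some $c>0$, so
\[
T_{f_{[k]}}(r,\omega_{[k]})\ge c\,T_f(r,\omega).
\]
Since $f$ is transcendental, $T_f(r,\omega)/\log r\to\infty$, and therefore $\log r/T_{f_{[k]}}(r,\omega_{[k]})\to 0$. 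This settles the first claim at every level directly, without needing the induction, and so the hypothesis of Lemma~\ref{lemma limit implies tautological inequality} holds at each stage.

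The main obstacle is checking that Lemma~\ref{lemma limit implies tautological inequality} applies verbatim at level $k-1$. The lemma is stated for a subbundle $V\subset T_X$ over a K\"ahler $X$ with the metric induced from a K\"ahler form. Here $(X_{k-1},\omega_{[k-1]})$ is K\"ahler and $V_{k-1}\subset T_{X_{k-1}}$ is a subbundle, and $h_{[k]}$ is by construction the metric on $\mathcal{O}_{X_k}(-1)\subset\pi^*V_{k-1}$ induced by $\omega_{[k-1]}$, so the hypotheses match. I would also verify that $f_{[k]}$, defined at points where $f_{[k-1]}'$ vanishes via the factorization $f'(t)=(t-t_0)^s u(t)$, coincides with the canonical lifting used in the lemma, which is a finite-order issue absorbed in the Lelong–Poincar\'e computation. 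Finally, if $T_{f_{[k-1]}}$ is normalized differently from $T_f$ in the lemma, the only change is replacing $\omega$ by $\omega_{[k-1]}$ in the denominator, which is harmless.
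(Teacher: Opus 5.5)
Your proposal is correct and follows essentially the same route as the paper. For the first claim, you bound $\pi_{k,0}^*\omega$ by a multiple of $\omega_{[k]}$ on the compact $X_k$; the paper instead bounds $\pi_{k,k-1}^*\omega_{[k-1]}$ and inducts, which is the same compactness argument. For the second claim, both you and the paper split $\omega_{[k]}$ via \eqref{eq define Kahler form on lifting} and apply the tautological inequality (Lemma~\ref{lemma limit implies tautological inequality}) to $f_{[k-1]}$ on the directed manifold $(X_{k-1},V_{k-1})$ with the metric $\omega_{[k-1]}$.
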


\begin{proof}
    Bounding $\pi_{k,k-1}^* (\omega_{[k-1]}) \leq C \omega_{[k]}$ by some constant $C$, we obtain   
    \[T_{f_{[k-1]}}(r,\omega_{[k-1]}) = T_{f_{[k]}}(r,\pi_{k,k-1}^* \omega_{[k-1]}) \leq C\ T_{f_{[k]}}(r,\omega_{[k]}). \] Thus, by induction, we have 
    \[\lim\limits_{r\to \infty} \frac{\log r}{T_{f_{[k]}}(r,\omega_{[k]})} = 0 \quad \text{for every}\quad k\in \Z^+.\]
     Moreover, since $\omega_{[k]} = C_{[k]}\cdot \pi_{k,k-1}^* (\omega_{[k-1]}) + \Theta_{h_{[k]}}(\mathcal{O}_{X_{k}}(1))$, we can apply Lemma~\ref{lemma limit implies tautological inequality} for $f_{[k-1]}$ instead of $f$ and the complex directed manifold $(X_{k-1},V_{k-1})$ to get the second inequality.
\end{proof}

Let $k \in \Z^+$. We now show that we can always construct a $k$-liftable Nevanlinna current. By Lemma~\ref{lemma Ahlfors} and Lemma~\ref{lemma decreasing of char function}, we can choose a sequence $(r_m)_{m\in \Z^+}$ such that for every $l\leq k$, we have
\begin{equation}\label{eq cond for liftable Nevanlinna}\lim_{m\to \infty} \frac{S_{f_{[l]}}(r_m, \omega_{[l]})}{T_{f_{[l]}}(r_m, \omega_{[l]})} = 0 \qquad \text{and} \qquad \lim_{m\to \infty} \frac{T_{f_{[l]}}(r_m, \omega_{[l]})}{T_{f_{[l-1]}}(r_m, \omega_{[l-1]})} \leq C_{[l]}.\end{equation}
Let $S$ be a Nevanlinna current associated with $f$, constructed by using the sequence $(r_m)$. Then, we have the following:

\begin{lemma}\label{lemma Nevanlinna current liftable}
    $S$ is $k$-liftable with $\overline{\chi}_k(S) \geq 0$.
\end{lemma}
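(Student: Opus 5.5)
The plan is to build the lifting directly from the lifted entire curve. For each $l\leq k$ and each $m$, set
\[S^{(m)}_{[l]}\colonequals \frac{1}{T_f(r_m,\omega)}\int_1^{r_m}\frac{dt}{t}\,(f_{[l]})_*([\D_t]).\]
Since $f_{[l]}$ is a tangent trajectory of $(X_l,V_l)$, each $S^{(m)}_{[l]}$ is a positive current directed by $V_l$. Moreover $\pi_{l,0}\circ f_{[l]}=f$, so $(\pi_{l,0})_*S^{(m)}_{[l]}=S_{r_m}$, which converges to $S$.

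\textbf{Step 1: uniform mass bounds.} The mass of $S^{(m)}_{[l]}$ with respect to $\omega_{[l]}$ equals $T_{f_{[l]}}(r_m,\omega_{[l]})/T_f(r_m,\omega)$. Chaining the second condition in \eqref{eq cond for liftable Nevanlinna} over $l$ gives, for $m$ large,
\[\|S^{(m)}_{[l]}\|_{\omega_{[l]}}\leq 2\prod_{j\leq l}C_{[j]},\]
so the masses are uniformly bounded. After passing to a subsequence, $S^{(m)}_{[k]}\to S_{[k]}$ weakly.

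\textbf{Step 2: properties of the limit.} Directedness is preserved under weak limits, since it is the linear condition $T\wedge\eta=0$. Push-forward under the proper map $\pi_{k,0}$ is also continuous, so $(\pi_{k,0})_*S_{[k]}=S$. For closedness, Stokes' theorem shows that $dS^{(m)}_{[k]}$ is bounded by $S_{f_{[k]}}(r_m,\omega_{[k]})/T_f(r_m,\omega)$. This bound equals $\frac{S_{f_{[k]}}}{T_{f_{[k]}}}\cdot\frac{T_{f_{[k]}}}{T_f}$, which tends to $0$ by \eqref{eq cond for liftable Nevanlinna} together with the mass bound of Step 1. Hence $S_{[k]}$ is closed and $S$ is $k$-liftable. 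Pushing $S_{[k]}$ forward to level $l$ yields the corresponding limit $S_{[l]}$ at every level $l\leq k$.

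\textbf{Step 3: the cohomological condition and the sign of $\overline{\chi}_k$.} Two inequalities are needed.

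First, \eqref{eq define regular k lift} requires $\{S_{[k]}\}\cdot\pi_{k,j}^*\{D_j\}\geq 0$ for $2\leq j\leq k$. The lifted curve is not contained in $D_j$; otherwise $f$ would be constant, because $f_{[j-1]}'$ being vertical everywhere forces $f_{[j-2]}$ to be constant. By the Nevanlinna first main theorem (Lemma~\ref{lemma Nevanlinna first main theorem}) applied to $f_{[j]}$ and $D_j$,
\[T_{f_{[j]}}(r,\Theta_{D_j})=N+m-O(1)\geq -O(1),\]
since the counting function $N$ is nonnegative and the proximity function $m$ is nonnegative (up to a fixed constant, because $\varphi_{D_j}\leq 0$). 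Dividing by $T_f(r_m,\omega)\to\infty$ and passing to the limit gives $\{S_{[j]}\}\cdot\{D_j\}\geq 0$. Pulling back along $\pi_{k,j}$ then gives the required inequality for $S_{[k]}$.

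Second, we need $\{S_{[k]}\}\cdot u_k\leq 0$. Here $\{S_{[k]}\}\cdot u_k$ is the limit of
\[\frac{1}{T_f(r_m,\omega)}\int_1^{r_m}\frac{dt}{t}\int_{\D_t}f_{[k]}^*\Theta_{h_{[k]}}(\mathcal{O}_{X_k}(1)).\]
McQuillan's inequality (Lemma~\ref{lemma limit implies tautological inequality}), applied to $f_{[k-1]}$ on the directed manifold $(X_{k-1},V_{k-1})$, bounds the $\limsup$ of this quantity, normalized by $T_{f_{[k-1]}}$, by $0$. The ratio $T_{f_{[k-1]}}/T_f$ stays bounded by Step 1, so the limit is $\leq 0$. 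It follows that $\overline{\chi}_k(S)\geq -\{S_{[k]}\}\cdot u_k\geq 0$.

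\textbf{Main obstacle.} The delicate point is that the exceptional sets $E_k$ from Lemma~\ref{lemma limit implies tautological inequality}, together with the sets where the first main theorem estimates hold, must avoid the chosen sequence $(r_m)$. I would handle this by choosing $r_m$ outside the union of all the exceptional sets $E_l$ for $l\leq k$. That union has finite Lebesgue measure, while Lemma~\ref{lemma Ahlfors} provides a set of infinite logarithmic measure of admissible radii, so a suitable sequence still exists. In doing so I must check that the choices made in \eqref{eq cond for liftable Nevanlinna} are compatible with excluding these sets.
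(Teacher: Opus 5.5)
Your proof is correct and follows the same route as the paper. Both arguments lift via $f_{[l]}$, get the mass bounds and closedness from \eqref{eq cond for liftable Nevanlinna}, obtain the cohomological condition from the first main theorem applied to $f_{[j]}$ and $D_j$, and get the sign of $\{S_{[k]}\}\cdot u_k$ from McQuillan's tautological inequality. Your worry about exceptional sets is already taken care of by the choice of $(r_m)$: since $\omega_{[k]}=C_{[k]}\pi_{k,k-1}^*\omega_{[k-1]}+\Theta_{h_{[k]}}(\mathcal{O}_{X_k}(1))$, the condition $\lim_m T_{f_{[k]}}(r_m,\omega_{[k]})/T_{f_{[k-1]}}(r_m,\omega_{[k-1]})\leq C_{[k]}$ is equivalent to the tautological inequality along $(r_m)$, and the first main theorem holds for every $r$ with no exceptional set.
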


\begin{proof}
    For $l\leq k$, consider the sequence of positive currents 
    \[S_{[l],r_m}\colonequals\frac{1}{T_{f}(r_m,\omega)} \int_0^{r_m} \frac{dt}{t} (f_{[l]})_*([\D_t]).\]
    By \eqref{eq cond for liftable Nevanlinna}, this sequence has uniformly bounded mass with respect to $\omega_{[l]}$. Moreover, any weak limit of this sequence is a positive closed current. Let $S_{[l]}$ be a weak limit of this sequence. Then $S_{[l]}$ is a positive closed current that directed by $V_l$ and satisfies $(\pi_{l,0})_*(S_{[l]}) = S$. Thus, we conclude that $S$ is $k$-liftable.

    For $l\geq 2$, since $f_{[l]} \not \subset D_l$, by Lemma~\ref{lemma Nevanlinna first main theorem}, we have $\{S_{[l]}\}\cdot \{D_l\} \geq 0$. Moreover, by construction, we have $(\pi_{k,l})_*(S_{[k]}) = S_{[l]}$. Therefore, we deduce that 
    \[\{S_{[k]}\} \cdot \pi_{k,l}^* \{D_l\} = \{(\pi_{k,l})_*(S_{[k]})\} \cdot \{D_l\} = \{S_{[l]}\} \cdot \{D_l\} \geq 0.\]
    Thus, $S_{[k]}$ is a cohomological $k$-lifting. Moreover, by Lemma~\ref{lemma limit implies tautological inequality}, we have $\{S_{[k]}\} \cdot u_k \leq 0$. Hence, we obtain $\overline{\chi}_k(S) \geq 0 $ as desired.
\end{proof}

We obtain the following direct corollary:

\begin{theorem}\label{theorem chyp > 0 implies Kobayashi hyperbolic}
    Suppose that $\chyp^{(k)}(X,\omega) > 0$ for some $k\in \Z^+ \cup \{\infty\}$ and K\"ahler form $\omega$. Then $X$ is Kobayashi hyperbolic.
\end{theorem}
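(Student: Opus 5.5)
We argue by contradiction and reduce to the compact (Brody) setting. Since $X$ is compact, Kobayashi hyperbolicity is equivalent to Brody hyperbolicity by Brody's lemma. So we assume that there is a non-constant entire curve $f:\C\to X$ and derive a contradiction with $\chyp^{(k)}(X,\omega)>0$.

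By Proposition~\ref{prop basic property of hyp}(ii) the indices increase in $k$, and $\chyp^{(\infty)}=\sup_k\chyp^{(k)}$. Hence if $\chyp^{(\infty)}(X,\omega)>0$, then $\chyp^{(k)}(X,\omega)>0$ for some finite $k$. It therefore suffices to treat $k\in\Z^+$.

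\textbf{Case 1: $f$ is transcendental.} Here $T_f(r,\omega)/\log r\to\infty$. Using Lemma~\ref{lemma Ahlfors} and Lemma~\ref{lemma decreasing of char function}, we choose a sequence $r_m\to\infty$ satisfying \eqref{eq cond for liftable Nevanlinna} for all $l\le k$. Let $S$ be the associated Nevanlinna current.
\begin{itemize}
\item $S$ has mass $1$ with respect to $\omega$, since the normalizations $S_r$ are exactly of mass one and mass passes to weak limits of positive currents on a compact manifold.
\item By Lemma~\ref{lemma Nevanlinna current liftable}, $S$ is $k$-liftable with $\overline{\chi}_k(S)\ge 0$.
\end{itemize}
Thus $\chyp^{(k)}(X,\omega)\le -\overline{\chi}_k(S)/\|S\|_\omega\le 0$, a contradiction.

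\textbf{Case 2: $f$ is not transcendental.} Then $T_f(r,\omega)=O(\log r)$ along a sequence $r\to\infty$. We need $O(\log r)$ for all $r$ to apply Lemma~\ref{lemma Demailly transcendental}. Since $r\,\tfrac{d}{dr}T_f(r,\omega)$ is increasing, a bound along a sequence upgrades to a bound for all $r$ up to adjusting constants; we only sketch this, as it is standard. Lemma~\ref{lemma Demailly transcendental} then shows that $f$ is a rational curve, so its image closure is a rational curve $\mathscr{C}\subset X$. The current $[\mathscr{C}]$ is $\infty$-liftable. By Theorem~\ref{theorem coincide with k-Euler characteristic} and Proposition~\ref{prop increasing of chi_k for curve}, together with $\chi(\widehat{\mathscr{C}})=2$, we get
\[
\overline{\chi}_k([\mathscr{C}])\ \ge\ \chi_k([\mathscr{C}])=\chi_k(\mathscr{C})\ \ge\ \chi_\infty(\mathscr{C})=2>0 .
\]
Hence $\chyp^{(k)}(X,\omega)<0$, again a contradiction. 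Alternatively, Case 2 can be avoided: a Brody reparametrization gives a curve with bounded derivative, and one can run the Nevanlinna construction directly. In the rational case, however, the curve argument above is cleaner.

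\textbf{Main obstacle.} The substantive input is Lemma~\ref{lemma Nevanlinna current liftable}, which we take as given. It rests on McQuillan's tautological inequality, which gives $\{S_{[k]}\}\cdot u_k\le 0$, and on the first main theorem, which gives the cohomological condition \eqref{eq define regular k lift}. The only remaining delicate points are the normalization of the mass of $S$ and the transcendental/rational dichotomy handled in Case 2.
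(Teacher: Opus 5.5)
Your proof is correct and follows the paper's argument. You reduce to finite $k$, then split a non-constant entire curve into two cases: the transcendental case, where the Nevanlinna current from Lemma~\ref{lemma Nevanlinna current liftable} gives $\chyp^{(k)}(X,\omega)\le 0$, and the $O(\log r)$ case, where a rational curve satisfies $\overline{\chi}_k([\mathscr{C}])\ge \chi_\infty(\mathscr{C})=2$. Your monotonicity argument, that $T_f(r,\omega)=O(\log r)$ along one sequence forces it for all $r$, correctly fills a step the paper's dichotomy leaves implicit. Your aside about bypassing Case~2 via Brody reparametrization would not work as stated, however: a Brody curve may itself be rational, and Lemma~\ref{lemma Nevanlinna current liftable} requires transcendence.
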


\begin{proof}
    By definition, we only need to prove this theorem for $k\in \Z^+$. Assume that $X$ is not Kobayashi hyperbolic. Then, there exists a non-constant entire curve $f:\C \to X$. If $T_f(r,\omega) = O(\log r)$, then by Lemma~\ref{lemma Demailly transcendental}, $f$ extends to a rational curve $\mathscr{C} \subset X$. Hence, we have $-\overline{\chi}_k([\mathscr{C}]) \leq -\chi_k([\mathscr{C}]) \leq -\chi_\infty(\mathscr{C}) = -2$, which contradicts $\chyp^{(k)}(X,\omega) > 0$. If $\lim\limits_{r\to \infty} T_f(r,\omega)/\log r= \infty$, then $f$ is a transcendental entire curve. Let $S$ be the Nevanlinna current of $f$ constructed in Lemma~\ref{lemma Nevanlinna current liftable}. Then $-\overline{\chi}_k(S)\leq 0$, which also contradicts $\chyp^{(k)}(X,\omega) > 0$. The result follows.
\end{proof}

\begin{remark}
    In general, we do not know if the positivity of geometric hyperbolic indices implies Kobayashi hyperbolicity. For this, one need to estimate the geometric Euler characteristics of Nevanlinna currents, which is currently beyond our reach (see Question~\ref{question estimate geometric Euler Nevanlinna current}). However, when $X$ is a projective manifold, we have a partial answer for this question (see Theorem~\ref{theorem hyp big implies hyperbolic in projective case}).
\end{remark}

\subsection{Negativity of jet curvature}\label{sec negative jet curv}

We now investigate the relation of $k$-jet metric and hyperbolic indices. We first recall the notion of $k$-jet metric from \cite[Section 7]{demailly-97-algebraic-hyperbolicity}.

A \textit{singular $k$-jet metric} on $X$ is a singular Hermitian metric $h_k$ on the line bundle $\mathcal{O}_{X_k}(-1)$ over $X_k$. We always assume that the weight function of this metric is a quasi-psh function. Then the curvature current $\Theta_{h_k}(\mathcal{O}_{X_k}(-1))$ can be written in the form $\alpha +\theta + dd^c \varphi$ for some $\theta$-psh function, where $\alpha,\theta$ are closed smooth $(1,1)$-forms and $\alpha+\theta$ represents $c_1(\mathcal{O}_{X_k}(-1))$. Denote $\Sigma_k \subset X_k$ the unbounded locus of $\varphi$, which is the set of $x\in X_k$ such that $\varphi$ is not locally bounded near $x$. We call $\Sigma_k$ the \textit{degeneration set} of $h_k$.

Given a singular $k$-jet metric $h_k$ on $X$. We say that $h_k$ has \textit{negative jet curvature} if there exists $\epsilon > 0$ and a K\"ahler metric $\omega$ on $X_k$ such that:
\begin{equation}\label{eq define negative jet curvature}
    \Theta_{h_k^{-1}}(\mathcal{O}_{X_k}(1)) (\xi,\xi) \geq \epsilon |\xi|^2_{\omega} \text{ for all } \xi \in V_k \subset TX_k
\end{equation} 
in the sense of distributions.

It has been proved in \cite[Theorem 7.8]{demailly-97-algebraic-hyperbolicity} using the Ahlfors-Schwarz lemma that if $X$ has a negative jet curvature $k$-jet metric $h_k$ such that $\Sigma_k \subset X_k^{[\sing]}$, then $X$ is Kobayashi hyperbolic. Our goal in this subsection is to show that this condition also implies that $X$ has positive geometric hyperbolic indices.

\begin{theorem}\label{theorem negative jet curvature implies pluripotential hyperbolic}
    Let $k\in \Z^+$. Suppose that $X$ admits a $k$-jet metric $h_k$ of negative jet curvature such that $\Sigma_k \subset X_k^{[\sing]}$. Then $\hyp^{(k)}(X,\omega)>0$. In particular, if $X$ has ample cotangent bundle or $X$ admits a metric with negative holomorphic sectional curvature, then $\hyp^{(1)}(X,\omega)>0$.
\end{theorem}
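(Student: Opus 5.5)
Given a $k$-liftable current $T$ with $\|T\|_\omega=1$, the goal is a uniform bound $-\chi_k(T)\geq c>0$. Equivalently, for every geometric $k$-lifting $T_{[k]}$ we want $\{T_{[k]}\}\cdot u_k\geq c$. Write the curvature current of $h_k^{-1}$ as $\Theta = \theta' + dd^c\varphi$, where $\theta'$ is a smooth closed form in $u_k=c_1(\mathcal{O}_{X_k}(1))$ and $\varphi$ is quasi-psh, locally bounded outside $\Sigma_k\subset X_k^{[\sing]}$. Since $T_{[k]}$ is geometric, it puts no mass on $X_k^{[\sing]}$, hence none on $\Sigma_k$. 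We will evaluate $\{T_{[k]}\}\cdot u_k$ through the wedge product $T_{[k]}\wedge\Theta$ on $X_k\setminus\Sigma_k$. The directedness of $T_{[k]}$ lets us test $\Theta$ only on vectors in $V_k$, where \eqref{eq define negative jet curvature} gives $\Theta|_{V_k}\geq \epsilon\,\omega_{X_k}|_{V_k}$.

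\textbf{Step 1 (pointwise lower bound).} On $X_k\setminus\Sigma_k$, $\varphi$ is locally bounded, so $T_{[k]}\wedge dd^c\varphi$ is well defined by Bedford–Taylor type theory for bi-dimension $(1,1)$ closed currents (locally $T_{[k]}$ is an average of directed discs/laminations). Because $T_{[k]}$ is directed by $V_k$, the measure $T_{[k]}\wedge\Theta$ only sees the restriction of $\Theta$ to $V_k$. I plan to regularize $\varphi$ by decreasing smooth quasi-psh approximations (Demailly regularization) with a loss $-\delta_j\omega_{[k]}$ and pass to the limit. This should give $T_{[k]}\wedge\Theta\geq \epsilon\, T_{[k]}\wedge\omega'$ on $X_k\setminus\Sigma_k$, where $\omega'$ is the Kähler form in \eqref{eq define negative jet curvature}. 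Using $\omega'\geq c_0\pi_{k,0}^*\omega$, we get $T_{[k]}\wedge\Theta\geq \epsilon c_0\, T_{[k]}\wedge \pi_{k,0}^*\omega$, whose total mass is $\epsilon c_0\|T\|_\omega=\epsilon c_0$.

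\textbf{Step 2 (cohomological identification).} The claim is $\{T_{[k]}\}\cdot u_k\geq \int_{X_k\setminus\Sigma_k}T_{[k]}\wedge\Theta$. This is exactly the content of Lemma~\ref{lemma puts no mass implies positive cup product} and Proposition~\ref{prop density = classical local case}, applied to $T_1=T_{[k]}$ and to the positive closed current $T_2=\Theta+A\omega_{[k]}$, with $A$ large so that it lies in the class $u_k+A\{\omega_{[k]}\}$. The set where $\nu(T_2,\cdot)>0$ is contained in $\Sigma_k$, which $T_{[k]}$ does not charge, so the density current has minimal h-dimension. Its shadow $S_\infty$ is a positive measure of mass $\{T_{[k]}\}\cdot(u_k+A\{\omega_{[k]}\})$. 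On the open set where $\varphi$ is bounded, the density current coincides with the classical wedge product, which is the same argument as Proposition~\ref{prop density = classical local case} extended to bounded potentials. Hence $S_\infty\geq \mathbf 1_{X_k\setminus\Sigma_k}T_{[k]}\wedge T_2$. Subtracting the $A\omega_{[k]}$ parts (both smooth, where equality holds) gives the claim.

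\textbf{Step 3 (conclusion).} Combining the two steps gives $\{T_{[k]}\}\cdot u_k\geq\epsilon c_0$ for every geometric lifting, so $-\chi_k(T)\geq \epsilon c_0$ and therefore $\hyp^{(k)}(X,\omega)\geq\epsilon c_0>0$. For the special cases: if $T^*X$ is ample, or if $X$ has a metric of negative holomorphic sectional curvature, then by \eqref{eq formula for curvature on tilde X} the induced smooth metric on $\mathcal{O}_{X_1}(-1)$ has curvature negative on $V_1$, because $\theta_{V,h}(v\otimes v)<0$ there. This metric has $\Sigma_1=\varnothing$, so the case $k=1$ applies. The main obstacle is Step 2, namely the extension of the density–wedge comparison to merely locally bounded potentials. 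If that is delicate, I would instead approximate $\varphi$ by smooth $\varphi_j$ decreasing outside $\Sigma_k$ and use the fact that $T_{[k]}$ puts no mass near $\Sigma_k$ to control the error terms.
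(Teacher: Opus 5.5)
Your argument is essentially the paper's proof, which goes through Lemma~\ref{lemma direct current has positive product with good current}. The paper first proves a pointwise lower bound for the wedge product off the singular set: it encodes directedness by $T\wedge A=0$ for $A=\sum_j i\alpha_j\wedge\overline{\alpha_j}$ and shows $S+C_2A\geq\frac{\epsilon}{2}\omega$ locally. It then identifies the cup product via Lemma~\ref{lemma puts no mass implies positive cup product} and Proposition~\ref{prop density = classical local case}, using that the part of the shadow measure on the singular set is non-negative. Keeping the explicit constant $\epsilon c_0\|T\|_\omega$ is an improvement on the write-up. Since $\hyp^{(k)}$ is an infimum, knowing $\{T_{[k]}\}\cdot u_k>0$ for each lifting, which is all the paper states, is not enough by itself; the paper's local estimate does, however, yield your uniform bound.

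Two points need repair.

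First, Step 1. Demailly's regularization preserves lower bounds $\Theta\geq\gamma$ by a continuous $(1,1)$-form $\gamma$, not a lower bound along $V_k$ alone. The latter cannot be upgraded to the former, because the mixed terms $\Theta(v,w)$ with $v\in V_k$, $w\perp V_k$ are unbounded for a current with measure coefficients. So your regularized $\varphi_j$ need not satisfy any directed bound.

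Since the inequality $T_{[k]}\wedge\Theta\geq\epsilon\,T_{[k]}\wedge\omega'$ is local on $X_k\setminus\Sigma_k$, convolve a local psh potential instead:
\begin{itemize}
\item Cauchy--Schwarz for the positive current $dd^c\varphi+C\omega'$ bounds the error coming from the variation of $V_k$ over a ball $B(x,\delta)$ by a multiple of $\delta^{2-2N}\|dd^c\varphi+C\omega'\|(B(x,\delta))$, where $N=\dim X_k$.
\item This error tends to $0$ locally uniformly wherever $\varphi$ is bounded, since Lelong numbers vanish there and Dini's argument applies.
\item Bedford--Taylor continuity along decreasing sequences then passes to the limit.
\end{itemize}
The paper's sentence ``we can assume that $\varphi$ is smooth on $X\setminus D$'' hides exactly the same issue. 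Your Step 2 also needs the bounded-potential version of Proposition~\ref{prop density = classical local case}; see \cite{Viet_Lucas}.

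Second, the ample case. Ampleness of $T^*X$ does not directly give a Hermitian metric $h$ on $T_X$ with $\theta_{T_X,h}(v\otimes v)<0$; it gives a Finsler metric. So \eqref{eq formula for curvature on tilde X} is the wrong tool here. Instead, note that $\mathcal{O}_{X_1}(1)$ is ample on $X_1=\P(T_X)$. It therefore carries a smooth metric whose curvature is a K\"ahler form, and this is already a $1$-jet metric of negative jet curvature with $\Sigma_1=\varnothing$.
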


This theorem is a consequence of the following more general result.

\begin{lemma}\label{lemma direct current has positive product with good current}
    Let $(X,\omega)$ be a compact K\"ahler manifold of dimension $n$. Let $V$ be a rank $r$ subbundle of $T_X$. Let $S$ be a closed current of bi-degree $(1,1)$ such that $S = \alpha + \theta + dd^c \varphi$, where $\alpha,\theta$ are closed smooth $(1,1)$-forms, and $\varphi$ is a $\theta$-psh function such that the unbounded locus of $\varphi$ lies inside some divisor $D$. Let $T$ be a positive $dd^c$-closed current of bi-dimension $(1,1)$ on $X$ such that $T$ is directed by $V$, puts no mass on $D$, and $\mathbf{1}_{X\setminus D} T\neq 0$. Suppose that there exists a constant $\epsilon >0$ such that $S(\xi,\xi) \geq \epsilon |\xi|^2_{\omega}$ in the sense of distributions, for all $\xi \in V$. Then $\{T\} \cdot \{S\} > 0$.
\end{lemma}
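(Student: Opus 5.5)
The plan is to compute $\{T\}\cdot\{S\}$ through a density current of $T$ and $S$, using the fact that $T$ puts no mass on the polar set of $\varphi$. Then, away from $D$, this density current is governed by the genuine wedge product $T\wedge S$, which is controlled by the directed positivity hypothesis.

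\emph{Step 1: minimal h-dimension.} The Lelong numbers of $S$ vanish outside the unbounded locus of $\varphi$, and that locus lies in $D$. Hence $\{x:\nu(S,x)>0\}\subset D$, and $T$ puts no mass on this set. To apply Lemma~\ref{lemma puts no mass implies positive cup product}, $S$ should be positive closed. I would replace $S$ by $S+A\omega$ for $A$ large, which makes $\theta+A\omega$ Kähler so that $S+A\omega$ is positive, and then subtract $A\{T\}\cdot\{\omega\}$ at the end. Lemma~\ref{lemma puts no mass implies positive cup product} then gives a density current of the form $T_\infty=\pi_2^*(S_\infty)$, with $S_\infty$ a positive measure on $X$ and $\{T\}\cdot\{S+A\omega\}=\|S_\infty\|$.

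\emph{Step 2: identify $S_\infty$ off $D$.} On $X\setminus D$, $\varphi$ is locally bounded. By the localization principle (Proposition~\ref{prop local global density}) and the analogue of Proposition~\ref{prop density = classical local case} for bounded potentials, I expect $S_\infty=T\wedge(S+A\omega)$ on $X\setminus D$. Here $T\wedge dd^c\varphi$ is defined in the Bedford--Taylor sense for $dd^c$-closed $T$ and bounded $\varphi$. I will prove this by regularizing $\varphi$ by decreasing smooth quasi-psh $\varphi_j$ and using the continuity of density currents under such convergence. On $D$, $S_\infty\ge0$ simply. Consequently
\[\{T\}\cdot\{S\}\ \ge\ \int_{X\setminus D} T\wedge S .\]

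\emph{Step 3: positivity via directedness.} Since $T$ is directed by $V$, locally $T=\sum \tau\, i v\wedge\bar v$-type. More precisely, $T$ is a trace measure times a positive $(n-1,n-1)$-direction field whose complex tangent lines lie in $V$, so $T\wedge\beta$ depends only on $\beta|_V$. Apply this to the smoothed currents $S_j=\alpha+\theta+dd^c\varphi_j$ on compact subsets of $X\setminus D$. The hypothesis $S(\xi,\xi)\ge\epsilon|\xi|^2_\omega$ for $\xi\in V$ does not obviously pass to $S_j$ uniformly. I would instead argue distributionally: $S-\epsilon\omega$ restricted to $V$ is a positive current in the directed sense, so $T\wedge(S-\epsilon\omega)\ge0$ as a measure on $X\setminus D$. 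This gives $\int_{X\setminus D}T\wedge S\ge\epsilon\|\mathbf 1_{X\setminus D}T\|_\omega>0$.

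The main obstacle is making this last inequality rigorous for a non-smooth $S$ paired with a non-smooth directed $T$. The difficulty is that $T\wedge S$ is only defined as a limit of $T\wedge S_j$, and the directed positivity must survive this limit. My first attempt is a local convolution of $\varphi$ in charts where $V$ is straightened. Since the negativity condition is linear in $S$ and tested against the fixed directions of $V$, a standard regularization with an error $O(\delta)\omega$ should give $T\wedge S_j\ge(\epsilon-\delta_j)T\wedge\omega$ locally. Weak convergence $T\wedge S_j\to T\wedge S$ on $X\setminus D$ (Bedford--Taylor monotone convergence) then finishes the argument.
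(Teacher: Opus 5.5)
Your overall plan matches the paper's. Lemma~\ref{lemma puts no mass implies positive cup product} applies because the positive Lelong numbers of $\theta+dd^c\varphi$ lie in $D$. It yields $\{T\}\cdot\{S\}\ge\int_{X\setminus D}T\wedge S$, and directedness should then make the right-hand side positive. Step 1 is fine: the paper applies the lemma to $\theta+dd^c\varphi$ and keeps the smooth form $\alpha$ aside, which is equivalent to your shift by $A\omega$.

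The gap is that Steps 2 and 3, which carry all the content, are left as expectations. Keeping $\varphi$ merely bounded on $X\setminus D$ makes them harder than they need to be. You would need three things:
\begin{itemize}
\item[(a)] a product $T\wedge dd^c\varphi$ for $T$ only $dd^c$-closed, where the Bedford--Taylor construction does not apply verbatim;
\item[(b)] continuity of density currents under decreasing regularization. This is not among the paper's tools (Proposition~\ref{prop density = classical local case} requires $T_2$ smooth on $U$), and it is not automatic, since density currents are not continuous under weak convergence;
\item[(c)] the bound $T\wedge S\ge\epsilon\,T\wedge\omega$ on $X\setminus D$, where directedness cannot be used pointwise because neither $T$ nor $S$ is smooth.
\end{itemize}
Your fix for (c) fails as stated. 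A non-integrable $V$ (such as $V_k$ in the application) cannot be straightened in a chart. Moreover, the convolution error is not $O(\delta)\omega$. For $\xi\in V_x$ and $z$ near $x$, write $\xi=v(z)+w(z)$ with $v(z)\in V_z$ and $|w(z)|\lesssim|z-x|$. With $P=\theta+dd^c\varphi$, the cross terms $P(v,w)$ and $P(w,w)$ produce an error of order $\delta\,(\mathrm{tr}P)*\rho_\delta$, which need not stay bounded as $\delta\to0$.

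The paper sidesteps (a)--(c) by first invoking Demailly's regularization to assume $\varphi$ is smooth on $X\setminus D$. The approximants stay in the same class, so $\{T\}\cdot\{S\}$ is unchanged, and they are smooth outside a set of positive Lelong numbers, hence outside $D$. Then (a)--(b) are exactly Proposition~\ref{prop density = classical local case} applied to $T$ and $\theta+dd^c\varphi$. Step (c) becomes pointwise linear algebra on a compact $\Omega'$ in a chart:
\begin{itemize}
\item split $\xi=v+w$ along $V\oplus V^\perp$;
\item use Young's inequality to get $S(\xi,\xi)\ge\frac{\epsilon}{2}|v|_\omega^2-C_1|w|_\omega^2$;
\item add $C_2A$ with $A=\sum_j i\gamma_j\wedge\overline{\gamma_j}$, the $\gamma_j$ being local $1$-forms cutting out $V$.
\end{itemize}
Since $T\wedge A=0$ (this is your remark that $T\wedge\beta$ only sees $\beta|_V$), one gets $T\wedge S=T\wedge(S+C_2A)\ge\frac{\epsilon}{2}T\wedge\omega$ on $\Omega'$.

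In fairness, that reduction takes for granted the point you flagged: that the directional bound survives the regularization. It is enough to have this on compacts $K\subset X\setminus D$ with $\epsilon/2$ in place of $\epsilon$. Indeed, the density measure is positive, so $X\setminus(D\cup K)$ contributes at least $-C\|T\|_{X\setminus(D\cup K)}$, which is small for $K$ large since $T$ puts no mass on $D$.

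If you want to prove that the bound survives, the mechanism is the vanishing of Lelong numbers off $D$, not an $O(\delta)\omega$ bound. Applying Young's inequality with a small parameter $\eta$ to the cross term costs a factor $1-\eta$ on the main term. It leaves an error $\lesssim\eta^{-1}\delta^2(\mathrm{tr}P)*\rho_\delta\lesssim\eta^{-1}\sigma_P(B(x,\delta))/\delta^{2n-2}$. This tends to $0$ uniformly on compacts of $X\setminus D$, by monotonicity of $r\mapsto\sigma_P(B(x,r))/r^{2n-2}$ and compactness. With a regularization of this kind, you apply the density lemma to the regularized current directly and need neither (a) nor (b).
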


\begin{proof}
    By Demailly's regularization theorem (\cite{Demailly_regula_11current}), we can assume that $\varphi$ is smooth on $X\setminus D$. Choose a local chart $\Omega$ of $X\setminus D$ and a compact subset $\Omega'$ of $\Omega$ such that $\mathbf{1}_{\Omega'} T \neq 0$ (this is always possible since $\mathbf{1}_{X\setminus D} T \neq 0$. Let $\alpha_1,\ldots,\alpha_m$ be holomorphic $1$-forms on $\Omega$ that define the distribution $V$. On $\Omega$, we consider the orthogonal decomposition $T_X = V \oplus V^{\bot}$ with respect to the metric $\omega$. Let $\xi \in T_X$. Then, on $\Omega$, we can decompose $\xi = v + w$ where $v \in V$ and $w \in V^\bot$.

    By assumption, we have $S(v,v) \geq \epsilon|v|^2_\omega$. Moreover, since $\Omega'$ is compact, there exists a constant $C>0$ such that $|S(v,w)| \leq C |v|_\omega|w|_\omega$ and $|S(w,w)| \leq C|w|_\omega^2$ on $\Omega'$. By Young's inequality, we have \[2C|v|_\omega|w|_\omega \leq \frac{\epsilon}{2} |v|_\omega^2 + \frac{2C^2}{\epsilon} |w|^2_\omega.\]
    Write $S(\xi,\xi) = S(v,v) + 2\Re S(v,w) + S(w,w)$ and combine all the inequalities, we obtain
    \begin{equation}\label{eq control S(xi,xi)}S(\xi,\xi) \geq \frac{\epsilon}{2} |v|^2_\omega - C_1 |w|^2_\omega\end{equation}
    on $\Omega'$ for some constant $C_1>0$.

    Define $A\colonequals \sum_{j=1}^m  i \alpha_j \wedge \overline{\alpha_j}$. Then $A$ vanishes on $V$. Moreover, on $\Omega'$, we have $A(w,w) \geq \epsilon' |w|^2_\omega$ for some small constant $\epsilon'$. This combines with \eqref{eq control S(xi,xi)} gives us 
    \[(S+ C_2 A)(\xi,\xi) \geq \frac{\epsilon}{2}|v|_{\omega}^2 + (C_2\epsilon'-C_1)|w|^2_\omega \geq \frac{\epsilon}{2} |\xi|^2_\omega \text{ on }\Omega',\]
    where $C_2>0$ is a big enough constant.

    Hence, we deduce that
    \[\int_{\Omega'} T \wedge S = \int_{\Omega'} T \wedge (S+ C_2A) \geq \int_{\Omega'} T \wedge \frac{\epsilon}{2}\omega > 0.\]
    Since $S$ is smooth on $X\setminus D$, we can define the measure $T\wedge S$ on $X\setminus D$. Thus, we obtain 
    \[\int_{X\setminus D} T \wedge S > 0.\]

    Since $T$ puts no mass on $D$, by Lemma~\ref{lemma puts no mass implies positive cup product}, there exists a density positive measure $R$ between $T$ and $\theta + dd^c \varphi$ such that $\{T\} \cdot \{\theta\} = \|R\|_X$. By Proposition~\ref{prop density = classical local case}, we have \[\mathbf{1}_{X\setminus D} (T \wedge \alpha +R) = \mathbf{1}_{X\setminus D} T \wedge S.\] 
    Since $T$ puts no mass on $D$, $\mathbf{1}_D(T\wedge \alpha) = 0$. Moreover, since $R$ is a positive measure, we have $\mathbf{1}_D R$ is also a positive measure. Hence,
    \begin{align*}
        \{T\} \cdot \{S\} &= \int_X T \wedge \alpha + \{T\} \cdot \{\theta\} \\
        &= \int_{X\setminus D} (T\wedge \alpha + R) + \int_{X} \mathbf{1}_DR \\
        &\geq \int_{X\setminus D} T \wedge S > 0.
    \end{align*}
    The result follows.
\end{proof}

\begin{proof}[Proof of Theorem~\ref{theorem negative jet curvature implies pluripotential hyperbolic}]
    In this situation, $T$ is a geometric $k$-lifting of some $k$-liftable current on $X$ and $D = X_k^{[\sing]}$. We only need to check if $\mathbf{1}_{X\setminus X_k^{[\sing]}} T\neq 0$. It is always true because $\mathbf{1}_{X\setminus X_k^{[\sing]}} T$ is also a $k$-lifting by Proposition~\ref{prop vanish when restrict to singular part}. Thus, we get $\hyp^{(k)}(X,\omega)>0$ by applying Lemma~\ref{lemma direct current has positive product with good current}.

    It is known (see \cite[Remark 7.5]{demailly-97-algebraic-hyperbolicity}) that $X$ admits a smooth $1$-jet metric with negative jet curvature if and only if $X$ admits a metric with negative holomorphic sectional curvature. Moreover, if $X$ as ample cotangent bundle, then $X$ admits a metric with negative holomorphic sectional curvature. The result follows.
\end{proof}

\begin{remark}
    A combination of Theorem~\ref{theorem chyp > 0 implies Kobayashi hyperbolic} and Theorem~\ref{theorem negative jet curvature implies pluripotential hyperbolic} provides a current-theoretic proof for the classical fact: if $X$ admits a metric with negative holomorphic sectional curvature, then $X$ is Kobayashi hyperbolic. We also note that, given $k\in \Z^+$, Demailly provides examples of hyperbolic surfaces (depending on $k$) $X$ that do not admit negative jet curvature $k$-jet metric (see \cite[Theorem 8.2]{demailly-97-algebraic-hyperbolicity}). We note that the manifold in Demailly's example also satisfies $\hyp^{(k)}(X,\omega) \leq 0$ since it contains a curve $\mathscr{C}$ such that $\chi_k(\mathscr{C})\geq 0$. Therefore, there exist hyperbolic surfaces $X$ with $\hyp^{(k)}(X,\omega)\leq 0$ for $k\in \Z^+$.
\end{remark}

\section{Hyperbolic indices of general hypersurfaces}
\label{sec quantitative Kobayashi conjecture}

In this section, we study hyperbolic indices of general hypersurfaces in $\P^n$ and prove Theorem~\ref{theorem quantitative Kobayashi conjecture}. First, we prove a comparison theorem between geometric and cohomological Euler characteristic of liftable currents on projective manifolds. Next, we prove a transcendental version of the fundamental vanishing theorem. The proof of Theorem~\ref{theorem quantitative Kobayashi conjecture} is a combination of this vanishing theorem and the theory of density currents mentioned in Section~\ref{sec background currents}. All the theorems of this section hold in the $dd^c$-closed setting.

\subsection{Comparison theorem} 
Let $X\subset \P^n$ be a projective manifold of dimension greater than $1$, and let $\omega\colonequals \omega_\FS|_X$ be the restriction of the Fubini-Study metric to $X$.

\begin{theorem}\label{theorem comparison in proj mfd}
     Let $k\in \Z^+\cup\{\infty\}$. Let $T$ be a $k$-liftable current on $X$ such that $\|T\|_{\omega} = 1$. Then, for $l\leq k, l\in \Z^+$, we have $|\chi_l(T) - \overline{\chi}_l(T)| \leq - (3^{l-1}-1)\overline{\chi}_k(T) + 2\cdot(3^{l-1}-1)$.
\end{theorem}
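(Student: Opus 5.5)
The plan is to bound the gap $\overline{\chi}_l(T)-\chi_l(T)\ge 0$; the absolute value is harmless because $\chi_l\le\overline{\chi}_l$ always holds by Remark~\ref{remark about definition of Euler char}. The idea is to take a cohomological lifting that is optimal at level $k$, push it down to level $l$, and split it into a regular part and a singular part. The regular part is a geometric $l$-lifting, so it controls $\chi_l(T)$. The singular part pushes forward to zero on $X$, so its intersection with $u_l$ can be bounded from below using the nef bundle of Proposition~\ref{prop construct nef line bundle on X_k}.

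\emph{Step 1: choose and split the lifting.} By Proposition~\ref{prop pick good k-lifing}, pick a cohomological $k$-lifting $T_{[k]}$ with $\{T_{[k]}\}\cdot u_k=-\overline{\chi}_k(T)$ (for $k=\infty$, work with a large finite $k'\ge l$ instead). Put $T_{[l]}\colonequals(\pi_{k,l})_*T_{[k]}$. As in the proof of Proposition~\ref{prop increasing of chi_k}, this is a cohomological $l$-lifting, and for every $1\le j\le l$,
\[
\{T_{[l]}\}\cdot\pi_{l,j}^*u_j=\{(\pi_{k,j})_*T_{[k]}\}\cdot u_j\le -\overline{\chi}_k(T).
\]
Write $T_{[l]}=G+R$ with $G\colonequals\mathbf{1}_{X_l^{[\mathrm{reg}]}}T_{[l]}$ and $R\colonequals\mathbf{1}_{X_l^{[\sing]}}T_{[l]}$. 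By Proposition~\ref{prop vanish when restrict to singular part}, $(\pi_{l,0})_*R=0$, and $G$ is a geometric $l$-lifting of $T$. Therefore
\[
-\chi_l(T)\le\{G\}\cdot u_l=-\overline{\chi}_k(T)-\{R\}\cdot u_l .
\]

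\emph{Step 2: lower bound for $\{R\}\cdot u_l$.} Proposition~\ref{prop construct nef line bundle on X_k} says that $u_l+\sum_{j=0}^{l-1}2\cdot 3^{l-1-j}\pi_{l,j}^*u_j$ is nef, where $u_0\colonequals c_1(\mathcal O_X(1))=\{\omega\}$. Pairing this class with $R$ gives a nonnegative number, and the $j=0$ term vanishes because $(\pi_{l,0})_*R=0$. Hence
\[
\{R\}\cdot u_l\ge-\sum_{j=1}^{l-1}2\cdot3^{l-1-j}\,\{R\}\cdot\pi_{l,j}^*u_j .
\]

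\emph{Step 3: bound each $\{R\}\cdot\pi_{l,j}^*u_j$.} Write
\[
\{R\}\cdot\pi_{l,j}^*u_j=\{T_{[l]}\}\cdot\pi_{l,j}^*u_j-\{G\}\cdot\pi_{l,j}^*u_j .
\]
The first term is at most $-\overline{\chi}_k(T)$ by Step 1. For the second, $(\pi_{l,j})_*G$ is a geometric $j$-lifting of $T$, so its $u_j$-pairing is at least $-\chi_j(T)$. By Proposition~\ref{prop increasing of chi_k} this is at least $-\chi_1(T)$, which equals $-\chi_1(\iota_*T)$ by Proposition~\ref{prop do not depend on ambient manifold}. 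Since $\|T\|_{\omega_\FS}=1$, Proposition~\ref{prop lower bound in Pn} gives $-\chi_1(\iota_*T)\ge-2$. Thus $\{R\}\cdot\pi_{l,j}^*u_j\le-\overline{\chi}_k(T)+2$.

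\emph{Conclusion.} Summing the coefficients gives $\sum_{j=1}^{l-1}2\cdot3^{l-1-j}=3^{l-1}-1$. Combining Steps 1--3 yields
\[
-\chi_l(T)\le-\overline{\chi}_k(T)+(3^{l-1}-1)\bigl(-\overline{\chi}_k(T)+2\bigr).
\]
Since $\overline{\chi}_l(T)\le\overline{\chi}_k(T)$, we get
\[
\overline{\chi}_l(T)-\chi_l(T)\le\overline{\chi}_k(T)-\chi_l(T)\le-(3^{l-1}-1)\overline{\chi}_k(T)+2(3^{l-1}-1),
\]
which is the claimed bound.

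The delicate step is Step 2. There I need the nef pairing with $R$ to be nonnegative and the $\pi_{l,0}^*u_0$ term to vanish. The class $\{R\}$ is well defined because $R$ is closed by Lemma~\ref{lemma support lemma for closed current}. A nef class pairs nonnegatively with a positive closed current, and $\{R\}\cdot\pi_{l,0}^*\{\omega\}=\{(\pi_{l,0})_*R\}\cdot\{\omega\}=0$. So I expect this step to go through.
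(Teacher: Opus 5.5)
Your proof fails at the last line, where you use $\overline{\chi}_l(T)\le\overline{\chi}_k(T)$ for $l\le k$. The monotonicity goes the other way. Pushing a cohomological $k$-lifting down one level gives a cohomological $(k-1)$-lifting whose pairing with $u_{k-1}$ is $\{T_{[k]}\}\cdot u_k-\{T_{[k]}\}\cdot\{D_k\}\le\{T_{[k]}\}\cdot u_k$. Hence $-\overline{\chi}_{k-1}(T)\le-\overline{\chi}_k(T)$, i.e. $\overline{\chi}_l(T)\ge\overline{\chi}_k(T)$.

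This is what the argument of Proposition~\ref{prop increasing of chi_k} actually proves; the inequalities displayed in its statement have the wrong sign. Compare with $\chi_k(\mathscr{C})$ decreasing to $\chi(\widehat{\mathscr{C}})$. Your own Step~1 with $j=l$ already shows it, since $-\overline{\chi}_l(T)\le\{T_{[l]}\}\cdot u_l\le-\overline{\chi}_k(T)$. In Step~3 you also use the correct direction for $\chi$, namely $-\chi_j\ge-\chi_1$.

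So Steps 1--3 only give $\overline{\chi}_k(T)-\chi_l(T)\le(3^{l-1}-1)\bigl(2-\overline{\chi}_k(T)\bigr)$. Passing to $\overline{\chi}_l(T)-\chi_l(T)$ adds the nonnegative term $\overline{\chi}_l(T)-\overline{\chi}_k(T)$, which your argument does not control. For the same reason, the equality in Step~1 should read $\le$.

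The repair is to optimize at level $l$ rather than at level $k$. By Proposition~\ref{prop pick good k-lifing} applied with $l$ in place of $k$, take a cohomological $l$-lifting $T_{[l]}$ with $\{T_{[l]}\}\cdot u_l=-\overline{\chi}_l(T)$. The cohomological condition gives $\{T_{[l]}\}\cdot\pi_{l,j}^*u_j\le-\overline{\chi}_l(T)$. Steps 1--3 then run verbatim with $\overline{\chi}_l$ in place of $\overline{\chi}_k$, yielding $\overline{\chi}_l(T)-\chi_l(T)\le(3^{l-1}-1)\bigl(2-\overline{\chi}_l(T)\bigr)$. Only at the very end do you use $-\overline{\chi}_l(T)\le-\overline{\chi}_k(T)$ together with $3^{l-1}-1\ge 0$. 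This is also valid for $k=\infty$, since $\overline{\chi}_\infty=\inf_k\overline{\chi}_k$, and it is the order the paper uses.

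With this change your proof is correct, and it treats the singular part differently from the paper. The paper splits $R$ into strata $T^j_{[l]}$ carried by $\pi_{l,j}^{-1}(D_j)$. It views each stratum as a geometric lifting of $(\pi_{l,j-1})_*T^j_{[l]}$ on $X_{j-1}$, embeds $X_{j-1}$ by a very ample bundle so that Proposition~\ref{prop lower bound in Pn} applies there, and then uses the nef bundle on $X_{j-1}$. You instead pair all of $R$ at once with the nef class on $X_l$, and bound $\{R\}\cdot\pi_{l,j}^*u_j$ by subtracting the regular part. Thus Proposition~\ref{prop lower bound in Pn} is needed only on $X\subset\P^n$. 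The constant $3^{l-1}-1$ is the same. Your route is shorter: it needs no stratification and no projective embedding of the intermediate levels of the tower.
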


\begin{proof}
    Let $\iota$ be the embedding map $\iota: X\hookrightarrow \P^n$. By Proposition~\ref{prop do not depend on ambient manifold}, we have $\chi_l(T) = \chi_l(\iota_*(T))$ and $\overline{\chi}_l(T) \leq  \overline{\chi}_l(\iota_*(T))$. Moreover, since $\overline{\chi}_l\geq \chi_l$, we have 
    \[|\chi_l(T) - \overline{\chi}_l(T)| = \overline{\chi}_l(T) - \chi_l(T) \leq   \overline{\chi}_l(\iota_*(T)) - \chi_l(\iota_*(T)). \]
    Thus, we can assume that $X= \P^n$ and $\|T\|_{\omega_{\FS}} = 1$.

    We have known that $\chi_1 = \overline{\chi}_1$. Therefore, we start with $l=2$. By Proposition~\ref{prop pick good k-lifing}, there exists a cohomological $2$-lifting $T_{[2]}$ of $T$ such that $-\overline{\chi}_2(T) = \{T_{[2]}\} \cdot u_2$. We define $T_{[2]}' \colonequals \mathbf{1}_{X_2^{[\mathrm{reg}]}} T_{[2]}$ and $T_{[2]}''\colonequals \mathbf{1}_{X_2^{[\sing]}}T_{[2]}$. Since $T_{[2]}'$ is a geometric $2$-lifting of $T$, we have $-\chi_2(T) \leq \{T_{[2]}'\}\cdot u_2$. We assume that $T_{[2]}'' \neq 0$. Otherwise, we deduce $\chi_2(T) = -\overline{\chi}_2(T)$. Let $S\colonequals (\pi_{2,1})_* (T_{[2]}'')$. If $S = 0$, then $\{T_{[2]}''\} \cdot u_2 > 0$, which contradicts $-\overline{\chi}_2(T) = \{T_{[2]}\} \cdot u_2$. Therefore, we can assume that $S\neq 0$.

    Let $L_1'$ be a very ample line bundle on $X_1$ of the form \eqref{eq construct ample line bundle on X_k}. Pick a K\"ahler metric $\omega_{L_1'}$ in the first Chern class of $L_1'$ such that $\omega_{L_1'} = \varphi_1^* (\omega_{\FS})$ where $\varphi_1: X_1 \hookrightarrow \P(H^0(X_1,L_1')^*)$ is the embedding map. Since $T_{[2]}''$ is a $1$-lifting of $S$, we deduce from Proposition~\ref{prop lower bound in Pn} and the construction of $L_1'$ that $\{T_{[2]}''\} \cdot u_2 \geq -2 \|S \|_{\omega_{L_1'}} = -2\{S\}\cdot u_1$. We deduce that $-\chi_2(T) \leq -\overline{\chi}_2(T) + 2\{S\} \cdot u_1$.

    Let $L_1$ be the nef line bundle construct on $X_1$ in Proposition~\ref{prop construct nef line bundle on X_k}. Recall that $L_1 = \mathcal{O}_{X_1}(1) \otimes \pi_{1,0}^* \mathcal{O}_X(2)$. Since $L_1$ is nef and $(\pi_{1,0})_* (S) = 0$, by Demailly-P\u{a}un theorem \cite{Demailly-Paun}, we obtain 
    \[\{S\} \cdot u_1 = \{S\} \cdot L_1 \leq \{(\pi_{2,1})_* (T_{[2]})\} \cdot L_1 \leq -\overline{\chi}_2(T) +2 \|T\|_{\omega_{\FS}} \leq -\overline{\chi}_k(T) + 2.\]
    Thus, we conclude that $|\chi_2(T) - \overline{\chi}_2(T)| \leq -2\overline{\chi}_k(T) + 4$.

    We now prove for general $l$. Let $T_{[l]}$ be a cohomological $l$-lifting of $T$ such that $-\overline{\chi}_l(T) = \{T_{[l]} \} \cdot u_l$. We define $T_{[l]}^l \colonequals \mathbf{1}_{D_l} T_{[l]}$. For $2\leq j< l$, we define inductively \[T_{[l]}^j \colonequals \mathbf{1}_{\pi_{l,j}^{-1}(D_j)} (T_{[l]} - T_{[l]}^l - \cdots - T_{[l]}^{j+1}).\]
    By Lemma~\ref{lemma support lemma for closed current}, $T_{[l]}^j$ is a positive closed current on $X_l$. Define $S_{[l]}^j \colonequals(\pi_{l,j-1})_*(T_{[l]}^j)$. Then, we have $(\pi_{j-1,j-1})_* (S_{[l]}^j) = 0$. Since $T_{[l]}^j$ puts no mass on $\pi_{l,p}^{-1}(D_p)$ for $j\leq p\leq l$, it is a geometric $(l-j+1)$-lifting of $S^{j}_{[l]}$. Using the very ample line bundle on $X_{j-1}$ as above and Proposition~\ref{prop lower bound in Pn}, we have 
    $\{T^j_{[l]} \} \cdot u_{l} \geq -2 \{S^j_{[l]} \} \cdot u_{j-1}$.

    Let $L_j$ be the nef line bundle construct on $X_j$ as in Proposition~\ref{prop construct nef line bundle on X_k}. Since $T_{[l]} - \sum_{j=2}^l T_{[l]}^j$ is a geometric $l$-lifting of $T$, by Demailly-P\u{a}un theorem, we obtain 
    \begin{align*}-\chi_l(T) &\leq -\overline{\chi}_l(T) - \sum_{j=2}^l \{T_{[l]}^j\} \cdot u_l \\
    &\leq -\overline{\chi}_l(T) +2 \sum_{j=2}^l \{S_{[l]}^j\} \cdot u_{j-1} \\
    &\leq -\overline{\chi}_l(T) +2 \sum_{j=2}^l \{(\pi_{l,j-1})_*(T_{[l]})\} \cdot L_{j-1} \\
    &\leq -\overline{\chi}_l(T)+2\sum_{j=2}^l\Big(-3^{j-2}\overline{\chi}_l(T)+2 \cdot 3^{j-2}\Big) \\
    &\leq -\overline{\chi}_l(T) - (3^{l-1}-1)\overline{\chi}_k(T) + 2\cdot(3^{l-1}-1).\end{align*}
    Thus, we conclude that $|\chi_l(T) - \overline{\chi}_l(T)| \leq - (3^{l-1}-1)\overline{\chi}_k(T) + 2\cdot(3^{l-1}-1)$ as desired.
\end{proof}

Applying this theorem for liftable Nevanlinna currents, we obtain the following relationship between the positivity of geometric indices and Kobayashi hyperbolicity.

\begin{theorem}\label{theorem hyp big implies hyperbolic in projective case}
    For $k\in \Z^+$, if $\hyp^{(k)}(X,\omega) > 2\cdot (3^{k-1}-1)$, then $X$ is Kobayashi hyperbolic.
\end{theorem}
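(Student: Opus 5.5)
We argue by contradiction, following the pattern of Theorem~\ref{theorem chyp > 0 implies Kobayashi hyperbolic}, but replacing the cohomological estimate for Nevanlinna currents by the comparison Theorem~\ref{theorem comparison in proj mfd}. Suppose $X$ is not Kobayashi hyperbolic, so there is a non-constant entire curve $f:\C\to X$.

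\emph{Rational curves.} If $T_f(r,\omega)=O(\log r)$, Lemma~\ref{lemma Demailly transcendental} shows that $f$ extends to a rational curve $\mathscr{C}$. Theorem~\ref{theorem coincide with k-Euler characteristic} and Proposition~\ref{prop increasing of chi_k to chi_infty for curve} give
\[
-\chi_k([\mathscr{C}])\le -\chi_\infty(\mathscr{C})=-2 .
\]
Hence $\hyp^{(k)}(X,\omega)\le -2/\|[\mathscr{C}]\|_\omega<0$, which contradicts the hypothesis.

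\emph{Transcendental curves.} Otherwise $f$ is transcendental. Take the Nevanlinna current $S$ built in Lemma~\ref{lemma Nevanlinna current liftable} from a sequence $(r_m)$ satisfying \eqref{eq cond for liftable Nevanlinna} with level $k$, normalized so that $\|S\|_\omega=1$. That lemma says $S$ is $k$-liftable and $\overline{\chi}_k(S)\ge 0$.

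Apply Theorem~\ref{theorem comparison in proj mfd} with $l=k$. This needs $X$ to be a submanifold of $\P^n$ with $\omega=\omega_{\FS}|_X$, which is the standing assumption of this subsection. Using $-\overline{\chi}_k(S)\le 0$, we get
\[
\chi_k(S)\ \ge\ \overline{\chi}_k(S)+(3^{k-1}-1)\overline{\chi}_k(S)-2(3^{k-1}-1)\ \ge\ -2(3^{k-1}-1).
\]
Therefore
\[
\hyp^{(k)}(X,\omega)\le -\chi_k(S)\le 2(3^{k-1}-1),
\]
which contradicts the assumption $\hyp^{(k)}(X,\omega)>2\cdot(3^{k-1}-1)$.

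\emph{Checks.} The main step to verify is that the comparison theorem is being used with the right sign. In that theorem the error term $-(3^{l-1}-1)\overline{\chi}_k(T)$ is nonpositive precisely because $\overline{\chi}_k(T)\ge 0$, so for a Nevanlinna current the bound reduces to the constant $2(3^{l-1}-1)$. One should also confirm two hypotheses. First, $\dim X>1$, which that theorem requires. Second, in the curve case the value $\chi_k$ of a geometric $k$-lifting is finite and correctly signed. The one-dimensional case is covered directly by Proposition~\ref{prop 1 dim case hyp compute}: there $\hyp>0$ means the genus is at least $2$, so $X$ is hyperbolic.
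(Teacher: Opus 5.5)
Your proof is correct and follows the paper's argument. You argue by contradiction, dispose of the rational-curve case as in Theorem~\ref{theorem chyp > 0 implies Kobayashi hyperbolic}, and in the transcendental case apply Theorem~\ref{theorem comparison in proj mfd} with $l=k$ to the mass-one Nevanlinna current of Lemma~\ref{lemma Nevanlinna current liftable}, whose $\overline{\chi}_k\ge 0$ forces $-\chi_k\le 2(3^{k-1}-1)$ — this is exactly what the paper means by ``applying this theorem for liftable Nevanlinna currents.''
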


We also obtain the following comparison theorem between geometric and cohomological hyperbolic indices.

\begin{theorem}\label{theorem compare geometric and cohomological hyp}
For $k\in \Z^+$, we have $ \chyp^{(k)}(X,\omega) \geq \frac{1}{3^{k-1}}\cdot \hyp^{(k)}(X,\omega) - \frac{2\cdot (3^{k-1}-1)}{3^{k-1}}$.
\end{theorem}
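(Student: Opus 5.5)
Write $h \colonequals \hyp^{(k)}(X,\omega)$ and $c \colonequals \chyp^{(k)}(X,\omega)$. We may assume $c$ is finite (otherwise there is nothing to prove), and then $h$ is finite as well, since $h \geq c$ by Proposition~\ref{prop basic property of hyp}(i). The plan is to apply the comparison theorem (Theorem~\ref{theorem comparison in proj mfd}) with $l=k$ to a current that realizes $c$. By Proposition~\ref{prop basic property of hyp}(v), there is a $k$-liftable current $T$ with $\|T\|_\omega = 1$ and $-\overline{\chi}_k(T) = c$.

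By the definition of the geometric index, $-\chi_k(T) \geq h$. Theorem~\ref{theorem comparison in proj mfd} with $l=k$ gives
\[
\overline{\chi}_k(T) - \chi_k(T) \leq -(3^{k-1}-1)\overline{\chi}_k(T) + 2\cdot(3^{k-1}-1).
\]
The left-hand side is $c$-independent in form but rewrites as $-\chi_k(T) - (-\overline{\chi}_k(T))$. Substituting $-\overline{\chi}_k(T) = c$ gives
\[
-\chi_k(T) - c \leq (3^{k-1}-1)c + 2\cdot(3^{k-1}-1).
\]
Hence $h \leq -\chi_k(T) \leq 3^{k-1} c + 2\cdot(3^{k-1}-1)$. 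Dividing by $3^{k-1}$ yields exactly
\[
c \geq \frac{h}{3^{k-1}} - \frac{2(3^{k-1}-1)}{3^{k-1}},
\]
which is the stated inequality.

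The only point needing care is the use of Proposition~\ref{prop basic property of hyp}(v) to get a minimizer. If one prefers to avoid it, one can instead take currents $T$ with $-\overline{\chi}_k(T) \leq c+\epsilon$ and let $\epsilon \to 0$; the same chain of inequalities goes through with $c$ replaced by $c+\epsilon$. In both cases the mass normalization $\|T\|_\omega = 1$ matches the hypothesis of Theorem~\ref{theorem comparison in proj mfd}. We also use that this theorem applies because $X\subset\P^n$ and $\omega=\omega_{\FS}|_X$. The main obstacle is thus only bookkeeping of signs; no new estimate is needed.
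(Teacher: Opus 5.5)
Your proof is correct and is the same as the paper's: take a normalized minimizer $T$ for $\chyp^{(k)}(X,\omega)$ from Proposition~\ref{prop basic property of hyp}(v), combine $-\chi_k(T)\geq \hyp^{(k)}(X,\omega)$ with Theorem~\ref{theorem comparison in proj mfd} at $l=k$, and rearrange. One small slip: $h\geq c$ only bounds $h$ from below, so it does not give finiteness of $h$; finiteness actually follows from the existence of $T$ together with Proposition~\ref{prop finiteness of Euler characteristic}, and you do not need it anyway, since your chain of inequalities bounds $h$ from above directly.
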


\begin{proof}
    By Proposition~\ref{prop basic property of hyp}(v), there exists a $k$-liftable current $T$ with $\|T\|_{\omega} = 1$ such that $\chyp^{(k)}(X,\omega) = -\overline{\chi}_k(T)$. By Theorem~\ref{theorem comparison in proj mfd}, we deduce that
    \begin{align*}\hyp^{(k)}(X,\omega) &\leq \chyp^{(k)}(X,\omega) + (3^{k-1}-1)\chyp^{(k)}(X,\omega) + 2\cdot (3^{k-1}-1)\\
    &= 3^{k-1} \chyp^{(k)}(X,\omega) + 2\cdot (3^{k-1}-1).\end{align*}
    The result follows.
\end{proof}

\subsection{Vanishing theorem}

We recall the fundamental vanishing theorem for entire curves. Let $X$ be a projective manifold, and let $A$ be an ample line bundle over $X$. Assume that there exist positive integers $k$ and $m$ such that 
\[H^0(X_k, \mathcal{O}_{X_k}(m) \otimes \pi_{k,0}^* A^{-1})\]
has non-zero sections $\sigma_1,\ldots,\sigma_N$ and let $Z \subset X_k$ be the base locus of these sections. Then, it is known by \cite{GreenGriffiths-80,demailly-97-algebraic-hyperbolicity,siuYeung-defects-97} that every entire curve $f:\C \to X$ satisfies $f_{[k]}(\C) \subset Z$. This property underlines the use of jet differentials in the study of Kobayashi hyperbolicity and Green-Griffiths algebraic degeneracy.

Let $X$ be a compact K\"ahler manifold of dimension $n$. Fix a cohomology class $\alpha \in H^{1,1}(X,\R)$. We define the cohomology class
\[\Psi_{k,m}^{\alpha} \colonequals c_1(\mathcal{O}_{X_k}(m))- \pi_{k,0}^* (\alpha) \in H^{1,1}(X_k,\R),\]
which plays a similar role as the line bundle $\mathcal{O}_{X_k}(m) \otimes \pi_{k,0}^* A^{-1}$. The positive closed currents in $\Psi_{k,m}^{\alpha}$ will serve as holomorphic sections.

Let $\beta \in H^{n-1,n-1}(X,\R)$. Then $\beta$ is called \textit{movable class} if $\beta$ belongs to the closed convex cone generated by classes of the form $\pi_*(\widetilde{\beta}_1 \cdots  \widetilde{\beta}_{n-1})$ where $\pi:\widetilde{X} \to X$ is a smooth modification and $\widetilde{\beta}_j$ are K\"ahler classes on $\widetilde{X}$ for $j = 1,\ldots,n-1$. This notion was introduced in \cite{BDPP} and was intensively studied by many authors. It is known that if $\beta$ is movable, then $\beta \cdot \alpha \geq 0$ for every pseudo-effective class $\alpha$. There is a conjecture in \cite{BDPP} that these conditions are equivalent (see \cite{WittNystrom-duality} for an affirmative answer in the case of projective manifolds). When $n=2$ the movable cone is the nef cone.

We have the following theorem. Part (i) of this theorem can be viewed as a transcendental version of the fundamental vanishing theorem for entire curves, while part (ii) provides an useful way to estimates Euler characteristic of liftable currents.

\begin{theorem}\label{theorem alpha_km positive implies bound on -chi_k}
    Suppose that there exist $k\in \Z^+, m \in \R^+$ such that  $\Psi^\alpha_{k,m}$ is pseudo-effective. Let $S$ be a positive closed current that represents $\Psi^\alpha_{k,m}$ and $Z\colonequals \{x\in X_k: \nu(S,x) > 0\}$. Let $T$ be a $k$-liftable current on $X$. Then, we have the following statements:
    \begin{itemize}
        \item[(i)] If $\alpha$ is a K\"ahler class and $\overline{\chi}_k(T) \geq 0$ (resp. $\chi_k(T) \geq 0)$, then there exists a cohomological (resp. geometric) $k$-lifting $T_{[k]}$ of $T$ such that $T_{[k]}$ puts mass on $Z$. A similar statement holds when $\alpha$ is a big class and $\{T\}$ is  movable.
        \item[(ii)] Assume that there exists an analytic subset $A$ of $X$ such that $\pi_{k,0}(Z) \subset A$ and $\alpha$ is a K\"ahler class. Then, we have the estimate 
        \[-{\chi}_{k}(\mathbf{1}_{X\setminus A} T) > m^{-1}\cdot \|\mathbf{1}_{X\setminus A}(T)\|_{\omega},\]
        where $\omega$ is a K\"ahler form on $\alpha$.
    \end{itemize}
\end{theorem}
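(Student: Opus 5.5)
Both parts rest on one pairing computation, obtained by intersecting a lifting $T_{[k]}$ with the class $\Psi^\alpha_{k,m}$ through $S$ and Lemma~\ref{lemma puts no mass implies positive cup product}. For any almost $k$-lifting we have $(\pi_{k,0})_*T_{[k]}=T$, so
\[\{T_{[k]}\}\cdot \Psi^\alpha_{k,m} = m\,\{T_{[k]}\}\cdot u_k - \{T\}\cdot\alpha .\]
If $T_{[k]}$ puts no mass on $Z$, the Lelong-number set of $S$, then Lemma~\ref{lemma puts no mass implies positive cup product}, applied on $X_k$ with $T_1=T_{[k]}$ and $T_2=S$, gives $\{T_{[k]}\}\cdot\{S\}\geq 0$. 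This yields
\[m\,\{T_{[k]}\}\cdot u_k \geq \{T\}\cdot \alpha. \tag{$\ast$}\]

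For (i) I argue by contradiction. Suppose every cohomological (resp. geometric) $k$-lifting puts no mass on $Z$. In the cohomological case, choose by Proposition~\ref{prop pick good k-lifing} a cohomological lifting with $\{T_{[k]}\}\cdot u_k=-\overline{\chi}_k(T)\leq 0$. In the geometric case, note that $\chi_k(T)\geq 0$ is a supremum, so for each $\epsilon>0$ we can take a geometric lifting with $\{T_{[k]}\}\cdot u_k\le\epsilon$.

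If $\alpha$ is Kähler and $T\neq 0$, then $\{T\}\cdot\alpha>0$, and $(\ast)$ gives $m\{T_{[k]}\}\cdot u_k\geq \{T\}\cdot\alpha$. This contradicts $\{T_{[k]}\}\cdot u_k\le 0$ directly, and contradicts $\{T_{[k]}\}\cdot u_k\le\epsilon$ once $\epsilon<m^{-1}\{T\}\cdot\alpha$.

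When $\alpha$ is only big, write $\alpha=\{\omega'\}+\{E\}$ with $\omega'$ Kähler and $E$ a positive current. If $\{T\}$ is movable, then $\{T\}\cdot\{E\}\ge 0$, because movable classes pair nonnegatively with pseudoeffective ones. Hence $\{T\}\cdot\alpha\ge\{T\}\cdot\{\omega'\}>0$, and the same contradiction follows.

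For (ii), put $T'=\mathbf{1}_{X\setminus A}T$. By Proposition~\ref{prop decomp for liftable current} it is $k$-liftable. Take any geometric $k$-lifting $T'_{[k]}$. It is supported over $\overline{X\setminus A}$, but it lives over $X\setminus A$ in the sense of mass: $T'_{[k]}$ and $\mathbf{1}_{\pi_{k,0}^{-1}(X\setminus A)}T'_{[k]}$ are both liftings of $T'$. The difference of the two is a lifting of $\mathbf{1}_A T'=0$, and so it has zero pushforward. Such a piece has nonnegative pairing with $u_k$ by the usual inequality $C\,\pi^*\omega+u\ge0$ for $\omega_{[k]}$, as in Proposition~\ref{prop finiteness of Euler characteristic} iterated along the tower. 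Replacing $T'_{[k]}$ by $\mathbf{1}_{\pi_{k,0}^{-1}(X\setminus A)}T'_{[k]}$ therefore only decreases $\{\cdot\}\cdot u_k$, and we may assume $T'_{[k]}$ charges nothing over $A$, hence nothing on $Z\subset\pi_{k,0}^{-1}(A)$. Then $(\ast)$ gives $\{T'_{[k]}\}\cdot u_k\ge m^{-1}\|T'\|_\omega$, and taking the infimum over geometric liftings gives $-\chi_k(T')\ge m^{-1}\|T'\|_\omega$.

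\textbf{Main obstacle.} The delicate point is the strict inequality in (ii). I would first try to read off strictness from the proof of Lemma~\ref{lemma puts no mass implies positive cup product}: the density measure on $\Delta$ is positive, and the supremum defining $\chi_k$ is attained. Attainment for geometric liftings is not available, since geometric liftings are not closed under limits. So if strictness fails, I would settle for the non-strict bound, or perturb $m$ slightly when $\Psi^\alpha_{k,m'}$ stays pseudoeffective for some $m'<m$.

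A second point to check is that Lemma~\ref{lemma puts no mass implies positive cup product} applies on $X_k$ with the merely closed current $T_{[k]}$. This holds since $X_k$ is compact Kähler.
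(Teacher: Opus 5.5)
Your argument follows the paper's proof. In (i) you pair a lifting with $\{S\}$ through Lemma~\ref{lemma puts no mass implies positive cup product} to get $m\{T_{[k]}\}\cdot u_k\geq \{T\}\cdot\alpha$. In (ii) you cut a geometric lifting along $\pi_{k,0}^{-1}(A)$ and show the discarded piece has nonnegative $u_k$-pairing. One step, however, is justified by a mechanism that does not work.

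\textbf{The gap in (ii).} Put $R\colonequals \mathbf{1}_{\pi_{k,0}^{-1}(A)}T'_{[k]}$. Nonnegativity of $\{R\}\cdot u_k$ does not follow from positivity of $\omega_{[k]}$ ``iterated along the tower''. Write $u_k=\{\omega_{[k]}\}-C_{[k]}\,\pi_{k,k-1}^*\{\omega_{[k-1]}\}$. The second term pairs with $(\pi_{k,k-1})_*R$, which need not vanish; only $(\pi_{k,0})_*R$ does.

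For $k\geq 2$, directedness plus zero push-forward to $X$ is genuinely not enough. Take a smooth rational curve lying in a fibre of $X_1\to X$ and lift it to $X_2$, as in Remark~\ref{remark about definition of Euler char}. The lift is directed by $V_2$ and pushes forward to $0$ on $X$, yet its $u_2$-pairing is $-2$.

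What rescues the step is geometricity, which you have but do not use. Since $R\leq T'_{[k]}$, it puts no mass on $X_k^{[\sing]}$. Lemma~\ref{lemma puts no mass implies positive cup product} then gives $\{R\}\cdot\{D_k\}\geq 0$. By \eqref{eq formula for chern class Xk} this yields $\{R\}\cdot u_k\geq \{(\pi_{k,k-1})_*R\}\cdot u_{k-1}$. The push-forward is again directed and puts no mass on $X_{k-1}^{[\sing]}$, so you can descend step by step to $X_1$. There the push-forward to $X$ vanishes, and the $\omega_{[1]}$ inequality from Proposition~\ref{prop finiteness of Euler characteristic} gives $\geq 0$. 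This is the argument of Proposition~\ref{prop increasing of chi_k}, which is what the paper invokes.

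\textbf{Strictness in (ii).} Your concern is well founded. The paper's proof (``a similar argument as in (i)'') also only gives $-\chi_k(\mathbf{1}_{X\setminus A}T)\geq m^{-1}\|\mathbf{1}_{X\setminus A}T\|_\omega$. The strict version fails trivially when $\mathbf{1}_{X\setminus A}T=0$, since both sides then vanish. So the non-strict bound is what this method honestly yields. Your perturbation of $m$ would additionally require the new current in $\Psi^\alpha_{k,m'}$ to have its Lelong set lying over $A$.

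\textbf{The big/movable case of (i).} Your splitting $\alpha=\{\omega'\}+\{E\}$, giving $\{T\}\cdot\alpha\geq\{T\}\cdot\{\omega'\}>0$, is genuinely needed. The paper's bare inequality $\{T\}\cdot\alpha\geq 0$ only yields $\overline{\chi}_k(T)\leq 0$. That does not contradict $\overline{\chi}_k(T)\geq 0$.
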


\begin{proof}
    (i) Suppose that $\alpha$ is a K\"ahler class and $\omega\in \alpha$ is a K\"ahler form.
    Let $T_{[k]}$ be a cohomological $k$-lifting of $T$. Suppose that $T_{[k]}$ puts no mass on $Z$. Then, by Lemma~\ref{lemma puts no mass implies positive cup product}, we have $\{T_{[k]}\}\cdot \Psi_{k,m}^\alpha = \{T_{[k]}\} \cdot \{S\} \geq 0$. This is equivalent to $ m\cdot \{T_{[k]}\} \cdot u_k \geq \{T\} \cdot \alpha$. Therefore, if every cohomological $k$-lifting of $T$ puts no mass on $Z$, then we must have $\overline{\chi}_k(S) \leq -m^{-1}\cdot \|T\|_{\omega}$, which is a contradiction with $\overline{\chi}_k(T) \geq 0$. The proof in the situation where $\chi_k(T) \geq 0$ is similar. In the case where $\alpha$ is big and $\{T\}$ is movable, we use the inequality $\{T\} \cdot \alpha \geq 0$.

    (ii) Let $T' \colonequals \mathbf{1}_{X\setminus A} T$. Then, by Proposition~\ref{prop decomp for liftable current}, $T'$ is a $k$-liftable current. Let $T_{[k]}'$ be a geometric $k$-lifting of $T'$. Then $\mathbf{1}_{X_k\setminus \pi_{k,0}^{-1}(A)} T'_{[k]}$ is also a $k$-lifting of $T'$ which puts no mass on $Z$. By a similar argument as in the proof of (i), we only need to show that 
    \[\{\mathbf{1}_{X_k\setminus \pi_{k,0}^{-1}(A)} T'_{[k]}\} \cdot u_k \leq \{T'_{[k]}\} \cdot u_k\]
    or equivalent $\{\mathbf{1}_{\pi_{k,0}^{-1} (A)} T_{[k]}'\} \cdot u_k \geq 0$. This is done by using Proposition~\ref{prop increasing of chi_k} and noting that $(\pi_{k,0})_*(\mathbf{1}_{\pi_{k,0}^{-1} (A)} T_{[k]}') = 0$. The proof is complete. 
\end{proof}

\begin{remark}\label{remark search for more algebraic obstruction}
    We see that this theorem remains true if we replace the cone of directed currents by the larger cone of all positive closed currents on $X_k$. This observation suggests that the jet differential method does not take into account  the directed structure. Therefore, it motivates the search for more delicate algebraic obstructions on jet spaces.
\end{remark}

\subsection{Lower bound of hyperbolic indices}

We now back to the situation where $X$ is a smooth hypersurface of degree $d$ in $\P^{n+1}$. Recall that $K_X \simeq \mathcal{O}_{X}(d-n-2)$. Therefore, for $d \geq n+3$, $K_X$ is an ample line bundle. We recall some fundamental results in \cite{diverio-merker-rousseau-effective} and the refinements in \cite{darondeau-slanted}.

\begin{theorem}\label{theorem non effective alg degeneracy 1}\cite[Theorem 2.4]{diverio-merker-rousseau-effective}
    Let $X$ be a smooth hypersurface of degree $d$ in $\P^{n+1}$. Fix a small enough rational number $\delta > 0$. Then, there exists a positive integer $d_{n,\delta}$ such that 
    \[H^{0} \left(X_n,\mathcal{O}_{X_n}(m) \otimes \pi_{n,0}^* K_X^{-\delta m} \right) \neq 0,\]
    whenever $d\geq d_{n,\delta}$ and $m$ is large enough such that $\delta m$ is an integer.
\end{theorem}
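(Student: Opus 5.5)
The natural route is Demailly's algebraic holomorphic Morse inequalities on the Demailly--Semple tower $X_n$, following the strategy of \cite{diverio-merker-rousseau-effective}. The line bundle $\mathcal{O}_{X_n}(m)\otimes\pi_{n,0}^*K_X^{-\delta m}$ is not itself positive, so the plan has three steps.

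\textbf{Step 1: pass to a weighted bundle.} For a weight vector $\mathbf{a}=(a_1,\ldots,a_n)$ with $|\mathbf{a}|=\sum a_j$, set
\[
\mathcal{O}_{X_n}(\mathbf{a})\colonequals\pi_{n,1}^*\mathcal{O}_{X_1}(a_1)\otimes\cdots\otimes\pi_{n,n-1}^*\mathcal{O}_{X_{n-1}}(a_{n-1})\otimes\mathcal{O}_{X_n}(a_n).
\]
Iterating \eqref{eq formula for Xk and Xk-1} gives
\[
\mathcal{O}_{X_n}(|\mathbf{a}|)=\mathcal{O}_{X_n}(\mathbf{a})\otimes\mathcal{O}\Big(\sum_{j\ge2}b_j\,\pi_{n,j}^*D_j\Big),\qquad b_j=a_j+\cdots+a_n\ge0 .
\]
Since the $D_j$ are effective, any nonzero section of $\mathcal{O}_{X_n}(\mathbf{a})\otimes\pi_{n,0}^*K_X^{-\delta|\mathbf{a}|}$ yields a nonzero section of $\mathcal{O}_{X_n}(m)\otimes\pi_{n,0}^*K_X^{-\delta m}$ with $m=|\mathbf{a}|$. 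It therefore suffices to produce sections of the weighted bundle. Replacing $m$ by multiples, and adjusting by a bounded number of effective twists, takes care of the remaining $m$.

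\textbf{Step 2: write the bundle as a difference of nef bundles and apply Morse.} Choose weights with $a_1\ge 3a_2\ge\cdots\ge 3^{n-2}a_{n-1}\ge 2\cdot3^{n-2}a_n>0$. Then, in the spirit of Proposition~\ref{prop construct nef line bundle on X_k}, the bundle
\[
F\colonequals\mathcal{O}_{X_n}(\mathbf{a})\otimes\pi_{n,0}^*\mathcal{O}_X(2|\mathbf{a}|)
\]
is nef; this uses the nefness of $T_X^*\otimes\mathcal{O}_X(2)$ for hypersurfaces of $\P^{n+1}$. Since $K_X=\mathcal{O}_X(d-n-2)$, we get $L=F-G$ with $G\colonequals\pi_{n,0}^*\mathcal{O}_X\big((2+\delta(d-n-2))|\mathbf{a}|\big)$, which is nef. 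Let $N=\dim X_n=n+n(n-1)=n^2$. Demailly's algebraic Morse inequality gives
\[
h^0(X_n,L^{\otimes p})\ \ge\ \big(F^{N}-N\,F^{N-1}\cdot G\big)\frac{p^{N}}{N!}-o(p^N).
\]
It remains to show that the bracket is positive for $d\ge d_{n,\delta}$.

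\textbf{Step 3: the intersection computation, which is the main obstacle.} Both $F^N$ and $F^{N-1}\cdot G$ are computed in the cohomology ring of the tower. It is generated over $H^*(X)$ by $u_1,\ldots,u_n$, subject to the Grothendieck relations
\[
u_j^{\,n}+c_1(V_{j-1})u_j^{\,n-1}+\cdots+c_n(V_{j-1})=0 .
\]
The Chern classes of $V_{j-1}$ come from the exact sequences \eqref{eq short exact sequence for tilde V}. Pushing forward to $X$ turns everything into polynomials in $c_1(\mathcal{O}_X(1))=h$ and the Chern classes $c_i(T_X)$. These are in turn explicit polynomials in $d$ via $c(T_X)=(1+h)^{n+2}(1+dh)^{-1}$ and $h^n=d$.

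The result is a polynomial in $d$ of degree at most $n+1$, with coefficients depending on $\mathbf{a}$, $n$ and $\delta$. Its top-degree part comes from $h^n\cdot c_1^{\,\ell}$-type terms and carries the signs $(-1)^n$ coming from $c_i(T_X)\sim(-d)^ih^i$. The hard point is to show that the coefficient of $d^{\,n+1}$ in $F^N-NF^{N-1}G$ is strictly positive for small $\delta$.

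I would first set $\delta=0$ and choose the weights very lacunary, say $a_j\gg a_{j+1}$. The leading term should then reduce, as in Diverio's computation, to a product of Segre-type numbers, each positive by an inductive fibre-by-fibre integration along $X_n\to X_{n-1}\to\cdots\to X$. The contribution of $G$ is a term whose $d^{\,n+1}$-coefficient is linear in $\delta$ (plus the fixed twist from $\mathcal{O}_X(2|\mathbf{a}|)$, which only affects lower order in $d$). By continuity, the leading coefficient therefore remains positive for $\delta$ small enough.

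Once the leading coefficient is positive, the bracket is positive for all $d$ beyond some threshold $d_{n,\delta}$. This gives $h^0(X_n,L^{\otimes p})\sim c\,p^N>0$, hence the nonvanishing claimed.
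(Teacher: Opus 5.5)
The paper gives no proof of this statement. It quotes it from \cite{diverio-merker-rousseau-effective}, where the argument (going back to Diverio) follows your route: the weighted bundle $\mathcal{O}_{X_n}(\mathbf a)$, the nef decomposition $L=F-G$, algebraic Morse inequalities on $X_n$, positivity of the $d^{n+1}$-coefficient, and finally $\delta$ small. Steps 1--2 and the $\delta$-perturbation at the end are sound, up to three small slips.
First, iterating \eqref{eq formula for Xk and Xk-1} gives $\mathcal{O}_{X_n}(|\mathbf a|)=\mathcal{O}_{X_n}(\mathbf a)\otimes\mathcal{O}\big(\sum_{j\ge 2}(a_1+\cdots+a_{j-1})\,\pi_{n,j}^*D_j\big)$. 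These are head sums, not tail sums, which is harmless.
Second, twisting by the $D_j$ cannot change $m$. To reach every large $m$ with $\delta m\in\Z$, multiply sections coming from two admissible weight vectors with coprime sums, e.g.\ $\mathbf a$ and $\mathbf a+(1,0,\ldots,0)$.
Third, the $d^{n+1}$-part comes from monomials purely in the $c_i(T_X)$, with no bare factor of $h$.

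The genuine gap is Step 3, which carries the whole content of the theorem. You assert that the $d^{n+1}$-coefficient $P_0(\mathbf a)$ of $\mathcal{O}_{X_n}(\mathbf a)^{n^2}$ ``should'' reduce to positive Segre-type numbers. But you never identify the dominant monomial or compute its sign. Nothing in the sketch uses that the tower has height $n$, yet for height $k<n$ this coefficient vanishes identically, so an argument blind to $k$ cannot be complete.

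The missing computation is as follows.
\begin{itemize}
\item Every intersection number of $u_1,\ldots,u_n,\pi_{n,0}^*h$ is $\int_X$ of a universal polynomial in $h$ and $c_\bullet(T_X)$. Since $c(T_X)=(1+h)^{n+2}(1+dh)^{-1}$ and $\int_X h^n=d$, its $d^{n+1}$-coefficient is obtained by setting $h=0$, $c(T_X)=(1+\ell)^{-1}$ and $\int_X\ell^n=1$.
\item Under this substitution, the two exact sequences defining $V_j$ give $c(V_j)^{-1}=(1+\ell+u_1+\cdots+u_j)\,R_j(u_1,\ldots,u_j)$ with $R_j(0)=1$. Also $(\pi_{j,j-1})_*u_j^{\,n-1+i}=[c(V_{j-1})^{-1}]_i$, so each push-forward raises the $\ell$-degree by at most one.
\item This gives the vanishing for $k<n$.
\item For $k=n$, $\int u_1^{i_1}\cdots u_n^{i_n}$ has zero $d^{n+1}$-coefficient whenever $(i_1,\ldots,i_n)>_{\mathrm{lex}}(n,\ldots,n)$.
\item $\int u_1^n\cdots u_n^n$ has $d^{n+1}$-coefficient exactly $1$, since at each stage the pure $\ell$-term comes only from the $\ell$-coefficient $1$ of $[c(V_{j-1})^{-1}]_1$.
\item Choose integer weights $a_1\gg a_2\gg\cdots\gg a_n$; these satisfy the nefness condition. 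Then $P_0(\mathbf a)\sim\binom{n^2}{n,\ldots,n}(a_1\cdots a_n)^n>0$.
\end{itemize}
Only after this does your continuity argument in $\delta$ finish the proof.
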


\begin{theorem}\label{theorem non effective alg degeneracy 2}\cite{diverio-merker-rousseau-effective,darondeau-slanted}
    Let $X$ be a general smooth hypersurface of degree $d$ in $\P^{n+1}$. Suppose that $d$ and $m$ are large enough such that
    \[H^{0} \left(X_n,\mathcal{O}_{X_n}(m) \otimes \pi_{n,0}^* K_X^{-\delta m} \right) \neq 0 \quad \text{and} \quad d > \frac{5n+3}{\delta} + n + 2.\]
    For each $p \in \N, p \leq m$, let $Z_p$ be the base locus of the line bundle 
    \[\mathcal{O}_{X_n}(m) \otimes \pi_{n,0}^* \mathcal{O}_{X}\left(-\delta m (d - n - 2)+ p (5n+3) \right).\]
    Let $Z$ be the intersection of all $Z_p$. Then $\pi_{n,0}(Z)$ is a proper analytic subset of $X$.
\end{theorem}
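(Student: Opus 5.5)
The plan is to follow the Siu–Diverio–Merker–Rousseau strategy of slanted vector fields on the universal family, as sharpened by Darondeau; the constant $5n+3$ in the statement is exactly the pole order of those vector fields.

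\emph{Setup.} Let $\mathcal{X}\subset \P^{n+1}\times \P(H^0(\P^{n+1},\mathcal{O}(d)))$ be the universal hypersurface, and let $\mathcal{X}_n^{\mathrm{v}}$ be its relative (vertical) Demailly–Semple tower of level $n$, with projection $p:\mathcal{X}_n^{\mathrm{v}}\to \P(H^0(\mathcal{O}(d)))$. The fiber over a parameter $s$ is $X_{s,n}$.

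\emph{Step 1: slanted vector fields.} I would invoke the result of Merker and Darondeau that
\[T_{\mathcal{X}_n^{\mathrm{v}}}\otimes \mathcal{O}_{\P^{n+1}}(5n+3)\otimes \mathcal{O}_{\P(H^0)}(1)\]
is globally generated on the open set $\mathcal{X}_n^{\mathrm{v},\mathrm{reg}}\setminus \Sigma$. Here $\Sigma$ is a closed subset whose image in each fiber does not dominate $X_s$ under $\pi_{n,0}$. I take this as given from \cite{darondeau-slanted}. It is the genuinely hard, computational input, and it is the main obstacle; the remaining steps are formal.

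\emph{Step 2: spreading out sections.} The nonvanishing $H^0(X_n,\mathcal{O}_{X_n}(m)\otimes\pi_{n,0}^*K_X^{-\delta m})\neq 0$ is an open condition in $s$, so it holds on a Zariski open set $U$ of parameters. By semicontinuity, a nonzero section $\sigma$ on a general fiber extends to a section $\boldsymbol\sigma$ of the corresponding twisted line bundle on $p^{-1}(U')$, for some open $U'\subset U$.

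\emph{Step 3: differentiating.} Differentiate $\boldsymbol\sigma$ $p$ times along slanted vector fields $v_1,\dots,v_p$ and restrict to the fiber $X_{s,n}$. This gives sections of
\[\mathcal{O}_{X_n}(m)\otimes\pi_{n,0}^*\mathcal{O}_X\big(-\delta m(d-n-2)+p(5n+3)\big),\]
because each derivative adds a twist $\mathcal{O}(5n+3)$ from Step 1 and leaves the $\mathcal{O}_{X_n}(m)$-degree unchanged. The hypothesis $d>\frac{5n+3}{\delta}+n+2$ gives $-\delta m(d-n-2)+p(5n+3)<0$ for all $p\le m$. So every differentiated section is again a genuine jet differential with values in an antiample line bundle, and hence lies in the linear system defining $Z_p$. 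Consequently any point of $Z=\bigcap_p Z_p$ outside $\Sigma$ is a point where $\boldsymbol\sigma$ and all its derivatives of order $\le m$ vanish.

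\emph{Step 4: finite vanishing order.} A nonzero section of $\mathcal{O}(m)$, restricted to a point of $X_n^{\mathrm{reg}}$, has vanishing order at most $m$ along the fiber directions. This is because the section is a weighted polynomial of degree $m$ in the jet variables, which are accessible through the vertical part of the slanted fields. Global generation in Step 1 then shows that at every point of $\mathcal{X}_n^{\mathrm{v},\mathrm{reg}}\setminus\Sigma$ some derivative of order $\le m$ is nonzero. Hence
\[Z\subset X_n^{[\sing]}\cup(\Sigma\cap X_{s,n}).\]

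\emph{Step 5: properness of the image.} Projecting by $\pi_{n,0}$, the image of $\Sigma\cap X_{s,n}$ is proper by construction. It remains to check that $Z\cap X_n^{[\sing]}$ does not project onto $X$. I would handle this as in \cite{diverio-merker-rousseau-effective}: use the invariance of jet differentials under reparametrization to work with invariant jet differentials, which are insensitive to the singular locus. Alternatively, one can note that $\pi_{n,0}(Z)$ is closed and analytic, and that it misses the points $x$ for which the whole fiber $\pi_{n,0}^{-1}(x)\cap X_n^{\mathrm{reg}}$ avoids $\Sigma$. Either route shows that $\pi_{n,0}(Z)$ is a proper analytic subset of $X$ for general $X$.
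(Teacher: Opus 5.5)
Step~5 cannot be completed: for $n\ge 2$ the conclusion as literally stated is false, because $Z$ always contains the divisor $D_n$. The paper gives no proof of this statement, only the citation to Diverio--Merker--Rousseau and Darondeau, and your Steps~1--3 reproduce their argument.

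To see that $D_n\subset Z$, take any point $(w,[\xi])\in D_n=\P(T_{X_{n-1}/X_{n-2}})$. Let $F\cong\P^{n-1}$ be the fiber of $\pi_{n-1,n-2}$ through $w$, and let $\ell\subset F$ be the line through $w$ tangent to $\xi$. Since $T_{X_{n-1}/X_{n-2}}\subset V_{n-1}$, the lift $\ell_{[1]}$ has the following properties:
\begin{itemize}
\item it lies in $D_n$ and passes through $(w,[\xi])$;
\item it is contained in a fiber of $\pi_{n,0}$;
\item $\mathcal{O}_{X_n}(-1)|_{\ell_{[1]}}\cong T_\ell\cong\mathcal{O}_{\P^1}(2)$.
\end{itemize}
This is the same kind of curve as in Remark~\ref{remark about definition of Euler char}. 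Hence $\mathcal{O}_{X_n}(m)\otimes\pi_{n,0}^*\mathcal{O}_X(q)$ has degree $-2m<0$ on $\ell_{[1]}$ for every $q\in\Z$, so every section of it vanishes on $\ell_{[1]}$. Such curves pass through every point of $D_n$, so every section vanishes on $D_n$. Therefore $D_n\subset Z_p$ for all $p$, and $\pi_{n,0}(Z)\supset\pi_{n,0}(D_n)=X$.

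Both of your routes in Step~5 fail for this reason. Route (a) has it backwards: invariant jet differentials, viewed as sections of $\mathcal{O}_{X_n}(m)$, are forced to vanish on $D_n$. Route (b) ignores $Z\cap X_n^{[\sing]}$, which meets the fiber over every point of $X$. The repair must be made to the statement, not to Step~5.

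What the method actually proves is the statement with $Z$ replaced by $Z\cap X_n^{[\mathrm{reg}]}$, and even for that your Step~4 overclaims. The bound ``vanishing order at most $m$ in the fiber directions'' holds only over points $x$ with $\sigma|_{\pi_{n,0}^{-1}(x)}\not\equiv 0$. Over $Y_\sigma=\{x\in X:\sigma|_{\pi_{n,0}^{-1}(x)}\equiv 0\}$ you have no such control.

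The correct conclusion of Step~4 is $Z\cap X_n^{[\mathrm{reg}]}\subset\pi_{n,0}^{-1}(Y_\sigma)\cup\Sigma$. The set $Y_\sigma$ is a proper analytic subset because $\sigma\neq 0$, and it is exactly where the exceptional subvariety in DMR comes from. As written, Step~4 would confine $Z\cap X_n^{[\mathrm{reg}]}$ to $\Sigma$ alone, which is more than the method gives.

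With this correction, $\pi_{n,0}(Z\cap X_n^{[\mathrm{reg}]})$ lies in the proper analytic set $Y_\sigma\cup\pi_{n,0}(\Sigma\cap X_{s,n})$. This is also all the paper needs downstream, since geometric liftings put no mass on $X_n^{[\sing]}$.

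A minor point in Step~2: nonvanishing of $H^0$ is a closed, not open, condition in the parameter. What you need is that $h^0$ is constant on a dense Zariski open set of parameters (Grauert), so that a section on a general fiber extends to a neighborhood of it in the family.
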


\begin{theorem}\label{theorem effective alg degeneracy}\cite[Theorem 1.1]{diverio-merker-rousseau-effective}
    There exist effective constants $\delta(n)$ and $\lambda(n)$ such that for every $d\geq \lambda(n)$, there exists $m$ large enough such that the conclusion in Theorem~\ref{theorem non effective alg degeneracy 2} holds.
\end{theorem}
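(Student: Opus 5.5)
The plan is to make Theorem~\ref{theorem non effective alg degeneracy 1} effective and then check that the numerical hypothesis of Theorem~\ref{theorem non effective alg degeneracy 2} is automatic once $d$ is large. Both parts of that hypothesis must hold: some jet differential space $H^0(X_n,\mathcal{O}_{X_n}(m)\otimes\pi_{n,0}^*K_X^{-\delta m})$ is non-zero, and $d>(5n+3)/\delta+n+2$. I want to produce these from a single explicit $\delta=\delta(n)$ and a single explicit threshold $\lambda(n)$ in $d$.

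\emph{Step 1: effective existence of jet differentials.} Write $L=\mathcal{O}_{X_n}(m)\otimes\pi_{n,0}^*K_X^{-\delta m}$ after a suitable weighting. Using Proposition~\ref{prop construct nef line bundle on X_k}, I would take a weighted bundle $\mathcal{O}_{X_n}(a_1,\dots,a_n)$ with $a_1\geq 3a_2\geq\dots\geq 3^{n-1}a_n>0$ and twist it by a multiple of $\mathcal{O}_X(2)$ to get a nef bundle $F$. The negative part $\pi_{n,0}^*\mathcal{O}_X(\ast)$ is a nef bundle $G$. This gives $L=F-G$ with $F,G$ nef.

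Trapani's algebraic Morse inequality then gives $h^0(X_n,L^{\otimes p})\geq \frac{p^N}{N!}\big(F^N-N\,F^{N-1}G\big)-o(p^N)$, where $N=\dim X_n=n+n(n-1)$. The intersection numbers on the Demailly--Semple tower reduce, via the relations $u_k^{n}+c_1u_k^{n-1}+\dots=0$ coming from the Chern classes of $V_{k-1}$, to polynomials in the Chern classes of $T_X$. These Chern classes are explicit polynomials in $d$, computed from $c(T_X)=(1+h)^{n+2}/(1+dh)$ with $h^n=d$. The whole expression is therefore a polynomial in $d$ of degree $n+1$ with explicitly computable coefficients, and I need its leading coefficient to be positive when $\delta$ is small.

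This positivity is the main obstacle. The leading coefficient is a sum over many monomials, and I expect to need crude but explicit bounds on each coefficient to show it dominates. Once that is done, explicit bounds on the lower-order coefficients give an explicit $d_0(n)$ beyond which $F^N-NF^{N-1}G>0$. For $d\geq d_0(n)$ this yields non-zero sections for $m$ large and divisible. I would first try a weighting that makes the leading term a single dominant Schur-type monomial, choosing $\delta(n)$ as a small explicit rational number (of order $n^{-c}$) that keeps the $G$-term subordinate.

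\emph{Step 2: the base locus.} For general $X$, I would run Siu's method of slanted vector fields on the universal family of hypersurfaces. In Darondeau's form, meromorphic vector fields on the universal vertical jet space have pole order at most $5n+3$. Differentiating a section $p$ times therefore produces sections of the twisted bundles defining $Z_p$ in Theorem~\ref{theorem non effective alg degeneracy 2}. These twists remain negative precisely when $\delta m(d-n-2)>m(5n+3)$. Global generation by the vector fields then forces $\pi_{n,0}(Z)$ to be proper, which is exactly the degeneracy conclusion of Theorem~\ref{theorem non effective alg degeneracy 2}; this step is independent of effectivity.

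\emph{Step 3: the threshold.} Set
\[
\lambda(n)\colonequals\max\Big\{d_0(n),\ \Big\lceil \tfrac{5n+3}{\delta(n)}\Big\rceil+n+3\Big\}.
\]
For $d\geq\lambda(n)$, Step 1 supplies the sections for $m$ large enough, and the inequality $d>(5n+3)/\delta(n)+n+2$ holds. Theorem~\ref{theorem non effective alg degeneracy 2} then applies, which gives the claimed statement.
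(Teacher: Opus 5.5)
Your outline follows the same route as the result the paper merely cites here \cite[Theorem 1.1]{diverio-merker-rousseau-effective}: effective nonvanishing from Trapani's Morse inequality on the Demailly--Semple tower with the nef weighting of Proposition~\ref{prop construct nef line bundle on X_k}, slanted vector fields for the base locus, and then a threshold of the form in your Step~3. Your Step~2 is unnecessary, since Theorem~\ref{theorem non effective alg degeneracy 2} may be quoted as stated.

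The one caveat is that Step~1 stops exactly where the real content lies. Positivity of the $d^{n+1}$-coefficient, with an explicit lower bound, has to be imported from that work rather than expected. A dominance argument will not produce it directly: with $a_1\gg a_2\gg\cdots\gg a_n$, the lexicographically top monomials of $F^N$, namely all those containing $a_1^{2n-1}$, have zero $d^{n+1}$-coefficient, because $\int_{X_1}(u_1+2h)^{2n-1}$ (with $h$ the hyperplane class) is only quadratic in $d$. Any such argument must therefore start from the first monomial that survives at order $d^{n+1}$ and prove that its coefficient is positive.
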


We can now prove a Green-Griffiths-Lang type conjecture in our setting.

\begin{theorem}\label{theorem generalized GGL for liftable current}
     Let $X$ be a general smooth hypersurface of degree $d$ in $\P^{n+1}$ and $\lambda(n)$ be the constant in Theorem~\ref{theorem effective alg degeneracy}. Then, if $d\geq \lambda (n)$, there exists a proper analytic subset $Y $ of $X$ such that every $n$-liftable current $T$ with 
     \[\frac{{\chi}_{n}(T)}{\|T\|_{\omega_\FS|_X}} \geq -\delta(d-n-2) + 5n+3\] (and therefore, all $n$-liftable currents with ${\chi}_{n} \geq 0$) puts mass on $Y$. 
\end{theorem}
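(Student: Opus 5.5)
We take $Y\colonequals \pi_{n,0}(Z)$, where $Z=\bigcap_p Z_p$ is the set from Theorem~\ref{theorem non effective alg degeneracy 2}. By Theorem~\ref{theorem effective alg degeneracy}, for $d\geq \lambda(n)$ we can choose $m$ so that this theorem applies. Hence $Y$ is a proper analytic subset of $X$. We then argue by contradiction: suppose $T$ is an $n$-liftable current with $\|T\|_{\omega}=1$ (normalize, $\omega=\omega_\FS|_X$), satisfying the stated lower bound on $\chi_n(T)$, and putting no mass on $Y$. Then $T=\mathbf{1}_{X\setminus Y}T$, so the goal is a strict upper bound on $\chi_n(T)$ contradicting the hypothesis.

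The key step mimics Theorem~\ref{theorem alpha_km positive implies bound on -chi_k}(ii), applied with a sharper class. Take a geometric $n$-lifting $T_{[n]}$ of $T$ with $-\{T_{[n]}\}\cdot u_n \leq -\chi_n(T)+\epsilon$. Since $(\pi_{n,0})_*(\mathbf{1}_{\pi_{n,0}^{-1}(Y)}T_{[n]})=\mathbf{1}_Y T=0$, the argument of Theorem~\ref{theorem alpha_km positive implies bound on -chi_k}(ii) (via Proposition~\ref{prop increasing of chi_k} and geometricity) shows that $T'_{[n]}\colonequals\mathbf{1}_{X_n\setminus\pi_{n,0}^{-1}(Y)}T_{[n]}$ is still a geometric lifting of $T$ with $\{T'_{[n]}\}\cdot u_n\leq\{T_{[n]}\}\cdot u_n$. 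This lifting puts no mass on $\pi_{n,0}^{-1}(Y)\supset Z$.

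Next, for each $p\le m$, choose a section of $\mathcal{O}_{X_n}(m)\otimes\pi_{n,0}^*\mathcal{O}_X(-\delta m(d-n-2)+p(5n+3))$. Its divisor current $S_p$ has Lelong locus inside $Z_p$. The obstacle is that $T'_{[n]}$ only avoids $Z=\bigcap_p Z_p$, not a single $Z_p$. I would handle this by a pointwise covering argument. Lelong numbers of a sum are the sums of Lelong numbers, so the set where all of $S_0,\dots,S_m$ have positive Lelong number is $\bigcap_p\{\nu(S_p,\cdot)>0\}\subset Z$. Thus, after removing $Z$, the mass of $T'_{[n]}$ can be split along the finitely many analytic sets $Z_p$ with Lemma~\ref{lemma support lemma for closed current}: we assign to the piece of $T'_{[n]}$ living off $Z_p$ the current $S_p$. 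By Lemma~\ref{lemma puts no mass implies positive cup product}, each piece $R_p$ (off $Z_p$) satisfies $\{R_p\}\cdot\{S_p\}\ge 0$, that is,
\[ m\{R_p\}\cdot u_n \geq (\delta m(d-n-2)-p(5n+3))\,\|(\pi_{n,0})_*R_p\|_\omega \geq (\delta m(d-n-2)-m(5n+3))\,\|(\pi_{n,0})_*R_p\|_\omega .\]
The subtle point is that the pieces must each be directed positive closed currents whose pushforwards sum to $T$. This is where I would first try the simplest choice, namely just $p=0$ together with the geometric decomposition. If the splitting does not work, it is the step where Theorem~\ref{theorem non effective alg degeneracy 2} must be used in its full ``intersection of base loci'' form.

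Summing over the pieces gives $\{T'_{[n]}\}\cdot u_n\ge(\delta(d-n-2)-(5n+3))$, hence $-\chi_n(T)\ge \delta(d-n-2)-(5n+3)-\epsilon$. Strictness (as needed to contradict the hypothesis, compare the strict inequality in Theorem~\ref{theorem alpha_km positive implies bound on -chi_k}(ii)) comes from the ample slack: $\alpha$ is Kähler and we may perturb $\delta$ slightly, or equivalently use strict positivity from Lemma~\ref{lemma direct current has positive product with good current}. This contradicts $\chi_n(T)\ge -\delta(d-n-2)+5n+3$. Finally, $d>\frac{5n+3}{\delta}+n+2$ makes the threshold negative, which gives the parenthetical claim for currents with $\chi_n\ge0$.
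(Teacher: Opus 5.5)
Your overall route is the paper's: take $Y=\pi_{n,0}(Z)$, replace a geometric lifting by $T'_{[n]}=\mathbf{1}_{X_n\setminus\pi_{n,0}^{-1}(Y)}T_{[n]}$, and pair it, via Lemma~\ref{lemma puts no mass implies positive cup product}, with a positive closed current in a jet-differential class whose Lelong locus lies in $Z$. The key step, however, is mishandled.

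\textbf{The currents $S_p$.} The divisor current of a single section of $L_p\colonequals\mathcal{O}_{X_n}(m)\otimes\pi_{n,0}^*\mathcal{O}_X(-\delta m(d-n-2)+p(5n+3))$ has positive Lelong number along its entire zero divisor. That divisor is in general much larger than the base locus $Z_p$. A piece avoiding $Z_p$ can therefore still charge it, and the lemma does not apply. You need a current whose Lelong locus lies in $Z_p$. One choice is $\Theta_h+dd^c\log\sum_j|\sigma_j|_h^2$ over a basis $(\sigma_j)$ of $H^0(X_n,L_p)$; another, used in the paper, is the current $S_{p,\min}$ with minimal singularities in $c_1(L_p)$.

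\textbf{The obstacle and the choice of $p$.} The obstacle you identify does not exist, and your fallback $p=0$ points the wrong way. We have $L_m=L_p\otimes\pi_{n,0}^*\mathcal{O}_X((m-p)(5n+3))$ and $\mathcal{O}_X(1)$ is globally generated, so $Z_m\subset Z_p$ for all $p\le m$. Hence $Z=Z_m$, while $Z_0$ is the largest base locus and $T'_{[n]}$ need not avoid it. The paper's proof encodes exactly this. The current $S_{p,\min}+(m-p)(5n+3)\pi_{n,0}^*\omega_{\FS}$ lies in $c_1(L_m)$, so $\nu(S_{m,\min},\cdot)\le\nu(S_{p,\min},\cdot)$ and $\{\nu(S_{m,\min},\cdot)>0\}\subset\bigcap_p Z_p=Z$. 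One then applies Theorem~\ref{theorem alpha_km positive implies bound on -chi_k}(ii) once, with $S=S_{m,\min}$, $A=Y$ and $\alpha=m(\delta(d-n-2)-(5n+3))\{\omega_{\FS}|_X\}$. This class is K\"ahler because $d>\frac{5n+3}{\delta}+n+2$. Your splitting into $\mathbf{1}_{X_n\setminus Z_0}T'_{[n]}$ and $\mathbf{1}_{(Z_0\cap\cdots\cap Z_{p-1})\setminus Z_p}T'_{[n]}$ does work once the $S_p$ are corrected: the pieces are positive closed by the support theorem, and the lemma needs no directedness. It is simply superfluous.

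\textbf{Strictness.} Your strictness argument does not hold up. Lowering $\delta$ weakens the bound, and raising it requires sections you do not have (and changes $Y$). Lemma~\ref{lemma direct current has positive product with good current} needs $S(\xi,\xi)\ge\epsilon|\xi|_\omega^2$ for all $\xi\in V_n$. A current in $u_n$ produced from jet differentials is only known to dominate a multiple of $\pi_{n,0}^*\omega_{\FS}$, which vanishes on the vertical directions $T_{X_n/X_{n-1}}\subset V_n$. Moreover, even a strict inequality for every individual lifting would not pass to $\chi_n(T)$, which is a supremum over liftings.

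The paper gets strictness only by quoting the strict inequality in Theorem~\ref{theorem alpha_km positive implies bound on -chi_k}(ii), whose proof likewise yields only $\ge$. So what both the paper and your argument actually establish is the statement with $>$ in the hypothesis. The parenthetical claim for currents with $\chi_n\ge 0$ is unaffected, because the threshold $-\delta(d-n-2)+5n+3$ is negative.
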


\begin{proof}
     Let $S_{p,\min}$ be the current of minimal singularities in the first Chern class of the line bundle 
    \[\mathcal{O}_{X_n}(m) \otimes \pi_{n,0}^* \mathcal{O}_{X}\left(-\delta m (d - n - 2)+ p (5n+3) \right).\]
    Recall that a positive closed current in a pseudoeffective class $\alpha$ is said to have
    \textit{minimal singularities} if it is less singular than every other closed positive current in $\alpha$.
    Then, we see that $\{x\in X_n: \nu(S_{p,\min},x) > 0\} \subset Z_p$. Moreover,  observe that 
    \[S_{p,\min} +  (m-p)(5n+3) \pi_{n,0}^* (\omega_{\FS}|_X)\]
    is a positive closed current in the first Chern class of the line bundle
    \[\mathcal{O}_{X_n}(m) \otimes \pi_{n,0}^* \mathcal{O}_{X}\left(-\delta m (d - n - 2)+ m (5n+3) \right),\]
    we infer that $\{x\in X_n: \nu(S_{m,\min},x) > 0\} \subset \{x\in X_n: \nu(S_{p,\min},x) > 0\} \subset Z_p$ for every $p\leq m$. Thus, we have $\{x\in X_n: \nu(S_{m,\min},x) > 0\} \subset Z$.

    Let $T$ be a $n$-liftable current on $X$ such that
    \[\frac{{\chi}_{n}(T)}{\|T\|_{\omega_\FS|_X}} \geq -\delta(d-n-2) + 5n+3\] 
    By Theorem~\ref{theorem non effective alg degeneracy 2} and Theorem~\ref{theorem effective alg degeneracy}, $Y = \pi_{n,0}(Z)$ is an analytic subset of $X$. Suppose that $T$ puts no mass on $Y$. Then $T = \mathbf{1}_{X\setminus Y} T$. Therefore, by Theorem~\ref{theorem alpha_km positive implies bound on -chi_k}(ii), we have
    \[\frac{\chi_n(T)}{\|T\|_{\omega_{\FS}|_X}} < -\delta(d-n-2) + 5n+3,\]
    which is a contradictions. Hence $T$ puts mass on $Y$ as desired.
\end{proof}

We now prove the main result of this section. We need the following result of Riedl-Yang which allows us to control the dimension of Green-Griffiths locus.

\begin{theorem}\cite{riedl-yang-grassmannian-technique}\label{theorem pass from GGL to Kobayashi}
    There exist effective constants $\delta(n)$ and $\lambda(n)$ such that for every $d\geq \lambda(n)$, there exists $m$ large enough such that the conclusion in Theorem~\ref{theorem non effective alg degeneracy 2} holds and the analytic subset $\pi_{n,0}(Z)$ is of dimension $0$.
\end{theorem}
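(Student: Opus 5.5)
The plan is to deduce this from Theorems~\ref{theorem non effective alg degeneracy 2} and \ref{theorem effective alg degeneracy}, applied in a higher-dimensional projective space, together with the Grassmannian (slicing) technique of Riedl--Yang. The mechanism is that a general hypersurface $X \subset \P^{n+1}$ of degree $d$ can be realised as a general linear section $X' \cap \Lambda$. Here $X' \subset \P^{N+1}$ is a general hypersurface of the same degree, with $N \geq n$ to be fixed later, and $\Lambda \simeq \P^{n+1}$ is a general linear subspace. The genericity claim follows from a dimension count on the incidence variety $\{(X',\Lambda)\}$, whose projection to the space of hypersurfaces of $\Lambda$ is dominant.

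\textbf{Step 1: restriction of jet differentials.} By the functoriality of the Demailly--Semple tower recalled in the proof of Proposition~\ref{prop do not depend on ambient manifold}, we have an embedding $X_n \hookrightarrow X'_n$ with $\mathcal{O}_{X'_n}(1)|_{X_n} = \mathcal{O}_{X_n}(1)$. Sections of $\mathcal{O}_{X'_n}(m)\otimes \pi_{n,0}^*\mathcal{O}_{X'}(-a)$ therefore restrict to sections of the corresponding bundle on $X_n$. Consequently the base locus of the slice satisfies
\[Z(X) \subset Z(X') \cap X_n, \qquad \pi_{n,0}(Z(X)) \subset \pi_{n,0}(Z(X')) \cap \Lambda.\]
The twists must be matched: $K_{X'} = \mathcal{O}(d-N-2)$ is smaller than $K_X = \mathcal{O}(d-n-2)$. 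Since the restricted section only needs the weaker negative twist $-\delta m(d-N-2)$, I will replace $\delta(n)$ by a slightly smaller effective constant and enlarge $\lambda(n)$ accordingly. Once this is done, the hypotheses of Theorem~\ref{theorem non effective alg degeneracy 2} hold for $X$ as soon as they hold for $X'$ with $d \geq \lambda(N)$.

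\textbf{Step 2: codimension count.} For a general $\Lambda$, we have $\dim\bigl(\pi_{n,0}(Z(X'))\cap \Lambda\bigr) = \dim \pi_{n,0}(Z(X')) - (N-n)$. So it suffices to show that $\pi_{n,0}(Z(X'))$ has codimension at least $N$ in the $N$-dimensional $X'$, i.e.\ is finite. This is circular if done naively, so I instead run an induction that increases the codimension. Theorem~\ref{theorem effective alg degeneracy} gives codimension $\geq 1$ for general hypersurfaces in every dimension. The Riedl--Yang lemma upgrades this: if for general hypersurfaces of degree $d$ in $\P^{M}$ the relevant exceptional locus has codimension $\geq c$, then for general hypersurfaces of degree $d$ in $\P^{M+1}$ it has codimension $\geq c+1$. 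Its proof considers the universal family over the Grassmannian of hyperplane sections. A component of the exceptional locus that fails to drop in codimension would have to be swept out consistently by the loci on general hyperplane slices. The incidence-dimension count shows that such a component forces a point-independent subvariety, which is then ruled out by the codimension-$1$ result applied to a general hyperplane section. Iterating this $n$ times starting from a general $X'$ in a sufficiently large ambient space, and slicing back down as in Step~1, yields codimension $n$ in $X$, that is, $\dim \pi_{n,0}(Z) = 0$.

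\textbf{Main obstacle.} I expect the codimension-raising step to be the main difficulty. One must make sure that the base locus $Z$ here (the intersection of the $Z_p$ with the specific twists $-\delta m(d-n-2)+p(5n+3)$) is stable under the restriction maps uniformly in the ambient dimension. One must also make sure that the universal family argument works for base loci of these twisted jet bundles, and not merely for the Green--Griffiths locus of entire curves. I would first verify that the Riedl--Yang argument only uses semicontinuity and restriction of the locus, both of which follow from Step~1. Finally, I would track the constants to keep $\delta(n)$ and $\lambda(n)$ effective; roughly, $\lambda(n)$ becomes the degree bound of Theorem~\ref{theorem effective alg degeneracy} in ambient dimension about $2n$.
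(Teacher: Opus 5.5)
The paper gives no proof of this statement; it is quoted from \cite{riedl-yang-grassmannian-technique}. Your general idea is the right one: realise $X$ as a general linear section of a general hypersurface of larger dimension and propagate the Diverio--Merker--Rousseau degeneracy. But two steps fail.

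\textbf{The codimension-raising lemma runs in the wrong direction.} Riedl--Yang consider $\mathrm{PGL}$-invariant (countable unions of) closed subsets $W$ of the universal hypersurfaces satisfying $W_{X\cap\Lambda}\subseteq W_X\cap\Lambda$. Your Step~1 restriction argument does give this containment for base loci at a \emph{fixed} jet order. For such $W$, codimension $\geq c$ for hypersurfaces in $\P^{M+1}$ implies codimension $\geq c+1$ for hypersurfaces in $\P^{M}$. So one gains a codimension by passing \emph{down} to a hyperplane section; the extra dimension comes from moving $\Lambda$ among the hyperplanes through a fixed point of $W_X$. The version you state, where codimension grows with the ambient dimension, is false. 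For fixed $d$ and $N\geq d+1$, every hypersurface of degree $d$ in $\P^N$ is covered by lines, so its locus is everything. Consequently ``iterate upward, then slice down'' gains nothing, since the Bertini count of Step~2 only preserves codimension. The correct scheme applies the lemma along the successive slices. Start from codimension $\geq 1$ for general hypersurfaces in $\P^{2n}$ (Theorem~\ref{theorem effective alg degeneracy} in dimension $2n-1$). Go down $n-1$ times to $\P^{n+1}$, obtaining codimension $\geq n$, i.e.\ a finite set, in the $n$-dimensional $X$. This holds for very general rather than general $X$.

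\textbf{There is no compatible family of loci at jet order $n$ to feed into the lemma.} This is the more serious gap for the statement as formulated. The lemma needs loci defined for hypersurfaces of every dimension between $n$ and $2n-1$. Your Step~1 uses order-$n$ jets on $X'$, but for $\dim X'=N>n$ these carry no information. A smooth hypersurface of dimension $N$ has no nonzero invariant jet differentials of order $k<N$ (Diverio's vanishing theorem). So with the negative twists of Theorem~\ref{theorem non effective alg degeneracy 2} one gets $H^0(X'_n,\mathcal{O}_{X'_n}(m)\otimes\pi_{n,0}^*\mathcal{O}_{X'}(-a))=0$, and the base locus is all of $X'_n$. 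Theorem~\ref{theorem effective alg degeneracy} says something about $X'$ only at jet order $\dim X'$.

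Suppose instead you fix the order $K=2n-1$ in all dimensions. Then the restriction $X_K\hookrightarrow X'_K$ works, provided the twists are chosen independently of the dimension. But the output concerns a base locus in $X_K$, and this does not control $Z\subset X_n$. Since $u_K=\pi_{K,n}^*u_n+\sum_{j=n+1}^{K}\pi_{K,j}^*\{D_j\}$, sections on $X_n$ pull back to sections on $X_K$, but not conversely. So a small base locus at level $K$ says nothing about the one at level $n$. Riedl--Yang sidestep this by running the technique on the locus swept out by entire curves, which involves no jet order; their output is hyperbolicity. Your route therefore yields at best the level-$(2n-1)$ analogue of the statement, which would replace $\hyp^{(n)}$ by $\hyp^{(2n-1)}$ in Theorem~\ref{theorem quantitative Kobayashi conjecture}. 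Reaching the level-$n$ locus $Z$ requires a different idea.
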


\begin{proof}[Proof of Theorem~\ref{theorem quantitative Kobayashi conjecture}]
Let $T$ be a $n$-liftable current on $X$ with $\|T\|_{\omega_{\FS}|_X} = 1$. Suppose that  $d\geq \lambda(n)$ for $\lambda(n)$ as in the Theorem~\ref{theorem pass from GGL to Kobayashi}. Let $Y \colonequals \pi_{0,2}(Z)$ be the analytic subset of $X$ in Theorem~\ref{theorem pass from GGL to Kobayashi}. Then $Y$ is a set of finite points. Thus, $\mathbf{1}_{X\setminus Y} T = T$ by dimensional reason. By Theorem~\ref{theorem alpha_km positive implies bound on -chi_k}(ii), we have
\[-\chi_n(T) \geq \delta(d-n-2)-(5n+3).\]
Thus, we obtain $\hyp^{(n)}(X,\omega_{\FS}|_X) \geq \delta(d-n-2)-(5n+3)$. By Theorem~\ref{theorem compare geometric and cohomological hyp}, we also duduce that
\[\hyp^{(n)}(X,\omega_{\FS}|_X) \geq  \frac{1}{3^{n-1}}\Big(\delta(d-n-2)-(5n+3)\Big) - \frac{2\cdot (3^{k-1}-1)}{3^{k-1}}. \]
Thus, both functions grow linearly in $d$ as $d\to \infty$. By Theorem~\ref{theorem hyp big implies hyperbolic in projective case}, $X_d$ is Kobayashi hyperbolic for $d$ sufficient large.
\end{proof}

\section{Further developments}\label{sec open problem}

In this final section, we discuss some future directions and propose some open questions related to our theory.

\subsection{An analytic approach to the Kobayashi conjecture} As mentioned in the Introduction, we outline a possible analytic approach to the Kobayashi conjecture. The first step is to build a similar theory in the logarithmic setting.

\begin{problem}\label{problem logarithmic setting}
    Develop a similar theory for the logarithmic setting. Can we prove a similar theorem as Theorem~\ref{theorem quantitative Kobayashi conjecture} in this setting?
\end{problem}

We now discuss this problem. We first note that it is tempting to try to use the logarithmic version of Demailly-Semple jet bundles developed by Dethloff-Lu in \cite{dethloff-lu-log-jet-bundles} to give a new definition for the hyperbolic indices in this setting. However, the geometric meaning in this case remains unclear, and the objects considered in this definition probably differ from the notion of liftable currents. Instead of that, we can try to mimic the notion of algebraic hyperbolicity in the logarithmic setting (see Section~\ref{sec complex hyperbolicity} for details). Let $X$ be a compact K\"ahler manifold and $D$ be an effective divisor. Given a liftable current $T$, we can define $i(T,D)$ which generalizes $i(\mathscr{C},D)$ for curves. To do that, we need to investigate the bi-meromorphic map between $X_k$ and the logarithmic version $X_k(-\log D)$. This has been done in \cite{cadorel-symmetric-diff-hypmfd-cusp} in the case where $k=1$. The geometric $k$-hyperbolic index in the logarithmic setting will be defined by the infimum of $-{\chi}_k(T) + i(T,D)$ where the infimum takes over all $k$-liftable currents that put no mass on $D$. The cohomological analogue will be defined by the infimum of $-\overline{\chi}_k(T) + i(T,D)$ where the infimum takes over all $k$-liftable currents $T$ such that $\{T\} \cdot \{D\} \geq 0$. When $T$ is a Nevanlinna current of a non-constant entire curve $f$, $i(T,D)$ is probably related to the geometric defect of $f$ to $D$ (see \cite{VietTuan} for a discussion in the $1$-jet level).

\begin{problem}\label{problem green theorem}
    (Green's theorem) Are the hyperbolic indices of the complement of union of (enough) hyperplanes in general position in projective space positive?
\end{problem}

The next step is to construct examples of low degree hypersurfaces with positive hyperbolic indices.

\begin{problem}\label{problem seeking example}
    Construct hypersurfaces $X_d$ of low degree $d$ in $\P^n$ such that $\hyp^{(k)}(X_d,\omega_{\FS}|_{X_d})$ or $\chyp^{(k)}(X_d,\omega_{\FS}|_{X_d})$ positive for some $k$.
\end{problem}

The current most effective method in the literature is the deformation method, which deduces the hyperbolicity in the compact case from the logarithmic case (see e.g. \cite{duval-sextique,huynh-examples-lowdim,huynh-examples-alldim}). We can try to use this method to answer Problem~\ref{problem seeking example}. The dynamical method in the proof of Proposition~\ref{prop lower bound in Pn} or Proposition~\ref{prop hyp of infinite aut mfd} may be a useful technique.

\begin{remark}\label{remark deform current}
    To use the deformation method, one needs to use the openness in the Euclidean topology property of Kobayashi hyperbolicity. In our setting, we only have this property for the cohomological notion.
\end{remark}

The last step requires an openness theorem in the (countable) Zariski topology.

\begin{question}\label{question open k-hyperbolic countable zariski}
    For $k\in \Z^+ \cup \{\infty\}$, is the positivity of hyperbolic indices an open property in countable Zariski topology for algebraic families of projective manifolds (or families of compact K\"ahler manifolds)?
\end{question}

We now discuss a possible approach to this question. Let $\pi: \mathcal{X}\to S$ be a holomorphic submersion between a K\"ahler manifold $\mathcal{X}$ of dimension $m+n$ and a connected complex manifold $S$ of dimension $n$. Then each fiber of $\pi$ is a compact $n$-dimensional K\"ahler manifold. We assume they are connected.  Let $T_{\mathcal{X}/S}$ denote the relative tangent bundle with respect to the submersion $\pi$. This is a rank $n$ holomorphic subbundle of $T \mathcal{X}$ on $\mathcal{X}$. For each $s\in S$, put $X_s\colonequals \pi^{-1}(s)$. Then we have $T_{\mathcal{X}/S}|_{\pi^{-1}(s)} = T_{X_s}$. Let $V_{\mathcal{X}/S}$ be a rank $r$ holomorphic subbundle of $T_{\mathcal{X}/S}$ and put $V_s\colonequals V_{\mathcal{X}/S}|_{X_s}$ which is a rank $r$ holomorphic subbundle of $T X_s$. Denote by $\P(V_{\mathcal{X}/S})$ the projectivization of $V_{\mathcal{X/S}}$. Then $\pi$ induces a natural holomorphic submersion map $\widehat \pi: \P(V_{\mathcal{X}/S}) \to S$. Put $Y_s\colonequals \widehat \pi ^{-1}(s) = \P (V_s)$.

By Ehresmann's lemma, $\widehat \pi$ is a locally trivial fibration and therefore the cohomology groups $H^k(Y_s,\C)$. These groups form the Hodge bundle $s\mapsto H^{k}(Y_s,\C)$ over $S$. This bundle carries a natural flat connection $\nabla$, which is known as the \textit{Gauss-Manin connection}. It is known that the Dolbeault cohomology groups $H^{p,q}(Y_s,\C)$ form real analytic subbundles of $H^k(Y_s,\C)$. The Gauss-Manin connection $\nabla$ induces a connection $\nabla^{p,q}$ on the subbundle $H^{p,q}(Y_s,\C)$.

For each $s\in S$, let $\mathscr{T}_s \subset H^{n-1,n-1}(Y_s,\R)$ denote the cone of cohomology classes $\alpha$ containing a positive closed (or $dd^c$-closed) current $T$ that is directed by $\widetilde{V_s}$, where $\widetilde{V_s}$ is defined as in \eqref{eq define tilde V}.

\begin{question}\label{question Demailly-Paun relative setting?}
    Is there a countable union of analytic subsets $S' \subset S$ such that: the cones $\mathscr{T}_s$ are invariant under parallel transport with respect to $\nabla^{n-1,n-1}$ for every $s\in S\setminus S'$?
\end{question}

This question can be viewed as a version of the celebrated Demailly-P\u{a}un theorem (see \cite[Theorem 0.8]{Demailly-Paun}) in this ``relative" setting. Similar to \cite{Demailly-Paun}, this requires a deeper understanding of the structure of the cone $\mathscr{T}_s$. Affirmative answers to Question \ref{question Demailly-Paun relative setting?} will be helpful in the study of Question~\ref{question open k-hyperbolic countable zariski}.

\subsection{Open questions} We collect some open questions related to our theory.

\begin{problem}
    Can we construct a (weakly) $\infty$-liftable current on a general compact complex manifold?
\end{problem}

By \cite[Section 2]{sibony-pfaff}, there always exists a positive $dd^c$-closed current $T_{[k]}$ on $X_k$ such that $T_{[k]}$ is directed by $V_k$. However, we do not know whether $(\pi_{k,0})_*(T_{[k]})\neq 0$. One drawback of the method used in the proof of Proposition~\ref{lemma existence liftable current induction level} is that when the center of the disc in $X_k$ approaches $X_{k}^{[\sing]}$, the area of this disc becomes very large compared to the area of its projection onto $X$.

\begin{question}\label{question estimate geometric Euler Nevanlinna current}
    Let $k\geq 2$ and $S_{[k]}$ be a Nevanlinna current of $f_{[k]}$. Is $S_{[k]}$ puts no mass on the singular locus $X_{k}^{[\sing]}$? If not, can we construct a counterexample?
\end{question}

A negative answer to this question would further illustrate the significant difference between algebraic curves and entire curves, as well as the flexibility inherent in the latter. Another interesting question to ask is whether we can have another proof for Duval's theorem using the liftings of Nevanlinna (or Ahlfors) currents.

 Nevanlinna currents play a key role in \cite{McQuillan}. Therefore, it is natural to ask if one can bring the result of \cite{McQuillan} to our setting.

\begin{problem}
    Let $X$ be a surface of general type such that $c_1^2(X) > c_2(X)$. Is there exists an analytic set $Y\subset X$ such that: if $T$ is an $\infty$-liftable current with $\overline{\chi}_\infty(T) \geq 0$ (or $\chi_\infty(T)$), then $T$ has positive mass on $Y$?
\end{problem}

An important step in \cite{McQuillan} is the blow-up process, which reduces the singularities of the foliation. This leads to the following question.

\begin{question}
    Let $T$ be an $\infty$-liftable current on $X$. Let $\pi:\widetilde{X} \to X$ be the blow up map at a point $x$. Let $\widetilde{T}$ denotes the strict transform of $T$ under $\pi$. Then, it is not hard to see that $\widetilde{T}$ is also an $\infty$-liftable current. Is ${\chi}_\infty(T) = {\chi}_\infty(\widetilde{T})$?
\end{question}

Let $J_k^{[\mathrm{reg}]}X/ \mathrm{Diff}_{k}(1)$ denote the invariant jet bundles. By \cite{demailly-97-algebraic-hyperbolicity}, we know that this space is isomorphic with $X_k^{[\mathrm{reg}]}$, and $X_k$ serves as a compactification of $J_k^{[\mathrm{reg}]}X/ \mathrm{Diff}_{k}(1)$. The current algebraic methods often replace $J_k^{[\mathrm{reg}]}X/\mathrm{Diff}_k(1)$ by different compactifications and use Demailly's Morse inequalities to check the bigness of some tautological bundle (see e.g. \cite{berczi-kirwan-non-reductive-hyperbolicity} or \cite{cadorel-hyperbolicity-viaGG} for a different compactification of Green-Griffiths jet bundles). Let $T_{[k]}$ be a $k$-lifting of a $k$-liftable current $T$. Then $T_{[k]}$ is a current on $X_k$ that puts no mass on $X_k \setminus X_k^{[\mathrm{reg}]}$. If we restrict $T_{[k]}$ to a positive closed current on $X_k^{[\mathrm{reg}]}$, then by the regularization theorem of currents in \cite{DS_regula}, we can extend $T_{[k]}$ trivially to a positive closed current $\widetilde{T_{[k]}}$ on any other compactifications of $X_{k}^{[\mathrm{reg}]}$ (for a proof of this fact, see \cite[Corollary 2.2]{dinh-sibony-entropy-mero-correspondence}). The hyperbolic index $\hyp^{(k)}$ can be defined using the intersection of $\{\widetilde{T_{[k]}}\}$ with the canonical tautological line bundle (depending on the choice of compactification). We expect that $\hyp^{(\infty)}$ does not depend on the choice of compactification. Note that using the compactification in \cite{berczi-kirwan-non-reductive-hyperbolicity} and algebraic methods, one can obtain better estimates for Theorem~\ref{theorem quantitative Kobayashi conjecture}. However, as we pursue a more analytic approach, we choose to use the Demailly-Semple tower because of its' simplicity.

\begin{question}
    Is the definition of $\hyp^{(\infty)}$ depend on the choice of compactification of $J_k^{[\mathrm{reg}]}X/\mathrm{Diff_k}(1)$? What happen if we use $J_k^{\mathrm{GG}}X$?
\end{question}

There is a conjecture by Demailly (see \cite[Conjecture 7.13]{demailly-97-algebraic-hyperbolicity} saying that a compact manifold $X$ is hyperbolic if and only if $X$ admits a negative curvature $k$-jet metric for $k$ large enough. Motivated by Theorem~\ref{theorem negative jet curvature implies pluripotential hyperbolic}, we propose a different version of this conjecture:

\begin{conjecture}\label{conj hyp > 0 equiv Kobayashi}
    A compact K\"ahler manifold $X$ is Kobayashi hyperbolic if and only if $\hyp^{(\infty)}(X)>0$.
\end{conjecture}

\bibliography{biblio_family_MA,biblio_Viet_papers,bib-kahlerRicci-flow,ag}
\bibliographystyle{alpha}

\bigskip

\noindent
\Addresses
\end{document}